\newcolumntype{N}[1]{>{\centering\arraybackslash}m{#1}}
\newcommand{\transv}{\mathrel{\text{\tpitchfork}}}
\newcommand{\tpitchfork}{%
  \vbox{
    \baselineskip\z@skip
    \lineskip-.52ex
    \lineskiplimit\maxdimen
    \m@th
    \ialign{##\crcr\hidewidth\smash{$-$}\hidewidth\crcr$\pitchfork$\crcr}
  }%
}
\definecolor{refkey}{rgb}{1,0,0}
\definecolor{labelkey}{rgb}{1,0,0}
  \mathchardef\ordinarycolon\mathcode`\:
\theoremstyle{plain}
\newtheorem{thm}{Theorem}[section]
\newtheorem{lem}[thm]{Lemma}
\newtheorem{prop}[thm]{Proposition}
\newtheorem{cor}[thm]{Corollary}
\newtheorem{defi}[thm]{Definition}
\begin{document}

\title{{\LARGE{Sturm 3-ball global attractors 3:\\
Examples of Thom-Smale complexes}}}

\author{
 \\
{~}\\
Bernold Fiedler* and Carlos Rocha**\\
\vspace{2cm}}

\date{version of \today}
\maketitle
\thispagestyle{empty}

\vfill

*\\
Institut für Mathematik\\
Freie Universität Berlin\\
Arnimallee 3\\ 
14195 Berlin, Germany\\
\\
**\\
Center for Mathematical Analysis, Geometry and Dynamical Systems\\
Instituto Superior T\'ecnico\\
Universidade de Lisboa\\
Avenida Rovisco Pais\\
1049--001 Lisbon, Portugal\\


\newpage
\pagestyle{plain}
\pagenumbering{roman}
\setcounter{page}{1}

\begin{abstract}
Examples complete our trilogy on the geometric and combinatorial characterization of global Sturm attractors $\mathcal{A}$ which consist of a single closed 3-ball.
The underlying scalar PDE is parabolic,
$$ u_t = u_{xx} + f(x,u,u_x)\,, $$
on the unit interval $0 < x<1$ with Neumann boundary conditions.
Equilibria $v_t=0$ are assumed to be hyperbolic.\\
\newline
Geometrically, we study the resulting Thom-Smale dynamic complex with cells defined by the fast unstable manifolds of the equilibria. 
The Thom-Smale complex turns out to be a regular cell complex.
In the first two papers we characterized 3-ball Sturm attractors $\mathcal{A}$ as 3-cell templates $\mathcal{C}$. 
The characterization involves bipolar orientations and hemisphere decompositions which are closely related to the geometry of the fast unstable manifolds.\\
\newline
An equivalent combinatorial description was given in terms of the Sturm permutation, alias the meander properties of the shooting curve for the equilibrium ODE boundary value problem.
It involves the relative positioning of extreme 2-dimensionally unstable equilibria at the Neumann boundaries $x=0$ and $x=1$, respectively, and the overlapping reach of polar serpents in the shooting meander.\\
\newline
In the present paper we apply these descriptions to explicitly enumerate all 3-ball Sturm attractors $\mathcal{A}$ with at most 13 equilibria.
We also give complete lists of all possibilities to obtain solid tetrahedra, cubes, and octahedra as 3-ball Sturm attractors with 15 and 27 equilibria, respectively. For the remaining Platonic 3-balls, icosahedra and dodecahedra, we indicate a reduction to mere planar considerations as discussed in our previous trilogy on planar Sturm attractors.

\end{abstract}

\newpage
\tableofcontents


\newpage
\pagenumbering{arabic}
\setcounter{page}{1}

\section{Introduction}
\label{sec1}

\numberwithin{equation}{section}
\numberwithin{figure}{section}
\numberwithin{table}{section}

For our general introduction we first follow \cite{firo3d-1, firo3d-2} and the references there.
\emph{Sturm global attractors} $\mathcal{A}_f$ are the global attractors of scalar parabolic equations
	\begin{equation}
	u_t = u_{xx} + f(x,u,u_x)
	\label{eq:1.1}
	\end{equation}
on the unit interval $0<x<1$.
Just to be specific we consider Neumann boundary conditions $u_x=0$ at $x \in \{0,1\}$.
Standard semigroup theory provides local solutions $u(t,x)$ for $t \geq 0$ and given initial data at time $t=0$, in suitable Sobolev spaces $u(t, \cdot) \in X \subseteq C^1 ([0,1], \mathbb{R})$.
Under suitable dissipativeness assumptions on $f \in C^2$, any solution eventually enters a fixed large ball in $X$.
For large times $t$, in fact, that large ball of initial conditions itself limits onto the maximal compact and invariant subset $\mathcal{A}_f$ of $X$ which is called the global attractor. See \cite{he81, pa83, ta79} for a general PDE background, and \cite{bavi92, chvi02, edetal94, ha88, haetal02, la91, ra02, seyo02, te88} for global attractors in general.

Equilibria $v = v(x)$ are time-independent solutions, of course, and hence satisfy the ODE
	\begin{equation}
	0 = v_{xx} + f(x,v,v_x)
	\label{eq:1.3}
	\end{equation} 
for $0\leq x \leq 1$, again with Neumann~boundary.
Here and below we assume that all equilibria $v$ of \eqref{eq:1.1}, \eqref{eq:1.3} are \emph{hyperbolic}, i.e. without eigenvalues (of) zero (real part) of their linearization.
Let $\mathcal{E} = \mathcal{E}_f \subseteq \mathcal{A}_f$ denote the set of equilibria.
Our generic hyperbolicity assumption and dissipativeness of $f$ imply that $N$:= $|\mathcal{E}_f|$ is odd.

It is known that \eqref{eq:1.1} possesses a Lyapunov~function, alias a variational or gradient-like structure, under separated boundary conditions;  see \cite{ze68, ma78, mana97, hu11, fietal14}. In particular, the global attractor consists of equilibria and of solutions $u(t, \cdot )$, $t \in \mathbb{R}$, with forward and backward limits, i.e.
	\begin{equation}
	\underset{t \rightarrow -\infty}{\mathrm{lim}} u(t, \cdot ) = v\,,
	\qquad
	\underset{t \rightarrow +\infty}{\mathrm{lim}} u(t, \cdot ) = w\,.
	\label{eq:1.2}
	\end{equation}
In other words, the $\alpha$- and $\omega$-limit sets of $u(t,\cdot )$ are two distinct equilibria $v$ and $w$.
We call $u(t, \cdot )$ a \emph{heteroclinic} or \emph{connecting} orbit, or \emph{instanton},  and write $v \leadsto w$ for such heteroclinically connected equilibria. 
See fig.~\ref{fig:1.0}(a) for a simple 3-ball example with $N=9$ equilibria.

\begin{figure}[p!]
\centering \includegraphics[width=0.9\textwidth]{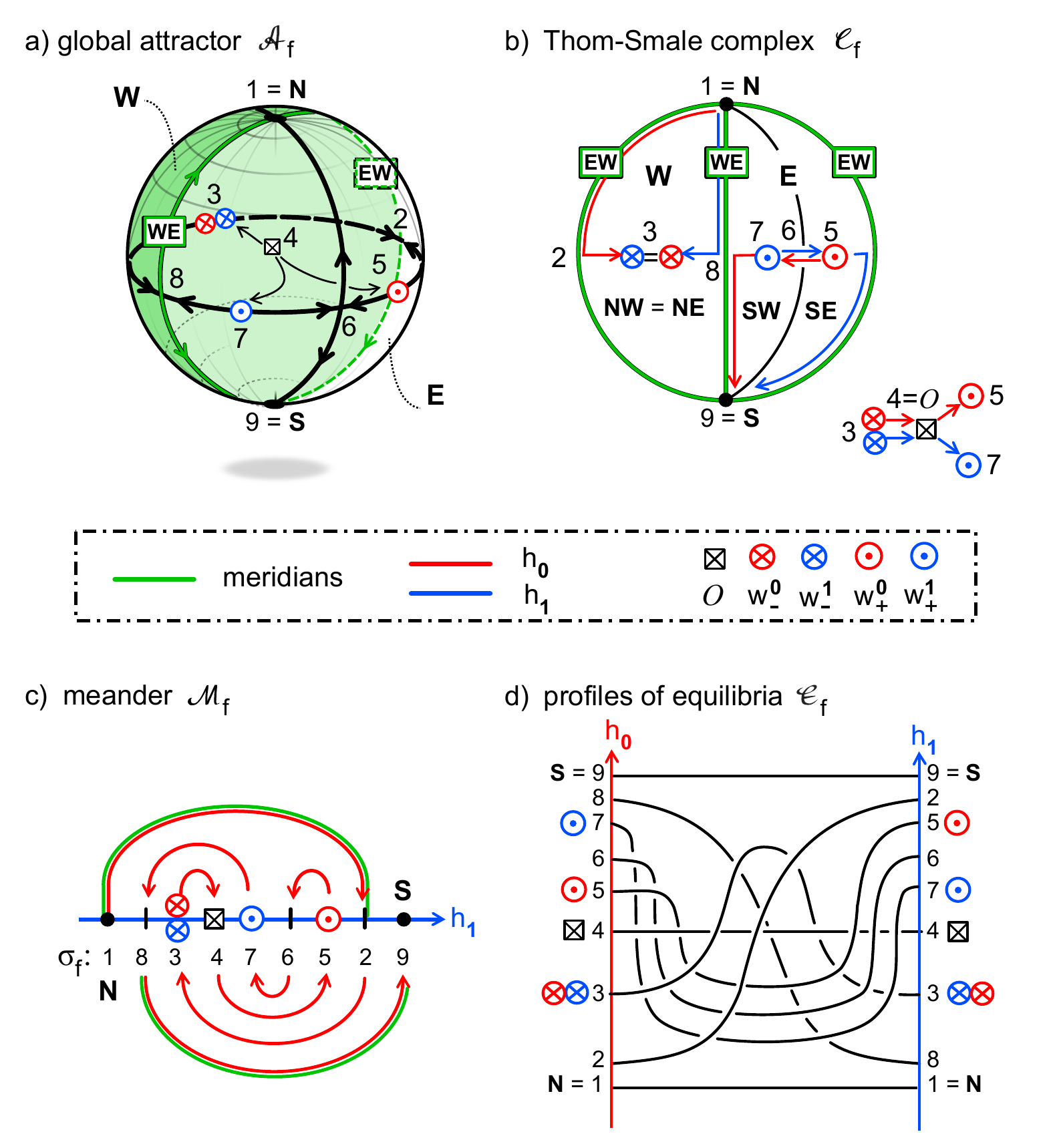}
\caption{\emph{
Example of a Sturm 3-ball global attractor $\mathcal{A}_f = clos W^u(\mathcal{O})$; see attractor $9.3^2$, alias case 2, of figs.~\ref{fig:6.3},~\ref{fig:6.4}, and case 2, $5.2|7.2^2$ of table \ref{tbl:6.5}. Equilibria are labeled as $\mathcal{E}_f=\{1,\ldots,9\}$. The previous papers \cite{firo3d-1,firo3d-2} established the equivalence of the viewpoints (a)--(d).
(a) The Sturm global attractor $\mathcal{A}_f$, 3d view, including the location of the poles $\textbf{N}$, $\textbf{S}$, the (green) meridians $\textbf{WE}$, $\textbf{EW}$, the central equilibrium $\mathcal{O}$ and the hemispheres $\textbf{W}$ (green), $\textbf{E}$.
(b) The dynamic Thom-Smale complex $\mathcal{C}_f$ of the boundary sphere $\Sigma^2 = \partial c_\mathcal{O}$, including the Hamiltonian SZS-pair of paths $(h_0,h_1)$, (red/blue), induced by the bipolar orientation of the 1-skeleton $\mathcal{C}_f^1$. 
The right and left boundaries denote the same $\textbf{EW}$ meridian and have to be identified. 
See fig.~\ref{fig:1.1} for the general case.
(c) The Sturm meander $\mathcal{M}_f$ of the global attractor $\mathcal{A}_f$. 
The meander $\mathcal{M}_f$ is the curve $a \mapsto (v,v_x)$, at $x=1$, which results from Neumann initial conditions $(v,v_x)=(a,0)$ at $x=0$ by shooting via the equilibrium ODE \eqref{eq:1.3}. 
Intersections of the meander with the horizontal $v$-axis indicate equilibria. 
See fig.~\ref{fig:1.2} for the general case.
(d) Spatial profiles $x\mapsto v(x)$ of the equilibria $v \in \mathcal{E}_f$. Note the different orderings of $v(x)$, by $h_0 = \mathrm{id}$ at the left boundary $x=0$, and by the Sturm permutation $\sigma_f = h_1 = (1\ 8\ 3\ 4\ 7\ 6\ 5\ 2\ 9)$ at the right boundary $x=1$. The same orderings define the meander in (c) and the Hamiltonian SZS-pair $(h_0,h_1)$ in the Thom-Smale complex (b).
}}
\label{fig:1.0}
\end{figure}

We attach the name of \emph{Sturm} to the PDE \eqref{eq:1.1}, and to its global attractor $\mathcal{A}_f$. This refers to a crucial nodal property of its solutions, which we express by the \emph{zero number} $z$.
Let $0 \leq z (\varphi) \leq \infty$ count the number of (strict) sign changes of $\varphi : [0,1] \rightarrow \mathbb{R}, \, \varphi \not\equiv 0$.
Then
	\begin{equation}
	t \quad \longmapsto \quad z(u^1(t, \cdot ) - u^2(t, \cdot ))\,
	\label{eq:1.4}
	\end{equation}
is finite and nonincreasing with time $t$, for $t>0$ and any two distinct solutions $u^1$, $u^2$ of \eqref{eq:1.1}.
Moreover $z$ drops strictly with increasing $t$, at any multiple zero of $x \longmapsto u^1(t_0 ,x) - u^2(t_0 ,x)$; see \cite{an88}.
See Sturm \cite{st1836} for a linear autonomous version. For a first introduction see also \cite{ma82, brfi88, fuol88, mp88, brfi89, ro91, fisc03, ga04} and the many references there.
As a convenient notational variant of the zero number $z$, we also write
	\begin{equation}
	z(\varphi) = j_{\pm}
	\label{eq:1.4+}
	\end{equation}
to indicate $j$ strict sign changes of $\varphi$, by $j$, and $\pm \varphi (0) >0$, by the index $\pm$.
For example $z(\pm \varphi_j) = j_{\pm}$, for the $j$-th Sturm-Liouville eigenfunction $\varphi_j$.

The dynamic consequences of the Sturm~structure are enormous.
In a series of papers, we have given a combinatorial description of Sturm global attractors $\mathcal{A}_f$; see \cite{firo96, firo99, firo00}.
Define the two \emph{boundary orders} $h^f_0, h^f_1$: $\lbrace 1, \ldots, N \rbrace \rightarrow \mathcal{E}_f$ of the equilibria such that
	\begin{equation}
	h^f_\iota (1) < h^f_\iota (2) < \ldots < h^f_\iota (N) \qquad \text{at}
	\qquad x=\iota \in \{0,1\}\,.
	\label{eq:1.5}
	\end{equation}
See fig.~\ref{fig:1.0}(d) for an illustration with $N=9$ equilibrium profiles, $\mathcal{E}_f = \{1,\ldots,9\}, \ h_0^f = \mathrm{id},\ h_1^f = (1\ 8\ 3\ 4\ 7\ 6\ 5\ 2\ 9)$.	

The combinatorial description is based on the \emph{Sturm~permutation} $\sigma_f \in S_N$ which was introduced by Fusco and Rocha in \cite{furo91} and is defined as
	\begin{equation}
	\sigma_f:= (h^f_0)^{-1} \circ h^f_1\,.
	\label{eq:1.6}
	\end{equation}
Using a shooting approach to the ODE boundary value problem \eqref{eq:1.3}, the Sturm~permutations $\sigma_f \in S_N$ have been characterized as \emph{dissipative Morse meanders} in \cite{firo99}; see also \eqref{eq:1.20}--\eqref{eq:1.22b} below.
In \cite{firo96} we have shown how to determine which equilibria $v, w$ possess a heteroclinic orbit connection \eqref{eq:1.2}, explicitly and purely combinatorially from $\sigma_f$.

More geometrically, global Sturm attractors $\mathcal{A}_f$ and $\mathcal{A}_g$ with the same Sturm permutation $\sigma_f = \sigma_g$ are $C^0$ orbit-equivalent \cite{firo00}.
For $C^1$-small perturbations, from $f$ to $g$, this global rigidity result is based on the $C^0$ structural stability of Morse-Smale systems; see e.g. \cite{pasm70} and \cite{pame82}.
It is the Sturm property \eqref{eq:1.4} which implies the Morse-Smale property, for hyperbolic equilibria.
Stable and unstable manifolds $W^u(v_-)$, $W^s(v_+)$, which intersect precisely along heteroclinic orbits $v_- \leadsto v_+$, are in fact automatically transverse:
$W^u(v_-) \transv W^s(v_+)$.
See \cite{he85, an86}.
In the Morse-Smale setting, Henry already observed, that a heteroclinic orbit $v_- \leadsto v_+$ is equivalent to $v_+$ belonging to the boundary $\partial W^u(v_-)$ of the unstable manifold $W^u(v_-)$; see \cite{he85}.

More recently, we have pursued a more explicitly geometric approach.
Let us consider \emph{finite~regular} CW-\emph{complexes}
	\begin{equation}
	\mathcal{C} = \bigcup\limits_{v\in \mathcal{E}} c_v\,,
	\label{eq:1.7}
	\end{equation}
i.e. finite disjoint unions of \emph{cell~interiors} $c_v$ with additional gluing properties.
We think of the labels $v\in \mathcal{E}$ as \emph{barycenter} elements of $c_v$.
For CW-complexes we require the closures $\overline{c}_v$ in $\mathcal{C}$ to be the continuous images of closed unit balls $\overline{B}_v$ under \emph{characteristic maps}.
We call $\mathrm{dim}\,\overline{B}_v$ the dimension of the (open) cell $c_v$. 
For positive dimensions of $\overline{B}_v$ we require $c_v$ to be the homeomorphic images of the interiors $B_v$. 
For dimension zero we write $B_v := \overline{B}_v$ so that any 0-cell $c_v= B_v$ is just a point.
The \emph{m-skeleton} $\mathcal{C}^m$ of $\mathcal{C}$ consists of all cells of dimension at most $m$.
We require $\partial c_v := \overline{c}_v \setminus c_v \subseteq \mathcal{C}^{m-1}$ for any $m$-cell $c_v$.
Thus, the boundary $(m-1)$-sphere $S_v := \partial B_v = \overline{B}_v \setminus B_v$ of any $m$-ball  $B_v$, $m>0$, maps into the $(m-1)$-skeleton,
	\begin{equation}
	\partial B_v \quad \longrightarrow \quad \partial c_v \subseteq \mathcal{C}^{m-1}\,,
	\label{eq:1.8}
	\end{equation}
for the $m$-cell $c_v$, by restriction of the characteristic map.
The continuous map \eqref{eq:1.8} is called the \emph{attaching} (or \emph{gluing}) \emph{map}.
For \emph{regular} CW-complexes, in contrast, the characteristic maps $ \overline{B}_v  \rightarrow  \overline{c}_v $ are required to be homeomorphisms, up to and including the \emph{attaching} (or \emph{gluing}) \emph{homeomorphism}. 
We require the $(m-1)$-sphere $\partial{c_v}$  to be a sub-complex of $\mathcal{C}^{m-1}$. 
See \cite{frpi90} for some further background on this terminology.

The disjoint dynamic decomposition
	\begin{equation}
	\mathcal{A}_f = \bigcup\limits_{v \in \mathcal{E}_f} W^u(v) =: \mathcal{C}_f
	\label{eq:1.9}
	\end{equation}
of the global attractor $\mathcal{A}_f$ into unstable manifolds $W^u$ of equilibria $v$ is called the \emph{Thom-Smale complex} or \emph{dynamic complex}; see for example \cite{fr79, bo88, bizh92}.
In our Sturm setting \eqref{eq:1.1} with hyperbolic equilibria $v \in \mathcal{E}_f$, the Thom-Smale complex is a finite regular CW-complex.
The open cells $c_v$ are the unstable manifolds $W^ u (v)$ of the equilibria $v \in \mathcal{E}_f$.
The proof is closely related to the Schoenflies result of \cite{firo13}; see \cite{firo14} for a summary.

We can therefore define the \emph{Sturm~complex} $\mathcal{C}_f$ to be the regular Thom-Smale complex $\mathcal{C}_f$
of the Sturm global attractor $\mathcal{A}_f$, provided all equilibria $v \in \mathcal{E}_f$ are hyperbolic.
Again we call the equilibrium $v \in \mathcal{E}_f$ the \emph{barycenter} of the cell $c_v=W^u(v)$.
The dimension $i(v)$ of $c_v$ is called the \emph{Morse index} of $v$.
A planar Sturm complex $\mathcal{C}_f$, for example, is the regular Thom-Smale complex of a planar $\mathcal{A}_f$, i.e. of a Sturm global attractor for which all equilibria $v \in \mathcal{E}_f$ have Morse indices $i(v) \leq 2$.
See fig.~\ref{fig:1.0}(b) for the Sturm complex $\mathcal{C}_f$ of the Sturm global attractor $\mathcal{A}_f$ sketched in fig.~\ref{fig:1.0}(a).

Our main result, in the first two parts \cite{firo3d-1, firo3d-2} of the present trilogy, was a geometric and combinatorial characterization of those global Sturm attractors, which are the closure
	\begin{equation}
	\mathcal{A}_f = \text{clos } W^u (\mathcal{O})
	\label{eq:1.10}
	\end{equation}
of the unstable manifold $W^u$ of a single equilibrium $v = \mathcal{O}$ with Morse index $i(\mathcal{O}) =3$.
We call such an $\mathcal{A}_f$ a 3-\emph{ball Sturm attractor}.
Recall that we assume all equilibria $v_1, \ldots, v_N$ to be hyperbolic:
\emph{sinks} have Morse index $i=0$, \emph{saddles} have $i=1$, and \emph{sources}  $i=2$.
This terminology also applies when viewed within the flow-invariant and attracting boundary 2-sphere
	\begin{equation}
	\Sigma^2 = \partial W^u(\mathcal{O}):= \left(
	\text{clos } W^u(\mathcal{O})\right) \smallsetminus
	W^u (\mathcal{O})\,.
	\label{eq:1.11}
	\end{equation}
Correspondingly we call the associated cells $c_v = W^u(v)$ of the Thom-Smale cell complex, or of any regular cell complex, \emph{vertices}, \emph{edges}, and {\emph{faces}.
The graph of vertices and edges, for example, defines the 1-skeleton $\mathcal{C}^1$ of the 3-ball cell complex $\mathcal{C} = \bigcup_v \, c_v$.

Any abstractly prescribed regular 3-ball complex $\mathcal{C}$ possesses a realization as the Sturm dynamic complex
	\begin{equation}
	\mathcal{C}_f = \mathcal{C}
	\label{eq:cfc}
	\end{equation}
of a suitably chosen nonlinearity $f$ with Sturm 3-ball $\mathcal{A}_f$; see \cite{firo14}.
However, there may be many meander permutations $\sigma_f \neq \sigma_g$ which realize the same complex,
	\begin{equation}
	\mathcal{C}_f = \mathcal{C} = \mathcal{C}_g\,,
	\label{eq:cfcg}
	\end{equation}
up to homeomorphisms which preserve the cell structure.
In section~\ref{sec2} we review \emph{trivial equivalences} as a (trivial) cause:
$f,\ g$, and hence $\sigma_f,\ \sigma_g$, may be related by transformations $x \mapsto 1-x$ or $u \mapsto -u$.
But there are much more subtle causes for the phenomenon \eqref{eq:cfcg}, where even the cycle lengths of the Sturm permutations $\sigma_f,\ \sigma_g$ disagree.
The examples of sections~\ref{sec5} and~\ref{sec6} will realize Sturm 3-ball attractors $\mathcal{A}_f= \mathcal{C}_f$ with prescribed 3-ball complex $\mathcal{C}$, as in \eqref{eq:cfc}, and will provide lists of all realizing permutations $\sigma_f$, in the sense of \eqref{eq:cfcg}.
The comparatively modest example of fig.~\ref{fig:1.0} possesses nine equilibria. Depending on their Morse index, they serve as the barycenters of two 0-cells, three 1-cells, three 2-cells, and one 3-cell.
The example will reappear as case 2, also labeled $9.3^2$, in figs.~\ref{fig:6.3}, \ref{fig:6.4} and in table \ref{tbl:6.5} below.

Our results are crucially based on the disjoint \emph{signed hemisphere decomposition}
	\begin{equation}
	\partial W^u(v) =
	\bigcup\limits_{0\leq j< i(v)}^\centerdot
	\Sigma_\pm^j(v)
	\label{eq:1.9a}
	\end{equation}
of the topological boundary $\partial W^u= \partial c_v = \overline{c}_v \smallsetminus c_v$ of the unstable manifold $W^u(v)=c_v$, for any equilibrium $v$.
As in \cite[(1.19)]{firo3d-2} we define the hemispheres by their Thom-Smale cell decompositions
	 \begin{equation}
	 \Sigma_ \pm^j(v) :=
	 \bigcup\limits_{w\in \mathcal{E}_\pm^j(v)}^\centerdot
	 W^u(w)
	 \label{eq:1.9b}
	 \end{equation}
with the equilibrium sets
	\begin{equation}
	\mathcal{E}_\pm^j(v) := \lbrace w\in \mathcal{E}_f\,|\,z(w-v)=j_\pm \quad \text{and}
	\quad v\leadsto w\rbrace\,,
	\label{eq:1.9c}
	\end{equation}
for $0\leq j<i(v)$.
Equivalently, we may define the hemisphere decompositions, inductively, via the topological boundary $j$-spheres
	\begin{equation}
	\Sigma^j(v):=
	\bigcup\limits_{0\leq k<j}^\centerdot \Sigma_\pm^k(v)
	\label{eq:1.9d}
	\end{equation}
of the fast unstable manifolds $W^{j+1}(v)$.
Here $W^{j+1}(v)$ is tangent to the eigenvectors $\varphi_0, \ldots ,\varphi_j$ of the first $j+1$ unstable eigenvalues $\lambda_0 > \ldots > \lambda_j >0$ of the linearization at the equilibrium $v$.
See \cite{firo3d-1} for details.

For 3-ball Sturm attractors, the signed hemisphere decomposition \eqref{eq:1.9a} reads
	\begin{equation}
	\Sigma^2 = \partial W^u (\mathcal{O})=
	\bigcup\limits_{j=0}^2 \Sigma_\pm^j\,.
	\label{eq:1.13}
	\end{equation}
at $v=\mathcal{O}$ with Morse index $i(\mathcal{O})=3$.
See \eqref{eq:1.10}, \eqref{eq:1.11}.
Here $\Sigma_\pm^0 = \lbrace \mathbf{N}, \mathbf{S}\rbrace$ is the boundary of the one-dimensional fastest unstable manifold $W^1 = W^1(\mathcal{O})$, tangent to the positive eigenfunction $\varphi_0$ of the largest eigenvalue $\lambda_0$ at $\mathcal{O}$.
Indeed, solutions $t \mapsto u(t,x)$ in $W^1$ are monotone in $t$, for any fixed $x$.
Accordingly
	\begin{equation}
	z(\mathbf{N}- \mathcal{O}) = 0_-\,, 
	\quad z(\mathbf{S}- \mathcal{O}) = 0_+\,,
	\label{eq:1.14}
	\end{equation}
i.e. $\mathbf{N} < \mathcal{O} < \mathbf{S}$ for all $0\leq x\leq 1$.
The \emph{poles} $\mathbf{N},\mathbf{S}$ split the circle boundary $\Sigma^1 = \partial W^2 (\mathcal{O})$ of the 2-dimensional fast unstable manifold into the two \emph{meridian} half-circles $\Sigma_\pm^1$.
The circle $\Sigma^1$, in turn, splits the boundary sphere $\Sigma^2 = \partial W^u(\mathcal{O})$ of the whole unstable manifold $W^u$ of $\mathcal{O}$ into the two hemispheres $\Sigma_\pm^2$.
See fig.~\ref{fig:1.0}(b), for example.

\begin{figure}[t!]
\centering \includegraphics[width=\textwidth]{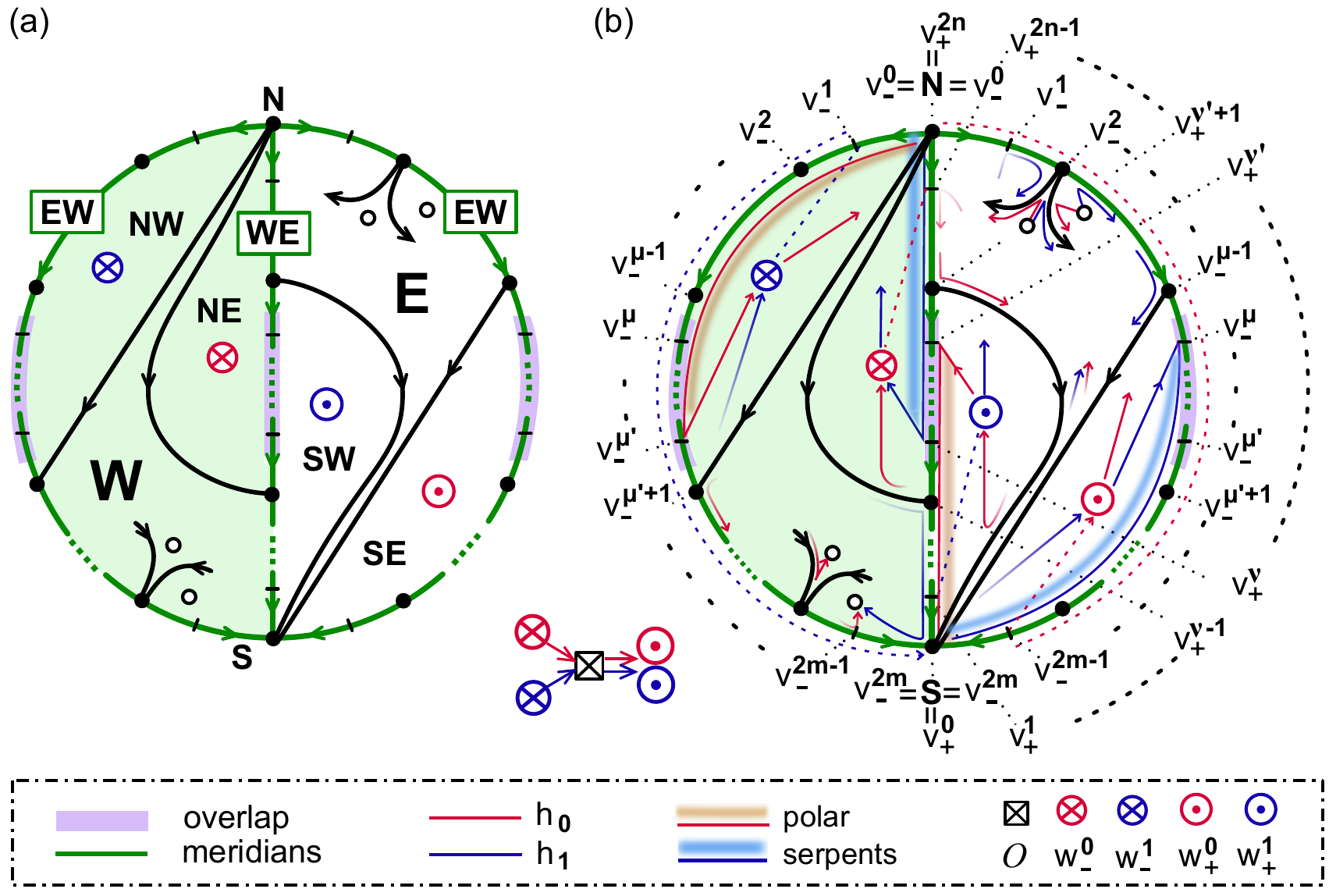}
\caption{\emph{
The 3-cell template, generalizing fig.~\ref{fig:1.0}(b). Shown is the 2-sphere boundary of the single 3-cell $c_\mathcal{O}$ with poles $\mathbf{N}$, $\mathbf{S}$, hemispheres $\mathbf{W}$ (green), $\mathbf{E}$, and separating meridians $\mathbf{EW}$, $\mathbf{WE}$ (both green).
The right and the left boundaries denote the same $\mathbf{EW}$ meridian and have to be identified.
Dots $\bullet$ are sinks, and small circles $\circ$ are sources.
(a) Note the hemisphere decomposition~(ii), the edge orientations~(iii) at meridian boundaries, and the meridian overlaps~(iv) of the $\mathbf{N}$-adjacent meridian faces $\otimes = w_-^\iota$ with their $\mathbf{S}$-adjacent counterparts $\odot =w_+^\iota$; see also \eqref{eq:1.24a}.
(b) The SZS-pair $(h_0,h_1)$ in a 3-cell template $\mathcal{C}$, with poles $\mathbf{N}, \mathbf{S}$, hemispheres $\mathbf{W}, \mathbf{E}$  and meridians $\mathbf{EW}, \mathbf{WE}$.
Dashed lines indicate the $h_\iota$-ordering of vertices in the closed hemisphere, when $\mathcal{O}$ and the other hemisphere are ignored, according to definition~\ref{def:2.4}(i).
The actual paths $h_\iota$ tunnel, from $w_ -^\iota \in  \mathbf{W}$ through the 3-cell barycenter $\mathcal{O}$, and re-emerge at $w_+^\iota \in  \mathbf{E}$, respectively.
Note the boundary overlap of the faces $\mathbf{NW}, \mathbf{SE}$ of $w_-^1, w_+^0$ from $v_-^{\mu-1}$ to $v_-^{\mu ' +1}$ on the $\mathbf{EW}$ meridian.
Similarly, the boundaries of the faces $\mathbf{NE}, \mathbf{SW}$ of $w_-^0, w_+^1$ overlap from $v_+^{\nu -1}$ to $v_+^{\nu ' +1}$ along $\mathbf{WE}$.
For many additional examples see sections~\ref{sec5} to \ref{sec7}. See also fig.~\ref{fig:2.3}.
}}
\label{fig:1.1}
\end{figure}

For the geometric characterization of 3-ball Sturm attractors $\mathcal{A}_f$ in \eqref{eq:1.10}, by their Thom-Smale dynamic complexes \eqref{eq:1.9}, we now drop all Sturmian PDE interpretations.
Instead we define 3-cell templates, abstractly, in the class of regular cell complexes $\mathcal{C}$ and without any reference to PDE or dynamics terminology.
See fig.~\ref{fig:1.1} for an illustration of the general case.

\begin{defi}\label{def:1.1}
A finite regular cell complex $\mathcal{C} = \bigcup_{v \in \mathcal{E}} c_v$ is called a \emph{3-cell template} if the following four conditions all hold.
\begin{itemize}
\item[(i)] $\mathcal{C} = \text{clos } c_{\mathcal{O}}= S^2 \,\dot{\cup}\, c_{\mathcal{O}}$ is the closure of a single 3-cell $c_{\mathcal{O}}$.
\item[(ii)] The 1-skeleton $\mathcal{C}^1$ of $\mathcal{C}$ possesses a \emph{bipolar orientation} from a pole vertex $\mathbf{N}$ (North) to a pole vertex $\mathbf{S}$ (South), with two disjoint directed \emph{meridian paths} $\mathbf{WE}$ and $\mathbf{EW}$ from $\mathbf{N}$ to $\mathbf{S}$.
The meridians decompose the boundary sphere $S^2$ into remaining \emph{hemisphere} components $\mathbf{W}$ (West) and $\mathbf{E}$ (East).
\item[(iii)] Edges are directed towards the meridians, in $\mathbf{W}$, and away from the meridians, in $\mathbf{E}$, at end points on the meridians other than the poles $\mathbf{N}$, $\mathbf{S}$.
\item[(iv)] Let $\mathbf{NE}$, $\mathbf{SW}$ denote the unique faces in $\mathbf{W}$, $\mathbf{E}$, respectively, which contain the first, last edge of the meridian $\mathbf{WE}$ in their boundary.
Then the boundaries of $\mathbf{NE}$ and $\mathbf{SW}$ overlap in at least one shared edge of the meridian $\mathbf{WE}$.

Similarly, let $\mathbf{NW}$, $\mathbf{SE}$ denote the unique faces in $\mathbf{W}$, $\mathbf{E}$, adjacent to the first, last edge of the other meridian $\mathbf{EW}$, respectively.
Then their boundaries overlap in at least one shared edge of $\mathbf{EW}$.
\end{itemize}
\end{defi}

We recall here that an edge orientation of the 1-skeleton $\mathcal{C}^1$ is called bipolar if it is without directed cycles, and with a single ``source'' vertex $\mathbf{N}$ and a single ``sink'' vertex $\mathbf{S}$ on the boundary of $\mathcal{C}$.
Here ``source'' and ``sink'' are understood, not dynamically but, with respect to edge direction.
To avoid any confusion with dynamic $i=0$ sinks and $i=2$ sources, below, we call $\mathbf{N}$ and $\mathbf{S}$ the North and South pole, respectively.
Again we refer to fig.~\ref{fig:1.0} for an illustrative example.

The hemisphere translation table between $\mathcal{A}_f$ and $\mathcal{C}_f= \mathcal{C}$ is, of course, the following:
	\begin{equation}
	\begin{aligned}
	(\Sigma_-^0, \Sigma_+^0) \quad &\mapsto \quad (\mathbf{N}, \mathbf{S})\\
	(\Sigma_-^1, \Sigma_+^1) \quad &\mapsto \quad 
	(\mathbf{EW}, 		\mathbf{WE})\\
	(\Sigma_-^2, \Sigma_+^2) \quad &\mapsto \quad (\mathbf{W}, \mathbf{E})
	\end{aligned}
	\label{eq:1.17}
	\end{equation}
Here $\Sigma_\pm^j$ abbreviates $\Sigma_\pm^j(\mathcal{O})$.

\begin{thm}\label{thm:1.2}
\cite[theorems~1.2 and~2.6]{firo3d-2}.
A finite regular cell complex $\mathcal{C}$ coincides with the Thom-Smale dynamic complex $c_v= W^u(v) \in \mathcal{C}_f$ of a 3-ball Sturm attractor $\mathcal{A}_f$ if, and only if, $\mathcal{C}$ is a 3-cell template, with the above translation of the hemisphere decomposition of $\partial W^u(\mathcal{O})$.
\end{thm}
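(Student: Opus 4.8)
The assertion is an equivalence, so I would prove necessity and sufficiency separately: the former reads the template axioms off the Sturmian geometry, the latter reconstructs a nonlinearity from an abstract template.

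\emph{Necessity.} Suppose $\mathcal{A}_f = \text{clos}\, W^u(\mathcal{O})$ is a 3-ball Sturm attractor. The first step is to invoke the Schoenflies-type result of \cite{firo13} to guarantee that the Thom-Smale decomposition \eqref{eq:1.9} is a finite regular CW complex whose underlying space is a closed 3-ball with boundary 2-sphere $\Sigma^2 = \partial W^u(\mathcal{O})$; this yields condition~(i) of Definition~\ref{def:1.1}. Next I would read off the bipolar orientation of condition~(ii) from the zero number. The poles $\mathbf{N},\mathbf{S}$ are the endpoints $\Sigma^0_\pm$ of the fastest unstable manifold $W^1(\mathcal{O})$, characterized by \eqref{eq:1.14}; orienting the edges of $\mathcal{C}^1$ by the Sturmian order of their bounding sinks and invoking the gradient structure to preclude directed cycles yields an acyclic orientation, while the extremal ordering $\mathbf{N}<\mathcal{O}<\mathbf{S}$ forces $\mathbf{N},\mathbf{S}$ to be its unique source and sink. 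The meridians $\mathbf{EW},\mathbf{WE}$ are then the two halves $\Sigma^1_-,\Sigma^1_+$ of the circle $\partial W^2(\mathcal{O})$, and the hemispheres $\mathbf{W},\mathbf{E}$ are $\Sigma^2_-,\Sigma^2_+$, exactly the translation table \eqref{eq:1.17}.

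The technical core of necessity is conditions~(iii) and~(iv). For~(iii) I would track the signed zero numbers \eqref{eq:1.9c} across the hemisphere boundaries: an edge terminating at a non-polar meridian vertex lies in one hemisphere $\Sigma^2_\pm$, and its orientation relative to the meridian is dictated by whether $z(\,\cdot-\mathcal{O})$ drops through the meridian value $1_\pm$ along the flow, which the Sturm monotonicity \eqref{eq:1.4} forces to occur consistently in the prescribed West-inward, East-outward directions. For~(iv) I would use the combinatorial heteroclinic connection rules of \cite{firo96}: the overlap of the boundaries of the $\mathbf{N}$-adjacent and $\mathbf{S}$-adjacent meridian faces in a shared meridian edge is precisely the statement that the extreme index-$2$ equilibria adjacent to the two poles, ordered by $h_0$ at $x=0$ and by $h_1$ at $x=1$, have unstable manifolds sharing a common bounding saddle, i.e.\ the overlapping reach of the polar serpents in the shooting meander.

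\emph{Sufficiency.} Conversely, let $\mathcal{C}$ be an abstract 3-cell template. The plan is to manufacture a dissipative Morse meander and then appeal to realization. From the bipolar orientation and hemisphere decomposition of conditions~(ii)--(iv) I would extract the Hamiltonian SZS-pair $(h_0,h_1)$ of fig.~\ref{fig:1.1}(b), each path traversing all vertices of $\mathcal{C}$ and tunnelling through the barycenter $\mathcal{O}$; these define two boundary orders as in \eqref{eq:1.5} and hence a candidate permutation $\sigma := h_0^{-1}\circ h_1$, cf.\ \eqref{eq:1.6}. The crucial verification is that the template axioms translate exactly into $\sigma$ being a dissipative Morse meander in the sense of \cite{firo99}: the orientation and edge rules~(ii)--(iii) yield the meander and Morse structure together with the dissipative fixing of the poles, while the overlap condition~(iv) guarantees the nested serpent structure the shooting curve must exhibit. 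Granting this, the realization direction of the characterization \cite{firo99} supplies a nonlinearity $f$ with $\sigma_f=\sigma$; a final consistency check—recovering the cell attachments of $\mathcal{C}_f$ from $\sigma_f$ via the connection algorithm of \cite{firo96} and matching them against $\mathcal{C}$ through the translation \eqref{eq:1.17}—then shows $\mathcal{C}_f=\mathcal{C}$ with the correct signed hemisphere decomposition.

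The main obstacle, in both directions, is condition~(iv) together with its meander counterpart: verifying that the \emph{overlapping reach} of the two polar serpents is simultaneously a necessary consequence of the Sturmian zero-number bookkeeping and a sufficient combinatorial guarantee that the constructed permutation actually shoots as a genuine meander. The edge-orientation rule~(iii) and the bipolarity~(ii) are comparatively direct bookkeeping of $z(\,\cdot-\mathcal{O})$, but the face overlap forces a delicate interleaving of the $\mathbf{N}$- and $\mathbf{S}$-adjacent faces that must be matched precisely on both sides of the equivalence.
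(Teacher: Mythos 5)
Your proposal is correct and follows essentially the same route as the paper: necessity by reading conditions (i)--(iv) of definition~\ref{def:1.1} off the Schoenflies/zero-number structure of the Sturm 3-ball, and sufficiency by constructing the SZS-pair $(h_0,h_1)$, verifying that $\sigma=h_0^{-1}\circ h_1$ is a dissipative Morse meander, realizing $\sigma=\sigma_f$ via \cite{firo96, firo99}, and identifying $\mathcal{C}_f=\mathcal{C}$ with the hemisphere translation \eqref{eq:1.17}. This is precisely the chain the paper delegates to \cite[theorems~4.1, 5.2]{firo3d-1} and \cite[theorems~3.1, 5.1, 2.6]{firo3d-2}, including your identification of the overlap condition~(iv) and its polar-serpent counterpart as the hard technical core.
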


In \cite[theorem~4.1]{firo3d-1} we proved that the dynamic complex $\mathcal{C}$:= $\mathcal{C}_f$ of a Sturm 3-ball $\mathcal{A}_f$ indeed satisfies properties (i)--(iv) of definition~\ref{def:1.1} on a 3-cell template.
In our example of fig.~\ref{fig:1.0} this simply means the passage (a) $\Rightarrow$ (b).
In general, the 3-cell property~(i) of $c_{\mathcal{O}} = W^u(\mathcal{O})$ is obviously satisfied.
The bipolar orientation~(ii) of the edges $c_v$ of the 1-skeleton, alias the one-dimensional unstable manifolds $c_v = W^u(v)$ of $i(v)=1$ saddles $v$, is simply the strict monotone ordering from the lowest equilibrium vertex $\Sigma_-^0 (v)$ in the closure $\bar{c}_v$ to the highest equilibrium vertex $\Sigma_+^0(v)$ in $\bar{c}_v$. 
The ordering is uniform for $0 \leq x \leq 1$, and holds at $x \in \{0,1\}$, in particular.
The meridian cycle is the boundary $\Sigma^1$ of the two-dimensional fast unstable manifold.
Properties (iii) and (iv) are far less obvious, at first sight.

The proof of the converse passage, say from fig.~\ref{fig:1.0}(b) to fig.~\ref{fig:1.0}(a), requires the design of a 3-ball Sturm attractor $\mathcal{A}_f$ with a prescribed 3-cell template $\mathcal{C}_f=\mathcal{C}$ for the signed hemisphere decomposition \eqref{eq:1.17}.
This has been achieved via the notion of a 3-meander template $\mathcal{M}, \sigma$ which we explain below.
Suffice it here to say that we introduced a construction of a suitable SZS-pair of Hamiltonian paths
	\begin{equation}
	h_\iota :\quad
	\lbrace 1, \ldots , N\rbrace \longrightarrow \mathcal{E}\,,
	\label{eq:1.19a}
	\end{equation}
for $\iota \in \{0,1\}$, i.e. a pair of bijections onto the barycentric vertex set $v\in \mathcal{E}$ of the given 3-cell template $\mathcal{C}= (c_v)_{v \in \mathcal{E}}$.
In fact we constructed the abstract paths $h_\iota$ in $\mathcal{C}$, for $\iota \in \{0,1\}$, by recipe or decree ex cathedra, such that the abstract permutation
	\begin{equation}
	\sigma:= h_0^{-1}\circ h_1
	\label{eq:1.19b}
	\end{equation}
is a dissipative Morse meander and hence, by \cite{firo96}, a Sturm permutation $\sigma=\sigma_f$ for some concrete nonlinearity $f$.
See fig.~\ref{fig:1.0}(b) for an example of an SZS-pair $(h_0,h_1)$.

More precisely, \cite[theorem~5.2]{firo3d-1} showed that the construction \eqref{eq:1.19b} of $\sigma$, ex cathedra, results in a 3-meander template, for any prescribed 3-cell template $\mathcal{C}$.
In the example of fig.~\ref{fig:1.0} this amounts to the passage (b) $\Rightarrow$ (c). 
In \cite[theorem~3.1]{firo3d-2} we showed that the resulting permutation $\sigma$ is a dissipative Morse meander, and hence is a Sturm permutation $\sigma=\sigma_f$, for some suitable nonlinearities $f$ with Sturm attractor $\mathcal{A}_f$.
In \cite[theorem~5.1]{firo3d-2} we showed that $\mathcal{A}_f$, thus constructed, is indeed a Sturm 3-ball.
In the example of fig.~\ref{fig:1.0} this amounts to the passage (c) $\Rightarrow$ (a). 
In \cite[theorem~2.6]{firo3d-2}, finally, we showed that the Thom-Smale dynamic Sturm complex $\mathcal{C}_f$ of $\mathcal{A}_f$ coincides with the prescribed 3-cell template $\mathcal{C}$, i.e. $\mathcal{C}_f = \mathcal{C}$, by a cell homeomorphism which, in addition, preserves the signed hemisphere translation table \eqref{eq:1.17}.
In particular the 3-cell template $\mathcal{C}=\mathcal{C}_f$ determines the Sturm permutation $\sigma=\sigma_f$ uniquely \cite[theorem~2.7]{firo3d-2}.
In the example of fig.~\ref{fig:1.0} this amounts to the passage (a) $\Rightarrow$ (b), and culminates in the equivalence of all three descriptions (a), (b), (c) of Sturm global attractors.

It remains to recall the two main concepts mentioned in the above proof of theorem~\ref{thm:1.2}:
meanders $\mathcal{M}$ and SZS-pairs $(h_0,h_1)$ of Hamiltonian paths in $\mathcal{C}$.
See fig.~\ref{fig:1.0}(b),(c) for illustration.

Abstractly, a \emph{meander} is an oriented planar $C^1$ Jordan curve $\mathcal{M}$ which crosses a positively oriented horizontal axis at finitely many points.
The curve $\mathcal{M}$ is assumed to run from Southwest to Northeast, asymptotically, and all $N$ crossings are assumed to be transverse. See \cite{ar88, arvi89} for this original notion. For a recent algebraically minded monograph and beautiful survey on meanders see \cite{ka17}.

Note $N$ is odd in our setting.
Enumerating the $N$ crossing points $v \in \mathcal{E}$, by $h_0$ along the meander $\mathcal{M}$ and by $h_1$ along the horizontal axis, respectively, we obtain two labeling bijections \eqref{eq:1.19a}.
We define the \emph{meander permutation} $\sigma \in S_N$ by \eqref{eq:1.19b}.
We call the meander $\mathcal{M}$ \emph{dissipative} if
	\begin{equation}
	\sigma(1) =1, \quad \sigma(N) =N
	\label{eq:1.20}
	\end{equation}
are fixed under $\sigma$.

For $\mathcal{M}$-adjacent crossings $v=h_0(j)$, $\Tilde{v}= h_0(j+1)$ we recursively define \emph{Morse numbers} $i_{\Tilde{v}}$, $i_v$ by
	\begin{equation}
	\begin{aligned}
	i_{h_0(1)} &:= \quad i_{h_0(N)} := 0\,,\\
	i_{h_0(j+1)} &:= \quad i_{h_0(j)}+(-1)^{j+1}\,
	\text{sign} (\sigma^{-1}(j+1)-\sigma^{-1}(j))\,.
	\end{aligned}
	\label{eq:1.22a}
	\end{equation}
Equivalently, by recursion along $h_1$:
	\begin{equation}
	\begin{aligned}
	i_{h_1(1)} &:= \quad i_{h_1(N)} := 0\,,\\
	i_{h_1(j+1)} &:= \quad i_{h_1(j)}+(-1)^{j+1}\,
	\text{sign} (\sigma(j+1)-\sigma(j))\,.
	\end{aligned}
	\label{eq:1.22b}
	\end{equation}
Note how the enumeration of intersections $v\in \mathcal{E}$ by $h_\iota$: $\lbrace 1, \ldots , N\rbrace \rightarrow \mathcal{E}$ depends on $h_\iota$, of course.
The Morse numbers $i_v$, however, only depend on the Sturm permutation $\sigma$ which defines the meander $\mathcal{M}$.
We call the meander $\mathcal{M}$ \emph{Morse}, if all $i_v$ are nonnegative:
	\begin{equation}
	i_v \geq 0\,.
	\label{eq:1.23a}
	\end{equation}

We call $\mathcal{M}$ \emph{Sturm meander}, if $\mathcal{M}$ is a dissipative Morse meander; see \cite{firo96}. 
Conversely, given any permutation $\sigma \in S_N$, we label $N$ crossings along the axis in the order of $\sigma$. Define an associated curve $\mathcal{M}$ of arcs over the horizontal axis which switches sides at the labels $\{1,\ldots,N\}$, in successive order. This fixes $h_0=\mathrm{id}$ and $h_1=\sigma$.
A \emph{Sturm permutation} $\sigma$ is a permutation such that the associated curve $\mathcal{M}$ is a Sturm meander.
The main paradigm of \cite{firo96} is the equivalence of Sturm meanders $\mathcal{M}$ with shooting curves $\mathcal{M}_f$ of the Neumann ODE problem \eqref{eq:1.3}.
In fact, the Neumann shooting curve is a Sturm meander, for any dissipative nonlinearity $f$ with hyperbolic equilibria.
Conversely, for any permutation $\sigma$ of a Sturm meander $\mathcal{M}$ there exist dissipative $f$ with hyperbolic equilibria such that $\sigma = \sigma_f$ is the Sturm permutation of $f$.
In that case, the intersections $v$ of the meander $\mathcal{M}_f$ with the horizontal $v$-axis are the boundary values of the equilibria $v \in \mathcal{E}_f$ at $x=1$, and the Morse number $i_v$ are the Morse indices $i(v)$:
	\begin{equation}
	i_v = \dim c_v = \dim W^u(v) = i(v) \geq 0\,.
	\label{eq:1.23d}
	\end{equation}
This allows us to identify
	\begin{align}
	\mathcal{E}_f &= \mathcal{E}\,;
	\label{eq:1.23b}\\
	h_\iota^f &= h_\iota\,;\label{eq:1.23c}
	\end{align}
For that reason we have used closely related notation to describe either case.

In particular, \eqref{eq:1.23c} justifies the terminology of \emph{sinks} $i_v =0$, \emph{saddles} $i_v =1$, and \emph{sources} $i_v=2$ for abstract Sturm meanders.
We insist, however, that our above definition~\eqref{eq:1.20}--\eqref{eq:1.22b} is completely abstract and independent of this ODE/PDE interpretation.

We return to abstract Sturm meanders $\mathcal{M}$ as in \eqref{eq:1.20}--\eqref{eq:1.22b}.
For example, consider the case $i_{\mathcal{O}} =3$ of a single intersection $v= \mathcal{O}$ with Morse number $3$.
Suppose $i_v \leq 2$ for all other Morse numbers. Then \eqref{eq:1.22a} implies $i=2$ for the two $h_0$-\emph{neighbors} $h_0(h_0^{-1} (\mathcal{O})\pm1)$ of $\mathcal{O}$ along the meander $\mathcal{M}$.
In other words, both these neighbors are sources.
The same statement holds true for the two $h_1$-\emph{neighbors} $h_1(h_1^{-1}(\mathcal{O})\pm 1)$ of $\mathcal{O}$ along the horizontal axis.
To fix notation, we denote these $h_\iota$-neighbors by
	\begin{equation}
	w_\pm^\iota:= h_\iota (h_\iota^{-1} (\mathcal{O}) \pm 1)\,,
	\label{eq:1.24a}
	\end{equation}
for $\iota \in \{0,1\}$.
The $h_\iota$-\emph{extreme sources} are the first and last source intersections $v$ of the meander $\mathcal{M}$ with the horizontal axis, in the order of $h_\iota$.

Reminiscent of cell template terminology, we call the extreme sinks $\mathbf{N} = h_0(1) = h_1(1)$ and $\mathbf{S} = h_0(N) = h_1(N)$ the (North and South) \emph{poles} of the Sturm meander $\mathcal{M}$.
A \emph{polar} $h_\iota$-\emph{serpent}, for $\iota \in \{0,1\}$, is a set of $v =h_\iota (j) \in \mathcal{E}$, for a maximal interval of integers $j$, which contains a pole, $\mathbf{N}$ or $\mathbf{S}$, and satisfies
	\begin{equation}
	i_{h_\iota(j)} \in \lbrace 0,1\rbrace
	\label{eq:1.25}
	\end{equation}
for all $j$.
To visualize serpents we often include the meander or axis path joining $v$ in the serpent.
See figs.~\ref{fig:1.0}(c), \ref{fig:1.2} and sections~\ref{sec6}, \ref{sec7} for examples.
The \emph{polar arc}, i.e. the arc of the path $h_\iota$ adjacent to its pole, is part of any polar $h_\iota$-serpent, due to adjacency of the Morse indices of $h_\iota$-adjacent equilibria.
We call $\mathbf{N}$-polar serpents and $\mathbf{S}$-polar serpents \emph{anti-polar} to each other.
An \emph{overlap} of anti-polar serpents simply indicates a nonempty intersection.
We call a polar $h_\iota$-serpent \emph{full}, if it extends all the way to the saddle which is $h_{1-\iota}$-adjacent to the opposite pole.
Full $h_\iota$-serpents always overlap with their anti-polar $h_{1-\iota}$-serpent, of course, at least at that saddle.

\begin{figure}[t!]
\centering \includegraphics[width=\textwidth]{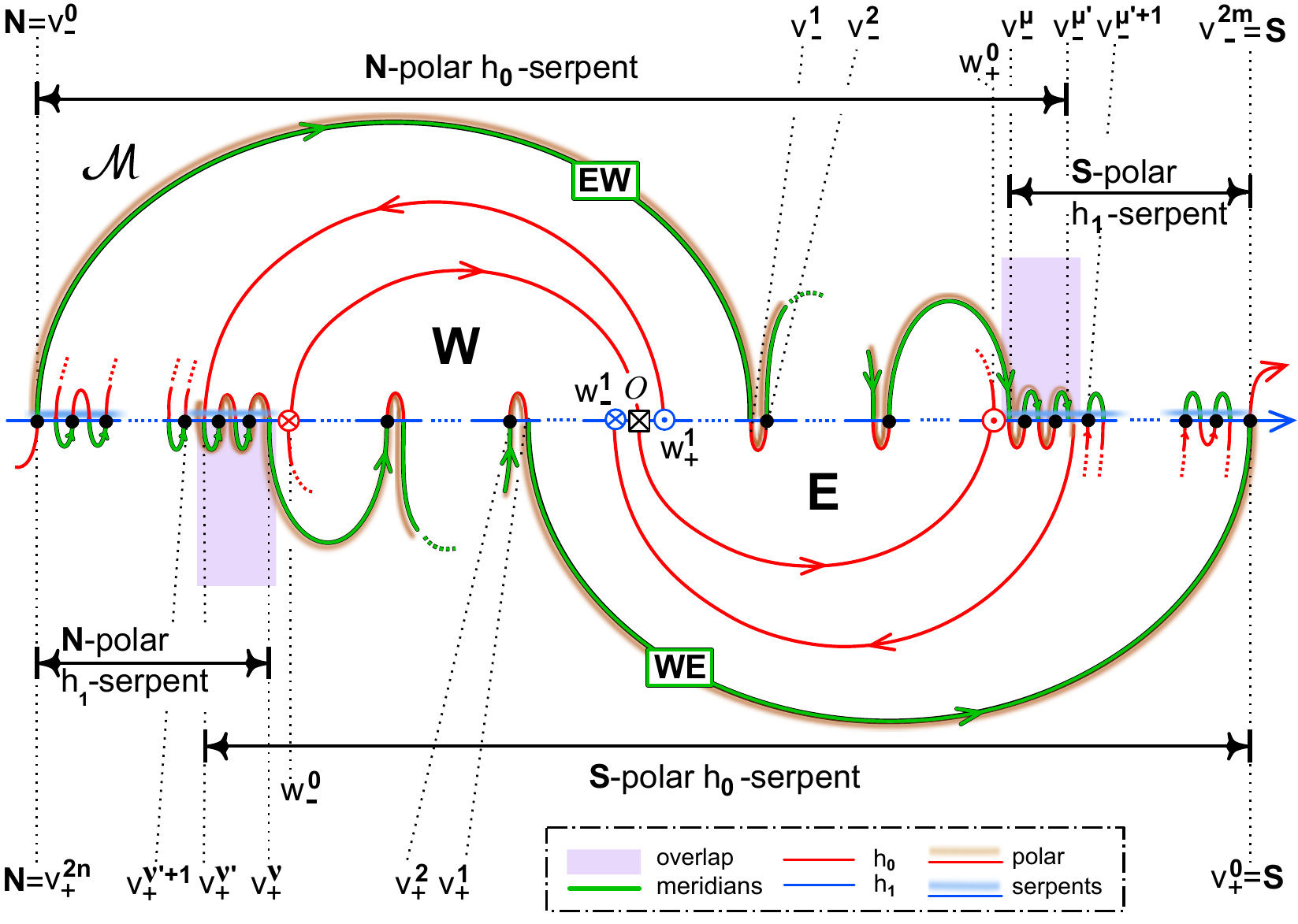}
\caption{\emph{
The 3-meander template.
Note the $\mathbf{N}$-polar $h_1$-serpent $\mathbf{N} = v_+^{2n} \ldots  v_+^\nu$ terminated at $v_+^\nu$ by the subsequent source $w_-^0$ which is, both, $h_1$-extreme minimal and the lower $h_0$-neighbor of $\mathcal{O}$.
This serpent overlaps the anti-polar, i.e. $\mathbf{S}$-polar, $h_0$-serpent $v_+^{\nu '} \ldots  v_+^\nu \ldots  v_+^0  = \mathbf{S}$, from $v_+^{\nu '}$ to $v_+^\nu$.
Similarly, the $\mathbf{N}$-polar $h_0$-serpent $\mathbf{N} = v_-^0 \ldots  v_-^{\mu '}$ overlaps the anti-polar, i.e. $\mathbf{S}$-polar, $h_1$-serpent $v_-^\mu  \ldots  v_-^{\mu '} \ldots  v_-^{2n} = \mathbf{S}$, from $v_-^\mu$ to $v_-^{\mu '}$.
The $h_1$-neighbors $w_\pm^1$ of $\mathcal{O}$ are  the $h_0$-extreme sources, by the two polar $h_0$-serpents.
Similarly, the $h_0$-neighbors $w_\pm^0$ of $\mathcal{O}$ define the $h_1$-extreme sources.
See also sections~\ref{sec6}, \ref{sec7} for specific examples.
}}
\label{fig:1.2}
\end{figure}

\begin{defi}\label{def:1.3}
An abstract Sturm meander $\mathcal{M}$ with intersections $v \in \mathcal{E}$ is called a \emph{3-meander template} if the following four conditions hold, for $\iota \in \{0,1\}$.

\begin{itemize}
\item[(i)] $\mathcal{M}$ possesses a single intersection $v = \mathcal{O}$ with Morse number $i_{\mathcal{O}} =3$, and  no other Morse number exceeds $2$.
\item[(ii)] Polar $h_\iota$-serpents overlap with their anti-polar $h_{1-\iota}$-serpents in at least one shared vertex.
\item[(iii)] The intersection $v=\mathcal{O}$ is located between the two intersection points, in the order of $h_{1-\iota}$, of the polar arc of any polar $h_\iota$-serpent.
\item[(iv)] The $h_\iota$-neighbors $w^\iota_\pm$ of $v=\mathcal{O}$ are the $i=2$ sources which terminate the polar $h_{1-\iota}$-serpents.
\end{itemize}
\end{defi}

See fig.~\ref{fig:1.2} for an illustration of 3-meander templates.
Property (iv), for example, asserts that the $h_\iota$-neighbor sources $w_\pm^\iota$ of $\mathcal{O}$ are the $h_{1-\iota}$-extreme sources, for $\iota \in \{0,1\}$.
See also the example of fig.~\ref{fig:1.0}(c).

The passage from 3-cell templates to 3-meander templates is based on a detailed construction of an SZS-pair $(h_0,h_1)$ of paths in the given 3-cell template.
The construction relies heavily on our previous trilogy \cite{firo09, firo08, firo10} on the planar case.
In section~\ref{sec2} we construct $h_0$ and $h_1$, separately, for each closed hemisphere $\mathbf{W}$ and $\mathbf{E}$.
Each closed hemisphere, by itself, will be viewed as a planar Sturm attractor in \cite{firo3d-2}.

The remaining paper is organized as follows.
In section~\ref{sec2} we recall the construction of the SZS-pair $(h_0,h_1)$ of Hamiltonian paths for any 3-cell template $\mathcal{C}$.
Section~\ref{sec3} comments on the effects of the trivial equivalences $x \mapsto 1-x$ and $u \mapsto -u$ on 3-cell templates and SZS-pairs.
In section~\ref{sec4} we discuss face lifts from certain planar disk complexes to 3-cell complexes via attachment of a Western hemisphere which consists of a single cell.
Duality, a useful tool in the analysis of planar Sturm attractors, is lifted to 3-balls in section~\ref{sec5}.
With these general preparations, and based on the results in our planar Sturm trilogy \cite{firo09, firo08, firo10}, we enumerate all 3-ball Sturm attractors with at most 13 equilibria, in section~\ref{sec6}.
Section~\ref{sec7} is devoted to the Platonic solids as Sturm global attractors.
We conclude, in section~\ref{sec8}, with the ``Snoopy burger'':
a regular cell complex $\mathcal{C}$ of two 3-cells and a total of only 9 equilibria, which cannot be realized as a Sturm dynamic complex $\mathcal{C}_f$.


\textbf{Acknowledgments.}
Extended mutually delightful hospitality by the authors is gratefully acknowledged.
Gustavo~Granja generously shared his deeply topological view point, precise references included.
Anna~Karnauhova has contributed all illustrations with great patience, ambition, and her inimitable artistic touch.
Original typesetting was accomplished by Ulrike~Geiger. This work
was partially supported by DFG/Germany through SFB 910 project A4, and by FCT/Portugal through project UID/MAT/04459/2013.


\section{Hamiltonian pairs in 3-cell templates}
\label{sec2}

We recall results from \cite[section~2]{firo3d-1}.
The design and enumeration of 3-ball Sturm attractors $\mathcal{A}_f$ with prescribed 3-cell template $\mathcal{C} = (c_v)_{v\in \mathcal{E}}$ is based on the construction, by recipe, of an SZS-pair $(h_0,h_1)$ of Hamiltonian paths $h_\iota$: $\lbrace 1, \ldots , N\rbrace \rightarrow \mathcal{E}$.
See definition~\ref{def:2.1}.
The definition turned out to be mandated by the boundary orders $h_\iota^f$ of 3-ball Sturm attractors $\mathcal{A}_f$,
via identifications $\mathcal{E} = \mathcal{E}_f$ and $h_\iota = h_\iota^f$ of \eqref{eq:1.23b}, \eqref{eq:1.23c}.

\begin{figure}[b!]
\centering \includegraphics[width=0.75\textwidth]{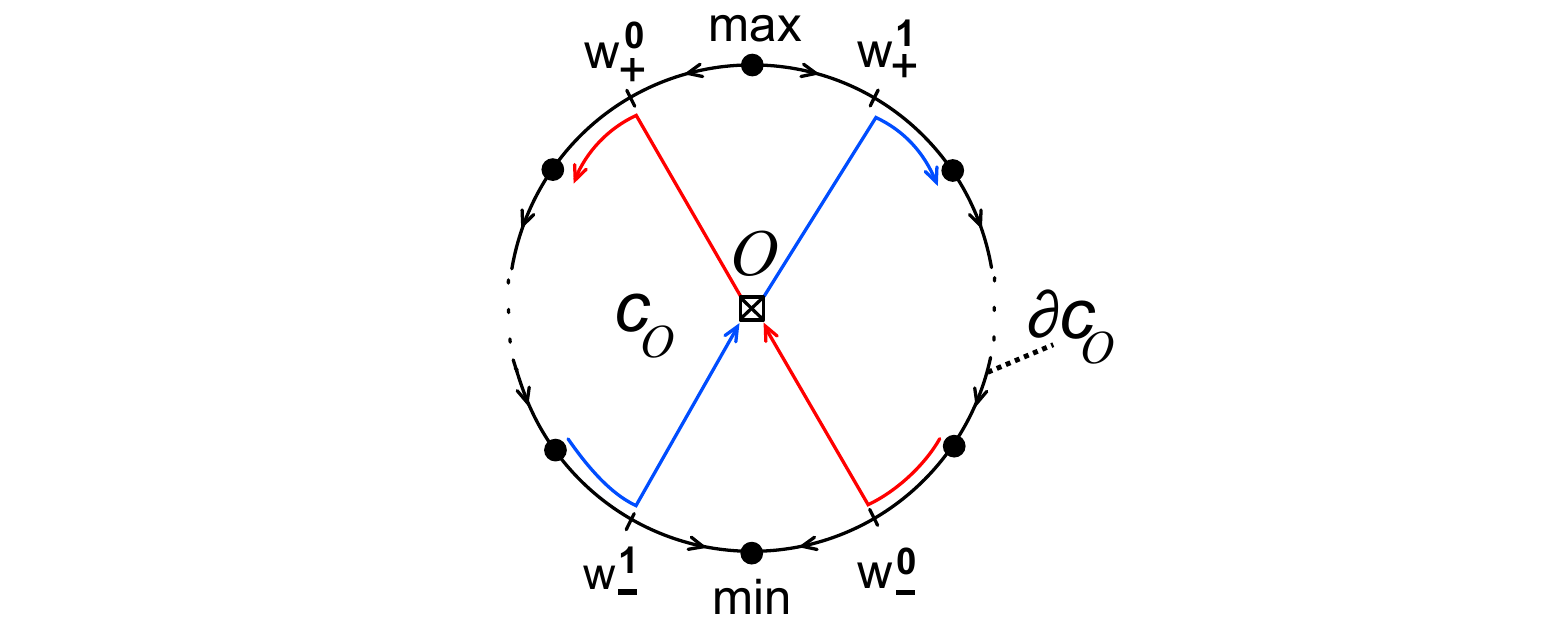}
\caption{\emph{
Traversing a face vertex $\mathcal{O}$ by a ZS-pair $h_0, h_1$.
Note the resulting shapes ``Z'' of $h_0$ (red) and ``S'' of $h_1$ (blue).
The paths $h_\iota$ may also continue into adjacent neighboring faces, beyond $w_\pm^\iota$, without turning into the face boundary $\partial c_{\mathcal{O}}$.
}}
\label{fig:2.1}
\end{figure}

\begin{figure}[t!]
\centering \includegraphics[width=\textwidth]{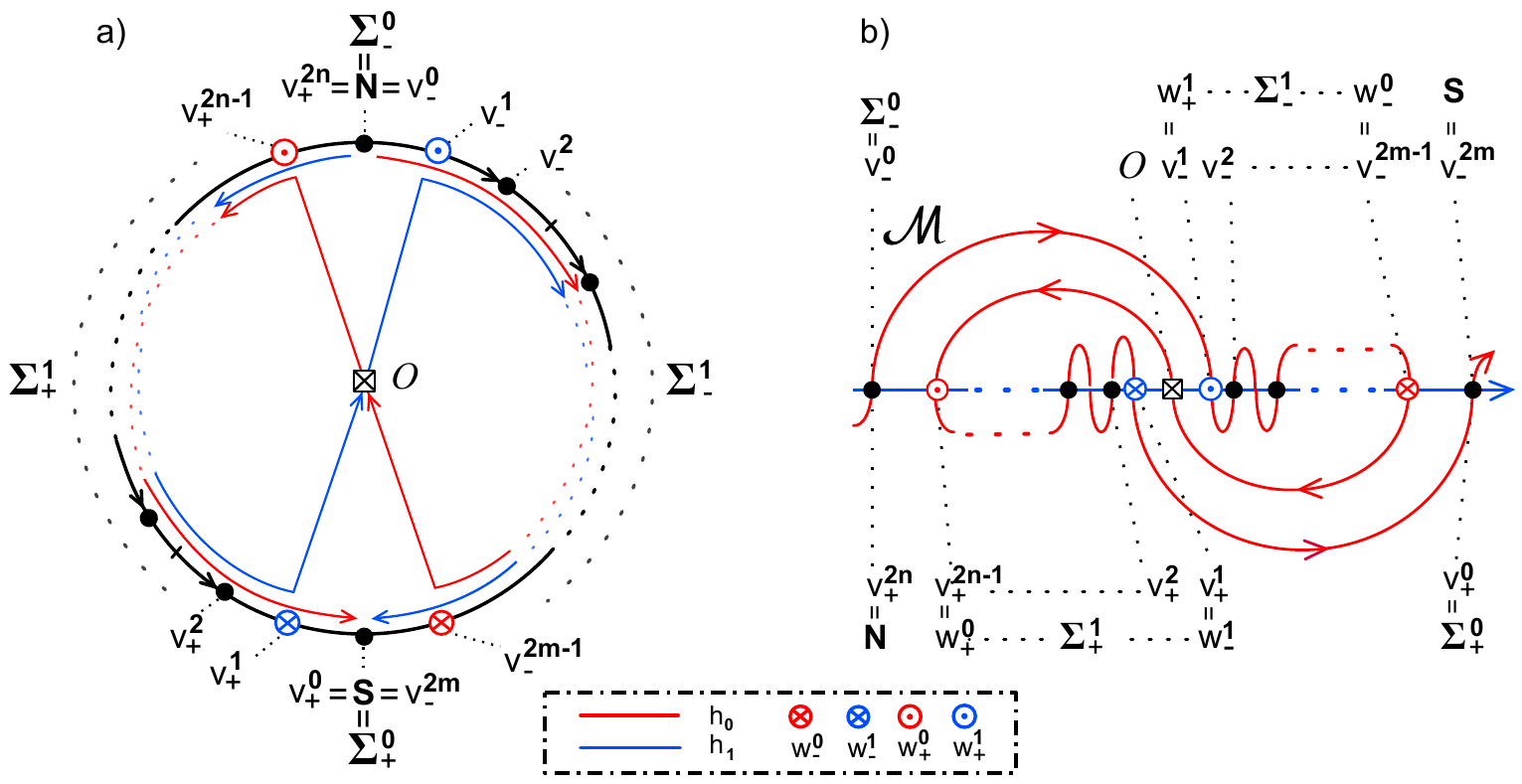}
\caption{\emph{
The Sturm $(m,n)$-gon disk with source $\mathcal{O}$, $m+n$ sinks, $m+n$ saddles, and hemisphere decomposition $\Sigma_\pm^j$, $j \in \{0,1\}$, of $\mathcal{A} = \text{clos } W^u(\mathcal{O})$.
Saddles and sinks are enumerated by $v_\pm^k$ with odd and even exponents $k$, respectively.
(a) The associated Thom-Smale dynamic complex $\mathcal{C}$.
Arrows on the circular boundary indicate the bipolar orientation of the edges of the 1-skeleton.
Edges are the whole one-dimensional unstable manifolds of the saddles; the orientation of the edge runs against the time direction on half of each edge.
The poles $\mathbf{N}$, $\mathbf{S}$ are the extrema of the bipolar orientation.
Geometrically, we obtain an $(m+n)$-gon, with $m$ edges to the right of the poles and $n$ edges to the left.
The resulting bipolar orientation determines the ZS-pair $(h_0,h_1)$, by definition~\ref{def:2.1}.
Colors $h_0$ (red), $h_1$ (blue).
(b) The meander $\mathcal{M}$ defined by the ZS-pair $(h_0, h_1)$ of the $(m,n)$-gon (a).
Along the horizontal axis, equilibria $v \in \mathcal{E}$ are ordered according to the directed path $h_1$ (blue). 
The directed path $h_0$ (red) defines the arcs of the meander $\mathcal{M}$.
Note the two full polar $h_0$-serpents $v_-^0v_-^1 \ldots v_-^{2m-1}$ and $v_+^0v_+^1 \ldots v_+^{2n-1}$.
The two full polar $h_1$-serpents are $ v_+^{2n} \ldots v_+^1$ and $v_-^{1} \ldots v_-^{2m}$. 
Also note how the $h_\iota$-neighboring saddles $w_\pm^\iota$ to the source $\mathcal{O}$, at $x=\iota$, become the $h_{1-\iota}$-extreme saddles at the opposite boundary.
}}
\label{fig:2.2}
\end{figure}

To prepare our construction, we first consider planar regular CW-complexes $\mathcal{C}$, abstractly, with a bipolar orientation of the 1-skeleton $\mathcal{C}^1$.
Here bipolarity requires that the unique poles $\mathbf{N}$ and $\mathbf{S}$ of the orientation are located at the boundary of the regular complex $\mathcal{C} \subseteq \mathbb{R}^2$.

To label the vertices $v \in \mathcal{E}$ of a planar complex $\mathcal{C}$, in two different ways, we construct a pair of directed Hamiltonian paths
	\begin{equation}
	h_0, h_1: \quad \lbrace 1, \ldots , N \rbrace \rightarrow \mathcal{E}
	\label{eq:2.1}
	\end{equation}
as follows.
Let $\mathcal{O}$ indicate any source, i.e. (the barycenter of) any 2-cell  face $c_{\mathcal{O}}$ in $\mathcal{C}$.
(We temporarily deviate from the standard 3-ball notation, here, to emphasize analogies with the passage of $h_\iota$ through a 3-cell, later.)
By planarity of $\mathcal{C}$ the bipolar orientation of $\mathcal{C}^1$ defines unique extrema on the boundary circle $\partial c_{\mathcal{O}}$ of the 2-cell $c_\mathcal{O}$.
Let $w_-^0$ denote the saddle on $\partial c_{\mathcal{O}}$ (of the edge) to the right of the minimum, and $w_+^0$ the saddle to the left of the maximum.
Similarly, let $w_-^1$ be the saddle to the left of the minimum, and $w_+^1$ to the right of the maximum.
See fig.~\ref{fig:2.1}.
Then the following definition serves as a construction recipe for the pair $(h_0,h_1$).

\begin{defi}\label{def:2.1}
The bijections $h_0, h_1$ in \eqref{eq:2.1} are called a \emph{ZS-pair} $(h_0, h_1)$ in the finite, regular, planar and bipolar cell complex $\mathcal{C} = \bigcup_{v \in \mathcal{E}} c_v$ if the following three conditions all hold true:

\begin{itemize}
\item[(i)] $h_0$ traverses any face $c_\mathcal{O}$ from $w_-^0$ to $w_+^0$;
\item[(ii)] $h_1$ traverses any face $c_\mathcal{O}$ from $w_-^1$ to $w_+^1$
\item[(iii)] both $h_\iota$ follow the same bipolar orientation of the 1-skeleton $\mathcal{C}^1$, unless defined by (i), (ii) already.
\end{itemize}

We call $(h_0,h_1)$ an \emph{SZ-pair}, if $(h_1, h_0)$ is a ZS-pair, i.e. if the roles of $h_0$ and $h_1$ in the rules (i) and (ii) of the face traversals are reversed.
\end{defi}

Properties (i)-(iii) of definition \ref{def:2.1} construct the ZS-pair $(h_0,h_1)$ uniquely, provided the resulting paths turn out Hamiltonian. 
Settling existence, the summary in  \cite[section~2]{firo3d-1} guarantees that they are.

In fig.~\ref{fig:2.2} we illustrate definition~\ref{def:2.1} for the simple case of a single 2-disk with $m+n$ sinks and $m+n$ saddles on the boundary, and with a single source $\mathcal{O}$.
The bipolar orientation of the 1-skeleton, in (a), in fact follows from the boundary $\Sigma^0 = \lbrace \mathbf{N}, \mathbf{S}\rbrace$ of the fast unstable manifold $W^{uu}(\mathcal{O})$.
Indeed $z(v-\mathcal{O}) = 0_\pm$ uniquely characterizes $v \in \Sigma_\pm^0$.
Geometrically, the closed disk can be viewed as an $(m+n)$-gon with $m+n$ sink vertices and $m+n$ boundary edges, alias unstable manifolds of the saddles.
The poles separate the boundary into $m$ edges to the right, and $n$ edges to the left.
We therefore call the resulting Sturmian cell complex an $(m,n)$\emph{-gon}.

The planar trilogy \cite{firo08, firo09, firo10} contains ample material on the planar case.
In particular it has been proved that a regular finite cell complex $\mathcal{C}$ is the Thom-Smale dynamic cell complex $\mathcal{C}_f$ of a planar Sturm attractor $\mathcal{A}_f$ if, and only if, $\mathcal{C} \subseteq \mathbb{R}^2$ is planar and contractible with bipolar 1-skeleton $\mathcal{C}^1$.
See \cite[theorem~2.1]{firo3d-1}.
Moreover we can identify $\mathcal{C}_f = \mathcal{C}$ via $\mathcal{E}_f = \mathcal{E},\ h_\iota^f = h_\iota$, as in \eqref{eq:1.23b}, \eqref{eq:1.23c}.
See \cite{firo08, firo09, firo10} for proofs and many more examples.

For a later comeback as hemisphere constituents $\text{clos } \mathbf{W}, \ \text{clos } \mathbf{E}$, in 3-cell templates $\mathcal{C}$, we now single out bipolar topological disk complexes which already satisfy the properties (ii) and (iii) of definition~\ref{def:1.1}.
We recall that a topological disk complex may contain (finitely) many faces.

\begin{defi}\label{def:2.2}
A bipolar topological disk complex $\text{clos } \mathbf{E}$ with poles $\mathbf{N}, \mathbf{S}$ on the circular boundary $\partial \mathbf{E}$ is called \emph{Eastern disk}, if any edge of the 1-skeleton in $\mathbf{E}$, with at least one vertex $v \in \partial \mathbf{E} \setminus \mathbf{S}$, is directed inward, i.e. away from that boundary vertex $v$.
Similarly, we call such a complex $\text{clos } \mathbf{W}$ \emph{Western disk}, if any edge of the 1-skeleton in $\mathbf{W}$, with at least one vertex $v \in \partial \mathbf{W} \setminus \mathbf{N}$, is directed outward, i.e. towards that boundary vertex $v$.
\end{defi}

See fig.~\ref{fig:2.3}.
For SZ- and ZS-pairs $(h_0,h_1)$ this leads to full polar serpents as follows.

\begin{figure}[t!]
\centering \includegraphics[width=\textwidth]{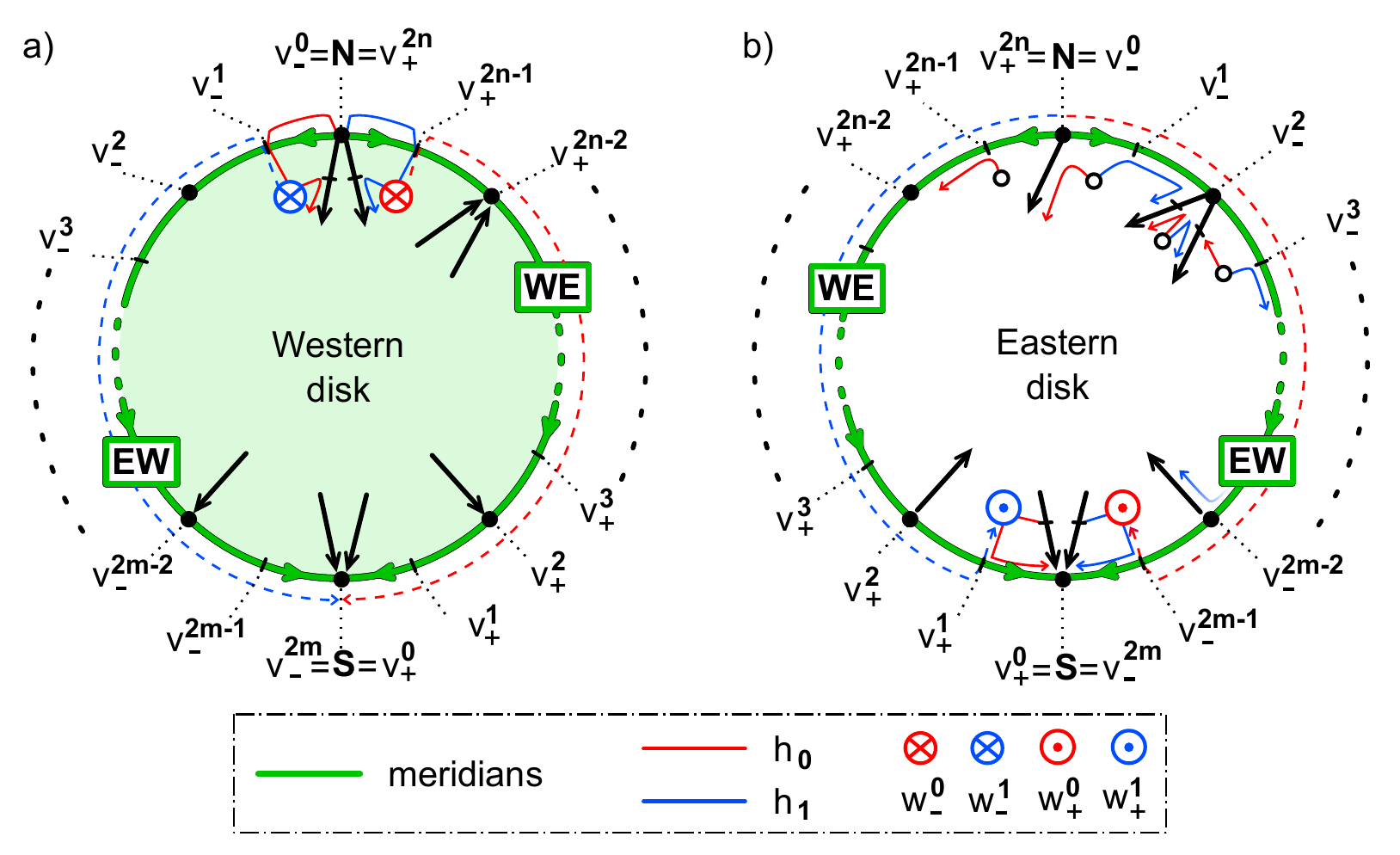}
\caption{\emph{
Western $(\mathbf{W})$ and Eastern $(\mathbf{E})$ planar topological disk complexes.
In $\mathbf{W}$, (a), all edges of the 1-skeleton $\mathbf{W}^1$ with a vertex $v \neq \mathbf{N}$ on the disk boundary are directed outward, i.e. towards $v$.
In $\mathbf{E}$, (b), all 1-skeleton edges with a vertex $v\neq \mathbf{S}$ on the disk boundary are directed inward, i.e. away from $v$.
Note the respective full $\mathbf{S}$-polar $h_0,h_1$-serpents $v_+^{2n-1} \ldots v_+^0 = \mathbf{S}$, $v_-^1 \ldots v_-^{2m} = \mathbf{S}$, dashed red/blue in (a), and the full $\mathbf{N}$-polar $h_0, h_1$-serpents $\mathbf{N} = v_-^0 \ldots v_-^{2m-1}$,
$\mathbf{N} = v_+^{2n} v_+^{2n-1} \ldots v_+^1$, dashed red/blue in (b).
Here we use ZS-pairs $(h_0,h_1)$ in $ \mathbf{E}$, but SZ-pairs $(h_0,h_1)$ in $\mathbf{W}$.
}}
\label{fig:2.3}
\end{figure}

\begin{lem}\label{lem:2.3}
\cite[lemma~2.7]{firo3d-1}
Let  $\mathbf{W}, \mathbf{E}$ be bipolar topological disk complexes with poles $\mathbf{N}, \mathbf{S}$ on their circular boundaries.
Let $(h_0,h_1)$ denote an SZ- or ZS-pair.

Then the disk $\text{clos }\mathbf{W}$ is Western, if and only if the $\mathbf{S}$-polar $h_\iota$-serpents are full, for $\iota \in \{0,1\}$, i.e. they contain all points of their respective boundary half-circle, except the antipodal pole $\mathbf{N}$.

Similarly, the disk $\text{clos } \mathbf{E}$ is Eastern, if and only if the $\mathbf{N}$-polar $h_\iota$-serpents are full, for $\iota \in \{0,1\}$, i.e. they contain all points of their respective boundary half-circle, except the antipodal pole $\mathbf{S}$.
\end{lem}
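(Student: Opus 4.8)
The plan is to prove the Eastern statement; the Western one then follows verbatim under the pole reflection $\mathbf{N}\leftrightarrow\mathbf{S}$, which reverses every edge, interchanges maxima with minima, in-edges with out-edges, and $\mathbf{N}$-polar with $\mathbf{S}$-polar serpents. Moreover the assertions for $\iota=0$ and $\iota=1$ are interchanged by the left-right reflection $x\mapsto 1-x$ of the disk, which swaps the two boundary half-circles together with the roles of $h_0$ and $h_1$. Hence it suffices to show, for the right boundary half-circle, that the $\mathbf{N}$-polar $h_0$-serpent is full if and only if the Eastern condition holds along that half-circle.

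First I would record two standard structural facts about a bipolar orientation of a planar disk. (a)~Each of the two boundary half-circles joining $\mathbf{N}$ to $\mathbf{S}$ is a directed path from $\mathbf{N}$ to $\mathbf{S}$; hence every non-pole boundary vertex $v$ carries exactly one boundary in-edge and one boundary out-edge. (b)~Around any vertex the incoming edges are cyclically consecutive, and so are the outgoing edges; consequently $v$ is the maximum of some adjacent face if and only if its in-degree is at least two. Combining~(a) and~(b), a non-pole boundary vertex $v$ is a face-maximum precisely when $v$ has an incoming interior edge --- which is exactly the failure of the Eastern requirement that every edge at $v$ be directed away from $v$. Thus the Eastern condition along the right half-circle is equivalent to the purely combinatorial statement that no right-boundary vertex other than $\mathbf{S}$ is the maximum of any face.

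The heart of the proof is to match this to serpent fullness through the ZS/SZ-construction of definition~\ref{def:2.1}. The $\mathbf{N}$-polar $h_0$-serpent is the maximal initial $h_0$-segment of vertices of index at most $1$, i.e.\ the stretch traversed before $h_0$ first enters a face, reaching a source of index $2$. I would show, from the face-traversal rule together with the planar path-construction of \cite{firo08, firo09, firo10}, that $h_0$ runs monotonically down the right half-circle and first turns into a source exactly at the \emph{right-of-max} saddle of the topmost right-boundary face-maximum, namely at the boundary edge lying immediately above that maximum. Faces whose maximum lies in the interior, or on the left half-circle, are reached only after $h_0$ has already left the right boundary, so they cannot interrupt this initial run. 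Hence the serpent extends all the way to the right saddle adjacent to $\mathbf{S}$ --- i.e.\ it is full --- if and only if $\mathbf{S}$ is the only right-boundary face-maximum, which by the previous paragraph is exactly the Eastern condition. The calibrating instance is the single-face $(m,n)$-gon of fig.~\ref{fig:2.2}, where the unique face-maximum is $\mathbf{S}$, the turn-in occurs at the last right saddle $v_-^{2m-1}$, and the $\mathbf{N}$-polar $h_0$-serpent $v_-^0\ldots v_-^{2m-1}$ is indeed full.

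The main obstacle is precisely this last identification: verifying, within the explicit recipe of definition~\ref{def:2.1} and its Hamiltonicity guaranteed in \cite{firo3d-1}, that the first source turn-in of $h_0$ sits exactly at the right-of-max saddle of the topmost right-boundary face-maximum, and that interior- or left-maxima never trigger an earlier turn-in while $h_0$ is still on the right boundary. Once this turn-in location is pinned down, the equivalence with the Eastern edge condition is immediate from the local bipolar bookkeeping of step~(b). Running the same argument with $h_1$ on the left half-circle, and then applying the $\mathbf{N}\leftrightarrow\mathbf{S}$ reflection, yields the full Western/Eastern equivalence asserted in the lemma.
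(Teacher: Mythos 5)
First, a point of reference: this paper never proves Lemma~\ref{lem:2.3} at all --- it is recalled verbatim, with citation, from \cite[lemma~2.7]{firo3d-1}, and the text moves straight on to definition~\ref{def:2.4}. So there is no internal proof to compare you against, and your attempt has to stand on its own. Much of its scaffolding does stand: the two reflection reductions are legitimate (with the small bookkeeping caveat that a pure edge reversal turns a ZS-pair into an SZ-pair and swaps the roles of $h_0$ and $h_1$, which is harmless only because the lemma explicitly covers both pair types and both $\iota$); and your combinatorial reformulation of the Eastern property is correct: since incoming and outgoing edges form consecutive blocks around each vertex of a bipolar orientation, and each boundary half-circle is a directed path from $\mathbf{N}$ to $\mathbf{S}$, a non-polar boundary vertex is the terminal vertex of some face (the ``minimum'' in the convention of definition~\ref{def:2.1}, your ``maximum'') exactly when it has an incoming interior edge, i.e.\ exactly when definition~\ref{def:2.2} fails there.

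The genuine gap is the step you yourself flag as ``the main obstacle'' and then do not carry out: that the initial Morse-$\leq 1$ segment of $h_0$ actually stays on its boundary half-circle and first meets a face barycenter exactly at the boundary edge into the topmost Eastern-violating vertex (or into $\mathbf{S}$). That claim \emph{is} the lemma; everything else in your write-up is routine, so deferring it to ``the explicit recipe and its Hamiltonicity'' leaves the proof empty at its core. It is also not automatic, because rule~(iii) of definition~\ref{def:2.1} does not by itself select among several outgoing edges at a vertex. What closes it is a reservation argument: by rule~(i), the exit saddle $w_+^0(f)$ of every face $f$ must be \emph{immediately preceded} in $h_0$ by the barycenter of $f$, hence can never be the successor of a vertex; at a boundary vertex $v$ satisfying the Eastern condition, the consecutive-block structure shows that \emph{every} outgoing interior edge at $v$ is the exit saddle $w_+^0$ of the face on its far side, so the only admissible successor of $v$ is the next boundary edge. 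Dually, a boundary edge $e$ is an entry saddle $w_-^0(f)$ of its unique adjacent face $f$ precisely when the head of $e$ is the terminal vertex of $f$, which (again by consecutiveness) happens precisely when the head has an incoming interior edge or equals $\mathbf{S}$. Induction down the half-circle then yields your per-half-circle equivalence, with the $(m,n)$-gon of fig.~\ref{fig:2.2} as the base pattern; without some such argument, your text is an outline rather than a proof.
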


After these preparations we can now return to general 3-cell templates $\mathcal{C}$ and define the SZS-pair $(h_0,h_1)$ associated to $\mathcal{C}$.

\begin{defi}\label{def:2.4}
Let $\mathcal{C} = \bigcup_{v \in \mathcal{E}} c_v$ be a 3-cell template with oriented 1-skeleton $\mathcal{C}^1$, poles $\mathbf{N}, \mathbf{S}$, hemispheres $\mathbf{W}, \mathbf{E}$, and meridians $\mathbf{EW}$, $\mathbf{WE}$.
A pair $(h_0, h_1)$ of bijections $h_\iota$: $ \lbrace 1, \ldots , N \rbrace \rightarrow \mathcal{E}$ is called the SZS-\emph{pair assigned to} $\mathcal{C}$ if the following conditions hold.
\begin{itemize}
\item[(i)] The restrictions of range $h_\iota$ to $\text{clos } \mathbf{W}$ form an SZ-pair $(h_0, h_1)$, in the closed Western hemisphere.
The analogous restrictions form a ZS-pair $(h_0,h_1)$ in the closed Eastern hemisphere $\text{clos } \mathbf{E}$.
See definition~\ref{def:2.1}.
\item[(ii)] In the notation of figs.~\ref{fig:1.1}, \ref{fig:2.3}, and for each $\iota \in \{0,1\}$, the permutation $h_\iota$ traverses $w_-^\iota, \mathcal{O}, w_+^\iota$, successively.
\end{itemize}
The swapped pair $(h_1,h_0)$ is called the ZSZ-pair of $\mathcal{C}$.
\end{defi}

See fig.~\ref{fig:1.1}(b) for a general illustration, and fig.~\ref{fig:1.0} for a specific example.
Condition (i) identifies the closed hemispheres as the Thom-Smale dynamic complexes of planar Sturm attractors; see lemma~\ref{lem:2.3}.
The resulting full polar serpents of $h_\iota$ are indicated by dashed lines.

It is easy to see why the SZS-pair $(h_0,h_1)$ is unique, for any given 3-cell template $\mathcal{C}$.
Indeed, the bipolar orientation of $\mathcal{C}$ fixes the orderings $h_0$ and $h_1$ uniquely on the 1-skeleton of $\mathcal{C}$.
The SZ- and ZS-requirements of (i) determine how $h_\iota$ traverses each face, except for the faces of the $h_\iota$-neighbors $w_\pm^\iota$ of $\mathcal{O}$.
That final missing piece is uniquely prescribed to be $w_-^\iota \mathcal{O}w_+^\iota$, by requirement (ii) of definition~\ref{def:2.4}.
This assigns a unique SZS-pair $(h_0, h_1)$ of Hamiltonian paths, from pole $\mathbf{N}$ to pole $\mathbf{S}$, for any given 3-cell template $\mathcal{C}$.

With the above construction of the SZS-pair $(h_0,h_1)$, for any given 3-cell template $\mathcal{C}$, the construction of the unique Sturm permutation $\sigma_f=\sigma =h_0^{-1}\circ h_1$ is complete.
This also identifies the unique 3-meander template $\mathcal{M}_f = \mathcal{M}$ and 3-ball Sturm attractor $\mathcal{A}_f$, up to $C^0$ flow-equivalence, with prescribed Thom-Smale dynamic complex $\mathcal{C}_f=\mathcal{C}$ and prescribed hemisphere decomposition \eqref{eq:1.17}.
In the example of fig.~\ref{fig:1.0}, this construction amounts to the cyclic implications (b) $\Rightarrow$ (c) $\Rightarrow$ (a) $\Rightarrow$ (b).


\section{Trivial equivalences}\label{sec3}

To reduce the number of cases in complete enumerations, a proper consideration of symmetries is mandatory.
For 3-cell templates $\mathcal{C}$ there are two sources of such symmetries.
First, there are the automorphisms of the cell complex $\mathcal{C}$ itself.
The isotropy subgroups of the orthogonal group $O(3)$ for the five Platonic solids provide a rich source of examples.
Second, there are certain trivial equivalences which arise from the signed hemisphere decomposition \eqref{eq:1.17} of $\mathcal{C}$; see definition~\ref{def:1.1}.
As an illustration, we digress with some observations on cell counts for flip-symmetric Sturm attractors.
For 3-cell templates, we then eliminate trivial equivalences rather easily, in an adhoc manner, based on certain choices of poles and bipolar orientations.
The effect of trivial equivalences, elementary as it may be, deserves some careful attention to avoid duplicates and omissions in the resulting lists of cases.
We summarize the pertinent results in fig.~\ref{fig:3.1} and table~\ref{tbl:3.1} below.

Already in \cite{firo96}, \emph{trivial equivalences} were defined as the Klein 4-group $\langle \kappa,\rho \rangle$ with commuting involutive generators
	\begin{align}
	(\kappa u)(x) &:= -u(x)\,; \label{eq:3.1}\\
	(\rho u)(x) &:= u(1-x)\,.\label{eq:3.2}
	\end{align}
In the PDE \eqref{eq:1.1}, the $u$-flip $\kappa$ induces a linear flow equivalence $\mathcal{A}_f \rightarrow \kappa \mathcal{A}_f = \mathcal{A}_{f^\kappa}$ where $f^\kappa (x,u,p)$:= $f(x,-u,-p)$.
Similarly, the $x$-reversal $\rho$ induces a linear flow equivalence $\mathcal{A}_f \rightarrow \rho \mathcal{A}_f = \mathcal{A}_{f^\rho}$ via $f^\rho (x,u,p)$:= $f(1-x,u,-p)$. Here and below $\mathcal{E},\ \mathcal{A},\ \mathcal{C},\ h_\iota,\ \sigma$ refer to $f$, whereas $\mathcal{E}^\gamma,\ \mathcal{A}^\gamma,\ \mathcal{C}^\gamma,\ h_\iota^\gamma,\ \sigma^\gamma$ will refer to $f^\gamma$.

For example, let us describe the effect of $\kappa = -\text{id}$ on the Hamiltonian paths $h_\iota$ and on the Sturm permutations $\sigma$ algebraically.
We abuse notation slightly and let $\kappa$ also denote the involution permutation
	\begin{equation}
	\kappa j := N+1-j
	\label{eq:3.8}
	\end{equation}
on $j \in \lbrace 1, \ldots , N\rbrace$.
Then $\kappa$ reverses the boundary orders of the equilibria $\mathcal{E}^\kappa= -\mathcal{E}$, at $x=\iota \in \{0,1\}$, respectively.
Therefore
	\begin{equation}
	h_\iota^\kappa = \kappa h_\iota \kappa\,,
	\label{eq:3.9}
	\end{equation}
and $\sigma = h_0^{-1}\circ h_1$ leads to the conjugation
	\begin{equation}
	\sigma^\kappa = \kappa \sigma \kappa\,.
	\label{eq:3.10}
	\end{equation}

For illustration, we consider the cell counts of flip-symmetric Sturm attractors, i.e. for Sturm permutations $\sigma$ such that $\sigma^\kappa := \kappa \sigma \kappa = \sigma$; see \eqref{eq:3.10}. 
Let
\begin{equation}
\label{eq:3.19}
\mathcal{E}(i) := \{v \in \mathcal{E} \,|\, i(v)=i\}
\end{equation}
denote the set of equilibria $v$ with given Morse index $i$. 
The \emph{cell counts} $c_i$ count the elements of $\mathcal{E}(i)$.
It is interesting to compare the following proposition with standard Morse theory, which asserts that the alternating sum of the cell counts $c_i$ over all Morse indices $i$ coincides with the Euler characteristic +1 of the global attractor $\mathcal{A}$.

\begin{prop}\label{prop:3.1}
Let $\sigma \in S_N$ be a flip-symmetric Sturm permutation $\sigma^\kappa := \kappa \sigma \kappa = \sigma$ of $N$ equilibria.

Then all cell counts $c_i$ are even, except for one unique odd Morse count $c_{i_*}$.
Moreover
\begin{equation}
\label{eq:3.20}
N \quad \equiv \quad
\begin{cases}
      & 1 \ (\mathrm{mod}\, 4),\ \text{if and only if}\  i_*\ \text{is even}, \\
      & 3 \ (\mathrm{mod}\, 4),\ \text{if and only if}\  i_*\ \text{is odd}.
\end{cases}
\end{equation}

\end{prop}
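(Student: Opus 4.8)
The plan is to exploit the flip involution $\kappa:\ j\mapsto N+1-j$ of \eqref{eq:3.8} as a symmetry of the Morse numbers. Throughout I use the standard realization $h_0=\mathrm{id}$, $h_1=\sigma$ (so that the $i_v$ depend only on $\sigma$, as noted after \eqref{eq:1.22b}), and write $i_j:=i_{h_0(j)}$ for the Morse number of the $j$-th equilibrium, governed by the recursion \eqref{eq:1.22a}. The crux of the whole proposition is the single identity
\[
  i_{N+1-j}=i_j, \qquad 1\le j\le N,
\]
asserting that $\kappa$ preserves Morse numbers. Granting this, both claims follow almost immediately, so I would establish it first.

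To prove the symmetry I would first unpack flip-invariance. Since $\kappa^2=\mathrm{id}$, the relation $\kappa\sigma\kappa=\sigma$ of \eqref{eq:3.10} is equivalent to $\sigma\kappa=\kappa\sigma$, and hence also $\sigma^{-1}\kappa=\kappa\sigma^{-1}$; written out, $\sigma^{-1}(N+1-m)=N+1-\sigma^{-1}(m)$. Abbreviating the increment signs by $s_j:=\mathrm{sign}\bigl(\sigma^{-1}(j+1)-\sigma^{-1}(j)\bigr)$, a direct substitution then gives $s_{N-j}=s_j$. Next I would check that the reflected sequence $T_j:=i_{N+1-j}$ satisfies the very same recursion \eqref{eq:1.22a}: indeed $T_{j+1}-T_j=-(i_{N+1-j}-i_{N-j})=-(-1)^{N-j+1}s_{N-j}$, and since $N$ is odd one has $(-1)^{N-j+1}=(-1)^{j}$, so this equals $(-1)^{j+1}s_j$, matching the increment of $i_j$. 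As the initial data agree, $T_1=i_N=0=i_1$, induction yields $T_j=i_j$, i.e. the desired symmetry. The parity bookkeeping here — using oddness of $N$ to line up the alternating signs of the forward and reflected recursions — is the one genuinely delicate point, and is where I expect the main effort to go. Conceptually, the same identity is immediate from the fact that $\kappa$ is a trivial flow equivalence and therefore preserves Morse indices, which combined with $\sigma^\kappa=\sigma$ reproduces $i_{N+1-j}=i_j$; I would nonetheless keep the combinatorial derivation so that the argument stays self-contained at the level of permutations.

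With the symmetry in hand, the two assertions are quick. Since $N$ is odd, the involution $\kappa$ on $\{1,\dots,N\}$ has the unique fixed point $j_*=(N+1)/2$; all other indices pair off as $\{j,\,N+1-j\}$, and within each pair the two equilibria share a common Morse number. Hence, for every value $i\ne i_*:=i_{j_*}$, the set $\mathcal{E}(i)$ of \eqref{eq:3.19} is exhausted by such pairs and $c_i$ is even, whereas $\mathcal{E}(i_*)$ consists of pairs together with the lone fixed equilibrium, so $c_{i_*}$ is odd; this is the unique odd cell count. Finally, the recursion \eqref{eq:1.22a} forces $i_{j+1}-i_j=\pm1$, so consecutive Morse numbers alternate in parity and $i_j\equiv j-1\ (\mathrm{mod}\ 2)$, starting from $i_1=0$. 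Evaluating at the fixed index gives $i_*\equiv j_*-1=(N-1)/2\ (\mathrm{mod}\ 2)$, whence $i_*$ is even exactly when $(N-1)/2$ is even, i.e. $N\equiv1\ (\mathrm{mod}\ 4)$, and $i_*$ is odd exactly when $N\equiv3\ (\mathrm{mod}\ 4)$. This is precisely the dichotomy \eqref{eq:3.20}, completing the plan.
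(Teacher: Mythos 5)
Your proof is correct and follows essentially the same route as the paper: both arguments reduce everything to the $\kappa$-symmetry of the Morse numbers, $i_{N+1-j}=i_j$, then pair the labels off under the involution $\kappa$ around its unique fixed point $j_*=(N+1)/2$, and finish with the parity switch $i_j\equiv j-1\ (\mathrm{mod}\ 2)$ to get the dichotomy \eqref{eq:3.20}. The only difference is one of detail, not of structure: where the paper deduces the symmetry $i(h_0(\kappa j))=i(h_0(j))$ from $180^\circ$-rotation invariance of the meander combined with the recursion \eqref{eq:1.22a}, you verify the same identity by an explicit induction on that recursion, using $\sigma\kappa=\kappa\sigma$, the sign symmetry $s_{N-j}=s_j$, and oddness of $N$.
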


\begin{proof}[\textbf{Proof.}]
The flip $\kappa$ of \eqref{eq:3.1} amounts to a rotation by $180^\circ$ in the $(u,u_x)$-plane of the Sturm meander $\mathcal{M}$.
Since $\sigma^\kappa = \sigma$, the Sturm meander can be considered to be invariant under that rotation.
For the Morse indices, alias Morse numbers, the recursion \eqref{eq:1.22a} therefore implies 
\begin{equation}
\label{eq:3.20a}
i(h_0(\kappa j)) = i(h_0(j)),
\end{equation}
for all $j$. 
The same recursion also implies the parity switch from even/odd labels $j$ to odd/even Morse indices $i(h_0(j))$, respectively.

Let $j_* \in \{1,\ldots,N\}$ denote the unique fixed point of $\kappa$.
More specifically
\begin{equation}
\label{eq:3.20b}
j_* = \frac{1}{2} (N+1) =
\begin{cases}
      & \text{odd, for} \ N\equiv1 \ (\mathrm{mod}\, 4), \\
      & \text{even, for} \ N\equiv3 \ (\mathrm{mod}\, 4).
\end{cases}
\end{equation}
Here we used that $N$ itself is odd, due to the meander property.
Under the involution $\kappa^2=\mathrm{id}$ of the labels $1 \leq j \leq N$, all orbits $\{j,\kappa j\}$ are 2-cycles, except for the fixed point $j=j_*$. 

Now let $J(i) := \{j\,|\, i(h_0(j))=i\} = h_0^{-1} \mathcal{E}(i)$ enumerate the equilibria $v$ with given Morse index $i(v)=i$, via the Hamiltonian path $h_0$.
By \eqref{eq:3.20a}, each set $J(i)$ is invariant under $\kappa$,
and the sets are pairwise disjoint.

Define $i_* := i(h_0(j_*)$ as the Morse index associated to the fixed point $j_*$ of $\kappa$. 
Then $j_* \in J(i_*)$, by construction.
In view of the remaining 2-cycles of $\kappa$ in $J(i_*)$, the Morse count $c_{i_*}$ of the elements of $\mathcal{E}(i_*)=h_0J(i_*)$ is odd.
The other sets $J(i),\ i\neq i_*\,,$ are disjoint from $J(i_*)$, and therefore consist of 2-cycles of $\kappa$, only.
Hence the Morse count $c_{i}$ is even, for each other set $\mathcal{E}(i)$.

To prove the parity claim \eqref{eq:3.20}, we recall that the even/odd parities of $j_*$ and of $i_*=i(h_0(j_*))$ are opposite, by parity switch. Therefore \eqref{eq:3.20b} proves the proposition.
\end{proof}

\begin{cor}\label{cor:3.2}
Suppose $\sigma = \kappa \sigma \kappa =: \sigma^\kappa$ is the flip-symmetric Sturm permutation of a Sturm 3-ball global attractor with $N$ equilibria.

Then $N\equiv 3\ (\mathrm{mod}\ 4)$. Moreover each Morse count $c_0,c_1,c_2$ of i=0 sinks, i=1 sources, and i=2 faces, respectively, is even.
\end{cor}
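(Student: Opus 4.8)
The plan is to read the corollary directly off Proposition~\ref{prop:3.1}, once we record the single extra input that is special to 3-balls. By the definition \eqref{eq:1.10} of a 3-ball Sturm attractor, the attractor $\mathcal{A}$ is the closure of the unstable manifold of a \emph{single} equilibrium $\mathcal{O}$ with Morse index $i(\mathcal{O})=3$, while every remaining equilibrium satisfies $i(v)\leq 2$. Hence the cell count at Morse index three is
\[
c_3 = |\mathcal{E}(3)| = 1,
\]
which is odd, and $c_i = 0$ for all $i\geq 4$.

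First I would invoke the standing hypothesis $\sigma^\kappa = \kappa\sigma\kappa = \sigma$ of flip-symmetry, so that Proposition~\ref{prop:3.1} applies verbatim. It asserts that among the cell counts $c_i$ exactly one, say $c_{i_*}$, is odd, while all others are even. Since $c_3 = 1$ is odd and this odd count is unique, we are forced to identify $i_* = 3$.

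Next I would apply the parity dichotomy \eqref{eq:3.20}, which links $N \bmod 4$ to the parity of $i_*$. As $i_* = 3$ is odd, the second alternative of \eqref{eq:3.20} yields $N \equiv 3 \ (\mathrm{mod}\, 4)$, the first claim of the corollary. For the second claim, uniqueness of the odd cell count at $i_* = 3$ means that every other count is even; in particular $c_0$, $c_1$, and $c_2$ — the numbers of $i=0$ sinks, $i=1$ saddles, and $i=2$ faces — are each even.

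The argument presents no genuine obstacle: all of the content resides in Proposition~\ref{prop:3.1}, and the corollary is merely its specialization to the situation where the prescribed odd-count Morse index coincides with the top index $3$ of the unique 3-cell barycenter $\mathcal{O}$. The only point deserving a moment's care is verifying that the hypotheses of Proposition~\ref{prop:3.1} are satisfied — namely that $\sigma$ is a flip-symmetric Sturm permutation — but this is exactly what the corollary assumes, and that $c_3=1$, which is forced by the defining property of a 3-ball attractor.
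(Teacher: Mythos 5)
Your proposal is correct and is exactly the paper's argument: the paper's own proof just notes that a Sturm 3-ball has a single equilibrium of odd Morse index $i_*=3$ and then cites proposition~\ref{prop:3.1}, which is precisely your identification of $c_3=1$ as the unique odd count followed by the parity dichotomy \eqref{eq:3.20}. You have merely spelled out the details the paper leaves implicit.
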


\begin{proof}[\textbf{Proof.}]
By definition, Sturm 3-balls possess a single equilibrium of the odd Morse index $i_*=3$. Therefore proposition \ref{prop:3.1} proves the corollary.
\end{proof}

After this little digression we now return to the effect of general trivial equivalences on the Thom-Smale complexes $\mathcal{C}$, the Hamiltonian paths $h_\iota$, and the Sturm permutations $\sigma$.
We first describe the effect of trivial equivalences on the level of signed hemisphere complexes, via the actions of the group elements $\gamma = \kappa, \rho, \kappa \rho$ on the hemispheres $\Sigma_\pm^j(v)$.
By definition \eqref{eq:1.9b}, \eqref{eq:1.9c} of $\Sigma_\pm^j$ we observe
	\begin{align}
	\Sigma_\pm^{\kappa,j}(\kappa v) &=
	\phantom{\lbrace\rbrace}\kappa \Sigma_\mp^j(v)\,;
	\label{eq:3.3}\\
	\Sigma_\pm^{\rho,j}(\rho v) &=
	\left\lbrace
	\begin{aligned}
	&\rho \Sigma_\pm^j(v)\,,&\text{for}\ &j\  \text{even;}\\
	&\rho \Sigma_\mp^j(v)\,,&\text{for}\ &j\ \text{odd.}
	\end{aligned}
	\right.
	\label{eq:3.4}
	\end{align}

\begin{figure}[t!]
\centering \includegraphics[width=\textwidth]{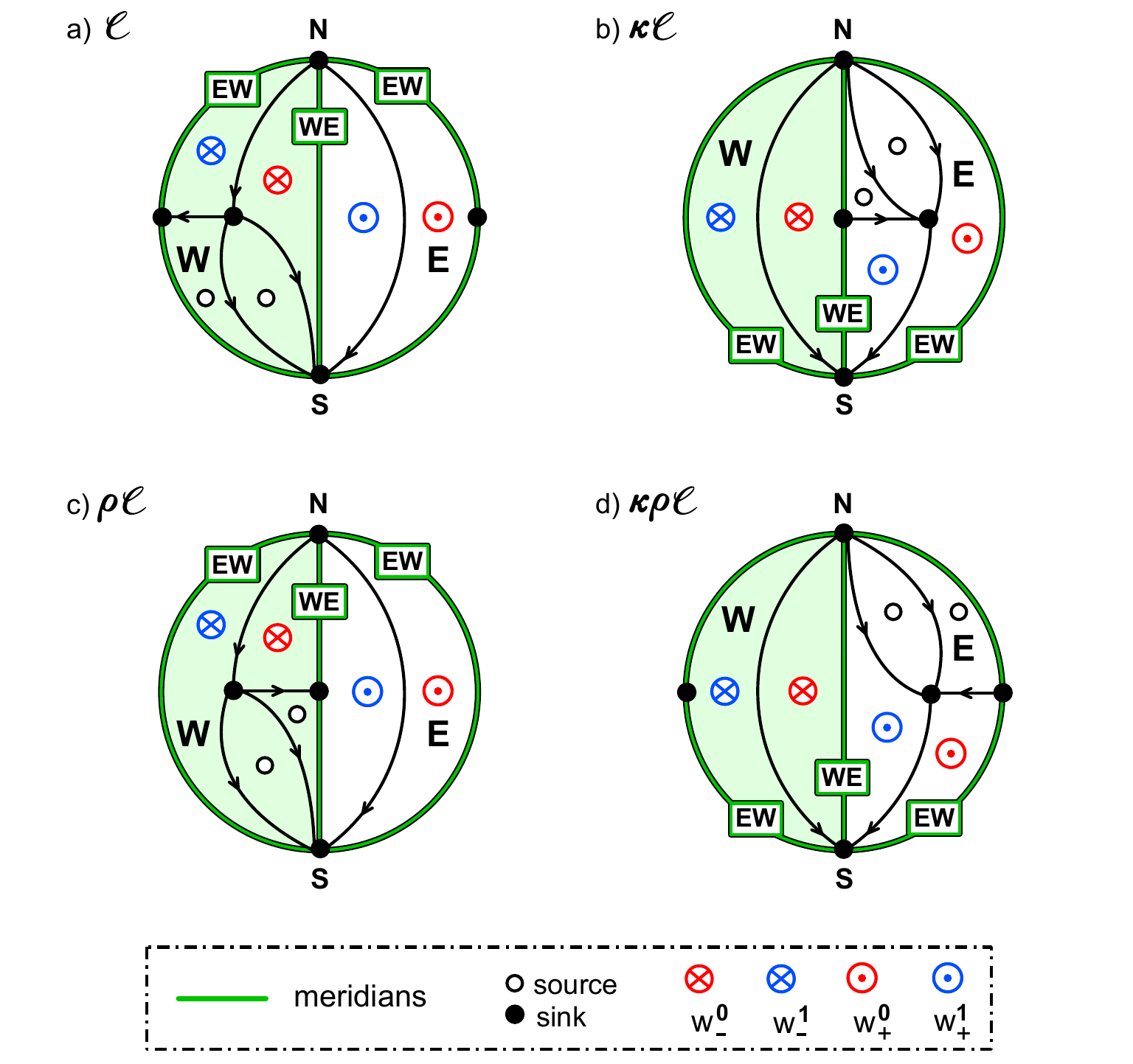}
\caption{\emph{
The effects of the trivial equivalences $\kappa, \rho$, and $\kappa\rho$ on a 3-cell template $\mathcal{C}$ with 19 equilibria.
The cell complex $\mathcal{C}$ is drawn as the boundary sphere $S^2= \partial c_{\mathcal{O}}$, in the style of figs.~\ref{fig:1.0}(b) and \ref{fig:1.1}.
(a) The original 3-cell template $\mathcal{C}$.
(b)--(d) The 3-cell templates $\gamma \mathcal{C},\ \gamma \in \lbrace \kappa, \rho, \kappa\rho \rbrace$.
Annotations refer to the resulting template with hemisphere decomposition given by \eqref{eq:1.17}.
See also the summary in table~\ref{tbl:3.1}, at the end of this section.
All entries of table~\ref{tbl:3.1} can be recovered from the figure, in principle, because corresponding cells are easily identified by their shape and connectivity.
Note how the Klein 4-group $\langle \kappa, \rho \rangle$ acts transitively on the four elements $w_\pm^\iota$. 
}}
\label{fig:3.1}
\end{figure}

We now specialize to 3-cell templates $\mathcal{C}=\mathcal{C}_f$. 
Let us consider the effect of the $u$-flip $\kappa = -\text{id}$ first.
See fig.~\ref{fig:1.1} again, and fig.~\ref{fig:3.1}(a),(b).
The orientation of $\mathcal{C}$ is reversed by $\kappa$.
The involution $\kappa$ also reverses the bipolar orientation, and swaps poles, meridians, hemispheres, and overlap faces as
	\begin{equation}
	\begin{aligned}
	 \mathbf{N} \quad &\longleftrightarrow \quad \mathbf{S}\,;\\
	 \mathbf{WE} \quad &\longleftrightarrow \quad \mathbf{EW}\,;\\
	\kappa:\qquad \mathbf{W} \quad &\longleftrightarrow \quad \mathbf{E}\,;\\
	 \mathbf{NE} \quad &\longleftrightarrow \quad  \mathbf{SE}\,;\\
	 \mathbf{NW} \quad &\longleftrightarrow \quad \mathbf{SW}\,.
	\end{aligned}
	\label{eq:3.5}
	\end{equation}
More precisely, let $\mathbf{N}^\kappa,\ \mathbf{S}^\kappa$ denote the North and South poles $\Sigma_-^{\kappa,0} (\kappa \mathcal{O}),\ \Sigma_+^{\kappa,0}(\kappa \mathcal{O})$ in $\kappa \mathcal{C} = -\mathcal{C}$, respectively.
Then
	\begin{equation}
	\mathbf{N}^\kappa = \kappa \mathbf{S}\,, \quad
	\mathbf{S}^\kappa = \kappa \mathbf{N}\,,
	\label{eq:3.6}
	\end{equation}
by \eqref{eq:3.3} with $j=0$ and $v =\mathcal{O}$.
The other claims of \eqref{eq:3.5} are understood as analogous abbreviations.
For example, let $w_-^{\kappa,0},\ w_-^{\kappa,1},\ w_+^{\kappa,0},\ w_+^{\kappa,1}$ denote the face centers of $\mathbf{NE}^\kappa,\ \mathbf{NW}^\kappa,\ \mathbf{SE}^\kappa,\ \mathbf{SW}^\kappa$, respectively.
Then
	\begin{equation}
	w_\pm^{\kappa, \iota} = \kappa w_\mp^\iota
	\label{eq:3.7}
	\end{equation}
for $\iota =0,\ 1$, in agreement with the last two lines of \eqref{eq:3.5}.
Note how $(h_0^\kappa, h_1^\kappa)$ remains an SZS-pair, for SZS-pairs $(h_0,h_1)$, albeit for the complex $\mathcal{C}^\kappa = -\mathcal{C}$ of reversed orientation.

In summary, we can visualize the geometric effect of the orientation reversing reflection $\kappa = -\text{id}$ in fig.~\ref{fig:1.1} as, first, a rotation of the 2-sphere $S^2$ by 180$^\circ$.
The rotation axis is defined by the intersections of the two meridians $\mathbf{EW}$ and $\mathbf{WE}$ with the equator of $S^2$.
Subsequently, we perform a reflection of each hemisphere through a $\pm 90$ degree meridian which bisects each hemisphere and interchanges the 0~degree Greenwich meridian WE with the date line EW at 180$^\circ$.
The effect of $\kappa$ on the 3-meander template is a simple rotation by 180$^\circ$.
The combinatorial effect is the conjugation \eqref{eq:3.10} of the Sturm permutation $\sigma$ by the involution $\kappa$ of \eqref{eq:3.8}.

We consider the effect of the $x$-reversal $(\rho u)(x) = u(1-x)$ next, in slightly condensed form.
See fig.~\ref{fig:3.1}(c).
Again $\rho$ reverses the orientation of $\mathcal{C}$.
Because $\rho$ only interchanges 1-hemispheres $\Sigma_\pm^1(v)$, however, the poles and bipolar edge orientations of the 1-skeleton $\mathcal{C}^1$ remain unaffected.
The hemispheres $\mathbf{W},\ \mathbf{E}$ preserve their labels, as sets, but each is reflected along its $\pm 90^\circ$ meridian, thus reversing all face orientations.
Consequently $\Sigma_\pm^1(\mathcal{O})$, i.e.~the Greenwich meridian and the date line, are interchanged.
Therefore \eqref{eq:3.5} becomes
	\begin{equation}
	\begin{aligned}
	\mathbf{WE} \quad &\longleftrightarrow \quad \mathbf{EW}\\
	\mathbf{NE} \quad &\longleftrightarrow \quad  \mathbf{NW}\\
	\mathbf{SE} \quad &\longleftrightarrow \quad \mathbf{SW}\,.
	\end{aligned}
	\label{eq:3.11}
	\end{equation}
with preserved roles of the poles $\mathbf{N}, \mathbf{S}$ and the hemispheres $\mathbf{W}, \mathbf{E}$.
In fact, \eqref{eq:3.6} and \eqref{eq:3.7} get replaced by
	\begin{align}
	\mathbf{N}^\rho = \rho \mathbf{N}\,, &\quad
	\mathbf{S}^\rho = \rho \mathbf{S}\,;	\label{eq:3.12}\\
	w_\pm^{\rho, \iota} &= \rho w_\pm^{1-\iota}\,. \label{eq:3.13}
	\end{align}
The $\iota$-swap of the $\mathcal{O}$-neighbors $w_\pm^\iota$ and the reversal of the planar, but not bipolar, orientation of each face $c_v$ in $\mathcal{C}$ imply that $(\rho h_0, \rho h_1)$ become a ZSZ pair, instead of an SZS pair, in $\rho \,\mathcal{C}$.
Therefore \eqref{eq:3.9}, \eqref{eq:3.10} now read	
	\begin{align}
	h_\iota^\rho &= \rho h_{1-\iota}\,;
	\label{eq:3.14}\\
	\sigma^\rho &= (\rho h_1)^{-1} \circ 
	(\rho h_0) = \sigma^{-1}\,,\label{eq:3.15}
	\end{align}
using $\sigma = h_0^{-1} \circ h_1$.
In the PDE setting, property \eqref{eq:3.14} also follows directly, by definition of the boundary orders $h_\iota^f = h_{1-\iota}^{f^\rho}$.

In summary, we can visualize the geometric effect of the orientation reversing $x$-reversal $\rho$ in fig.~\ref{fig:1.1} as just a reflection of each hemisphere through a $\pm 90$ degree meridian.
This just swaps the meridian $\mathbf{WE}$ with $\mathbf{EW}$ and converts $(h_0,h_1)$ to a ZSZ pair.
Combinatorially, the Sturm permutation $\sigma$ gets replaced by its inverse $\sigma^{-1}$.

\begin{table}[]
\centering \includegraphics[width=\textwidth]{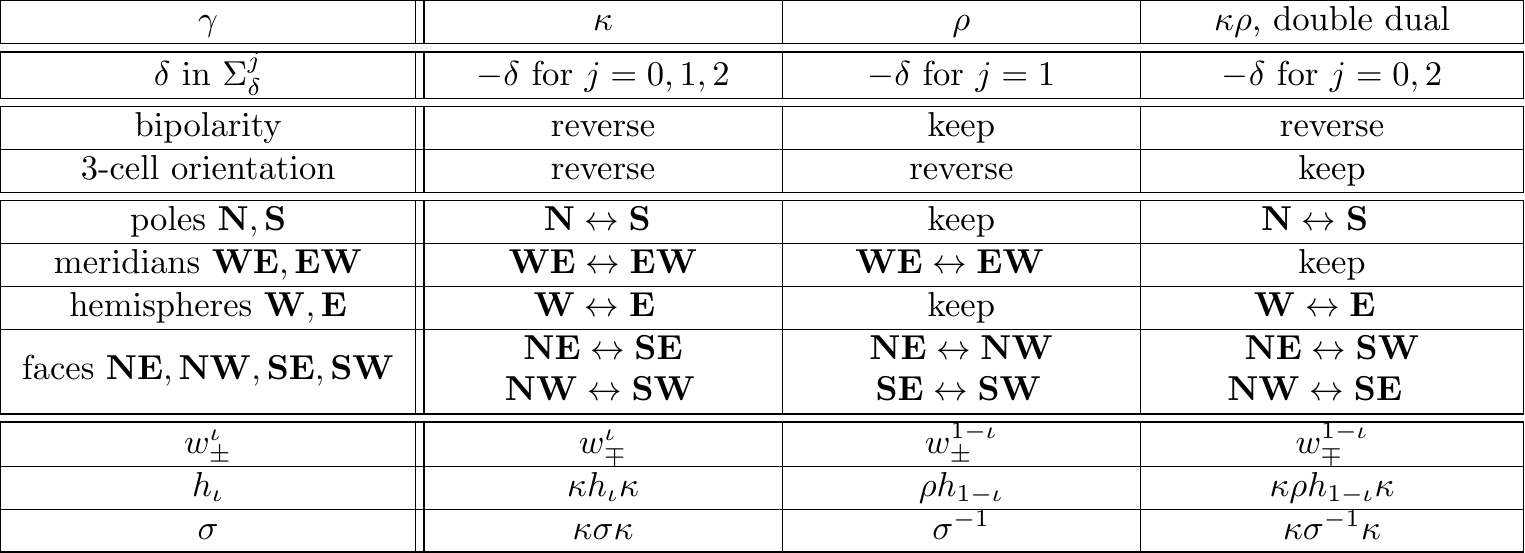}
\caption{\emph{
The effects of trivial equivalences $\gamma \in \langle \kappa, \rho \rangle$ on hemisphere decompositions, orientations, 3-cell complexes, Hamiltonian paths, and Sturm permutations.
For the double dual see \eqref{eq:5.6a}.
}}
\label{tbl:3.1}
\end{table}

The third nontrivial element of the Klein 4-group generated by $\kappa,\ \rho$ is the involution $\kappa\rho = \rho \kappa$ of course.
See fig.~\ref{fig:3.1}(d).
Combinatorially, this replaces $\sigma$ by the conjugate inverse, or inverse conjugate:
	\begin{equation}
	\sigma^{\rho \kappa} = (\sigma^\kappa)^{-1} =
	(\sigma^{-1})^\kappa = \kappa \sigma^{-1}\kappa\,.
	\label{eq:3.16}
	\end{equation}
Geometrically, the third involution $(\rho \kappa u)(x) = -u(1-x)$ acts on hemispheres by
	\begin{equation}
	\Sigma_\pm^{\kappa\rho ,j}(\kappa\rho v) =
	\left\lbrace
	\begin{aligned}
	&\kappa\rho \Sigma_\mp^j(v)\,,\quad &\text{for}\quad &j \in \{0,2\}\,;\\
	&\kappa\rho \Sigma_\pm^j(v)\,,\quad &\text{for}\quad &j=1\,.
	\end{aligned}
	\right.
	\label{eq:3.17}
	\end{equation}
In particular, $\kappa \rho$ reverses the bipolar orientation of all edges, 
but not the orientation of the 3-cell $c_{\mathcal{O}}$.
This  swaps poles, hemispheres, and overlap faces as
	\begin{equation}
	\begin{aligned}
	\mathbf{N} \quad &\longleftrightarrow \quad \mathbf{S}\,;\\
	\mathbf{W} \quad &\longleftrightarrow \quad  \mathbf{E}\,;\\
	\mathbf{NE} \quad &\longleftrightarrow \quad \mathbf{SW}\,;\\
	\mathbf{NW} \quad &\longleftrightarrow \quad \mathbf{SE}\,;
	\end{aligned}
	\label{eq:3.18}
	\end{equation}
but preserves the roles of the two meridians $\mathbf{WE}$ and $\mathbf{EW}$.
This can be visualized, in figs.~\ref{fig:1.0} and \ref{fig:1.1}, as a 180$^\circ$ rotation of the Sturm 3-ball through an axis defined by the intersections of the two meridians with the equator.

We summarize the results in table~\ref{tbl:3.1}.
Note that the Klein 4-group $\langle \kappa, \rho\rangle$ of trivial equivalences maps 3-cell templates to 3-cell templates and 3-meanders to 3-meanders.
Of course this claim can be checked against definitions~\ref{def:1.1}, \ref{def:1.3}, with the above remarks.
On the equivalent level of 3-ball Sturm attractors $\mathcal{A}_f$, however, this is trivial via the linear flow-equivalences \eqref{eq:3.1}, \eqref{eq:3.2} of the global attractors under the generators $\kappa,\ \rho$.


\section{Face and eye lifts}\label{sec4}

The characterization of 3-ball Sturmian Thom-Smale dynamic complexes $\mathcal{C}_f$ as 3-cell templates $\mathcal{C} = \text{clos } c_\mathcal{O}$, in definition~\ref{def:1.1}, can be described as the proper welding of two regular, bipolar topological disk complexes, $\mathcal{C}_-$ and $\mathcal{C}_+$, along their shared meridian boundary.
The welding succeeds to form the 2-sphere $S^2 = \partial c_\mathcal{O}$ of $\mathcal{C}$ if, and only if, two conditions hold.
First, the constituents $\mathcal{C}_-$ and $\mathcal{C}_+$ must be Western and Eastern disks, respectively, in the sense of definition~\ref{def:2.2} and lemma~\ref{lem:2.3}.
Second, the rather delicate overlap condition of definition~\ref{def:1.1}(iv) must be satisfied.
In this section we discuss EastWest complexes $\mathcal{C}_0$ which can serve, universally, as Western or Eastern disks alike.
The overlap condition is then automatically satisfied, whatever their complementing Eastern or Western disk may be.
Effectively this will allow us to lift any Eastern or Western disk $\mathcal{C}_\pm$ to a 3-cell template, by the faces of any EastWest complex $\mathcal{C}_0$.
This single \emph{face lift} will account for the majority of cases in the examples of sections~\ref{sec6} and \ref{sec7}.

\begin{figure}[]
\centering \includegraphics[width=\textwidth]{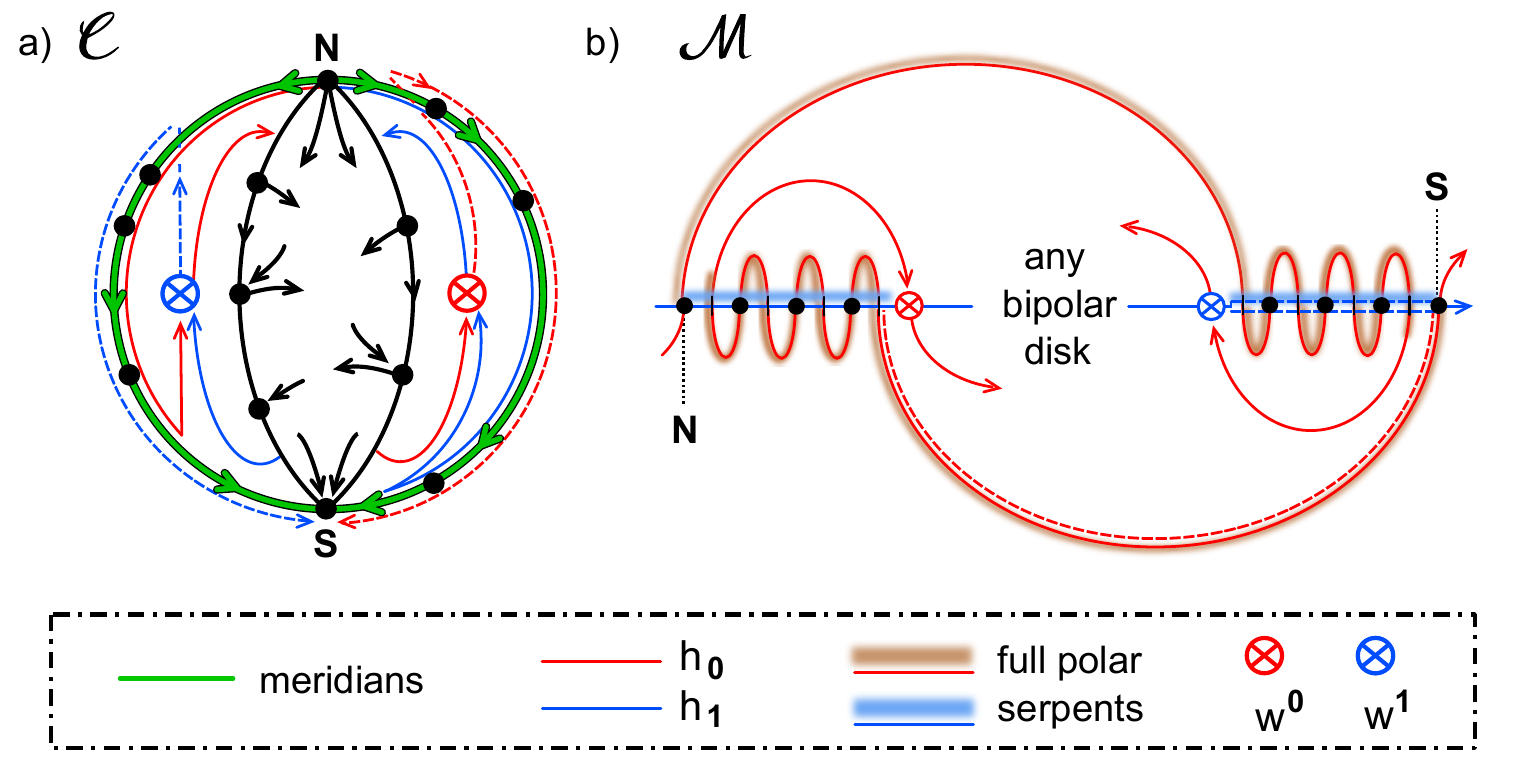}
\caption{\emph{
(a) Schematics of an EastWest complex $\mathcal{C}_0$.
Poles are $\mathbf{N},\ \mathbf{S}$.
Arrows indicate the bipolar orientation.
Green: pole-to-pole boundary paths.
The paths are contained in the boundaries $\partial c_{w^\iota}$ of the boundary faces $c_{w^\iota}$ with barycenters $w^\iota$, respectively, for $\iota \in \{0,1\}$.
(b) The meander $\mathcal{M}$ resulting from the SZ-pair $(h_0,h_1)$ in $\mathcal{C}$.
Note the full $\mathbf{N}$- and $\mathbf{S}$-polar serpents.
}}
\label{fig:4.1}
\end{figure}

\begin{defi}
Let $\mathcal{C}_0$ be a regular, bipolar topological disk complex.
We call $\mathcal{C}_0$ \emph{EastWest disk} if $\mathcal{C}_0$ is, both, Western and Eastern in the sense of definition~\ref{def:2.2}.
\label{def:4.1}
\end{defi}

See fig.~\ref{fig:3.1} for four examples; the EastWest complex is the closed hemisphere with two faces, in those cases.
See also fig.~\ref{fig:4.1} for the general case.

\begin{lem}
For any regular bipolar topological disk complex $\mathcal{C}_0$ the following three properties are equivalent:
\begin{itemize}
\item[(i)] $\mathcal{C}_0$ is an EastWest disk;
\item[(ii)] all polar serpents of the SZ- or ZS-pair $(h_0,h_1)$ of $\mathcal{C}_0$ are full polar serpents;
\item[(iii)] each pole-to-pole boundary path in $\mathcal{C}_0$ is contained in the boundary of some single face.
\end{itemize}
\label{lem:4.2}
\end{lem}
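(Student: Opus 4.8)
The plan is to prove the three-way equivalence as a cycle $(i) \Rightarrow (iii) \Rightarrow (ii) \Rightarrow (i)$, or perhaps more efficiently by establishing $(i) \Leftrightarrow (ii)$ directly from Lemma~\ref{lem:2.3} and then closing the loop through $(iii)$. The crucial observation is that Lemma~\ref{lem:2.3} already characterizes the \emph{Western} property via fullness of the $\mathbf{S}$-polar serpents, and the \emph{Eastern} property via fullness of the $\mathbf{N}$-polar serpents. Since Definition~\ref{def:4.1} declares $\mathcal{C}_0$ to be EastWest precisely when it is \emph{both} Western and Eastern, the equivalence $(i) \Leftrightarrow (ii)$ is almost immediate: $\mathcal{C}_0$ is EastWest iff both the $\mathbf{N}$-polar and the $\mathbf{S}$-polar $h_\iota$-serpents are full, for $\iota \in \{0,1\}$, which is exactly the statement that \emph{all} polar serpents of the SZ- or ZS-pair are full. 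So the content of $(i) \Leftrightarrow (ii)$ is merely the conjunction of the two halves of Lemma~\ref{lem:2.3}, and I would dispatch it in one or two sentences.

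\textbf{The geometric equivalence with $(iii)$.} The substantive new work is the equivalence of $(ii)$ and $(iii)$, translating the combinatorial fullness of polar serpents into the topological statement that each pole-to-pole boundary path lies in the boundary of a single face. First I would recall that a regular bipolar topological disk has exactly two boundary paths from $\mathbf{N}$ to $\mathbf{S}$ along $\partial \mathcal{C}_0$, one on each side. I would argue that a full $\mathbf{S}$-polar $h_\iota$-serpent, which by Lemma~\ref{lem:2.3} contains every vertex of its boundary half-circle except $\mathbf{N}$, forces the corresponding boundary path to consist of edges all directed consistently toward $\mathbf{S}$ (respectively away from $\mathbf{N}$), with no interior face-edges interrupting it; the bipolar orientation together with the Western/Eastern edge-direction rule of Definition~\ref{def:2.2} then pins down that this entire pole-to-pole path is the boundary arc of a single adjacent face. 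Conversely, if a pole-to-pole boundary path lies entirely in $\partial c_w$ for a single face $c_w$, then traversing that face via the SZ/ZS recipe of Definition~\ref{def:2.1} sweeps the serpent across the whole half-circle, yielding fullness. The bookkeeping for both values $\iota \in \{0,1\}$ and for the two poles runs symmetrically.

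\textbf{The main obstacle.} The delicate step is $(iii) \Rightarrow (ii)$ in the direction where a single face carrying the whole boundary path must force \emph{both} $h_0$- and $h_1$-serpents to be full simultaneously. The face-traversal rules differ for $h_0$ and $h_1$ (the ``Z'' versus ``S'' shapes of fig.~\ref{fig:2.1}), so I would need to verify carefully that the single boundary face is traversed in a way compatible with both paths reaching across the entire half-circle; the asymmetry between entering at $w_-^\iota$ and exiting at $w_+^\iota$ is where an off-by-one or an edge-orientation slip is most likely to hide. I expect the cleanest route is to phrase everything in terms of the edge directions mandated by Definition~\ref{def:2.2}: fullness, the single-face condition, and the inward/outward edge rule are three equivalent encodings of the same incidence data on the boundary half-circle, and once the edge-direction dictionary is fixed, each implication reduces to tracing the unique bipolar path. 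I would therefore lead with the edge-orientation reformulation and treat $(ii)$ and $(iii)$ as two readings of it, relegating the serpent-fullness equivalence to its citation of Lemma~\ref{lem:2.3}.
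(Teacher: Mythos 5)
Your proposal is correct in substance and, in its final reformulation, lands on the paper's own strategy: the equivalence (i)~$\Leftrightarrow$~(ii) is precisely the conjunction of the two halves of lemma~\ref{lem:2.3}, and the bridge to (iii) is carried by the inward/outward edge rules of definition~\ref{def:2.2}. The paper, however, proves (i)~$\Leftrightarrow$~(iii) directly, and the entire content is a two-line orientation conflict that your sketch circles around without quite stating: an interior edge meeting the boundary at a non-pole vertex $v$ would have to be directed away from $v$ (because $\mathcal{C}_0$ is Eastern) and also towards $v$ (because $\mathcal{C}_0$ is Western), which is impossible; hence in an EastWest disk no interior edge touches $\partial \mathcal{C}_0$ except at the poles, and each pole-to-pole boundary path therefore lies on the boundary of a single face. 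Conversely, under (iii) every interior edge with a boundary endpoint is polar, so the Western and Eastern orientation requirements are vacuous and (i) holds trivially. This routing dissolves what you call the ``main obstacle'': one never needs (iii)~$\Rightarrow$~(ii) directly, since (iii)~$\Rightarrow$~(i)~$\Rightarrow$~(ii) via lemma~\ref{lem:2.3}, so the Z- versus S-shaped face traversals of definition~\ref{def:2.1} never enter the argument at all. Note also that your middle paragraph is loose at one point: fullness of a single $\mathbf{S}$-polar serpent (equivalently, Westernness) does not by itself exclude interior edges at non-pole boundary vertices --- it only forces them to point outward; it is the conjunction of both fullness conditions, i.e.\ the full strength of (ii), that produces the orientation conflict eliminating such edges.
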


\begin{proof}[\textbf{Proof.}]
By lemma~\ref{lem:2.3}, the disk $\mathcal{C}_0$ is Western/Eastern if and only if the $\mathbf{S}$- and $\mathbf{N}$-polar serpents are full.
This proves that (i) and (ii) are equivalent.

To show that (i) implies (iii) we only have to show that interior edges in an EastWest complex $\mathcal{C}_0$ do not possess vertices $v$ on the boundary, other than the poles $\mathbf{N},\ \mathbf{S}$.

This is obvious because any such edge has to be directed away from $v$, since $\mathcal{C}_0$ is Eastern, and also towards $v$, since $\mathcal{C}_0$ is also Western.

To show that, conversely, (iii) implies (i) we only have to remark that all interior edges with boundary vertices $v$ are polar, and hence are exempt of any Western or Eastern orientation requirements.
This proves the lemma.
\end{proof}

\begin{defi}
Let $\mathcal{C}_+$ be an Eastern disk and $\mathcal{C}_0$ an EastWest disk such that $\partial \mathcal{C}_0$ coincides with the mirror image of $\partial \mathcal{C}_+$, in the chosen planar embedding.
We call the 3-cell template $\mathcal{C}$ defined by
	\begin{equation}
	\text{clos } \mathbf{W} := \mathcal{C}_0\,,\qquad
	\text{clos } \mathbf{E}:= \mathcal{C}_+
	\label{eq:4.1}
	\end{equation}
the \emph{West lift} of the Eastern disk $\mathcal{C}_+$ by the EastWest disk $\mathcal{C}_0$.\newline
The \emph{East lift} $\mathcal{C}$ of a Western disk $\mathcal{C}_-$ by a boundary compatible EastWest disk $\mathcal{C}_0$ is defined, analogously, by
	\begin{equation}
	\text{clos } \mathbf{W} := \mathcal{C}_-\,,\qquad
	\text{clos } \mathbf{E}:= \mathcal{C}_0\,.
	\label{eq:4.2}
	\end{equation}
\label{def:4.3}
\end{defi}

\begin{figure}[t!]
\centering \includegraphics[width=\textwidth]{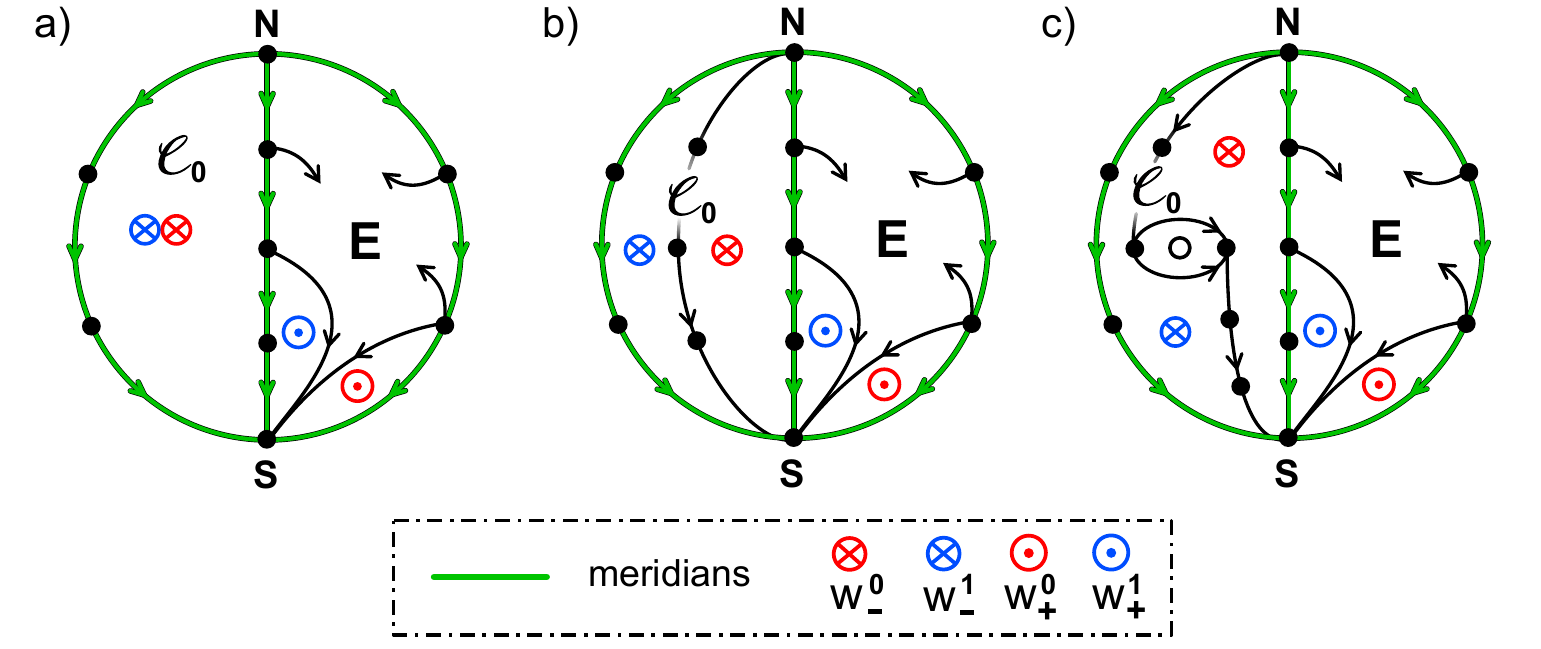}
\caption{\emph{
Three EastWest complexes $\mathcal{C}_0$ as Western, left, disks in 3-cell templates.
See also fig.~\ref{fig:1.0} for an example, and fig.~\ref{fig:1.1} for general notation.
(a) A single-face $(2,3)$-gon $\mathcal{C}_0$, with barycenter $w_-^0=w_-^1$.
See \ref{fig:2.2} for a general description of $(m,n)$-gons.
(b) A double face lift with two faces.
Each face takes care of one meridian.
(c) An eye lift where $\mathcal{C}_0$ possesses one interior closed face, the eye, which is detached from the two meridians.
}}
\label{fig:4.2}
\end{figure}

Note that the lift construction is compatible with the trivial equivalence group $\langle \kappa, \rho \rangle$ of section~\ref{sec3}.
The Eastern/Western property of $\mathcal{C}_\pm$ gets swapped by $\kappa$ but is invariant under $\rho$, by table~\ref{tbl:3.1}.
Hence the EastWest property of $\mathcal{C}_0$ is invariant under $\kappa, \rho$.
Therefore trivially equivalent Western or Eastern Sturm disks lift to trivially equivalent Sturm 3-balls by trivially equivalent EastWest disks.

The lift by an EastWest disk $\mathcal{C}_0$ is easily described, in terms of the resulting 3-cell template $\mathcal{C}$ and figs.~\ref{fig:1.1},  \ref{fig:4.2}.

\begin{figure}[t!]
\centering \includegraphics[width=0.95\textwidth]{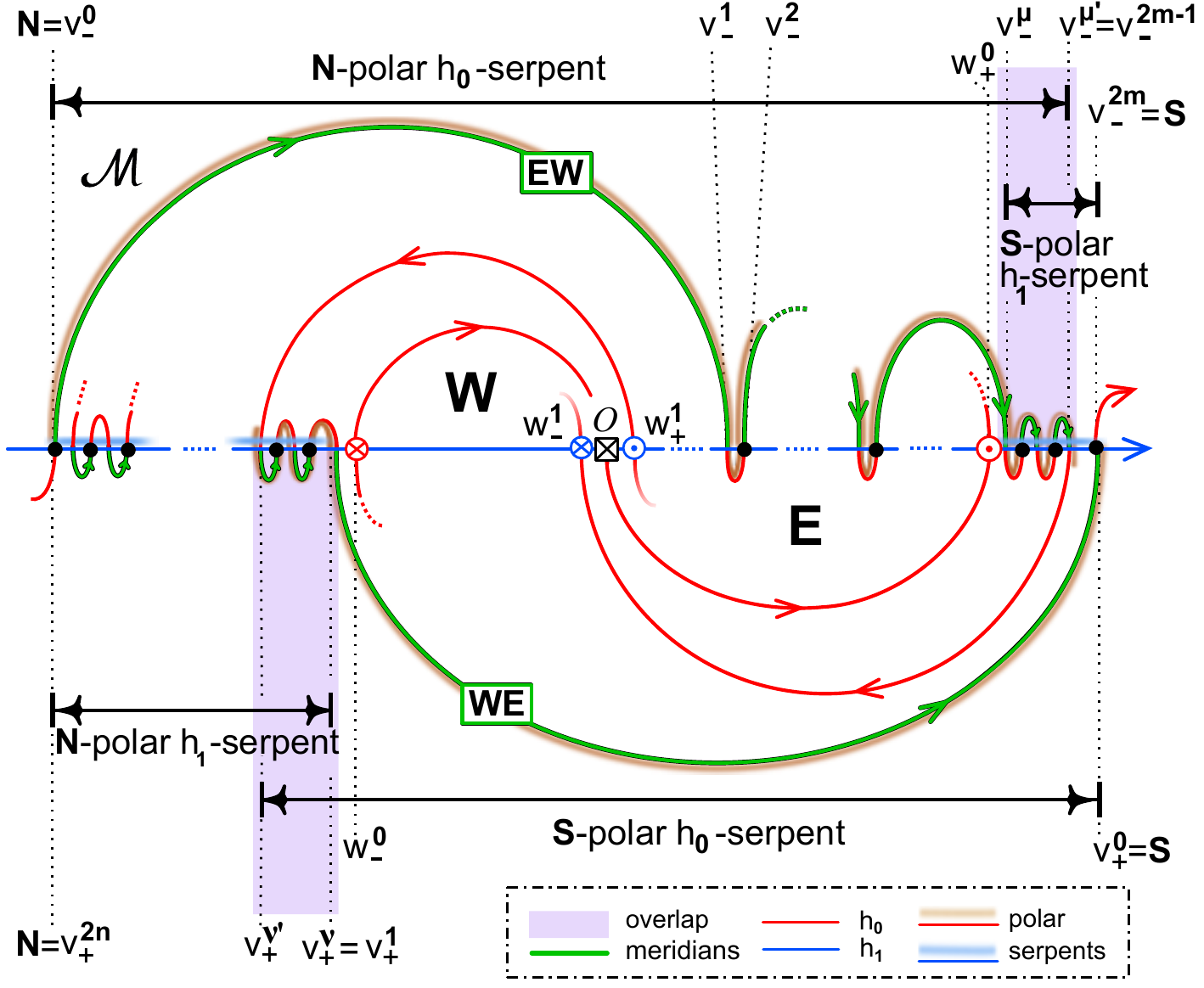}
\caption{\emph{
The 3-meander template of a West lift of an Eastern disk $\text{clos } \mathbf{E} = \mathcal{C}_+$ by a (Western) EastWest disk $\text{clos } \mathbf{W} = \mathcal{C}_0$.
Note the full $\mathbf{N}$-polar serpents, inherited from the Eastern disk $\mathbf{E}$.
The $\mathbf{S}$-polar serpents are not full, in general, but are overlapped completely by their full $\mathbf{N}$-polar counterparts.
This leads to a subtle simplification of the general 3-meander template, fig.~\ref{fig:1.2}.
}}
\label{fig:4.3}
\end{figure}

A West lift results in meridian faces $\mathbf{NW}$ and $\mathbf{NE}$ which stretch all the way to the South pole $\mathbf{S}$, by definition of $\text{clos }\mathbf{W} = \mathcal{C}_0$.
In the notation of figs.~\ref{fig:1.1} and \ref{fig:1.2},
	\begin{equation}
	\mu'=2m-1\,,\qquad \nu = 1\,.
	\label{eq:4.3}
	\end{equation}
In terms of the resulting 3-meander template, fig.~\ref{fig:1.2}, the non-overlap parts $v_-^{\mu'+1} \ldots v_-^{2m-1}$ and $v_+^{\nu-1} \ldots v_+^1$ of $\mathbf{S}$-polar serpents $h_\iota$ disappear.
This leads to the subtle difference between fig.~\ref{fig:1.2} and fig.~\ref{fig:4.3}.

For the East lift of a Western disk $\mathcal{C}_-$ by an (Eastern) EastWest disk $\mathcal{C}_0$ we analogously obtain
	\begin{equation}
	\mu = 1\,,\qquad \nu'= 2n-1\,.
	\label{eq:4.4}
	\end{equation}
Since the East lift is related to the West lift by the trivial equivalence $\kappa u=-u$, we can obtain the resulting 3-meander template of fig.~\ref{fig:1.2} or fig.~\ref{fig:4.3} by a 180$^\circ$ rotation of the shooting curve.

We mention the three most elementary examples of EastWest complexes $\mathcal{C}_0$ and their associated face lifts; see fig.~\ref{fig:4.2}.
We describe all lifts as West lifts, i.e. with $\mathcal{C}_0$ in the Western role.
The Eastern role can easily be obtained by the trivial equivalence $\kappa$ which reverses bipolar orientation; see fig.~\ref{fig:3.1} and table~\ref{tbl:3.1}.

A single-face disk $\mathcal{C}_0$ is always an $(m,n)$-gon and always EastWest; see fig.~\ref{fig:4.2}(a).
We call the lift by $\mathcal{C}_0$ simply a \emph{single-face lift}.
The most frequent case, below, will involve meridians which consist of a single directed edge, each.
We call this the \emph{minimal face lift}.
The associated $(1,1)$-gon $\mathcal{C}_0$ is the planar Chafee-Infante attractor $\mathcal{A}_{\text{CI}}^2$.

\begin{figure}[p!]
\centering \includegraphics[width=\textwidth]{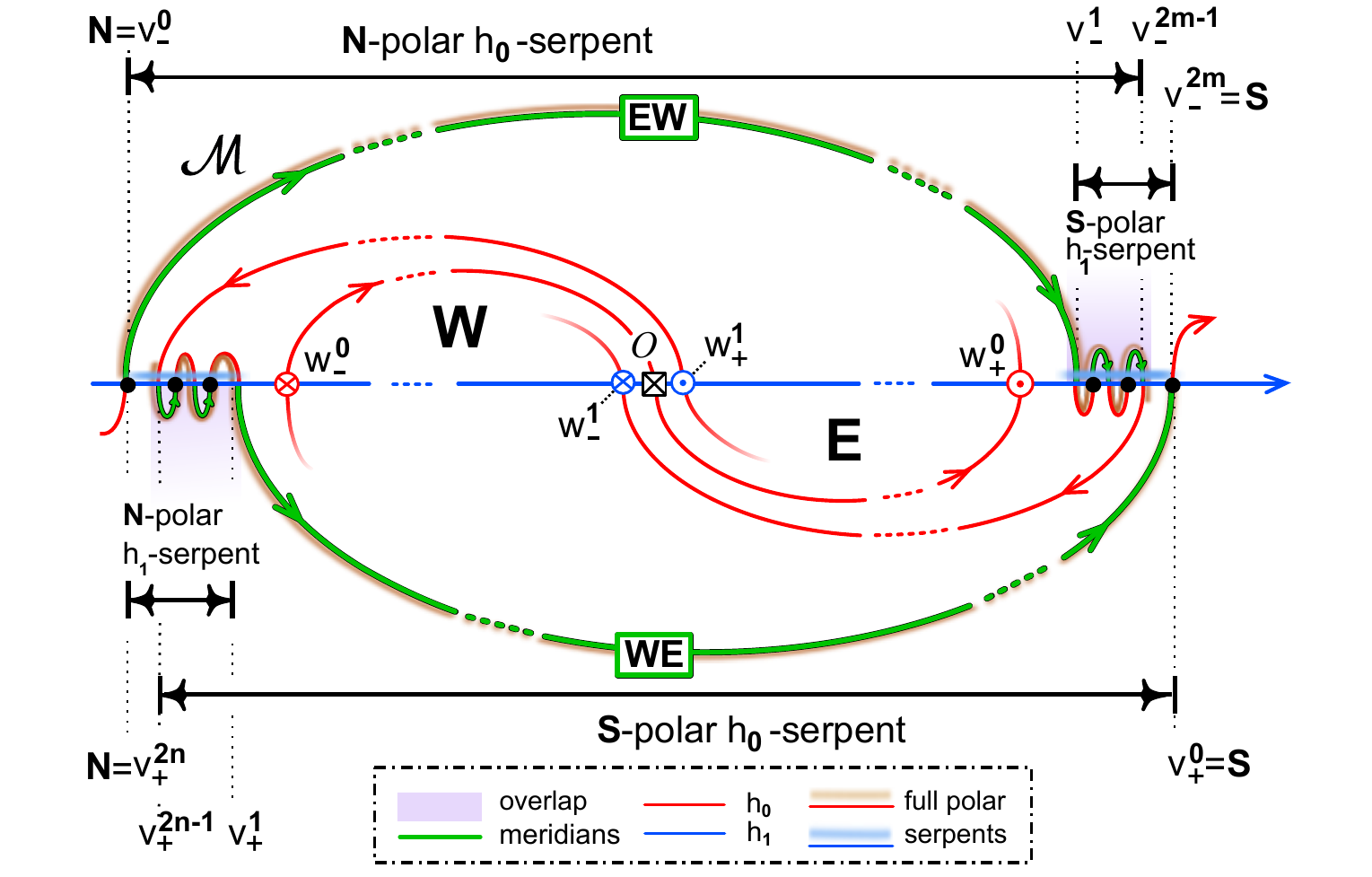}
\caption{\emph{
The 3-meander template resulting from the lift of a general (Western) EastWest disk $\mathcal{C}'_0$ by a general (Eastern) EastWest disk $\mathcal{C}_0$, welded at the shared $(m+n)$-gon meridian boundary.
Interchanging the Eastern and Western roles of $\mathcal{C}_0$ and $\mathcal{C}'_0$ interchanges $m$ and $n$.
}}
\label{fig:4.4}
\end{figure}

\begin{figure}[p!]
\centering \includegraphics[width=\textwidth]{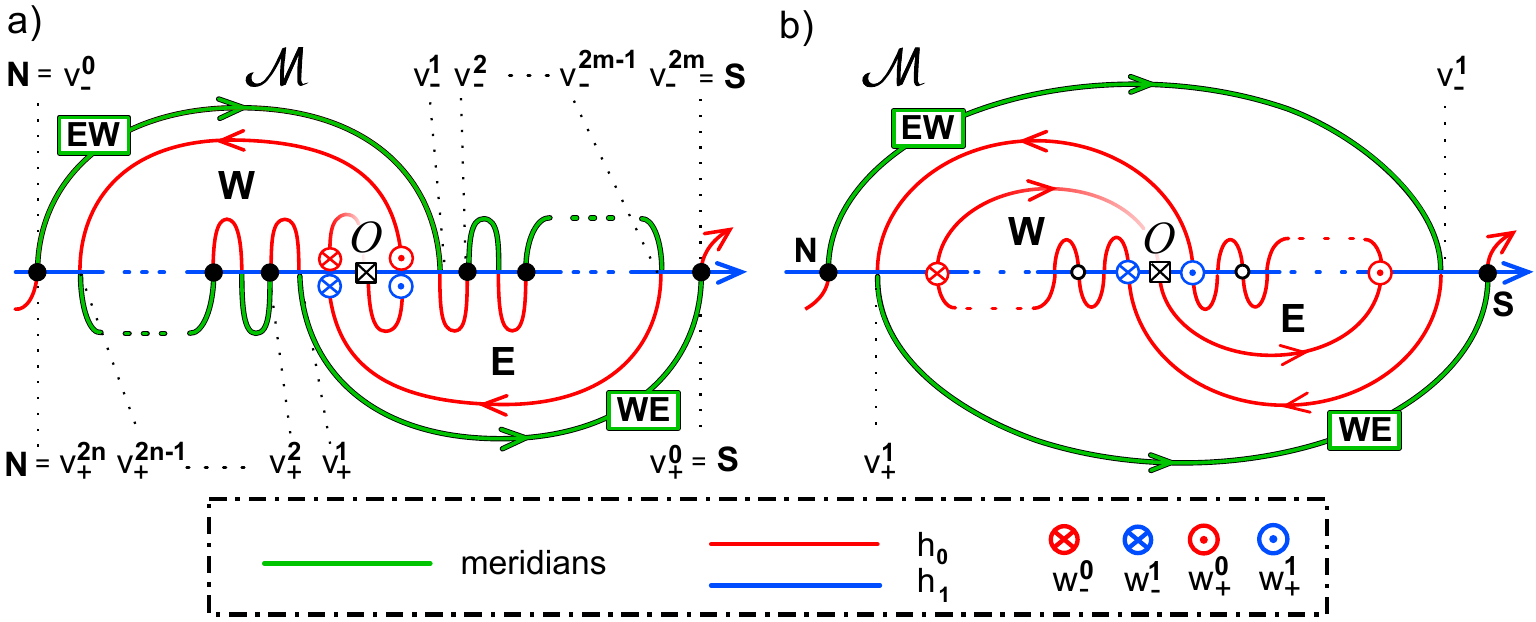}
\caption{\emph{
Two examples of 3-meander templates involving $(m+n)$-gons.
(a) The single face lift of a Western $(n,m)$-gon by an Eastern $(m,n)$-gon, called pitchfork lift.
Note the modification of the planar $(m,n)$-gon meander of fig.~\ref{fig:2.2}(b) by a pitchfork bifurcation of the face center $\mathcal{O}$.
(b) The lift of a Western $n$-striped disk by an Eastern $m$-striped disk.
Note the resulting unstable double cone suspension, of the planar $(m,n)$-gon meander of fig.~\ref{fig:2.2}(b), by a 180$\,^\circ$ rotation and the addition of two new polar arcs $\mathbf{N} v_-^1$ and $v_+^1 \mathbf{S}$.
}}
\label{fig:4.5}
\end{figure}

A \emph{double-face lift} involves any EastWest disk $\mathcal{C}_0$ with two faces.
The two distinct faces $c_{w^0}$ and $c_{w^1}$ are then separated by a third pole-to-pole path in the 1-skeleton $\mathcal{C}_0^1$, interior to $\mathcal{C}_0$, in addition to the two boundary paths.
See fig.~\ref{fig:4.2}(b).
If the three paths consist of a single directed edge, each, we speak of a \emph{minimal double-face lift}.

An \emph{eye lift} involves any EastWest disks $\mathcal{C}_0$ with three faces:
the two meridian faces $c_{w^0},\ c_{w^1}$, and a third face $c_v$ which we call the \emph{eye}.
See fig.~\ref{fig:4.2}(c) for the general configuration.
In general, the closure $\overline{c}_v$ will be interior to $\mathcal{C}_0$, detached from poles and meridians.
However, we also admit the \emph{degenerate eye lift} variants when
	\begin{equation}
	\partial c_v \,\cap \,\partial \mathcal{C}_0
	\label{eq:4.6}
	\end{equation}
consists of one or both poles.
The \emph{minimal eye lift} is the degenerate case where $\mathcal{C}_0$ is \emph{striped} vertically into three Chafee-Infante disks $\mathcal{A}_{\text{CI}}^2$, alias (1,1)-gons, by a total of four pole-to-pole edges, two interior plus two meridian boundaries.

We conclude this section with a brief look at lifts of EastWest disks $\mathcal{C}_0$ by boundary compatible EastWest disks $\mathcal{C}'_0$.
Then \eqref{eq:4.3}, \eqref{eq:4.4} imply
	\begin{equation}
	\mu=1\,,\ \mu'=2m-1\,,\quad
	\nu=1\,,\ \nu'=2n-1
	\label{eq:4.8}
	\end{equation}
because we may interpret the lift as, both, an East lift or a West lift.
In particular all polar serpents are full and their non-overlap regions disappear.
See fig.~\ref{fig:4.4} for the resulting 3-meander template.

For example, let $\mathcal{C}_0$ be a single-face $(m,n)$-disk as in fig.~\ref{fig:2.2}(a), and choose $\mathcal{C}'_0$ to be the mirrored $(n,m)$-disk.
The lift
	$$\text{clos }\mathbf{W} = \mathcal{C}_-:= \mathcal{C}'_0\,,\qquad
	\text{clos }\mathbf{E} = \mathcal{C}_+:= \mathcal{C}_0$$
provides the 3-cell template $\mathcal{C}$ of fig.~\ref{fig:1.1}(b).
Note however that
	\begin{equation}
	w_\pm^0 = w_\pm^1\,,
	\label{eq:4.7}
	\end{equation}
and all other interior vertices are missing, because the hemispheres are single faced.
See fig.~\ref{fig:4.5}(a) for the resulting 3-meander.
In fact the 3-ball meander arises from the planar $(m,n)$-gon of fig.~\ref{fig:2.2}(b) by a supercritical pitchfork bifurcation of the face center $\mathcal{O}$. 
The simplest case is the Chafee-Infante 3-ball $\mathcal{A}_{\text{CI}}^3$, which arises here as a face lift of the Chafee-Infante disk $\mathcal{C}_0 =\mathcal{A}_{\text{CI}}^2$, alias the (1,1)-gon, by itself.


\begin{figure}[t!]
\centering \includegraphics[width=0.85\textwidth]{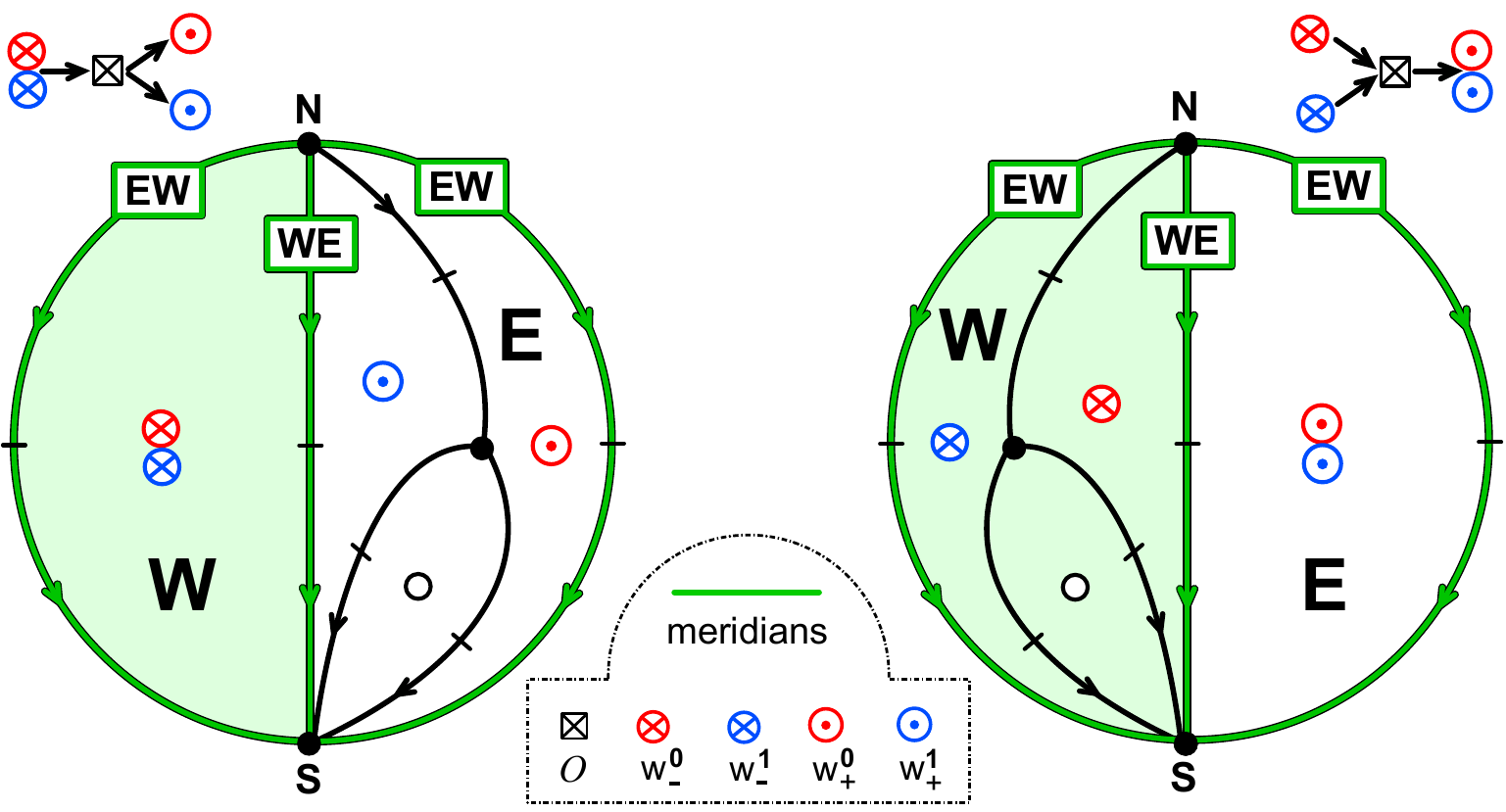}
\caption{\emph{
Two mirror-symmetric 3-ball attractors $\mathcal{A}^+$ (left) and $\mathcal{A}^-$ (right).
Note the degenerate eye lifts.
The attractors are not trivially equivalent, but result from lifts of the same EastWest disks $\mathcal{C}_0,\ \mathcal{C}'_0$ in swapped Eastern and Western roles.
}}
\label{fig:4.6}
\end{figure}

The $(m,n)$-\emph{striped} Sturm 3-ball is another example which involves the $(m,n)$-gon, though not at first sight.
Let $\mathcal{C}_0$ denote the $m$-striped EastWest disk which consists of $m$ Chafee-Infante disks, separated by $m-1$ single interior pole-to-pole edges.
The minimal double-face and eye disk above, correspond to the cases $m=2$ and $m=3$, respectively.
For $\mathcal{C}'_0$ we choose the $n$-striped EastWest disk.
Then we obtain the 3-meander of fig.~\ref{fig:4.5}(b).
This 3-meander coincides with the 2-meander of the $(m,n)$-gon of fig.~\ref{fig:2.2}(b), rotated by 180$^\circ$, with newly added overarching polar arcs $\mathbf{N}v_-^1$ and $v_+^1\mathbf{S}$.
In \cite{firo10} we have called the addition of such arcs a \emph{suspension}.
Indeed the resulting 3-ball attractor, in our case, is the one-dimensionally unstable suspension of the $(m,n)$-gon by a double cone construction with the resulting new poles $\mathbf{N},\ \mathbf{S}$ as attracting cone vertices.
The simplest case is the Chafee-Infante 3-ball $\mathcal{A}_{\text{CI}}^3$, again, which now arises as a suspension of $\mathcal{C}_0 =\mathcal{A}_{\text{CI}}^2$, alias the (1,1)-gon.

As a final caveat we recall an example from \cite{firo3d-1} which we redraw in fig.~\ref{fig:4.6}.
We have chosen a 3-face EastWest disk $\mathcal{C}_0$ of eye type, with the eye attached to the pole $\mathbf{S}$.
For $\mathcal{C}'_0$ we chose the EastWest Chafee-Infante disk $\mathcal{A}_{\text{CI}}^2$.
The two lifts only differ by the swapped roles of $\mathcal{C}_0$ and $\mathcal{C}'_0$ as Western and Eastern disks.
Note that the resulting 3-cell templates are not trivially equivalent.
Indeed, table~\ref{tbl:3.1} asserts that any trivial hemisphere swap $\mathbf{W} \leftrightarrow \mathbf{E}$ is accompanied by a corresponding pole swap $\mathbf{N} \leftrightarrow\mathbf{S}$, due to a reversal of bipolar orientations.
It is interesting to compare this example with our previous remark on the appropriate lifting of trivially equivalent Western or Eastern disks by EastWest disks.


\section{Duality}
\label{sec5}

For planar Sturm attractors, duality was introduced in \cite{firo08}, \cite[section~2.4]{firo10} to assist in the enumeration of all cases with up to 11 equilibria.
In the present section we explore duality on the boundary 2-sphere $S^2 = \partial c_\mathcal{O}$ of 3-cell templates $\mathcal{C}= \mathcal{C}_f$ for Sturm 3-ball attractors $\mathcal{A}_f$.
The properties are quite different, in the two settings.
In the plane the duals turned out to be bipolar, and thus provided a duality between planar attractors which, essentially, corresponded to time reversal (!) inside the attractor plane.
For 3-balls, duals turn out bipolar, interior to each hemisphere $\mathbf{W}$ and $\mathbf{E}$, separately.
Across the welding meridians, however, all polarity disappears and directed paths keep circling forever.

In the planar case we have defined the 1-skeleton $\mathcal{C}^{*,1}$ of the oriented dual $\mathcal{C}^*$ of $\mathcal{C}$ as follows.
Vertices of $\mathcal{C}^{*,1}$ are the $i=2$ barycenters $w$ of faces $c_w \in \mathcal{C}$.
Edges $e^*$ of $\mathcal{C}^{*,1}$ run between any two barycenters $w$ of faces $c_w \in \mathcal{C}$ which are adjacent along any shared  edge $e= c_v \in \mathcal{C}^1$ with $i=1$ barycenter saddle $v$.
The direction of $e^*$ is chosen such that $e^*$ crosses $e$ from left to right at the intersection $v$.
In other words
	\begin{equation}
	\det (e^*,e) = +1
	\label{eq:5.1}
	\end{equation}
for the direction vectors $e$ and $e^*$ at $v$.
This construction requires two artificial pole vertices $\underline{v} = \mathbf{N}^*$ and $\overline{v} = \mathbf{S}^*$ of $\mathcal{C}^*$ to be introduced, outside $\mathcal{C}$, to start and terminate all directed edges $e^*$ which enter and leave through the boundary of $\mathcal{C}$, respectively.
By this construction, the planar complex $\mathcal{C}^*$ became regular, bipolar and contractible, i.e. a planar Sturm complex, for any planar Sturm complex $\mathcal{C}$.
See \cite{firo08, firo10} for further details.

For 3-cell templates $\mathcal{C}$, i.e. for 3-ball Sturm attractors, we employ the same construction on the 2-sphere complex $S^2 = \partial c_\mathcal{O} = \mathcal{C}^2$.
As in figs.~\ref{fig:1.0} and \ref{fig:1.1}, we use the standard planar orientation of $S^2$, when viewed from outside.
Again we require $e^*$ to cross $e$, left to right, in this orientation.
This defines the \emph{dual} 2-\emph{sphere complex} $\mathcal{C}^{*,2}$ of $S^2$.
See fig.~\ref{fig:5.1} for a sufficiently general example.
Because we are on the sphere, this time, there is no need to add any extra poles.

However, the dual complex $\mathcal{C}^{*,2}$ fails to be bipolar.
The poles $\mathbf{N},\ \mathbf{S}$ of $\mathcal{C}^2$, in fact, become faces $\mathbf{N}^*$:= $c_{\mathbf{N}}^*,\ \mathbf{S}^*$:= $c_{\mathbf{S}}^*$ of the dual $\mathcal{C}^{*,2}$ with \emph{polar circles} $\partial \mathbf{N}^*,\ \partial \mathbf{S}^*$ as boundaries.
Note how $\partial \mathbf{N}^*$ is oriented clockwise, and $\partial \mathbf{S}^*$ counter-clockwise, in our chosen orientation of $S^2$ and $\mathcal{C}^{*,1}$.

\begin{figure}[t!]
\centering \includegraphics[width=\textwidth]{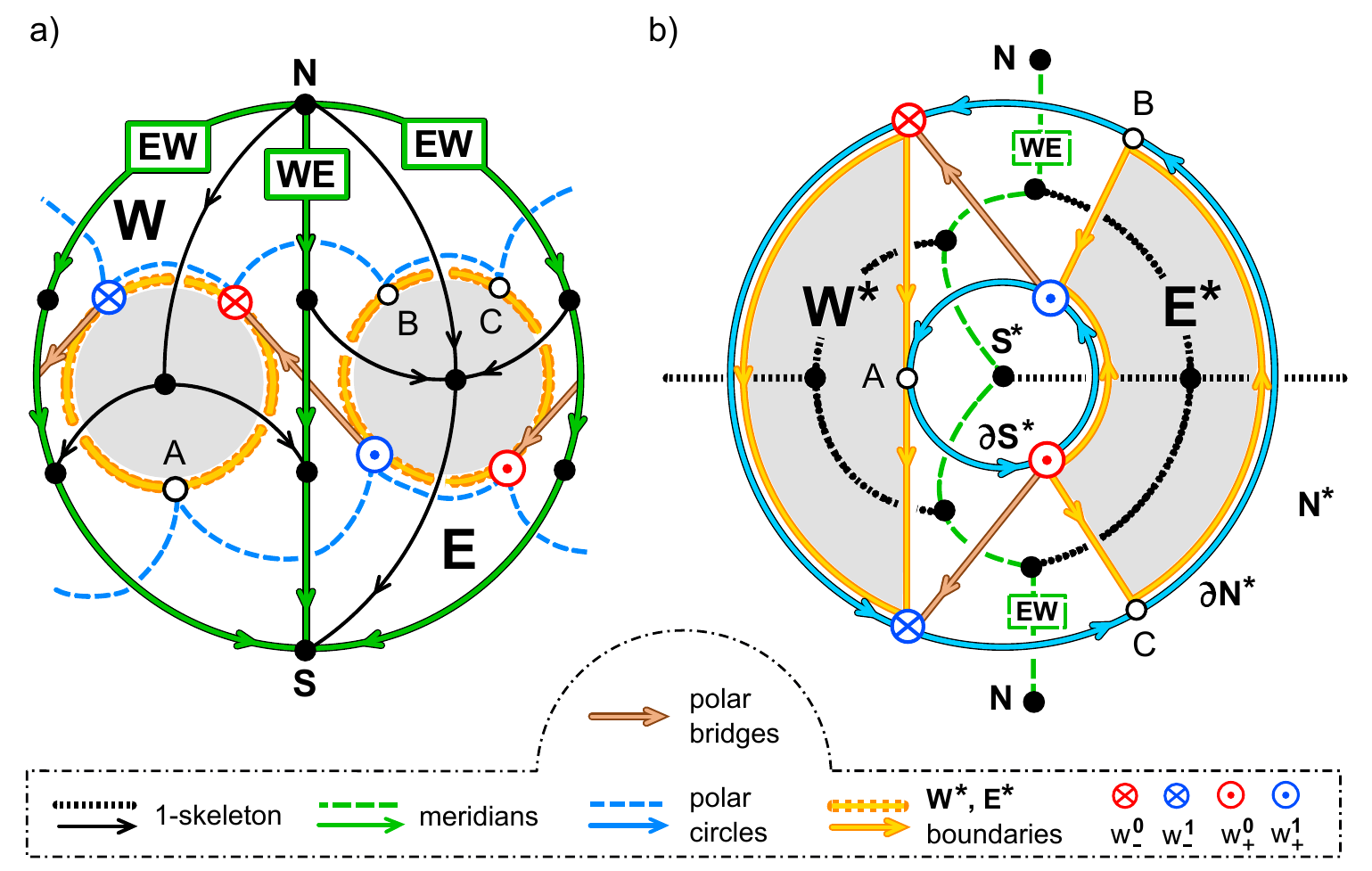}
\caption{\emph{
The dual complex $\mathcal{C}^*$ of an example 3-cell complex $\mathcal{C}$.
The seven face sources $\circ$ of $\mathcal{C}$ become vertices of the dual, $\mathcal{C}^*$.
The eight sources $\bullet$ of faces in $\mathcal{C}^*$ are the sink vertices of $\mathcal{C}$.
The orientation of $S^2 \subseteq \mathbb{R}^3$ is taken to be standard planar, when viewed from outside.
(a) The 1-skeleton $\mathcal{C}^{1}$ for the 3-cell template $\mathcal{C}$; see also fig.~\ref{fig:1.1}(a).
The boundaries $\mathbf{EW}$ are identified.
The thirteen edges $e$ of $\mathcal{C}$ (solid) are crossed by dual edges $e^*$ (dashed) from left to right.
(b) Schematics of the dual complex $\mathcal{C}^{*,1}$. 
Edges $e^*$ of $\mathcal{C}^*$ are solid, and $e$ of $\mathcal{C}$ are dashed.
The poles $\mathbf{N}, \mathbf{S}$ become the barycenters of the exterior face $\mathbf{N}^*$ and some interior face $\mathbf{S}^*$, respectively.
Note the annulus bounded by the dual polar circles $\partial \mathbf{N}^*, \partial \mathbf{S}^*$ (solid blue), which surround the pole $\mathbf{N}$ clockwise(!), and $\mathbf{S}$ counter-clockwise.
Bipolarity holds within each dual hemisphere core, $\mathbf{W}^* = \mathcal{C}_-^{2,*}$ and $\mathbf{E}^* = \mathcal{C}_+^{2,*}$ (both gray), with North poles $w_\pm^0$ and South poles $w_\pm^1$, and with dual meridians (orange).
The duals to overlap edges form single-edge bridges between the polar circles, directed from the dual South poles $w_\pm^1$ in one hemisphere core to the dual North poles $w_\mp^0$ in the opposite hemisphere core.
}}
\label{fig:5.1}
\end{figure}

By definition the polar circle $\partial c_{\mathbf{N}}^*$ contains a directed path segment from $w_-^0$ to $w_-^1$ in $\mathbf{W}$.
Likewise, $\partial c_{\mathbf{S}}^*$ contains a disjoint directed path segment from $w_+^0$ to $w_+^1$ in the Eastern hemisphere $\mathbf{E}$.

The edges $v$ and faces $w$ in $\mathcal{E}_\pm^2(\mathcal{O})$, i.e. inside each open hemisphere $\mathbf{W} = \Sigma_-^2(\mathcal{O}),\ \mathbf{E}= \Sigma_+^2(\mathcal{O})$, form a bipolar 1-skeleton $\mathcal{C}_-^{*,1},\ \mathcal{C}_+^{*,1}$, respectively.
Indeed, any dual circle of $\mathcal{C}_-^{*,1}$ in $\mathbf{W}$ would have to surround a source or sink of the bipolar orientation $\mathcal{C}^1$ in $\mathbf{W}$, as a face of $\mathcal{C}^{*,2}$, depending on the orientation of the dual cycle.
The only exceptions are the two faces of $w_\pm^\iota$, where the local poles of their boundaries both lie on the meridians, and one of their pole-to-pole boundaries, being contained in a meridian, has been deleted entirely.

Let $\mathbf{W}^* = \mathcal{C}_-^{+,2}$ and $\mathbf{E}^*= \mathcal{C}_+^{*,2}$ denote the resulting dual complexes if we include all the $i=0$ sinks of $\mathcal{C}$ in $\mathbf{W},\ \mathbf{E}$, respectively, as faces.
As planar bipolar, regular, and contractible cell complexes they must appear in our previous lists of planar Sturm attractors with the appropriate number of equilibria.
We call $\mathbf{W}^*$ and $\mathbf{E}^*$ the \emph{(dual) Western and Eastern core}, respectively.

Saddles $i_v=1$ of $\mathbf{WE}$ meridian edges $e=c_v$ generate dual edges $e^*$ which connect the Eastern core $\mathbf{E}^*= \mathcal{C}_+^{2,*}$ to the Western core $\mathbf{W}^*= \mathcal{C}_-^{2,*}$, directly.
For example, any overlap edge $e$ in $\mathbf{WE}$ guarantees a directed edge $e^*$ from $w_+^1$ in the South polar circle $\partial \mathbf{S}^*$ to $w_-^0$ in the North polar circle $\partial \mathbf{N}^*$.
Similarly, the $\mathbf{EW}$ overlap guarantees at least one directed dual edge from $w_-^1$ to $w_+^0$.
We call such edges \emph{polar bridges}.
Other directed edges across meridians may or may not exist.

With these remarks we have proved the only-if part of the following characterization of duals $\mathcal{C}^{2,*}$ of the 2-sphere complexes $\mathcal{C}^2$ of 3-cell templates $\mathcal{C}$.
Note that objects like the polar circles or $w_\pm^0,\ w_\pm^1$ may coincide, totally or in parts.
This leads to interesting special cases which we discuss afterwards.

\begin{lem}
A two-dimensional cell-complex $\mathcal{C}^{2,*}$ is the dual of the 2-sphere boundary complex $\mathcal{C}^2 = \partial c_\mathcal{O}$ of a 3-cell template $\mathcal{C} = \text{clos } c_\mathcal{O}$ if, and only if, the following conditions all hold.
\begin{itemize}
\item[(i)] $\mathcal{C}^{2,*}$ is a regular 2-sphere complex which decomposes into the disjoint union of
	\begin{itemize}
	\item[(a)] two polar faces $\mathbf{N}^*$ and $\mathbf{S}^*$;
	\item[(b)] Western and Eastern (dual) cores $\mathbf{W}^* = \mathcal{C}_-^{2,*}$ and $\mathbf{E}^* = \mathcal{C}_+^{2,*}$ which are closed planar Sturm complexes with North poles $w_\pm^0$ and South poles $w_\pm^1$, respectively;
	\item[(c)] two meridian duals  $\mathbf{EW}^*$ and $\mathbf{WE}^*$ of edges and faces.
	\end{itemize}
\item[(ii)] The polar circle $\partial \mathbf{N}^*$, seen from outside the 2-sphere, is right oriented, clockwise around $\mathbf{N}^*$.
The polar circle $\partial \mathbf{S}^*$, in contrast, is left oriented, counter-clockwise around $\mathbf{S}^*$.
The polar circles define disjoint directed polar \emph{segments} $w_-^0w_-^1 = \mathbf{W}^* \cap \partial \mathbf{N}^* \subset \partial \mathbf{W}^*$ from $w_-^0$ to $w_-^1$ on $\partial \mathbf{N}^*$, and $w_+^0w_+^1 = \mathbf{E}^* \cap \partial \mathbf{S}^*  \subset \partial \mathbf{E}^*$ from $w_+^0$ to $w_+^1$ on $\partial \mathbf{S}^*$, but the polar circles may intersect otherwise.
\item[(iii)] The pre-duals $e= -e^{**}$ to meridian edges $e^*$ in $\mathbf{EW}^*$ and $\mathbf{WE}^*$ define two disjoint directed paths $\mathbf{EW}$ and $\mathbf{WE}$, respectively, from the barycenter pole $\mathbf{N}$ of $\mathbf{N}^*$ to $\mathbf{S}$ of $\mathbf{S}^*$.
\item[(iv)] There exists at least one single-edge polar bridge $e_* = w_+^1 w_-^0 \in \mathbf{WE}^*$ from the South pole $w_+^1$ of the Eastern core $\mathbf{E}^* =\mathcal{C}_+^{2,*}$ to the North pole $w_-^0$ of the Western core $\mathbf{W}^* = \mathcal{C}_-^{2,*}$, and another single-edge polar bridge $e_* = w_-^1 w_+^0 \in \mathbf{EW}^*$ from the South pole $w_-^1$ of $\mathbf{W}^* = \mathcal{C}_-^{2,*}$ back to the North pole $w_+^0$ of $\mathbf{E}^* = \mathcal{C}_+^{2,*}$.
\end{itemize}
\label{lem:5.1}
\end{lem}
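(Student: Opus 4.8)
The only-if direction is already secured by the remarks immediately preceding the statement, so the plan is to establish the converse: every $\mathcal{C}^{2,*}$ satisfying (i)--(iv) arises as the dual of the boundary sphere of some 3-cell template. The guiding principle is that the duality construction \eqref{eq:5.1} on the oriented 2-sphere is, up to the sign convention $e=-e^{**}$ recorded in condition~(iii), an involution: dualizing a spherical complex and then dualizing again returns the original complex with reversed edge directions. Accordingly, I would define $\mathcal{C}^2 := (\mathcal{C}^{2,*})^*$, cap it by a single 3-cell to obtain a candidate $\mathcal{C} = \text{clos } c_\mathcal{O}$, and then verify that $\mathcal{C}$ satisfies all four conditions of definition~\ref{def:1.1}. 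Condition~(i) there is automatic, since $\mathcal{C}$ is by construction the closure of a single 3-cell over a regular 2-sphere complex.

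For the remaining conditions I would translate the lemma's hypotheses one at a time. The two polar faces $\mathbf{N}^*,\mathbf{S}^*$ of hypothesis~(i)(a) dualize back to two distinguished vertices $\mathbf{N},\mathbf{S}$ of $\mathcal{C}^1$, the intended poles; the orientation prescription of hypothesis~(ii), $\partial\mathbf{N}^*$ clockwise and $\partial\mathbf{S}^*$ counter-clockwise, is exactly what forces every edge incident to $\mathbf{N}$ outward and every edge incident to $\mathbf{S}$ inward, making them the unique source and sink. The cores $\mathbf{W}^*,\mathbf{E}^*$ of hypothesis~(i)(b) are closed planar Sturm complexes; since planar duality sends planar Sturm complexes to planar Sturm complexes and is involutive (\cite[theorem~2.1]{firo3d-1} together with the planar duality of \cite{firo08,firo10}), their pre-duals furnish bipolar, contractible $1$-skeleta in the interiors of $\mathbf{W},\mathbf{E}$, and the designated core poles $w_\pm^0,w_\pm^1$ reappear as the $\mathcal{O}$-neighbor faces of \eqref{eq:1.24a}. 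The directed polar segments $w_-^0w_-^1$ and $w_+^0w_+^1$ of hypothesis~(ii), combined with the Western/Eastern orientation of the cores, yield precisely the ``towards the meridian in $\mathbf{W}$, away in $\mathbf{E}$'' edge directions of definition~\ref{def:1.1}(iii), via definition~\ref{def:2.2} and lemma~\ref{lem:2.3}. Hypothesis~(iii), that the meridian duals predualize to two disjoint directed paths $\mathbf{EW},\mathbf{WE}$ from $\mathbf{N}$ to $\mathbf{S}$, supplies the two meridians of definition~\ref{def:1.1}(ii) and their decomposition of $S^2$ into $\mathbf{W},\mathbf{E}$. Finally, each single-edge polar bridge of hypothesis~(iv) predualizes to a shared meridian edge witnessing the face overlaps $\mathbf{NE}\cap\mathbf{SW}$ along $\mathbf{WE}$ and $\mathbf{NW}\cap\mathbf{SE}$ along $\mathbf{EW}$ demanded by definition~\ref{def:1.1}(iv).

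The principal obstacle is the global bipolarity of the reconstructed orientation on the whole sphere. As emphasized in the introduction to this section, across the welding meridians the dual orientation keeps circling forever, so bipolarity is manifest only within each closed hemisphere, inherited from the planar core duality; one must rule out directed cycles traversing both hemispheres. Here I would argue that any such cycle must cross a meridian, hence use a polar-bridge edge, and then invoke the fixed orientations of the polar circles in hypothesis~(ii), clockwise around $\mathbf{N}$ and counter-clockwise around $\mathbf{S}$, together with the prescribed bridge directions $w_+^1\to w_-^0$ and $w_-^1\to w_+^0$ of hypothesis~(iv), to show that a crossing can never return; consequently no cross-hemisphere cycle closes up, and $\mathbf{N},\mathbf{S}$ remain the unique extrema of a genuine bipolar orientation.

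A secondary technical point, flagged immediately before the statement, is that the polar circles $\partial\mathbf{N}^*,\partial\mathbf{S}^*$ and the four centers $w_\pm^0,w_\pm^1$ may coincide, totally or in part. I would dispatch these degenerate configurations by treating them as limiting identifications of the generic picture of fig.~\ref{fig:5.1}: each coincidence merely collapses a core, a polar segment, or a meridian to a shorter sub-complex, leaving every verification above intact, since neither definition~\ref{def:1.1} nor lemma~\ref{lem:2.3} imposes any minimality on the hemispheres or meridians.
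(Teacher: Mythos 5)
Your architecture matches the paper's proof: the only-if part from the preceding remarks, and for the if-part the predual construction, insertion of the 3-cell $c_\mathcal{O}$, and a clause-by-clause verification of definition~\ref{def:1.1}, with poles coming from the polar faces and hypothesis~(ii), meridians from hypothesis~(iii), interior acyclicity from the Sturm property of the cores, and the overlap condition from the preduals of the polar bridges of hypothesis~(iv). The gap lies in the one step that carries all the weight: property~(iii) of definition~\ref{def:1.1}, i.e. that edges of $\mathbf{W}$, $\mathbf{E}$ with an endpoint at a non-polar meridian vertex point towards, respectively away from, the meridians. You propose to obtain this from the polar segments and the core orientations ``via definition~\ref{def:2.2} and lemma~\ref{lem:2.3}'', but lemma~\ref{lem:2.3} cannot deliver it: that lemma characterizes Western/Eastern disks in terms of full polar serpents of an SZ- or ZS-pair, so it presupposes a Hamiltonian pair and a meander, neither of which exists at this stage of the reconstruction; and definition~\ref{def:2.2} merely names the property to be proved. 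The mechanism actually needed --- and the one the paper uses --- is the crossing convention \eqref{eq:5.1} itself: an edge $e$ of $\mathbf{E}$ with one endpoint on $\partial\mathbf{E}\smallsetminus\{\mathbf{N}\}$ is precisely the predual of a dual edge $e^*$ lying on the part of $\partial\mathbf{E}^*$ not contained in the polar circle $\partial\mathbf{N}^*$, and since $e^*$ must cross the directed edge $e$ from left to right, $e$ is forced to point away from $\partial\mathbf{E}$ (symmetrically in $\mathbf{W}$); the paper also notes this argument survives when the two boundary paths of a core overlap or coincide, which is where your degenerate cases are genuinely handled, rather than by a limiting-identification appeal.

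The same dual/predual confusion undermines your bipolarity argument. A directed cycle of $\mathcal{C}^1$ visiting both hemispheres does not ``use a polar-bridge edge'': polar bridges are edges of the \emph{dual} complex, and their preduals are edges lying \emph{on} the meridians, whereas a cross-hemisphere cycle merely passes through meridian vertices and need not traverse any meridian edge at all, so the bridge directions and polar circle orientations act on the wrong objects. The correct deduction, which the paper makes, is that once property~(iii) of definition~\ref{def:1.1} is established, no edge of $\mathbf{E}$ terminates at a non-polar meridian vertex and no edge of $\mathbf{W}$ emanates from one; combined with acyclicity inside each open hemisphere (standard duality from the Sturm cores, as you correctly invoke), with the meridian paths running from $\mathbf{N}$ to $\mathbf{S}$, and with all edges at $\mathbf{N}$ outgoing and all edges at $\mathbf{S}$ incoming, this excludes directed cycles and makes $\mathbf{N}$, $\mathbf{S}$ the unique extrema. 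Your plan is repairable, but only after replacing both defective steps by the crossing-convention argument; as written, the central claim is asserted rather than proved.
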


See fig.~\ref{fig:5.1}(b) for an illustration of conditions (i)--(iv) of the lemma.

\begin{proof}[\textbf{Proof.}]
For a proof of the only-if part see the remarks preceding the lemma.

To prove the if-part, we may assume that conditions (i)--(iv) of the lemma all hold. We have to show that the predual $\mathcal{C}$ of $\mathcal{C}^*$ defines a 3-cell template $\mathcal{C}$ which satisfies the required properties (i)--(iv) of definition~\ref{def:1.1}.
Strictly speaking we insert the 3-cell $c_\mathcal{O}$ here such that $\mathcal{C}^2 = \partial c_\mathcal{O}$ becomes the 2-sphere boundary; see condition (i) on $\mathcal{C}^{2,*}$.
This proves property (i) of definition~\ref{def:1.1}.

To prove the meridian decomposition property (ii) of definition~\ref{def:1.1} via $\mathcal{C}^1$, we first note that the predual vertices $\mathbf{N}$ and $\mathbf{S}$, i.e. the polar face barycenters of $\mathbf{N}^*$ and $\mathbf{S}^*$, respectively, become a bipolar source and sink vertex,   by condition (ii) and the edge directions \eqref{eq:5.1}.

By condition (iii) of the lemma, the disjoint directed meridian paths $\mathbf{EW}$ and $\mathbf{WE}$ are oriented from $\mathbf{N}$ to $\mathbf{S}$, as is appropriate.
We define the barycenters of $\mathbf{W}$ and $\mathbf{E}$ as the barycenters of the remaining core complexes $\mathbf{W}^* =\mathcal{C}_-^{2,*}$ and $\mathbf{E}^*=\mathcal{C}_+^{2,*}$, respectively, according to the given decomposition (i)(a)--(c).
The Sturm property of the dual cores, and in particular their bipolarity, implies the absence of cycles and poles within $\mathbf{W},\ \mathbf{E}$, separately, by standard duality.

Acyclicity on $\text{clos } \mathbf{W},\ \text{clos } \mathbf{E}$, as well as bipolarity on their union $\mathcal{C}^2$, will follow once we prove the orientation of edges towards and from the meridian boundaries, as required in property (iii) of definition~\ref{def:1.1}.
We only address $\mathbf{E},\ \mathbf{E}^*= \mathcal{C}_+^{2,*}$; the arguments for $\mathbf{W},\ \mathbf{W}^*=\mathcal{C}_-^{2,*}$ are analogous.
Consider the part of $\partial \mathbf{E}^*$ which is not part of the polar circle $\partial \mathbf{N}^*$.
In fig.~\ref{fig:5.1}(a), this is the lower part of $\partial \mathbf{E}^*$ (orange).
The saddle barycenters of edges $e^*$ in that boundary are precisely the barycenters of the (transverse) edges $e$ in $\mathbf{E}$ with one endpoint on $\partial \mathbf{E} \smallsetminus \mathbf{N}$.
We claim that such $e$ must be directed away from $\partial \mathbf{E}$.
This is obvious because $e^*$ must cross the directed edge $e$ left to right.

We note that the above argument remains valid, even if the two directed boundary paths from North pole $w_+^0$ to $w_+^1$ in $\partial \mathbf{E}^*$ overlap in parts, or coincide.
Still, all $e$ are then captured by the edges $e^*$ of $\partial \mathbf{E}^*$ which do not belong to the polar circle $\partial \mathbf{N}^*$.
This proves property (iii) of definition~\ref{def:1.1}.
It also completes the proof of property (ii) of definition~\ref{def:1.1}.

It remains to prove the overlap property (iv) of definition~\ref{def:1.1}, say, for the $\mathcal{C}^2$-faces
	\begin{equation}
	\mathbf{NE} = c_{w_-^0}\,,\qquad
	\mathbf{SW} = c_{w_+^1}\,.
	\label{eq:5.5}
	\end{equation}
Here $w_-^0$ is the North pole of $\mathbf{W}^* =\mathcal{C}_-^{2,*}$ and is located on the polar circle $\partial \mathbf{N}^*$.
Likewise $w_+^1$ is the South pole of $\mathbf{E}^*=\mathcal{C}_+^{2,*}$ and is located on the polar circle $\partial \mathbf{S}^*$.
By condition (iv) on $\mathcal{C}^{2,*}$ there exists a polar bridge $e_* \in \mathbf{WE}^*$ from $w_+^1$ to $w_-^0$.
By definition of duality, this means that the faces \eqref{eq:5.5} of $w_+^1$ and $w_-^0$ are edge adjacent to the predual $e \in \mathbf{WE}$ of $e_* \in \mathbf{WE}^*$.
This proves the overlap property (iv) of definition~\ref{def:1.1}.
The proof of the meridian edge overlap $\mathbf{NW}, \ \mathbf{SE}$ is analogous, indeed, and the lemma is proved.
\end{proof}

For later use we collect a few easy consequences of the previous lemma.

\begin{cor}
Let $\mathcal{C}^{2,*}$ be the dual of the 2-sphere boundary complex $\mathcal{C}^2 = \partial c_\mathcal{O}$ of a 3-cell template $\mathcal{C} = \text{clos } c_\mathcal{O}$.
Then the following six properties hold in $\mathcal{C}^{2,*}$.
\begin{itemize}
\item[(i)] Polar circles $\partial \mathbf{N}^*,\ \partial \mathbf{S}^*$ share a (dual) edge $e^*$ if, and only if, the pole distance $\delta$ between their barycenters $\mathbf{N},\ \mathbf{S}$ in the 1-skeleton $\mathcal{C}^1$ is 1.
\item[(ii)] The poles of the Western core $\mathbf{W}^*$ coincide, $w_-^0=w_-^1$, if, and only if, $\mathbf{W}^*= \lbrace w_-^0 \rbrace = \lbrace w_-^1 \rbrace$ is a singleton.
The analogous statement holds for the Eastern core $\mathbf{E}^*$ and $w_+^\iota$.
\item[(iii)] The edge distance between the two polar circles is at most 1, and is realized by at least two disjoint directed single-edge polar bridges. The bridges take the form $w_-^1 w_+^0 \in \mathbf{EW}^*$, directed from $w_-^1 \in \partial \mathbf{N}^*$ to $w_+^0 \in \partial \mathbf{S}^*$, and $w_+^1 w_-^0 \in \mathbf{WE}^*$, directed from $w_+^1 \in \partial \mathbf{S}^*$ to $w_-^0 \in \partial \mathbf{N}^*$. This provides at least one bridge between the polar circles, in each direction.
\item[(iv)] The disjoint directed polar segments $w_-^0 w_-^1$ and $w_+^0 w_+^1$ complement the directed single-edge bridges $w_-^1 w_+^0$, $w_+^1 w_-^0$ to at least one directed  cycle in $\mathcal{C}^{2,*}\,$:
	\begin{equation}
	w_-^0 w_-^1 w_+^0 w_+^1 w_-^0\,.
	\label{eq:5.6a}
	\end{equation}
\item[(v)] The directed polar segment $w_-^0w_-^1 \subseteq \partial \mathbf{N}^*$ is preceded and followed, on $\partial \mathbf{N}^*$, by the unique and disjoint intersections of the meridian duals $\mathbf{WE}^*$ and $\mathbf{EW}^*$ with $\partial \mathbf{N}^*$, respectively.
Analogously, the directed polar segment $w_+^0 w_+^1 \subseteq \partial \mathbf{S}^*$ is preceded and followed by the unique and disjoint edges $\mathbf{EW}^*\cap \partial \mathbf{S}^*$ and $\mathbf{WE}^* \cap \partial \mathbf{S}^*$, respectively.
\item[(vi)] Let $| \cdot |$ denote the edge length of paths and cycles.
Then the length of a polar segment relates to the length of its polar circle by
	\begin{equation}
	|w_-^0 w_-^1| \leq |\partial\mathbf{N}^*|-2\,,\qquad
	|w_+^0 w_+^1| \leq |\partial\mathbf{S}^*|-2\,.
	\label{eq:5.6b}
	\end{equation}
\end{itemize}
\label{cor:5.2}
\end{cor}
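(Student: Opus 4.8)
The plan is to obtain all six statements as short consequences of Lemma~\ref{lem:5.1}, combined with two elementary principles: the primal--dual incidence dictionary, and the fact that a connected bipolar orientation has distinct source and sink unless it is a single vertex. Throughout I would argue for the North/Western constituents and transfer the South/Eastern analogues by the $\kappa$-symmetry recorded in table~\ref{tbl:3.1}.

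For (i), the dual dictionary says that two dual faces $\mathbf{N}^*$ and $\mathbf{S}^*$ share a boundary edge $e^*$ precisely when their predual barycenters $\mathbf{N}, \mathbf{S}$ are the two endpoints of the common primal edge $e=-e^{**}$; such an $e$ is a single edge of $\mathcal{C}^1$ joining the poles, i.e.\ $\delta=1$. As the poles are always distinct we have $\delta\geq 1$, and the equivalence follows in both directions. For (ii), Lemma~\ref{lem:5.1}(i)(b) presents the Western core $\mathbf{W}^*=\mathcal{C}_-^{2,*}$ as a connected, contractible planar Sturm complex whose bipolar orientation has source $w_-^0$ and sink $w_-^1$. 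If $w_-^0=w_-^1$, this single vertex would be both source and sink, which excludes any edge; hence $\mathbf{W}^*=\{w_-^0\}=\{w_-^1\}$ is a singleton, and the converse is immediate.

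Properties (iii) and (iv) I would read off almost verbatim from Lemma~\ref{lem:5.1}(ii),(iv). The two single-edge polar bridges $w_+^1w_-^0\in\mathbf{WE}^*$ and $w_-^1w_+^0\in\mathbf{EW}^*$ guaranteed there certify edge distance at most $1$ between the polar circles; they are disjoint, lying on different meridian duals, and run in opposite senses between $\partial\mathbf{N}^*$ and $\partial\mathbf{S}^*$, giving one bridge in each direction. Concatenating them with the two directed polar segments $w_-^0w_-^1\subseteq\partial\mathbf{N}^*$ and $w_+^0w_+^1\subseteq\partial\mathbf{S}^*$ of Lemma~\ref{lem:5.1}(ii) closes the directed cycle \eqref{eq:5.6a}.

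The genuine work sits in (v), after which (vi) is a one-line count. I would analyze the cyclic boundary $\partial\mathbf{N}^*$, whose edges are exactly the duals of the primal edges incident to the pole $\mathbf{N}$. The polar segment $w_-^0w_-^1$ collects those duals lying in the Western core and terminates at the faces $w_-^0=\mathbf{NE}$ and $w_-^1=\mathbf{NW}$, which by definition~\ref{def:1.1}(iv) abut the first edges of the meridians $\mathbf{WE}$ and $\mathbf{EW}$, respectively. Traversing $\partial\mathbf{N}^*$, the dual edge immediately past $w_-^0$ is the dual of the unique first $\mathbf{WE}$-edge, i.e.\ the single intersection $\mathbf{WE}^*\cap\partial\mathbf{N}^*$, while the edge immediately past $w_-^1$ is the single intersection $\mathbf{EW}^*\cap\partial\mathbf{N}^*$; disjointness of these two flanking edges follows from disjointness of the meridians off the poles. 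The point that needs care is that no further meridian dual interrupts the segment: this uses that $\mathbf{N}$ emanates exactly one edge into each meridian, so each meridian dual meets $\partial\mathbf{N}^*$ in a single edge. Finally, for (vi), the segment $w_-^0w_-^1$ and its two distinct flanking meridian-dual edges all lie on $\partial\mathbf{N}^*$ with the flanking edges outside the segment, whence $|\partial\mathbf{N}^*|\geq|w_-^0w_-^1|+2$, which is the first inequality of \eqref{eq:5.6b}; the second follows by the $\kappa$-symmetry.
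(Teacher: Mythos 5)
Your proposal is correct and follows essentially the same route as the paper's proof: every claim is reduced to lemma~\ref{lem:5.1} together with the primal--dual dictionary and bipolarity of the cores, with (v) argued via the flanking meridian-dual edges and (vi) obtained as the resulting two-edge exclusion count. Your treatment is merely more explicit in places -- notably in justifying uniqueness of the meridian-dual intersections in (v) and in invoking $\kappa$-symmetry for the Eastern/Southern analogues, which the paper handles by saying the arguments are analogous.
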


\begin{proof}[\textbf{Proof.}]
Claim (i) follows by definition because, equivalently, the predual edge $e$ connects the poles $\mathbf{N},\ \mathbf{S}$.
Claim (ii) follows, because the core $\mathbf{W}^*$ is bipolar.

Claim (iii) follows from lemma~\ref{lem:5.1}(iv) because the polar circles are connected by at least two single-edge bridges $e_\pm^*= w_\pm^1 w_\mp^0$ from $w_\pm^1$ to $w_\mp^0$, dual to edges on the two disjoint meridian paths from $\mathbf{N}$ to $\mathbf{S}$.
Claim (iv) follows from (iii).

Claim (v) follows from lemma~\ref{lem:5.1}(iii).
Indeed the maximal segment $w_-^0 w_-^1= \partial \mathbf{W}^* \cap \partial \mathbf{N}^*$ from core pole $w_-^0$ to $w_-^1$ on the polar circle $\partial \mathbf{N}^*$ must be preceded and followed by an edge dual to the meridian $\mathbf{WE}$ and $\mathbf{EW}$, respectively.
Since these meridians are disjoint, so are their duals.
Since this argument excludes at least two edges of $\partial \mathbf{N}^*$ from the segment, it also proves claim \eqref{eq:5.6b} of (vi).
This proves the corollary.
\end{proof}

\begin{figure}[t!]
\centering \includegraphics[width=\textwidth]{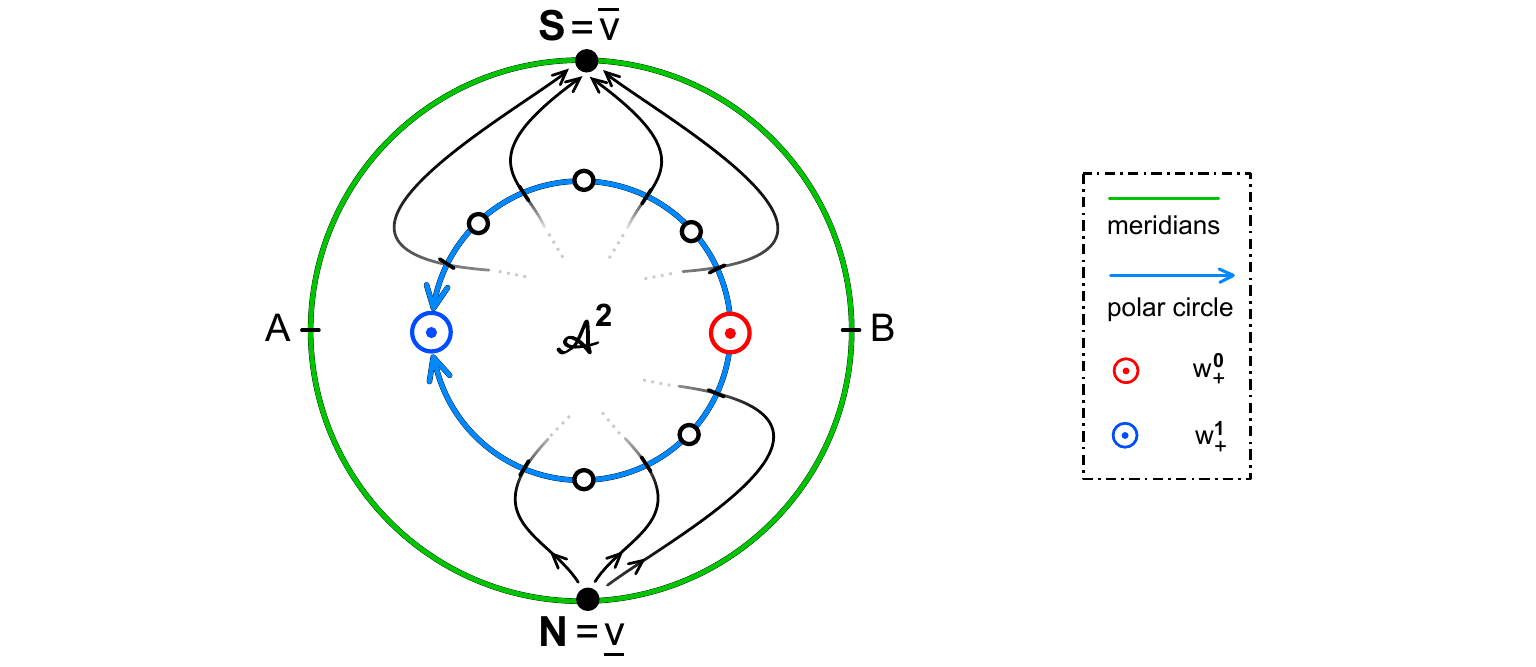}
\caption{\emph{
Minimal construction of an EastWest disk by the dual of any planar Sturm attractor $\mathcal{A}^2$.
Note how the poles $w_+^0$ and $w_+^1$ of $\mathcal{A}^2$ become faces adjacent to the extra edges of A and B which constitute the respective meridian boundaries.
Without these edges we obtain the standard construction of the planar dual Sturm attractor $\mathcal{A}^{2,*}$; see \cite{firo08, firo10}.
}}
\label{fig:5.2}
\end{figure}

We conclude this section with a few examples which relate the lift constructions of section~\ref{sec4} to duality.
First, we observe how duality allows us to, universally and minimally, convert any planar Sturm complex $\mathcal{A}^2$ to an EastWest disk.
We just replace the attractors $\mathcal{A}^2$ by its planar dual $\mathcal{A}^{2,*}$ and then surround $\mathcal{A}^{2,*}$ by the edges of two exterior saddles, as in fig.~\ref{fig:5.2}.

The new exterior poles $\mathbf{N}$ and $\mathbf{S}$ coincide with $\underline{v}$ and $\overline{v}$ of the planar duality construction in \cite{firo08, firo10}.
The extra edges of $A$ and $B$ close the dual $\mathcal{A}^{2,*}$ to become a Sturm EastWest disk.

The special case of a trivial one-point attractor $\mathcal{A}^2$ leads to the minimal single-face EastWest disk.
The special case of the trivial line $\sigma = \text{id} \in S_N$, with odd $N =2m-1$, leads to the minimal $m$-striped EastWest disk.
Note how $\mathcal{A}^2$ is one-dimensional.
The planar Chafee-Infante attractor $\mathcal{A}_\text{CI}^2$ leads to a double-face EastWest disk where only the interior pole-to-pole path  consists of two edges.
The Western eye disk of fig.~\ref{fig:4.2}(c) arises, in turn, if we take this resulting double-faced EastWest disk as the original attractor $\mathcal{A}^2$, and repeat the EastWest construction.


\section{The 31 Sturm 3-balls with at most 13 equilibria}
\label{sec6}

In this section we enumerate the Thom-Smale complexes of all 31 Sturm 3-balls, alias 3-cell templates $\mathcal{C}$, with at most $N = 13$ equilibria, up to the trivial equivalences of section~\ref{sec3}.
Our enumeration is based on the decomposition of their boundary 2-sphere $S^2= \partial c_\mathcal{O}$ into closed Eastern and Western Sturm disks with $\bar{N}_\mathbf{E}$ and $\bar{N}_\mathbf{W}$ equilibria.
We could invoke the results of \cite{firo10} on all planar Sturm attractors with at most 11 equilibria, select Eastern and Western Sturm disks, and weld shared boundaries.
To be more self-contained, and to prepare for section~\ref{sec7}, we proceed via the duals of section~\ref{sec5}, instead.
Brute force would yield 383 Sturm global attractors with 13 equilibria, up to trivial equivalences.
See \cite{fi94}.
We could simply extract all 3-ball cases, and dump them here.
Alas, what would we have understood?

Let $N_\mathbf{E}^*$ and $N_\mathbf{W}^*$ count the equilibria of the nonempty dual cores $\mathbf{E}^* = \mathcal{C}_+^{2,*}$ and $\mathbf{W}^* = \mathcal{C}_-^{2,*}$ of $\mathbf{E}$ and $\mathbf{W}$, respectively.
From lemma~\ref{lem:5.1}(i)(b) we recall that $\mathbf{E}^*$ and $\mathbf{W}^*$ are closed planar Sturm complexes.
By duality, the counts $N_\mathbf{E}^*, N_\mathbf{W}^*$ coincide with the equilibrium counts $N_\mathbf{E}, N_\mathbf{W}$ in the open hemispheres $\mathbf{E}, \mathbf{W}$, respectively.
We build up all Sturm 3-balls with $N\leq 13$ equilibria from the dual cores.
Notationally, we think of $\mathbf{E}$ and $\mathbf{W}$ as the originals here, and of $\mathbf{E}^*$ and $\mathbf{W}^*$ as their duals.

Let $M \geq 0$ denote the total number of non-polar sinks on the two meridians, which separate $M+2$ meridian edges.
Then the decomposition property of lemma~\ref{lem:5.1}(i) implies
	\begin{equation}
	2+M + (M+2) +N_\mathbf{E}^* +N_\mathbf{W}^*+1\quad = \quad
	N\quad \leq\quad 13\,.
	\label{eq:6.1}
	\end{equation}
Here the first summand~2 accounts for the two poles and the last summand~1 for the $i=3$ center $\mathcal{O}$ of the Sturm 3-ball.
Core and closed hemisphere counts are  related by
	\begin{equation}
	\begin{aligned}
	2+2 (M+1) &+ N_\mathbf{E}^* &=& \quad \bar{N}_\mathbf{E}^*\,;\\
	2+2 (M+1) &+ N_\mathbf{W}^* &=& \quad \bar{N}_\mathbf{W}^*\,.
	\end{aligned}
	\label{eq:6.2}
	\end{equation}
Since $M \geq 0$ and $N_\mathbf{E}^*,\ N_\mathbf{W}^* \geq 1$ we immediately obtain
	\begin{equation}
	2 \leq N_\mathbf{E}^* + N_\mathbf{W}^* \leq 8
	\label{eq:6.3}
	\end{equation}
from \eqref{eq:6.1}, and hence $N\geq 7$.
Since the total equilibrium count $N$ is odd, this leaves us with
	\begin{equation}
	N \in \lbrace 7,9,11,13 \rbrace\,.
	\label{eq:6.4}
	\end{equation}
Since $\mathbf{E}^*\ \mathbf{W}^*$ are (planar) Sturm attractors, the equilibrium counts $N_\mathbf{E}^*$ and $N_\mathbf{W}^*$ are also odd.
The trivial equivalence $\kappa$ allows us to interchange $\mathbf{W}$ and $\mathbf{E}$, if necessary.
In particular we may assume	
	\begin{equation}
	1\leq N_\mathbf{W}^* \leq N_\mathbf{E}^* \leq 7\,,\qquad \text{odd}\,,
	\label{eq:6.5}
	\end{equation}
without loss of generality.
This leaves us with the cases
	\begin{align}
	N_\mathbf{W}^*=1\,,\qquad N_\mathbf{E}^* 
	&\in \lbrace 1,3,5,7\rbrace\,, \qquad \text{and}\label{eq:6.6}\\
	N_\mathbf{W}^*=3\,,\qquad N_\mathbf{E}^* 
	&\in \lbrace 3,5\rbrace\,.\label{eq:6.7}
	\end{align}
In section~\ref{subsec6.1} we therefore list all planar Sturm attractors, alias cores $\mathbf{E}^*$, with up to 7 equilibria.
In \ref{subsec6.2} we discuss the $(m,n)$-gon suspensions, and in \ref{subsec6.3} the simple stripe patterns introduced in section~\ref{sec4}.
The two non-equivalent Sturm 3-balls of fig.~\ref{fig:4.6} are discussed in section~\ref{subsec6.4}.
In \ref{subsec6.5} we list the remaining cases arising from EastWest disks $\text{clos } \mathbf{W},\ \text{clos } \mathbf{E}$.
Purely Eastern, non-EastWest, disks are listed in section~\ref{subsec6.6}.
We summarize all results in the final section~\ref{subsec6.7}; see figs.~\ref{fig:6.3}, \ref{fig:6.4}, and tables~\ref{tbl:6.5}, \ref{tbl:6.6}.

\begin{figure}[t!]
\centering \includegraphics[width=\textwidth]{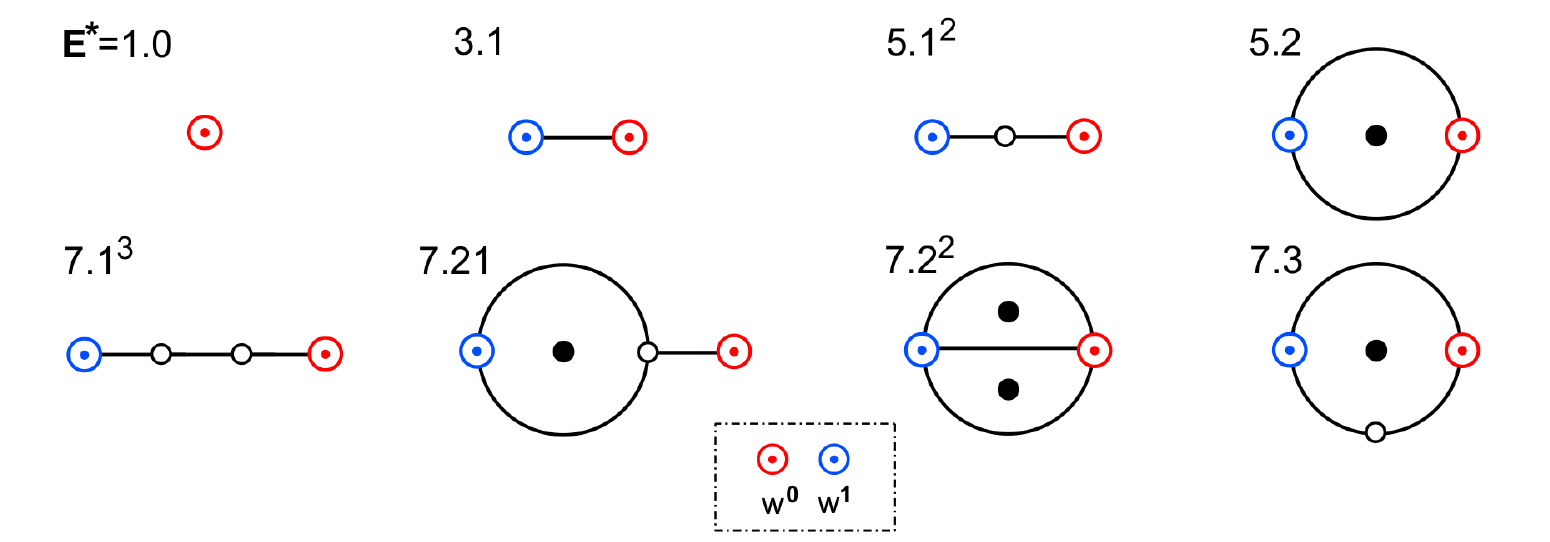}
\caption{\emph{
The eight Eastern dual cores $\mathbf{E}^*$, up to trivial equivalences, written as planar Sturm attractors with $N^* \leq 7$ equilibria.
See \eqref{eq:6.8} for the classification scheme.
Circles ``$\circ$'' indicate vertices of $\mathbf{E}^*$ and Morse index $i=2$ face barycenters of $\mathbf{E}$.
Dots ``$\bullet$'' are face barycenters of $\mathbf{E}^*$ and Morse stable $i=0$ vertices of $\mathbf{E}$.
The bipolar orientation of $\mathbf{E}^*$ runs from $w^0$ (red) to $w^1$ (blue), in each case.
}}
\label{fig:6.1}
\end{figure}

\subsection{The eight planar Sturm attractors with up to seven equilibria}\label{subsec6.1}

Let $\mathbf{E}^*$ be a planar Sturm attractor with $N^* \in \lbrace 1,3,5,7\rbrace$ equilibria.
Following \cite[section~3]{firo10} we choose the notation
	\begin{equation}
	N^*.\,n^{k_n}(n-1)^{k_{n-1}} \ldots 1^{k_1}- \ell\,;
	\label{eq:6.8}
	\end{equation}
where $n^{k_n}$ indicates a count $k_n$ of $n$-gon faces in the 1-skeleton of $\mathbf{E}^*$.
Edges which are not face boundaries are assigned $n=1$.
The postfix $\ell$ simply enumerates multiple configurations in somewhat arbitrary order.
We omit exponents 1.
The enumeration of cases is trivial, by planar Euler characteristic.
The results for odd $N^* \leq 7$ are listed in fig.~\ref{fig:6.1}.
To emphasize that $\mathbf{E}^*$ is a dual core to an Eastern hemisphere $\mathbf{E}$ we denote $i=0$ sinks of $\mathbf{E}^*$ by circles, ``$\circ$'', to indicate sources of $\mathbf{E}$, and $i=2$ sources of $\mathbf{E}^*$ by dots, ``$\bullet$'', to indicate sinks of $\mathbf{E}$.

\subsection{Pitchforked (m,n)-gons}\label{subsec6.2}

Let $N_\mathbf{E}^* = N_\mathbf{W}^* =1$, i.e., let $\mathbf{E}, \mathbf{W}$ be single faces, each.
Then the closed hemisphere disks $\text{clos } \mathbf{E}$ and
$\text{clos }\mathbf{W}$ are $(m,n)$-gons and $(n,m)$-gons, respectively, for compatibility.
See fig.~\ref{fig:2.2} and figs.~\ref{fig:4.4}, \ref{fig:4.5}(a).
In particular \eqref{eq:6.1} implies
	\begin{equation}
	M = m+n-2\,; \qquad m+n=(N-3)/ 2 \leq 5\,.
	\label{eq:6.9}
	\end{equation}
The trivial equivalence $\rho$, in table~\ref{tbl:3.1}, preserves each hemisphere but interchanges the boundaries by reflection through the $\pm 90$ ~degree meridian line.
This swaps $m$ and $n$, so that we may assume
	\begin{equation}
	1 \leq m\leq n
	\label{eq:6.10}
	\end{equation}
without loss of generality.
With the notation $(2(m+n)+1).(m+n)$, for $(m,n)$-gons and $(n,m)$-gons alike, we use the notation
	\begin{equation}
	\left( 2(m+n)+1\right).(m+n) | \left( 2(m+n)+1\right).(m+n)
	\label{eq:6.11}
	\end{equation}
for the resulting Sturm 3-ball.
We thus arrive at the case list of table~\ref{tbl:6.1}.
See also tables~\ref{tbl:6.5}, \ref{tbl:6.6}, cases 1, 3, 9, 10, 30, 31, and fig.~\ref{fig:6.3} for the six resulting 3-cell complexes.

\begin{table}[h!]
\centering \includegraphics[width=\textwidth]{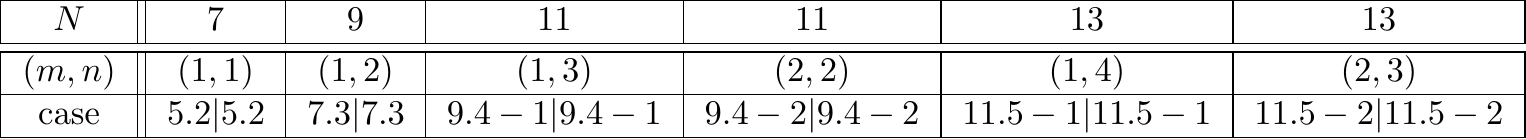}
\caption{\emph{
List of all six pitchforked $(m,n)$-gon 3-ball Sturm attractors with $N \leq 13$ equilibria, up to trivial equivalences.
The right entry duplicates the left entry, in each case label.
This covers all cases with trivial Sturm cores $N_{\mathbf{E}}^*= N_{\mathbf{W}}^*=1$.
}}
\label{tbl:6.1}
\end{table}

\begin{table}[b!]
\centering \includegraphics[width=0.75\textwidth]{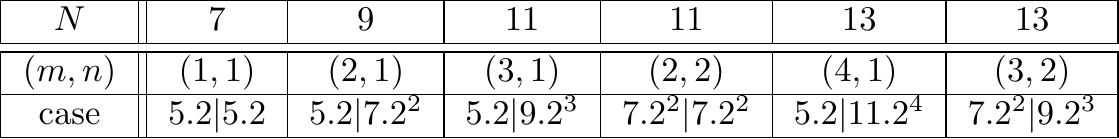}
\caption{\emph{
List of all six suspended $(m,n)$-gons, alias $(m,n)$-striped 3-ball Sturm attractors, with $N\leq 13$ equilibria, up to trivial equivalences.
Note the duplicated Chafee-Infante 3-ball $\mathcal{A}_{\text{CI}}^3 = (5.2|5.2)$ with $N=7$ equilibria, which also appears in table~\ref{tbl:6.1}, for $m=n=1$.
This covers all cases with one-dimensional Sturm cores $\dim \mathbf{E}^* = \dim \mathbf{W}^* =1$ and absent meridian sinks, $M=0$.
}}
\label{tbl:6.2}
\end{table}

\subsection{Striped suspensions of (m,n)-gons}\label{subsec6.3}

Let both dual cores $\mathbf{E}^* =(2m-1).1^{m-1},\ \mathbf{W}^*= (2n-1).1^{n-1}$ be one-dimensional, $m,n \geq 1$, and assume absence $M=0$ of non-polar meridian sinks.
Then $\text{clos } \mathbf{E} = (2m+3).2^m$ and $\text{clos } \mathbf{W} = (2n+3).2^n$ are planar with $m-1$ and $n-1$ pole-to-pole non-meridian edges, in addition to the two meridian edges, and with $m$ and $n$ faces, respectively.
This is the $(m,n)$-striped Sturm 3-ball of fig.~\ref{fig:4.5}(b), alias the unstably suspended $(m,n)$-gon.
In particular \eqref{eq:6.1} implies
	\begin{equation}
	m+n = (N-3)/2 \leq 5\,,
	\label{eq:6.12}
	\end{equation}
as in \eqref{eq:6.9}.
The trivial equivalence $\kappa$ lets us swap $\mathbf{W}$ and $\mathbf{E}$ so that 
	\begin{equation}
	\begin{aligned}
	2n-1 = N_\mathbf{W}^* \leq N_\mathbf{E}^* &= 
	2m-1\,,\quad \text{i.e.}\\
	n &\leq m
	\end{aligned}
	\label{eq:6.13}
	\end{equation}	
holds, without loss of generality.
Analogously to section~\ref{subsec6.2} and table~\ref{tbl:6.1} this provides the case list of table~\ref{tbl:6.2}.
Again see fig.~\ref{fig:6.3} and tables~\ref{tbl:6.5}, \ref{tbl:6.6} for the resulting cases 1, 2, 4, 6, 11, 16, five of them new.
For case 2 see also fig.~\ref{fig:1.0}.

\subsection{The triangle core}\label{subsec6.4}

Consider the triangle core $\mathbf{E}^* =7.3$; see fig.~\ref{fig:6.1}.
Then $N_{\mathbf{E}^*}=7$, and \eqref{eq:6.3}, \eqref{eq:6.1} imply $N_{\mathbf{W}^*}=1,\ M=0$.
Three edges $e$ of $\mathbf{E}$ cross the three edges of the dual triangle $\mathbf{E}^*$.
By the bipolar orientation of $\mathbf{E}^*$, from $w^0$ to $w^1$, two of these edges $e$ must be directed to $\mathbf{S}$, and the third edge must enter from $\mathbf{N}$.
This results in the  closed Eastern hemisphere, and hence the left 3-ball attractor $\mathcal{A}^+$, of fig.~\ref{fig:4.6}.
Swapping $\mathbf{W},\ \mathbf{E}$ by a 180$^\circ$ rotation of $\mathcal{A}^-$ in fig.~\ref{fig:4.1}, right, by trivial equivalence $\kappa \rho$, and reversing bipolar orientation, we obtain the inequivalent case where the core triangle $\mathbf{E}^*$ is flipped upside-down.
Derived from $\mathcal{A}^\pm$ we call these two cases
	\begin{equation}
	(5.2|11.3^22\pm)\,,
	\label{eq:6.14}
	\end{equation}
respectively.
See fig.~\ref{fig:6.3} and table~\ref{tbl:6.6}, cases 13 and 14.

\subsection{Multi-striped Sturm 3-balls}\label{subsec6.5}

All examples, so far, have been based on welding two compatible EastWest disks at their shared meridians.
We complete this list, in the present section.
The remaining cases, where at least one of the hemispheres is not of EastWest type, will be addressed in \ref{subsec6.6}.

Both hemispheres are EastWest disks if, and only if, only poles can be meridian vertices of interior edges $e\in \mathbf{E} \cup \mathbf{W}$.
In other words, edges of $\mathbf{E},\ \mathbf{W}$ can neither emanate from, nor terminate at, a sink vertex in $\mathbf{EW} \cup \mathbf{WE}$.
Equivalently, each boundary of the duals $\mathbf{E}^*,\ \mathbf{W}^*$ coincides with a polar circle segment of barycenters in $\mathbf{E},\ \mathbf{W}$, respectively.

The core list of fig.~\ref{fig:6.1} identifies the dual triangle~\ref{subsec7.3} as the only possibility where a directed edge path of $\mathbf{E},\ \mathbf{W}$ can branch at an interior sink.
This case has been treated in section~\ref{subsec6.4}, already.
All other interior sinks have degree two.
By the EastWest property, the same is true for the meridians.
We call Sturm disks with this degree two property \emph{multi-striped}.
Indeed all directed edge paths must then emanate from $\mathbf{N}$ and terminate at $\mathbf{S}$, because bipolarity excludes cycles.

It is therefore easy to enumerate all cases.
We simply place $N_0 \geq 1$ additional sinks inside any edges of any simply striped complex from section \ref{subsec6.3} and use trivial equivalences to reduce the number of cases.
The simply striped reference complex only has
	\begin{equation}
	N-2N_0 \geq 7
	\label{eq:6.15}
	\end{equation}
cells, of course; see \eqref{eq:6.4}.
Also note that at most one interior edge path may accomodate any additional sinks, and their number may only be one or two; see cases $5.2,\ 7.21,\ 7.2^2$ of fig.~\ref{fig:6.1}.
The restriction $N_\mathbf{W}^* \in \lbrace 1,3\rbrace$ of \eqref{eq:6.6}, \eqref{eq:6.7} does not accomodate interior sinks in $\mathbf{W}$.
Therefore it is sufficient to study $\text{clos }\mathbf{W}$ with $\text{clos }\mathbf{W}$ only inheriting the $M$ shared meridian sinks.
The case $N_\mathbf{W}^*=3$ of two faces in $\mathbf{W}$ simply amounts to one less interior sink available for $\mathbf{E}$.

The results are summarized in table~\ref{tbl:6.3}, ordered by the total number $N$ of equilibria and the total number $M$ of meridian sinks.
The Chafee-Infante ball $N=7$ has been treated  in sections~\ref{subsec6.2}, \ref{subsec6.3} already.
Consider $N=9$ next, with $N_0=1$.
Then the reference complex \eqref{eq:6.15} is the Chafee-Infante ball with one additional sink, necessarily on a meridian: $M=N_0=1$.
But any Chafee-Infante reference only leads to the pitchforked $(m,n)$-gon cases with
	\begin{equation}
	m+n= M+2 = N_0+2=(N-3)/2\,;
	\label{eq:6.16}
	\end{equation}
see \eqref{eq:6.9} and table~\ref{tbl:6.1}.
In particular the case $N=9$ can be omitted as a duplicate.

\begin{table}[]
\centering \includegraphics[width=0.85\textwidth]{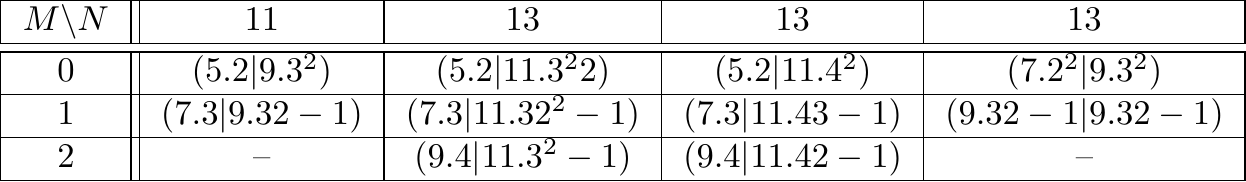}
\caption{\emph{
List of all 10 multi-striped 3-ball Sturm attractors with $N \leq 13$ equilibria, up to trivial equivalences.
Rows are ordered by the reference $(m,n)$-gon suspensions.
Chafee-Infante duplicates with the pitchforked $(m,n)$-gons of \ref{tbl:6.1} are omitted.
This covers all remaining cases of EastWest pairs of closed hemispheres.
}}
\label{tbl:6.3}
\end{table}

Consider $N=11$ next, first with $N_0=2$.
The reference complex \eqref{eq:6.15} then has $N-2N_0=7$ cells, and is omitted as a Chafee-Infante pitchforked $(m,n)$-gon, again.
Therefore $N_0=1$ and we have the unique $N-2N_0=9$ simply striped reference complex $(5.2|7.2^2)$ of table~\ref{tbl:6.2}, alias the triangle suspension.
Invoking trivial equivalence $\rho$ to interchange meridians, if necessary, we may assume the extra sink $N_0=1$ to either appear on the meridian $\mathbf{WE}$, with $M=1$, or interior to $\mathbf{E}$, with $M=0$.
This proves the $N=11$ column of table~\ref{tbl:6.3}.
See also cases 7 and 5 in table~\ref{tbl:6.5} and fig.~\ref{fig:6.3}.

For the remaining three columns, $N=13$.
The non-Chafee-Infante options are $N_0=1$ and $N_0=2$.
Consider $N_0=2$ first.
The simply striped reference complex has $N-2N_0=9$ equilibria and is the known triangle suspension.
For $M=N_0=2$, and up to trivial equivalence by $\rho$, we may either place the two extra sinks $N_0$ on the same meridian $\mathbf{WE}$, or else on one meridian each.
This proves the $M=2$ row of table~\ref{tbl:6.3}.
See also cases 27 and 25 in table~\ref{tbl:6.6} and fig.~\ref{fig:6.3}.

Next consider $N=13,\ N_0=2,\ M=1$.
Then we must place one extra sink on a meridian, say on $\mathbf{WE}$ by $\rho$, and the other extra sink on the only interior polar edge of the triangle suspension.
This yields case $(7.3|11.43-1)$.
For $N_0=2,\ M=0$ both extra sinks $N_0$ must go to the interior edge:
$(5.2|11.4^2)$.
This completes the third column of table~\ref{tbl:6.3}, and the case $N_0=2$.
See also cases 21 and 15 in table~\ref{tbl:6.6} and fig.~\ref{fig:6.3}.

It remains to consider $N=13,\ N_0=1$ with $N-2N_0=11$ reference equilibria.
This provides the two simply striped reference cases $(5.2|9.2^3)$ and $(7.2^2|7.2^2)$ of pitchforked quadrangles, in table~\ref{tbl:6.2}.
In table~\ref{tbl:6.5} and fig.~\ref{fig:6.3} these were the simply striped cases 4 and 6.
Placing the one extra sink $N_0$ on a meridian, $M=N_0=1$, or on any one of the interior edges, $M=0$, we obtain the remaining four cases of table~\ref{tbl:6.3}, up to trivial equivalences.
See also  cases 18, 23 and 12, 17 in the summarizing fig.~\ref{fig:6.3} and tables~\ref{tbl:6.5}, \ref{tbl:6.6}.

\subsection{Non-EastWest disks}\label{subsec6.6}

Non-EastWest disks require meridian sinks as targets.
Therefore $M \geq 1$.
Interior branchings of directed edge paths have been dealt with in section~\ref{subsec6.4} and can now be excluded.
Consider any directed path in $\mathbf{E}$.
By the boundary orientation of edges in 3-cell template hemispheres, definition~\ref{def:1.1}(iii), such a directed path must emanate from $\mathbf{N}$ or a meridian $i=0$ vertex, and has to terminate at $\mathbf{S}$.
In $\mathbf{W}$, similarly, any directed path has to emanate from $\mathbf{N}$ and must terminate at a meridian $i=0$ vertex or at $\mathbf{S}$.
We may therefore push any such directed path to emanate and terminate at the respective poles.
This provides a multiply striped 3-ball, with the exact same number of equilibria of the respective Morse numbers.
Conversely, we obtain all Non-EastWest disk Sturm 3-balls, by nudging at least one interior pole-to-pole directed path of the multiply striped 3-ball to start or terminate at an already existing $i=0$ meridian vertex, instead.
This leaves us with the rows $M=1$ and $M=2$ of table~\ref{tbl:6.3} as a reference for path nudging.

Consider $N=11$, for example, with $M=1$ reference $(7.3|9.32-1)$, case 7, of tables~\ref{tbl:6.3}, \ref{tbl:6.5} and fig.~\ref{fig:6.3}.
Nudging the unique interior edge $e \in\mathbf{E}$ to emanate from the unique extra $M=1$ sink on the meridian $\mathbf{WE}$, instead of $\mathbf{N}$, produces the unique case 8,
	\begin{equation}
	(7.3|9.32-2)
	\label{eq:6.17}
	\end{equation}
of a Sturm 3-ball with $N=11$ equilibria, where the Eastern disk $\text{clos } \mathbf{E}$ is not an EastWest disk.
See tables~\ref{tbl:6.4}, \ref{tbl:6.5} and fig.~\ref{fig:6.3}.

Consider the two reference cases $25,\ (9.4|11.3^2-1)$ and $27,\ (9.4|11.42-1)$ with $N=13,\ M=2$ next.
In case $25,\ (9.4|11.3^2-1)$, each meridian contains one extra sink.
The unique interior edge $e \in \mathbf{E}$ may emanate from the unique extra sink in $\mathbf{WE}$ or, equivalently under trivial equivalence $\rho$, from $\mathbf{EW}$.
This provides case 28,
	\begin{equation}
	(9.4|11.42-2)\,.
	\label{eq:6.18}
	\end{equation}
Similarly, nudgings of case $27,\ (9.4|11.42-1)$ lead to cases 26 and 29,
	\begin{equation}
	(9.4|11.3^2-2)\quad \text{and} \quad (9.4|11.42-3)\,.
	\label{eq:6.29}
	\end{equation}

\begin{table}[h!]
\centering \includegraphics[width=\textwidth]{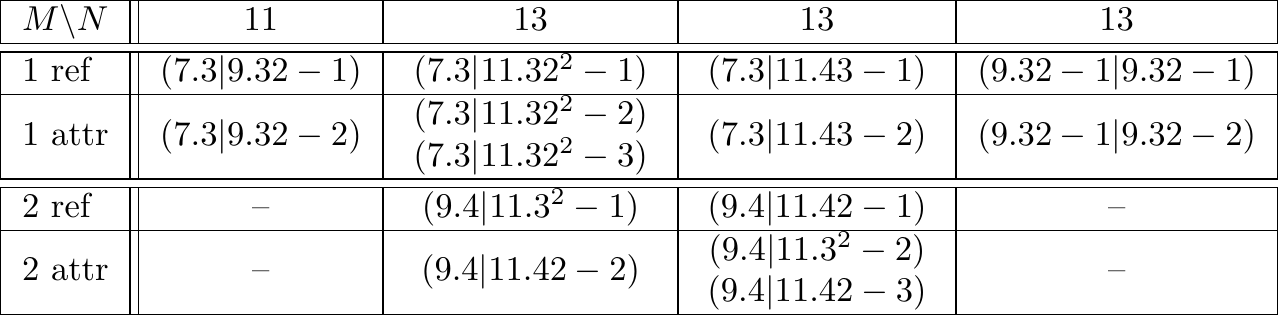}
\caption{\emph{
List of all eight non-EastWest 3-ball Sturm attractors with $N\leq 13$ equilibria, up to trivial equivalences.
Rows $\emph{M ref}$ with $M \geq 1$ meridian sinks refer to the multi-striped 3-ball Sturm attractors of table~\ref{tbl:6.3}, prior to nudging.
Rows $\emph{M attr}$ enumerate the resulting 3-ball Sturm attractors, after nudging.
This completes the listings of all 3-ball Sturm attractors with up to 13 equilibria.
}}
\label{tbl:6.4}
\end{table}

By similar arguments for $N=13,\ M=1$, the two reference cases $18,\ (7.3|11.32^2-1)$ and $21,\ (7.3|11.43-1)$ lead to cases 19, 20 and 22,
	\begin{equation}
	(7.3|11.32^2-2)\,,\ (7.3|11.32^2-3)\quad \text{and}\quad
	(7.3|11.43-2)\,,
	\label{eq:6.20}
	\end{equation}
respectively.
The remaining reference $23,\ (9.32-1|9.32-1)$ of table~\ref{tbl:6.3} only leads to the single case 24,
	\begin{equation}
	(9.32-1|9.32-2)
	\label{eq:6.21}
	\end{equation}
with the same nudged Eastern disk as in case 8, \eqref{eq:6.17}.
Nudging the Western disk, only, is trivially equivalent under $\kappa$.
Note that nudging of both hemispheres would violate the overlap condition of definition~\ref{def:1.1}(iv).
This completes the listing of all eight non-EastWest cases 8, 19, 20, 22, 24, 26, 28, 29, as summarized in tables~\ref{tbl:6.4}--\ref{tbl:6.6} and fig.~\ref{fig:6.3}.

\subsection{Summary}\label{subsec6.7}

The above hemisphere decompositions of 3-ball Sturm attractors define regular cell complexes of $S^2$, with additional structure.
It turns out that poles and meridians already define the bipolar orientation, for  $N\leq 13$ equilibria.
We therefore list the regular cell complexes, first, and then indicate the possible choices of poles and meridians, in each case.
We conclude with a list of all resulting Sturm permutations, their trivial isotropies and other elementary properties.

\begin{figure}[t!]
\centering \includegraphics[width=\textwidth]{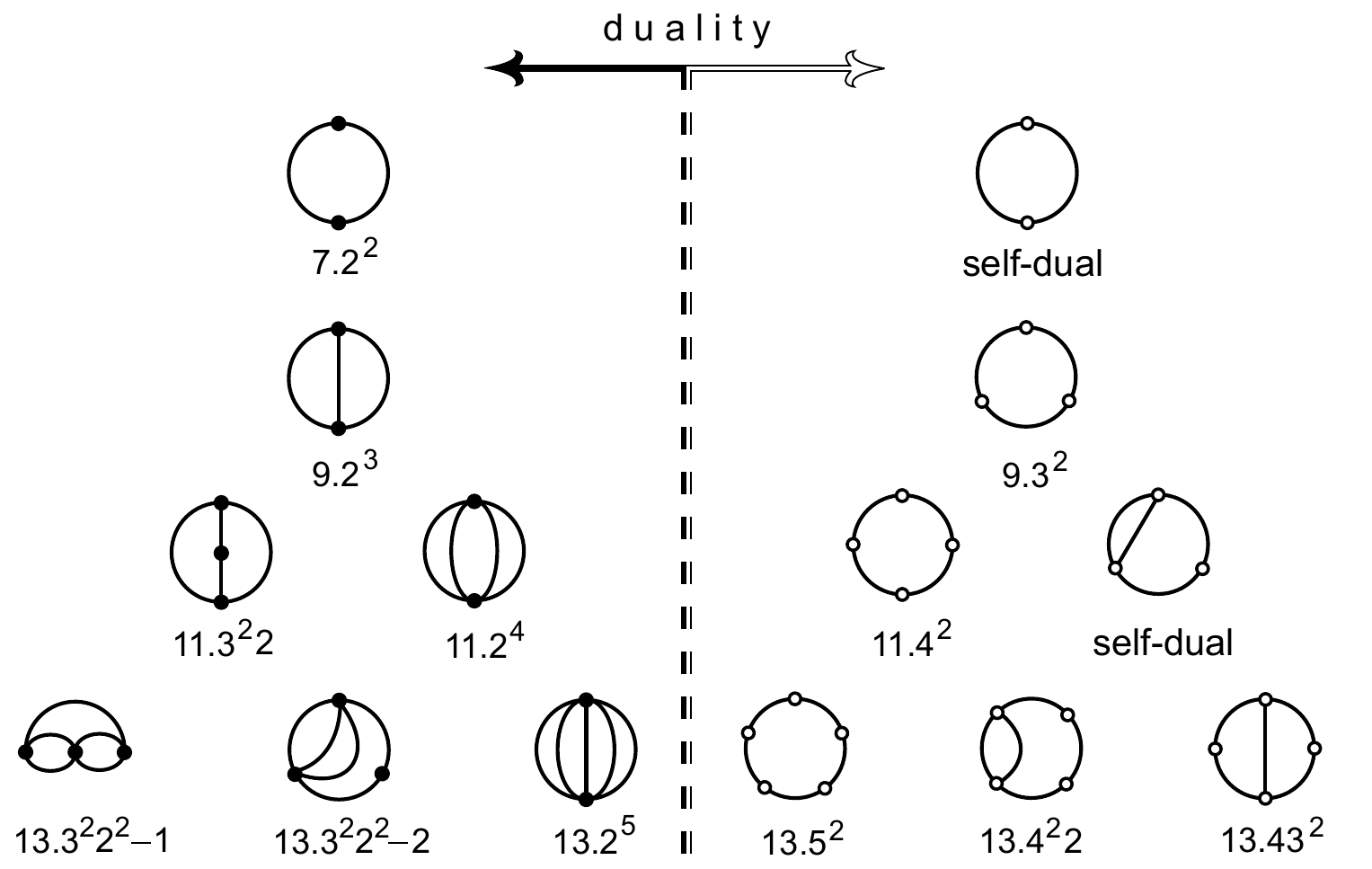}
\caption{\emph{
The twelve regular 2-sphere complexes, with at most $N-1 \leq 12$ cells on the sphere $\partial c_\mathcal{O}$.
The omitted 3-cell barycenter $\mathcal{O}$ contributes to the total count $N$ of cells, in the notation $N.n^{k_n}\ldots$ of \eqref{eq:6.8}.
The 2-sphere  is represented by one-point compactification of the plane, i.e. each 1-skeleton is drawn as a Schlegel graph.
In other words, we also consider the exterior as a face.
Left: $c_0 \leq c_2$, i.e. at least as many faces as zero-cell vertices.
Right: $c_0 \geq c_2$, by standard duality.
Note the two self-dual cases $7.2^2$ and $11.3^2 2$.
The cases $13.3^2 2^2-1$ and $-2$ differ by degrees 433 and 442 at their vertices, respectively.
See fig.~\ref{fig:6.3} for the associated Sturmian Thom-Smale complexes, which turn out to be nonunique quite frequently.
}}
\label{fig:6.2}
\end{figure}

See fig.~\ref{fig:6.2} for a list of all twelve regular $S^2$-complexes with at most $N-1=12$ cells.
(We omitted the 3-cell of $\mathcal{O}$.)
See \eqref{eq:6.8} for our notation of cases by face counts.
The list is easily derived as follows.
The cell counts $c_i$ count the cells of dimension $i$.
By Euler characteristic, $c_0-c_1+c_2=2$.
The total count is $c_0+c_1+c_2=N-1$.
Therefore 
	\begin{equation}
	c_0+c_2=(N+1)/2\,,\qquad c_1=(N-3)/2\,.
	\label{eq:6.22}
	\end{equation}
By standard duality, and because there exist at least two poles, we may assume
	\begin{equation}
	2 \leq c_0 \leq c_2
	\label{eq:6.23}
	\end{equation}
to obtain the left side of fig.~\ref{fig:6.2}.
Since $N \leq 13$, we only have to discuss the cases $c_0=2$ and $c_0=3$.

Consider $c_0=2$ first.
Then any of the $c_1=(N-3)/2 \geq 2$ edges must connect these two vertices directly, and each face is a 2-gon.
This provides the four cases $N.2^{c_1}$ of fig.~\ref{fig:6.2}, $N\in \lbrace 7,\ 9,\ 11,\ 13\rbrace$.

Consider $c_0=3$ next, and let $2 \leq d_1 \leq d_2\leq d_3$ denote the degrees at the three vertices.
Then \eqref{eq:6.22}, \eqref{eq:6.23} imply
	\begin{equation}
	N= 2(c_0+c_2)-1 \geq 4c_0-1=11\,,
	\label{eq:6.24}
	\end{equation}
i.e. $N\in \lbrace 11,\ 13\rbrace$.
Consider $N=11$ first.
Then
	\begin{equation}
	d_1+d_2+d_3=2c_1=N-3=8
	\label{eq:6.25}
	\end{equation}		
implies $d_3d_2d_1=422$ or $d_3d_2d_1=332$.
The four edges emanating from vertex 3 must terminate at vertices 1 and 2, in pairs.
The resulting complexes are not regular.
Therefore $d_3d_2d_1=332$.
Removing vertex 1, by $d_1=2$, leads to a case with $N=9$ and vertex degree 3.
This reduces to case $9.2^3$ and provides case $11.3^22$.

It only remains to consider $c_0=3,\ N=13$.
Then \eqref{eq:6.25} with $N-3=10$ implies
	\begin{equation}
	d_3d_2d_1 \in \lbrace 622, 532, 442, 433\rbrace\,.
	\label{eq:6.26}
	\end{equation}	
Absence of loops eliminates the case 622. In regular cell complexes it also eliminates the case  532.	
The case 442 reduce to $N=11$, $d_3d_2 = 44$, by removal of vertex 1, $d_1=2$.
The case $d_3d_2 =44$ occurs as case $11.2^4$, and leads to $13.3^22^2-2$.
In the remaining case 433 of \eqref{eq:6.26}, the absence of loops implies that the four edges of vertex 3 must terminate at vertices 1, 2, in pairs.
The remaining edge must join vertices 1 and 2.
This provides case $13.3^22^2-1$ and completes the list of fig.~\ref{fig:6.2}.

Based on the list of twelve regular $S^2$ cell complexes we could, in principle, determine all 3-cell templates, according to definition~\ref{def:1.1}, via the characterization of the duals in lemma~\ref{lem:5.1}.
We will follow such an approach for the Platonic solids, in section~\ref{sec7}.
Here we just summarize the results of  sections~\ref{subsec6.1}--\ref{subsec6.6} and assign the 31 known 3-cell templates to the 12 regular $S^2$ complexes; see fig.~\ref{fig:6.3}.

\begin{table}[p!]
\centering \includegraphics[width=\textwidth]{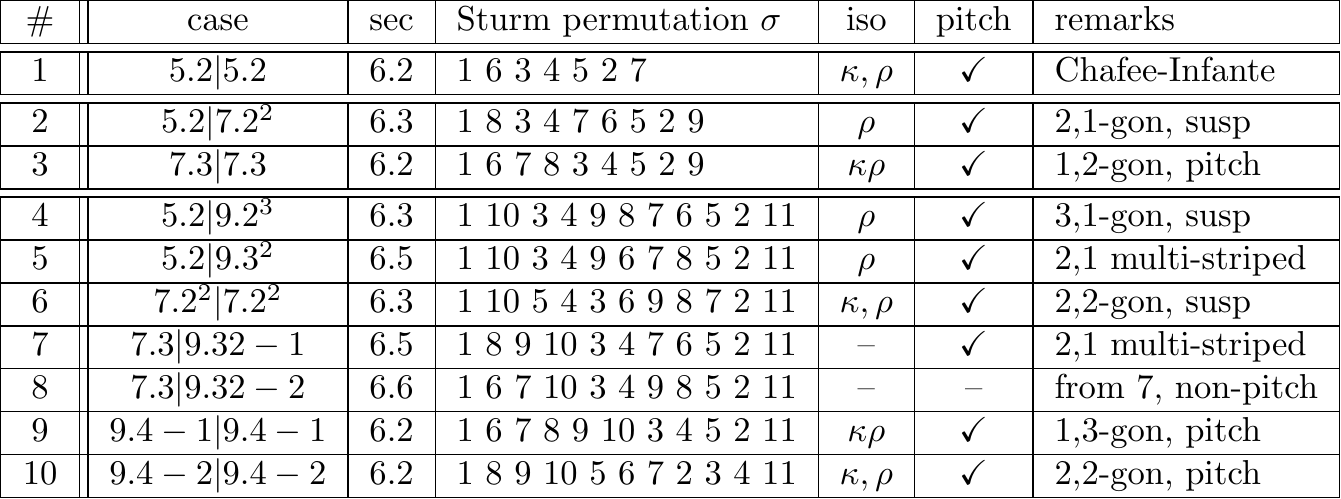}
\caption{\emph{
The 10 Sturm permutations $\sigma$ of 3-ball Sturm attractors with at most 11 equilibria, up to trivial equivalences.
The cases 1--10 are ordered by the $\mathbf{W} | \mathbf{E}$ hemisphere notation \eqref{eq:6.6}.
See section numbers for a detailed derivation, as indicated by ``remarks''.
The column ``iso'' lists the generators of trivial equivalences which leave $\sigma$ invariant, i.e. the trivial isotropy of $\sigma$ and the attractor.
For example $\rho$ indicates that $\sigma = \sigma^{-1}$ is an involution.
Note how the cases \#1, and 6, 10 of flip-isotropy $\kappa$ possess $N=7$ and $N=11$ equilibria, respectively, in compliance with corollary \ref{cor:3.2}.
The column ``pitch'' indicates that only example 8 is non-pitchforkable in the sense of \cite{furo91}.
}}
\label{tbl:6.5}
\end{table}

\begin{table}[p!]
\centering \includegraphics[width=\textwidth]{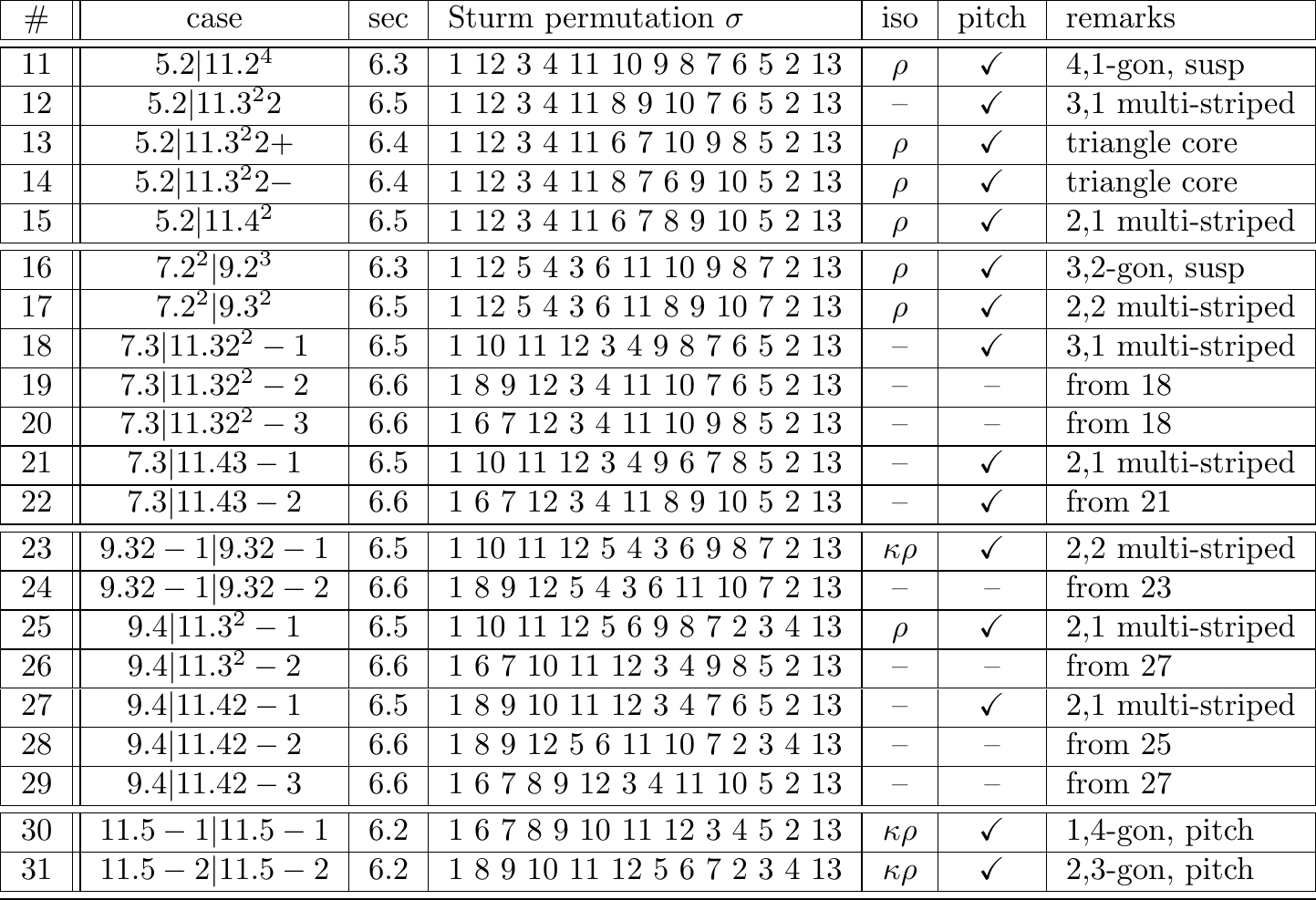}
\caption{\emph{
The 21 Sturm permutations $\sigma$ of 3-ball Sturm attractors with 13 equilibria.
For ordering and notation see table~\ref{tbl:6.5}.
Absence of flip-isotropy $\kappa$ for $N=13 \equiv 1\ (\mathrm{mod}\ 4)$ follows from corollary \ref{cor:3.2}.
The non-pitchforkable cases 19, 20, 24, 26, 28, 29 all reduce to case 8 of table~\ref{tbl:6.5} by a single pitchfork step.
}}
\label{tbl:6.6}
\end{table}

The bipolar orientations result, in each case, from the meridian and pole locations, together with the assignments of hemisphere labels $\mathbf{E},\ \mathbf{W}$.
The Sturm permutations $\sigma$ which generate each 3-cell template then follow from the SZS-pairs $(h_0,h_1)$.
See tables~\ref{tbl:6.5}, \ref{tbl:6.6} for the full list, and fig.~\ref{fig:6.4} for the resulting 3-meander templates.

By our derivation, any 3-ball Sturm global attractor with at most 13 equilibria appears in fig.~\ref{fig:6.3} and tables~\ref{tbl:6.5}, \ref{tbl:6.6}.
We order cases lexicographically, according to the notation \eqref{eq:6.6} for the closed hemisphere disks of $\mathbf{W}|\mathbf{E}$, and refer to the section where each case was defined and constructed.
Not surprisingly, each regular $S^2$-complex with at most 12 cells appears.
In fact, any regular $S^2$-complex is realizable a priori in the class of 3-ball Sturm attractors; see \cite{firo14}.

From the group $\langle \kappa, \rho \rangle$ of trivial equivalences in table~\ref{tbl:3.1}, nontrivial isotropy subgroups of trivial self-equivalences arise, occasionally, which leave the Sturm permutation and 3-cell template invariant.
These subgroups are manifest as symmetries, in fig.~\ref{fig:6.3}, or algebraically in the tables.
Absence of non-identity isotropy is marked by ``--'' in tables~\ref{tbl:6.5}, \ref{tbl:6.6}.
In view of corollary \ref{cor:3.2}, flip isotropy $\kappa$ can, and does, arise for numbers $N\equiv 3\ (\mathrm{mod}\ 4)$, only, i.e.~for $N=7$ and $N=11$ in our tables. See cases \#1, 6, 10, and note the even Morse counts $c_0, c_1, c_2$ in all those cases.

It is also interesting to compare the triangle core cases 13 and 14, i.e. $5.2|11.3^22\pm$, from the isotropy perspective.
See section~\ref{subsec6.4}.
Because each case is $\rho$-isotropic, only, we obtain the only trivially equivalent, but non-identical, case via the rotation $\kappa \rho$ (and orientation reversal).
This maps case 14 to the left case of the inequivalent examples in fig.~\ref{fig:4.5}.
The mirror symmetric, but inequivalent, right case is case 13, of course.
Inequivalence occurs due to the isotropy $\rho$:  the group orbit consists of only two elements.
Therefore a single group orbit of four trivial equivalences cannot cover all four reflected possibilities.

The column ``pitch'' indicates pitchfokable 3-balls, in the sense of \cite{furo91}.
These attractors can be generated, from the trivial $N=1$ attractor, by an \emph{increasing} sequence of pitchfork bifurcations. Here increasing means that each pitchfork in the sequence replaces
one equilibrium by three new ones. We do not allow the sequence to contain pitchforks which collapse
three equilibria into a single one.

\begin{figure}[p!]
\centering \includegraphics[width=0.935\textwidth]{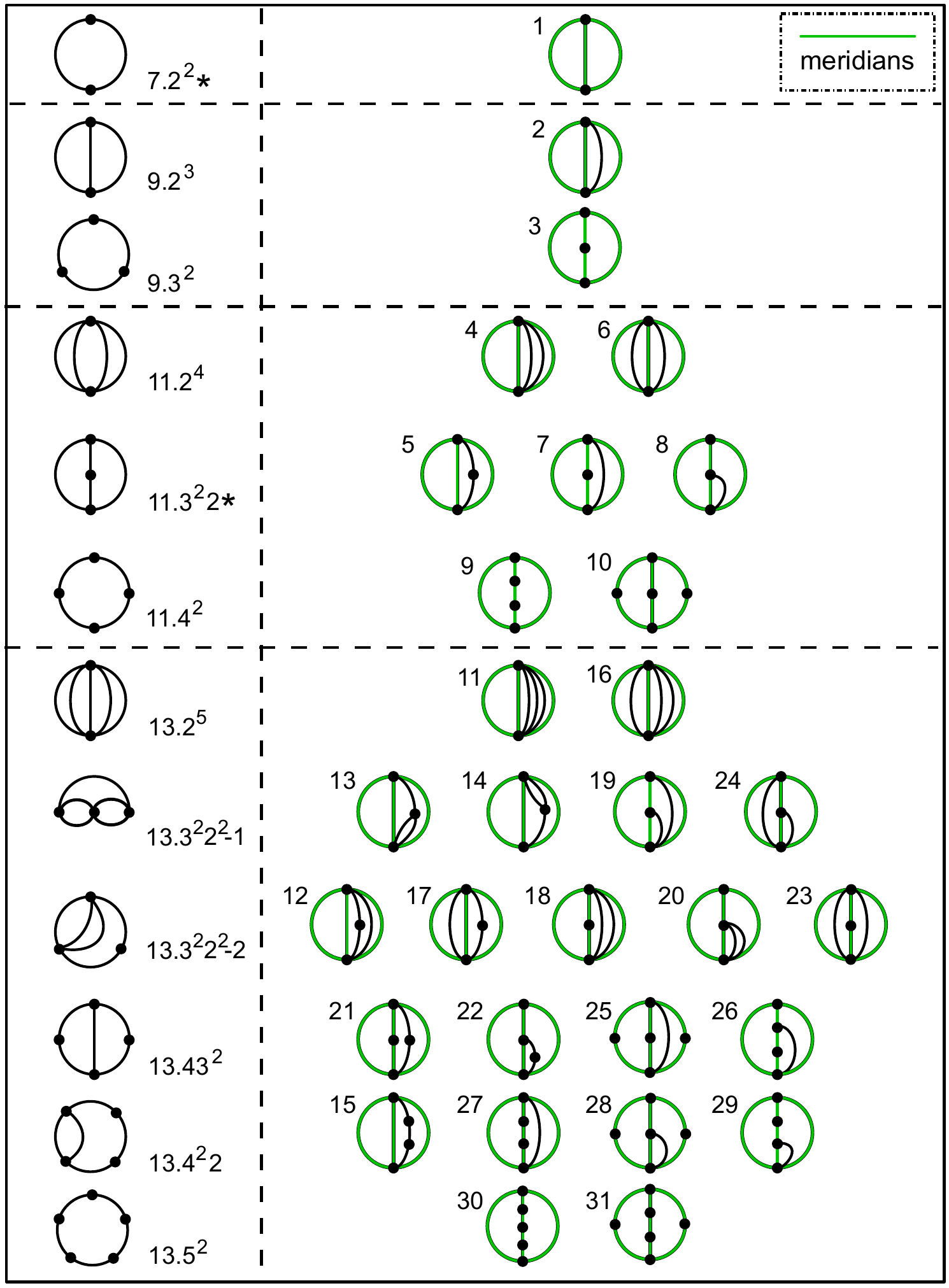}
\caption{\emph{
The 31 3-cell templates of 3-ball Sturm attractors with at most 13 equilibria, up to trivial equivalences.
See tables~\ref{tbl:6.5}, \ref{tbl:6.6} for case numbers 1--31, hemisphere notation, and Sturm permutations.
Cases are arranged in rows, on right, according to the twelve regular Thom-Smale $S^2$-complexes of fig.~\ref{fig:6.2}, listed left.
The two self-dual $S^2$-complexes are marked by $*$.
On the left, $S^2$ is the compactified plane.
On the right, with meridians in green, the right and left $\mathbf{EW}$ meridian have to be identified.
All omitted bipolar orientations result from the poles $\mathbf{N}$, top, versus, $\mathbf{S}$, bottom, and the hemisphere assignments $\mathbf{W}$, left, versus $\mathbf{E}$, right.
}}
\label{fig:6.3}
\end{figure}

\begin{figure}[p!]
\centering \includegraphics[width=\textwidth]{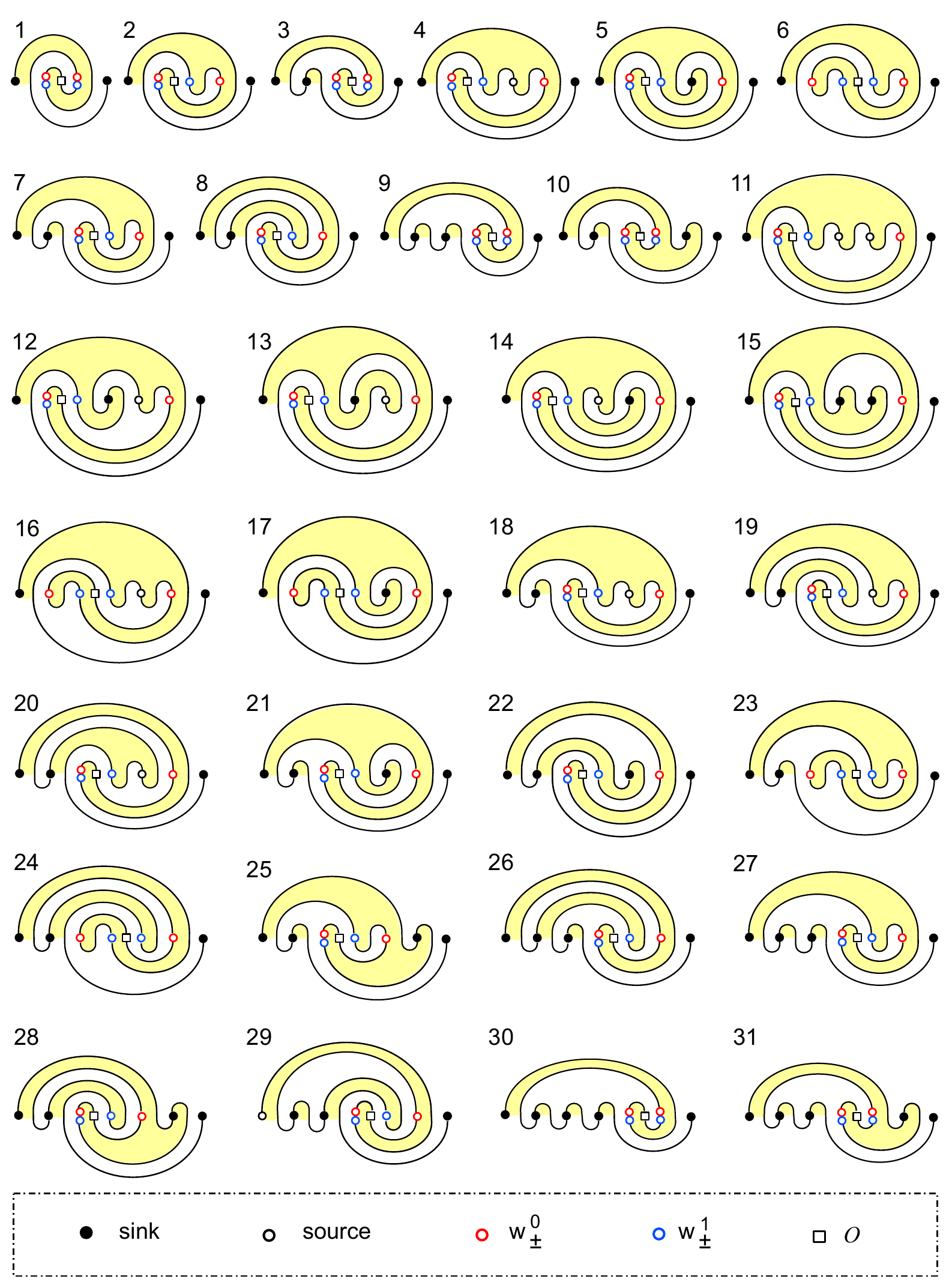}
\caption{\emph{
The 31 3-meander templates of 3-ball Sturm attractors with at most 13 equilibria, up to trivial equivalences.
Horizontal $h_1$ axis omitted.
See tables~\ref{tbl:6.5}, \ref{tbl:6.6}, and fig.~\ref{fig:6.3} for case numbers 1--31.
}}
\label{fig:6.4}
\end{figure}	

The first non-pitchforkable Sturm attractor has been constructed in \cite{ro91}.
It is the only self-dual planar Sturm attractor with at most 11 equilibria, other than the pitchforkable planar Chafee-Infante attractor $5.2$; see \cite[section~3.6]{firo10}.
We were not aware, so far, that the \emph{only} other non-pitchforkable Sturm attractor with (at most) 11 equilibria is the 3-ball given by case~8 in fig.~\ref{fig:6.3} and table~\ref{tbl:6.5}.
All six non-pitchforkable 3-ball Sturm attractors with 13 equilibria arise from case~8 by a pitchfork bifurcation.

The isotropy element $\rho$ characterizes Sturm involutions $\sigma = \sigma^{-1}$; see table~\ref{tbl:3.1}.
We encounter 13 such cases in 3-ball Sturm attractors with at most 13 equilibria.
In \cite{firowo12} we have characterized the Sturm permutations of Hamiltonian (pendulum) type nonlinearities $f= f(u)$ which only depend on $u$.
A necessary, but not sufficient, condition was $\sigma = \sigma^{-1}$ to consist of 2-cycles, only.
Due to absence of inversion isotropy $\rho$, none of the Sturm 3-balls with up to 13 equilibria is Hamiltonian, i.e. none is realizable by a nonlinearity $f=f(u)$ of pendulum type -- except the well-known Chafee-Infante attractor, case 1, \cite{chin74}.
This may be one reason why, to our knowledge, none of the cases 4--31 has appeared in the literature so far.
See \cite{fi94} for cases 2, 3.


\section{The Sturm Platonic solids}\label{sec7}


\begin{figure}[b!]
\centering \includegraphics[width=0.75\textwidth]{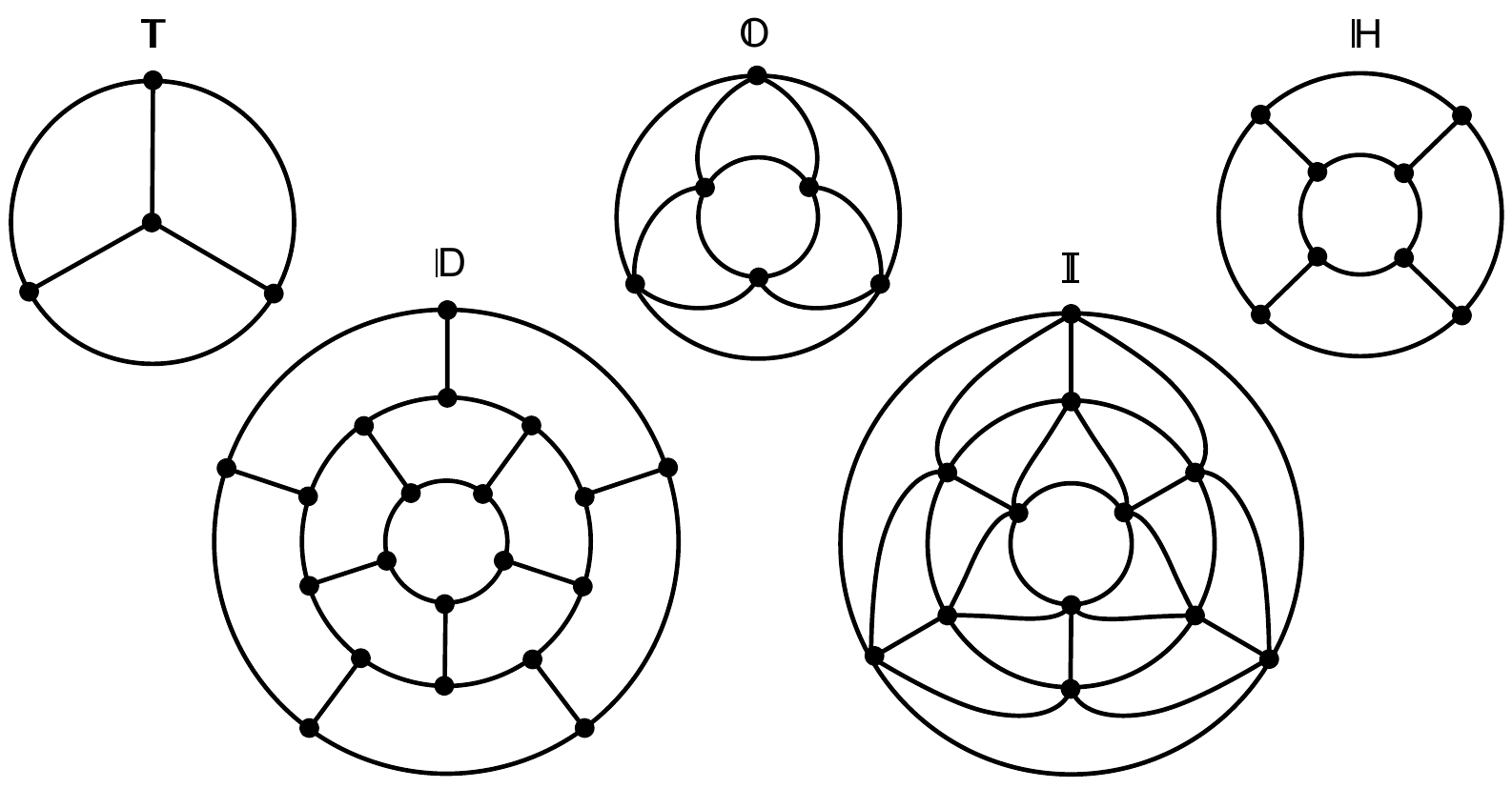}
\caption{\emph{
The five Platonic solids $c_\mathcal{O}\,$: tetrahedron $(\mathbb{T})$, octahedron $(\mathbb{O})$, cube or hexahedron $(\mathbb{H})$, dodecahedron $(\mathbb{D})$, and icosahedron $(\mathbb{I})$.
In each case we depict the planar Schlegel graph of the 1-skeleton $\mathcal{C}^1$ for the regular cell complex $\mathcal{C}^2$ of the boundary sphere $S^2= \partial c_\mathcal{O}$.
Again, we consider the exterior as another face in the one-point compactification of the plane.
}}
\label{fig:7.1.1}
\end{figure}

In this section we present Thom-Smale complexes of Sturm 3-cell templates and 3-meander templates for the five Sturm Platonic solids.
We outline their basic properties and graphical representations in section~\ref{subsec7.1}.
In \ref{subsec7.2} we present the two tetrahedra.
All five octahedra are obtained in \ref{subsec7.3} and all seven cubes in \ref{subsec7.4}.
These lists are complete, up to the trivial equivalences of section~\ref{sec3}.
We conclude with some remarks and examples on dodecahedra and icosahedra, in \ref{subsec7.5}.
We did not find any Platonic solid, in this investigation, which would be realizable by a Hamiltonian (pendulum) type nonlinearity $f=f(u)$ which only depends on $u$.

\subsection{The five Platonic solids}\label{subsec7.1}	

The five Platonic solids arise as the convex 3-dimensional polyhedra with regular $n$-gons as boundaries and identical degree $d$ at each vertex.
In other words, they are the convex hulls of non-planar orbits under discrete subgroups of the orthogonal group $SO(3)$, via the standard action on $\mathbb{R}^3$.
We study these examples because it is far from obvious how to accommodate bipolarity and the hemisphere structure of Sturm 3-cell templates in these highly symmetric objects, and how to obtain them as Sturmian Thom-Smale complexes.

Let $c_i$ count the cells of dimension $i=0,1,2$ of the 2-sphere boundary $S^2=\partial c_\mathcal{O}$ of the single 3-cell $c_\mathcal{O}$.
We then obtain table~\ref{tbl:7.1.1} and fig.~\ref{fig:7.1.1} as specific lists, from the convexity condition $d(1-2/n) <2$ and the Euler characteristic $c_0-c_1+c_2=2$.
The duals are defined by standard graph duality on $S^2$.
We also indicate the edge diameter $\vartheta$, on $S^2$, as an upper bound for the edge distance $\delta$ of the Sturm poles.

\begin{table}[t!]
\centering \includegraphics[width=\textwidth]{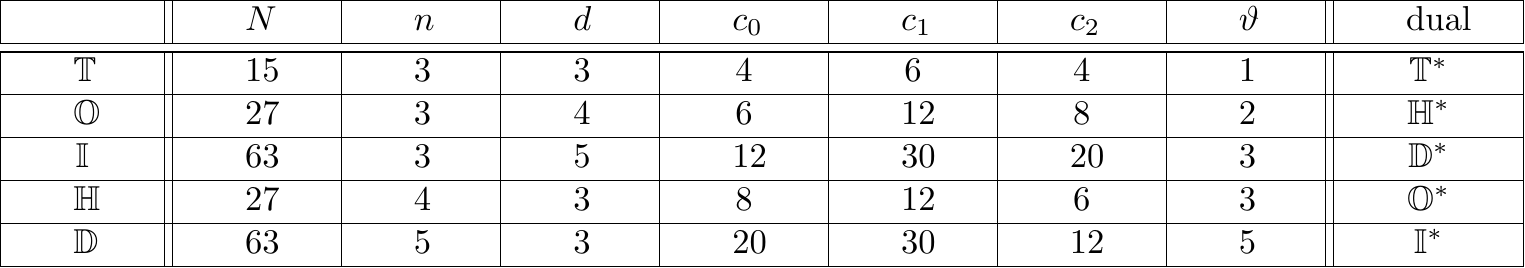}
\caption{\emph{                
The five convex Platonic solids with $N$ cells, characterized by regular $n$-gon faces and vertex degree $d$.
The columns $c_i$ count $i$-cells, and $\vartheta$ indicates the edge diameter, i.e. the maximal edge distance, on $S^2$, of vertices.
Standard $S^2$ duality is indicated in the last column.
}}
\label{tbl:7.1.1}
\end{table}

\subsection{The two Sturm tetrahedra}\label{subsec7.2}

The self-dual tetrahedron $\mathbb{T} = \mathbb{T}^*$, alias the 3-simplex, consists of $c_2=4$ faces, $c_0=4$ (sink) vertices, all of degree $d=3$, and of $c_1=6$ (saddle) edges. 
Since $n=3$, each face is a 3-gon.
Without loss of generality, we have to discuss the pole distance 
	\begin{equation}
	\delta = \vartheta=1\,, \quad \text{with} \quad 1 \leq \eta \leq c_2/2=2
	\label{eq:7.2.1}
	\end{equation}
face vertices of the Western dual core $ \mathbf{W}^*$.
Indeed, the poles have distance $\delta =1$, as any two vertices do, and we may choose $ \mathbf{W}^*$ as the smaller dual hemisphere.

See fig.~\ref{fig:7.2.1} for the unique single-face lift	$\eta=1$.
The Western face $\mathbf{W}$ is the compactified exterior of the Schlegel diagram, and the meridian circle is the boundary.
The trivial equivalence $\rho$ fixes 3d orientation.
The bipolar orientation is determined uniquely; see in particular definition~\ref{def:1.1}(iii) for the hemisphere $\mathbf{E}$.
This determines the SZS-pair $(h_0,h_1)$, and the Sturm meander permutation $\sigma = h_0^{-1} \circ h_1$, as illustrated.

\begin{figure}[t!]
\centering \includegraphics[width=\textwidth]{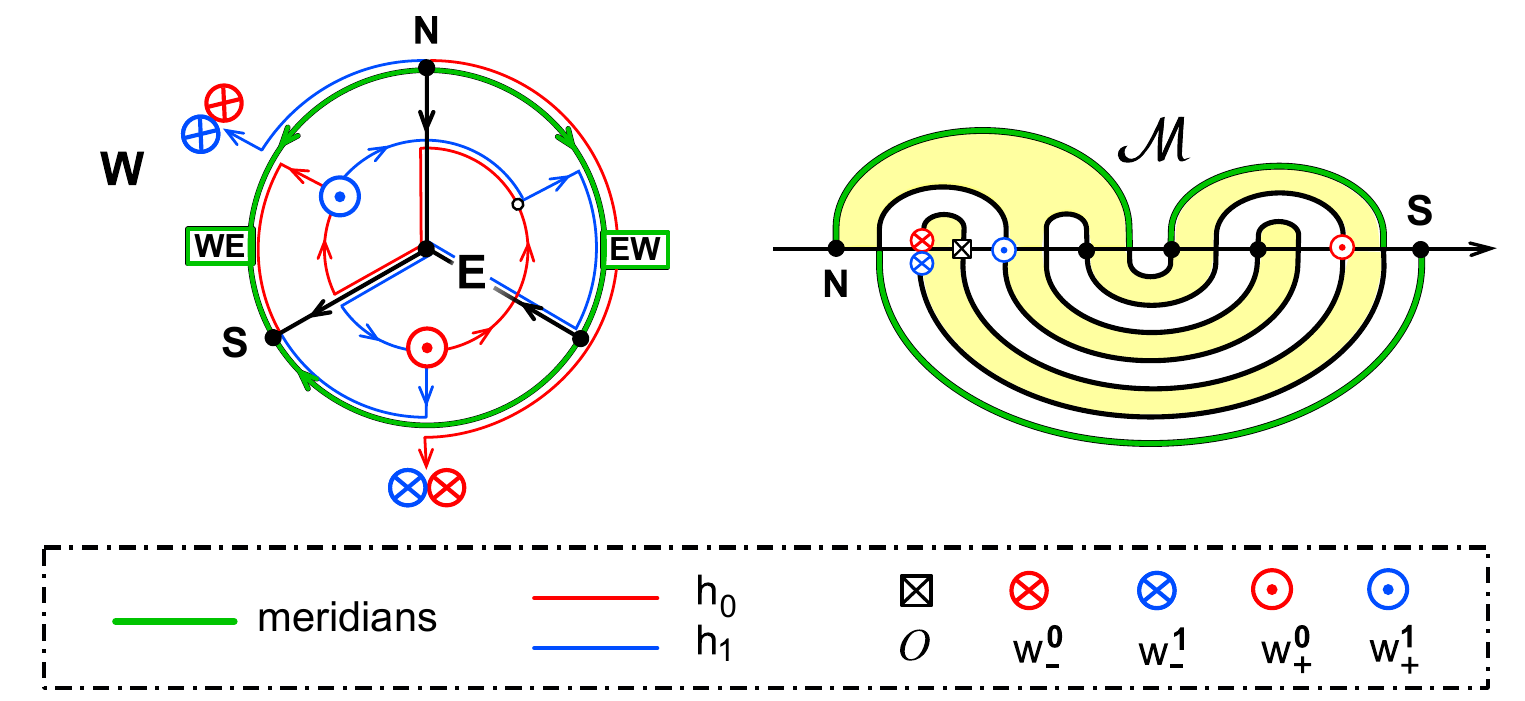}
\caption{\emph{
Left: the unique Sturm tetrahedron $\mathbb{T}.1$ with a single Western face (exterior).
The bipolar orientation on $S^2= \partial \mathbb{T}$ is uniquely determined by the pole location and the hemisphere decomposition.
Right: the Sturm meander $\mathcal{M}$ determined from the SZS-pair $(h_0,h_1)$ on the left.
}}
\label{fig:7.2.1}
\end{figure}

\begin{figure}[t!]
\centering \includegraphics[width=\textwidth]{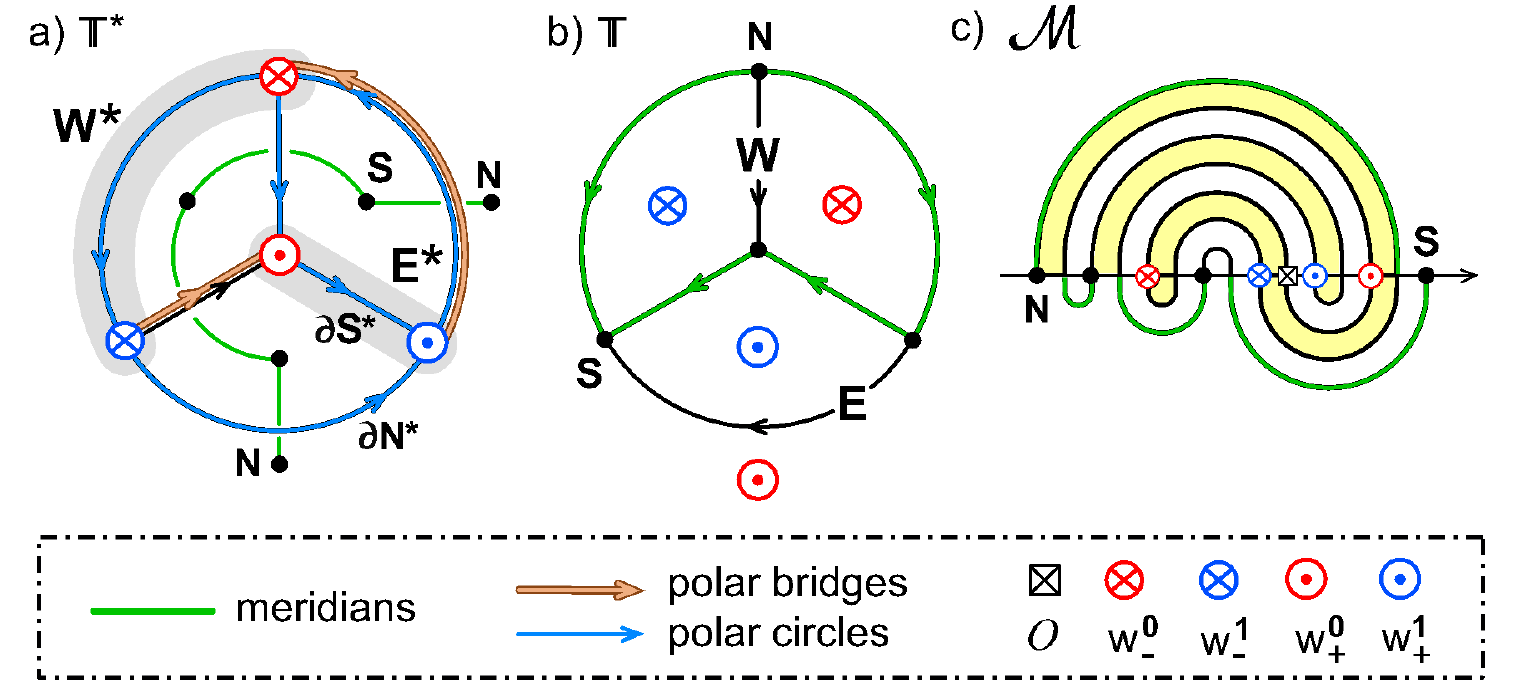}
\caption{\emph{
The unique Sturm tetrahedron $\mathbb{T}.2$ with two Western faces, (b).
The bipolar orientation on $S^2=\partial \mathbb{T}$ is uniquely determined by the pole location and the hemisphere decomposition.
Left, (a): the dual tetrahedron $\mathbb{T}^*$ with the one-dimensional dual core attractors $\mathbf{W}^*,\ \mathbf{E}^*$ (both shaded gray), dual poles $w_\pm^\iota$, polar circles $\partial \mathbf{N}^*,\ \partial \mathbf{S}^*$ (blue) and the predual meridian circle (green).
All orientations follow from lemma~\ref{lem:5.1}.
Right, (c): the Sturm meander $\mathcal{M}$ and Sturm permutation $\sigma = h_0^{-1} \circ h_1$ resulting from the (omitted) SZS-pair $(h_0,\ h_1)$ in the 3-cell template (b).
Note the 1, 2, 4 nested lower, and 7 nested upper arcs.
}}
\label{fig:7.2.2}
\end{figure}

The case of $\eta =2$ Western faces, i.e. of $\eta=2$ sinks in the dual core $\mathbf{W}^*$, leads to the one-dimensional attractor $\mathbf{W}^*$ with a single directed edge.
Indeed dual $n=2$-gons cannot be accommodated in $\mathbb{T}^*=\mathbb{T}$.
See sections~\ref{sec5}, \ref{subsec6.1} and figs.~\ref{fig:5.1}, \ref{fig:6.1}.
To derive the unique Sturm permutation $\sigma$, up to trivial equivalence, we start from the single directed edge $w_-^0w_-^1$ which defines $\mathbf{W}^* \subseteq \mathbb{T}^*$.
The dual pole face $\mathbf{N}^*$ must be edge adjacent to the right of the directed edge $w_-^0w_-^1$, i.e. exterior to the Schlegel triangle in fig.~\ref{fig:7.2.2}.
This defines the polar circle $\partial \mathbf{N}^*$ to be that boundary triangle, with left rotating orientation.
The dual edge $\mathbf{W}^*$ is surrounded by the meridian circle.
This only leaves one other edge $w_+^0w_+^1$ for $\mathbf{E}^*$.
The trivial equivalence $\rho$ is able to reverse the 3d orientation of $T^*$, and hence the direction of that edge, as well as the location of $w_+^\iota$ on that edge.
We choose the orientation of fig.~\ref{fig:7.2.2}(a) and obtain the polar face $\mathbf{S}^*$ to the left of $w_+^0w_+^1$,  with left oriented polar circle $\partial \mathbf{S}^*$.
The two polar circles overlap along the bridge from $w_+^1$ to $w_-^0$.
This settles all orientations (a) in the 1-skeleton $\mathcal{C}^{1,*}$, and hence the orientations (b) in $\mathcal{C}^1$.
The omitted SZS-pair $(h_0,h_1)$ of (b) then defines the Sturm permutation and meander $\mathcal{M}$ of (c).

\begin{table}[t!]
\centering \includegraphics[width=0.85\textwidth]{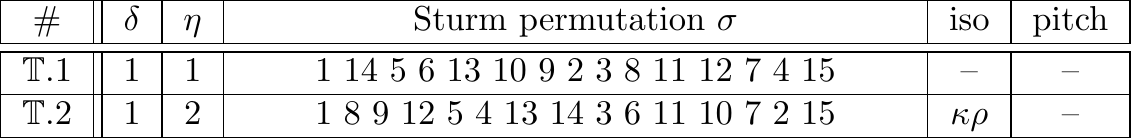}
\caption{\emph{
The two Sturm tetrahedra $\mathbb{T}$.
Pole distance $\delta=1$.
The number $\eta$ of Western faces is 1 or 2, with unique resulting Sturm permutations in either case, up to trivial equivalences.
}}
\label{tbl:7.2.1}
\end{table}

In summary, we obtain the two tetrahedral Sturm permutations of table~\ref{tbl:7.2.1} classified by their number $\eta =1,2$ of Western faces.
None of the examples is pitchforkable.
Note the isotropy generator $\kappa\rho$, for $\eta=2$.
Due to the absence of inversion isotropy $\rho$, however, none of the examples is Hamiltonian, i.e. none is realizable by a nonlinearity $f=f(u)$ of pendulum type.

\subsection{The five Sturm octahedra}\label{subsec7.3}

The octahedron $\mathbb{O}$, with dual hexahedral cube $\mathbb{O}^*=\mathbb{H}$, consists of $c_2=8$ faces, $c_0=6$ vertices of vertex degree $d=4$, and $c_1=12$ edges. 
By $n=3$, all faces are 3-gons. 
The edge diameter is $\vartheta =2$ on $S^2$.
We have to discuss pole distances $\delta$ and Western duals $\mathbf{W}^*$ with $\eta$ faces such that
	\begin{equation}
	1\leq \delta \leq \vartheta=2 \quad \text{and} \quad
	1\leq \eta \leq c_2/2=4\,,
	\label{eq:7.3.1}
	\end{equation}
without loss of generality.
See table~\ref{tbl:7.3.1} below for a list of results.

\begin{figure}[p!]
\centering \includegraphics[width=\textwidth]{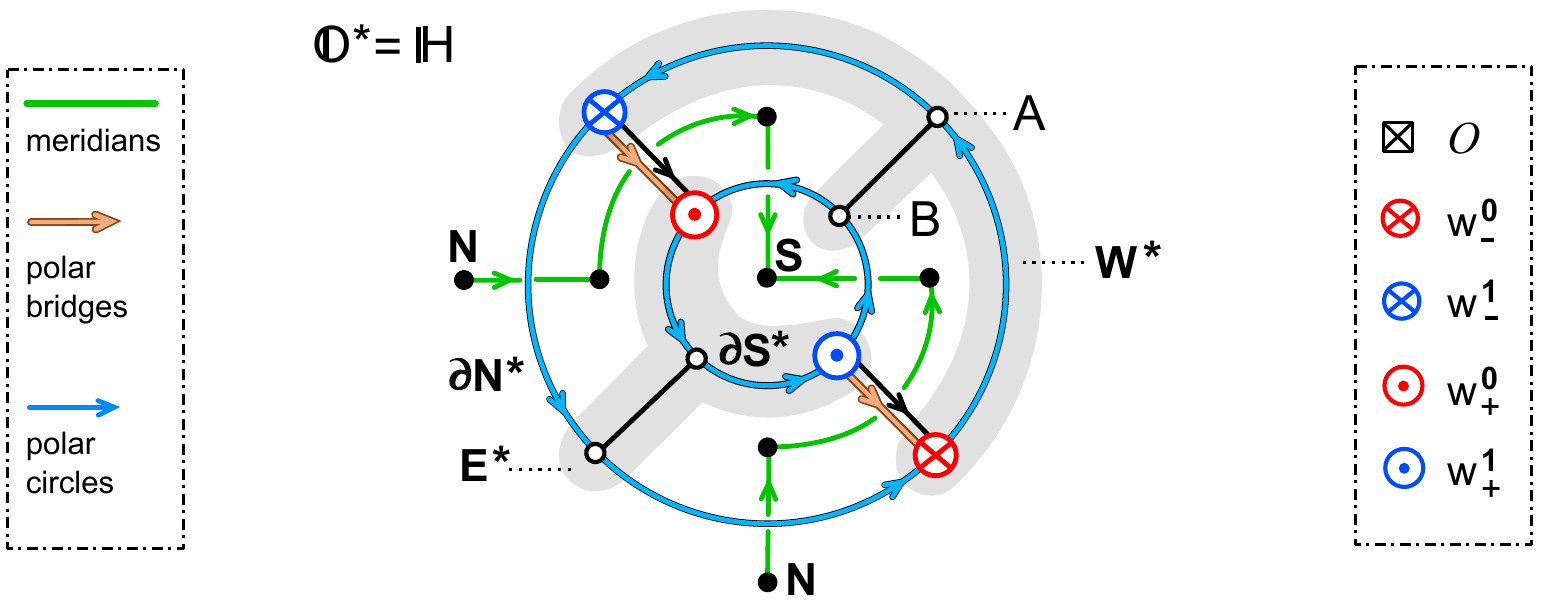}
\caption{\emph{
The impossibility of pole distance $\delta=2$ in the octahedron $\mathbb{O}$ with cube dual $\mathbb{O}^*= \mathbb{H}$.
Note the orientations of the disjoint polar circles $\partial \mathbf{N}^*,\ \partial \mathbf{S}^*$, and the pairs of directed polar bridges $e_*= w_\pm^1w_\mp^0$, dual to meridian edges $e$.
The remaining meridian edges are polar.
The meridian circle separates the (impossible) dual tri-star core $\mathbf{W}^*$ of $A,\ w_-^0,\ w_-^1,\ B$ from the tri-star core $\mathbf{E}^*$ (both shaded gray).
Polar bridges directed from $w_\pm^1$ to $w_\mp^0$ are indicated (orange).
}}
\label{fig:7.3.1}
\end{figure}

\begin{figure}[p!]
\centering \includegraphics[width=\textwidth]{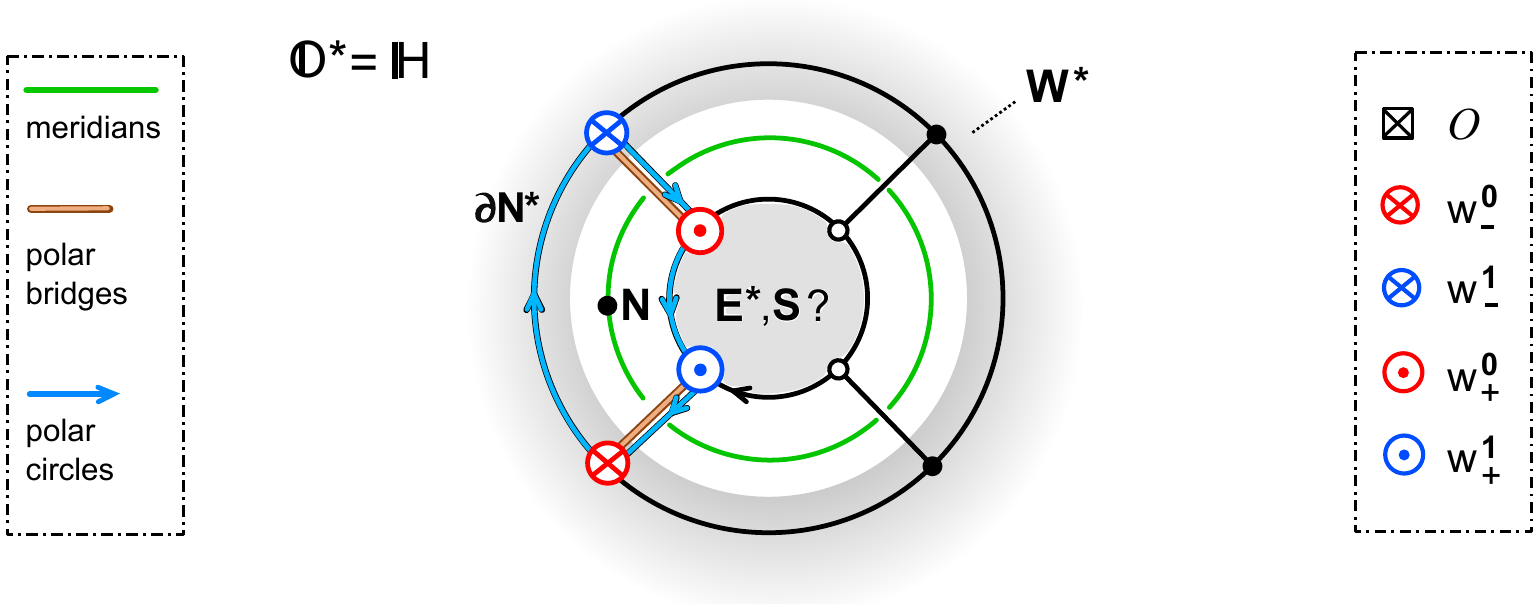}
\caption{\emph{
The impossibility of $\dim \mathbf{W}^*=2$ in the octahedron with cube dual $\mathbb{O}^*= \mathbb{H}$.
Note how the exterior square $\mathbf{W}^*$ (gray) forces the location of the meridian circle (green), with edge adjacent poles $w_-^\iota$ on the polar circle $\partial \mathbf{N}^* \,\cap \,\partial \mathbf{W}^*$.
These force $w_+^{1-\iota} \in \partial \mathbf{N}^*$ to be adjacent on the inner square $\partial \mathbf{S}^*$.
The resulting position of $\mathbf{S}$ in the central square is not on the meridian circle, and hence is impossible.
}}
\label{fig:7.3.2}
\end{figure}

In \cite{firo3d-1} we have already observed that pole distance $\delta=2$ cannot occur in the octahedron; see also \cite{firo14}.
For illustration we give another proof here, based on the dual cube $\mathbb{H}=\mathbb{O}^*$.
For $\delta =2$, the poles $\mathbf{N},\ \mathbf{S}$ are antipodes.
Hence the dual polar circles $\partial\mathbf{N}^*,\ \partial\mathbf{S}^*$ in $\mathbb{O}^*$ are disjoint; see fig.~\ref{fig:7.3.1}.
At least two of the remaining four non-polar edges of the dual $\mathbb{H}$ must connect the dual poles $w_\pm^1$ to $w_\mp^0$ as directed polar bridges, in pairs. See lemma \ref{lem:5.1}(iv).
The dual cores $\mathbf{W}^*,\ \mathbf{E}^*$ cannot both be singletons, since $c_2^*=c_0=6$.
As in corollary \ref{cor:5.2}(v), $w_\pm^1$ cannot be followed by $w_\pm^0$ on either polar circle, after a single directed edge.
Therefore the two polar bridges must be diagonally opposite.
This determines the meridians and the hemisphere attractors $\mathbf{W}^*,\ \mathbf{E}^*$, as in fig.~\ref{fig:7.3.1}.
However, $\mathbf{W}^*$ then consists of a tri-star, with edge spikes to $w_-^0,\ w_-^1$, and $B$ all emanating from the same dual vertex $A$.
Similarly, $\mathbf{E}^*$ is also a tri-star.
This contradicts bipolarity of $\mathbf{W}^*,\ \mathbf{E}^*$.
Therefore we can only encounter pole distance $\delta=1$ in the octahedron $\mathbb{O}$.

We show next that $\mathbf{W}^*$, with $\eta \leq 4$ vertices, must be one-dimensional.
Otherwise, $\mathbf{W}^* \subseteq \mathbb{O}^* = \mathbb{H}$ is a single closed 4-gon face of the cube $\mathbb{H}$.
In fig.~\ref{fig:7.3.2} we draw $\mathbf{W}^*$ as the exterior face.
The polar circle $\partial \mathbf{N}^*$ must be centered around some polar vertex $\mathbf{N}$ on the meridian.
Since $\partial \mathbf{N}^*$ contains the poles $w_-^\iota$ of $\mathbf{W}^*$, the path from $w_-^0$ to $w_-^1$ in the square boundary must therefore consist of a single directed edge.
See fig.~\ref{fig:7.3.2} again for the resulting orientation.
The poles $w_+^\iota$ of $\mathbf{E}^*$ must lie on the remaining centered 4-gon candidate $\mathbf{E}^*$ which is separated from $\mathbf{W}^*$ by the meridian circle.
The polar bridges locate $w_-^{1-\iota}$ opposite $w_+^\iota$, across the meridians, by lemma~\ref{lem:5.1}.
This forces $\mathbf{S}$ to be the barycenter of the inner square $\mathbf{S}^* \subseteq \mathbf{E}^*$.
Since $\mathbf{S}$ must also lie on the meridian circle, this is a contradiction.
This proves $\dim \mathbf{W}^*=1$ is a path of edges in the polar circle $\partial \mathbf{N}^*$.
In particular $\eta \leq 3$ with edge distance $\eta -1\leq 2$ from $w_-^0$ to $w_-^1$ on $\partial \mathbf{N}^*$; see \eqref{eq:5.6b}.

\begin{figure}[t!]
\centering \includegraphics[width=\textwidth]{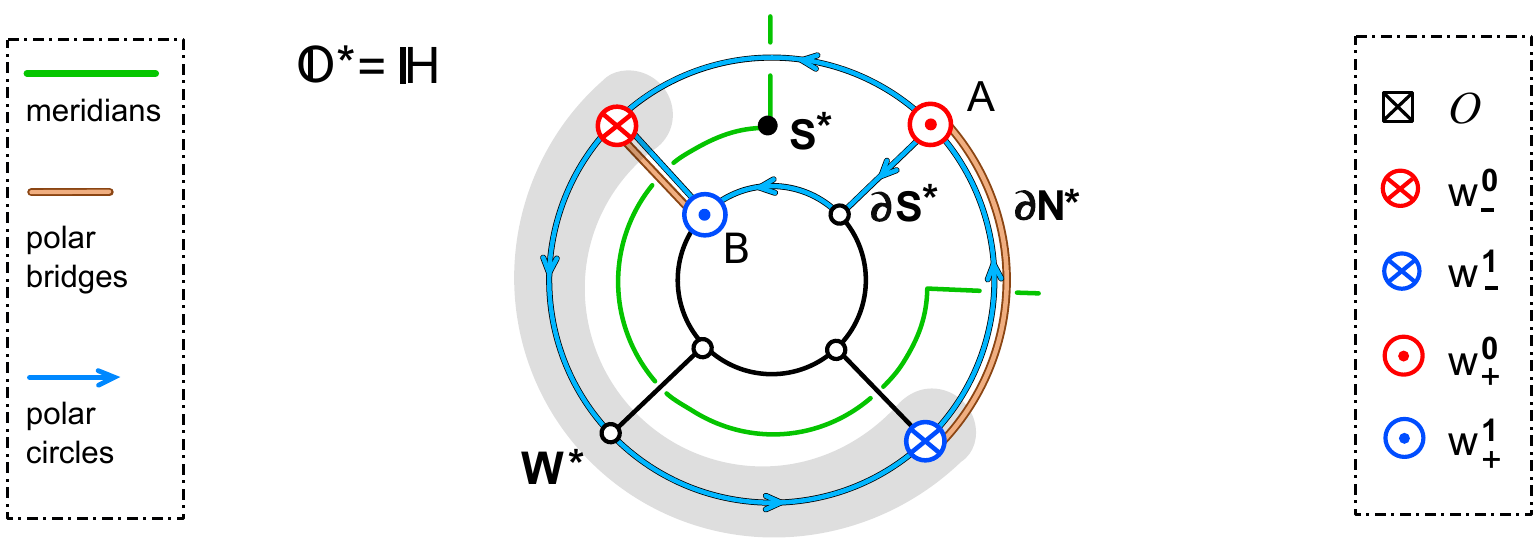}
\caption{\emph{
An orientation conflict arising from the (gray) Western core $\mathbf{W}^*$ with $\eta =3$ vertices.
Note how the location of the $\mathbf{E}^*$-poles $w_+^\iota$ forces the face $\mathbf{S}^*$ to possess an inconsistent orientation of its polar circle, $\partial \mathbf{S}^*$ from $A = w_+^0$ to $w_+^1$.
}}
\label{fig:7.3.3}
\end{figure}

Now let $\partial\mathbf{N}^*$ be the outer square again, with exterior face $\mathbf{N}^*$.
Suppose $\eta =3$.
Then the poles $w_-^\iota$ of $\mathbf{W}^*$ are diagonally opposite on the outer square; see fig.~\ref{fig:7.3.3} for $\mathbf{W}^*$ and the surrounding meridian circle.
Since $\mathbf{E}^*$ is spiked at $A$, but bipolar, one of its poles $w_+^{1-\iota}$ must coincide with $A$.
The other pole $w_+^\iota$ must be on the inner 4-gon, diagonally opposite to $w_-^\iota$ across the meridian (green) via a polar bridge (orange). 
Up to a reflection, we may consider $B = w_+^\iota$ located opposite $w_-^0$, as in fig.~\ref{fig:7.3.3}.
If $B = w_+^0$ is red, then it does not posses any mandatory single-edge bridge with $w_-^1$ (blue).
Therefore $B=w_+^1$ and $A=w_+^0$. 
Because one of the directed paths from $A$ to $B$ in the boundary $\partial \mathbf{E}^*$ must be a segment of the polar circle $\partial \mathbf{S}^*$, this fixes the location of $\mathbf{S}^*$ on the meridian (green), as indicated. 
However, the direction of the segment $\partial \mathbf{S}^* \cap \partial \mathbf{E}^* $ from $A$ to $B$ induces a clockwise orientation of the South polar circle $\partial \mathbf{S}^*$.
This contradicts the counter-clockwise orientation of $\partial \mathbf{S}^*$ required in lemma \ref{lem:5.1},
and eliminates the option $\eta=3$.

\begin{figure}[t!]
\centering \includegraphics[width=\textwidth]{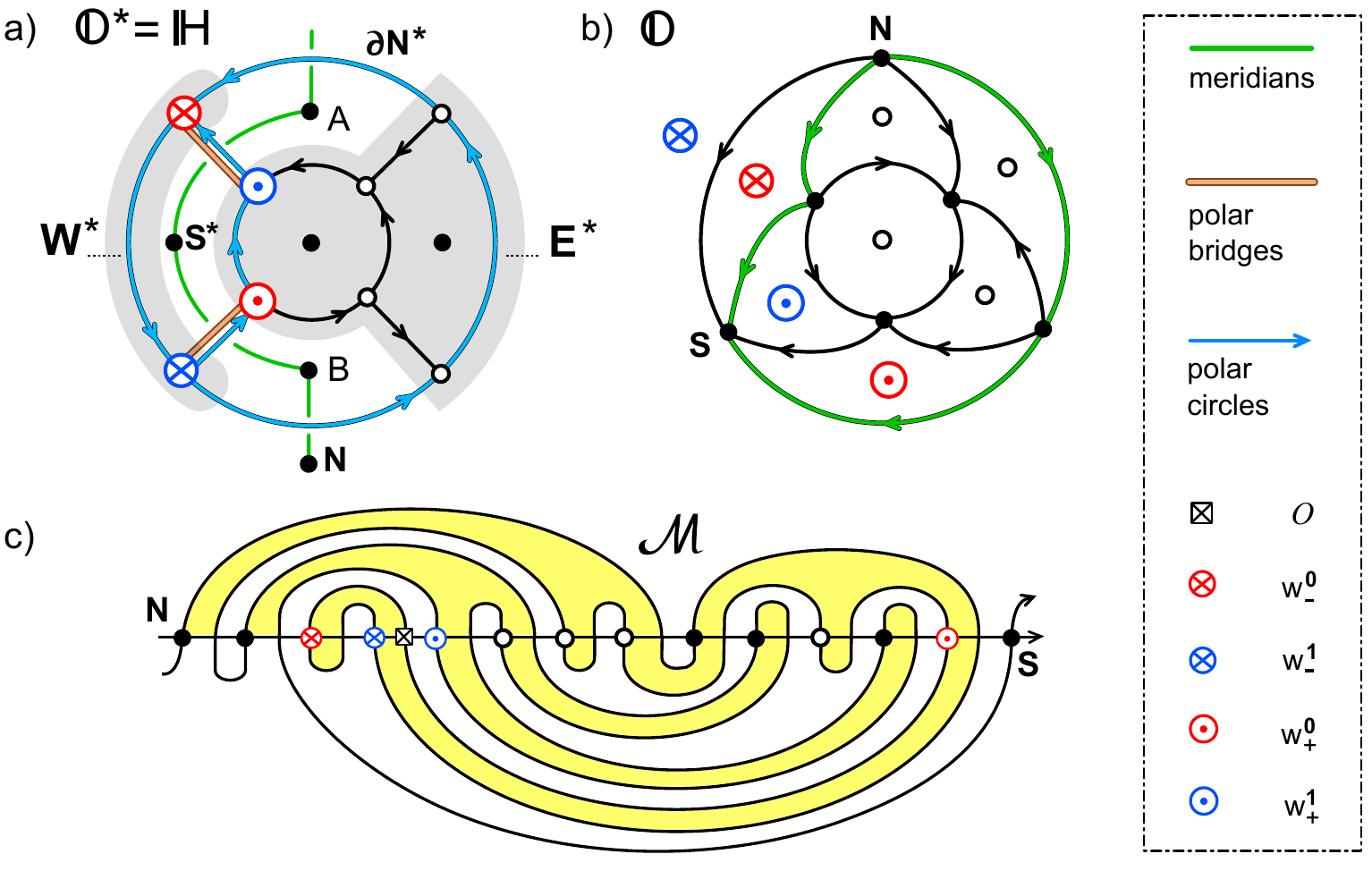}
\caption{\emph{
A viable dual core $\mathbf{W}^*$ (gray) with $\eta=2$ vertices, (a).
The locations $A,\ B$ are not viable for $\mathbf{S}$ because $\partial \mathbf{S}^*$ cannot accommodate $w_+^0$ and $w_+^1$ at edge distance 1 across the meridian from $w_-^1$ and $w_-^0$, respectively.
We draw the only viable location for $\mathbf{S}$.
The bipolar orientation of (gray) $\mathbf{E}^*$, from $w_+^0$ to $w_+^1$ determines all other edge orientations uniquely.
See (b) for the resulting bipolar octahedron complex $\mathbb{O}.2$.
See table~\ref{tbl:7.2.1} for the Sturm permutation $\sigma$.
}}
\label{fig:7.3.4}
\end{figure}

\begin{figure}[p!]
\centering \includegraphics[width=\textwidth]{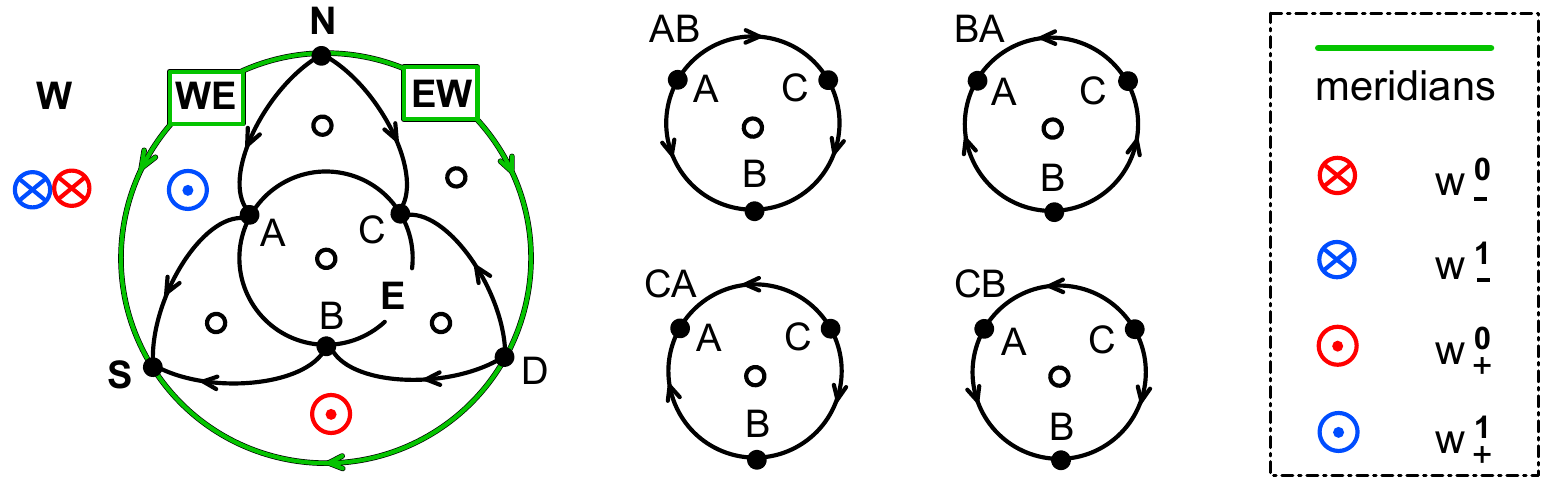}
\caption{\emph{
The four possible bipolar orientations of the Sturm octahedron with a single face Western hemisphere $\mathbf{W}$ (exterior).
The orientations only differ on the acyclic central triangle $ABC$ inside the Eastern hemisphere $\mathbf{E}$.
The four possibilities on the right arise by the selection of a maximal and a minimal vertex among $A,\ B,\ C$.
Bipolarity prevents $C$ from being minimal.
The case $AB$, for example, chooses $A$ as maximal and $B$ as minimal.
}}
\label{fig:7.3.5}
\end{figure}

\begin{figure}[p!]
\centering \includegraphics[width=\textwidth]{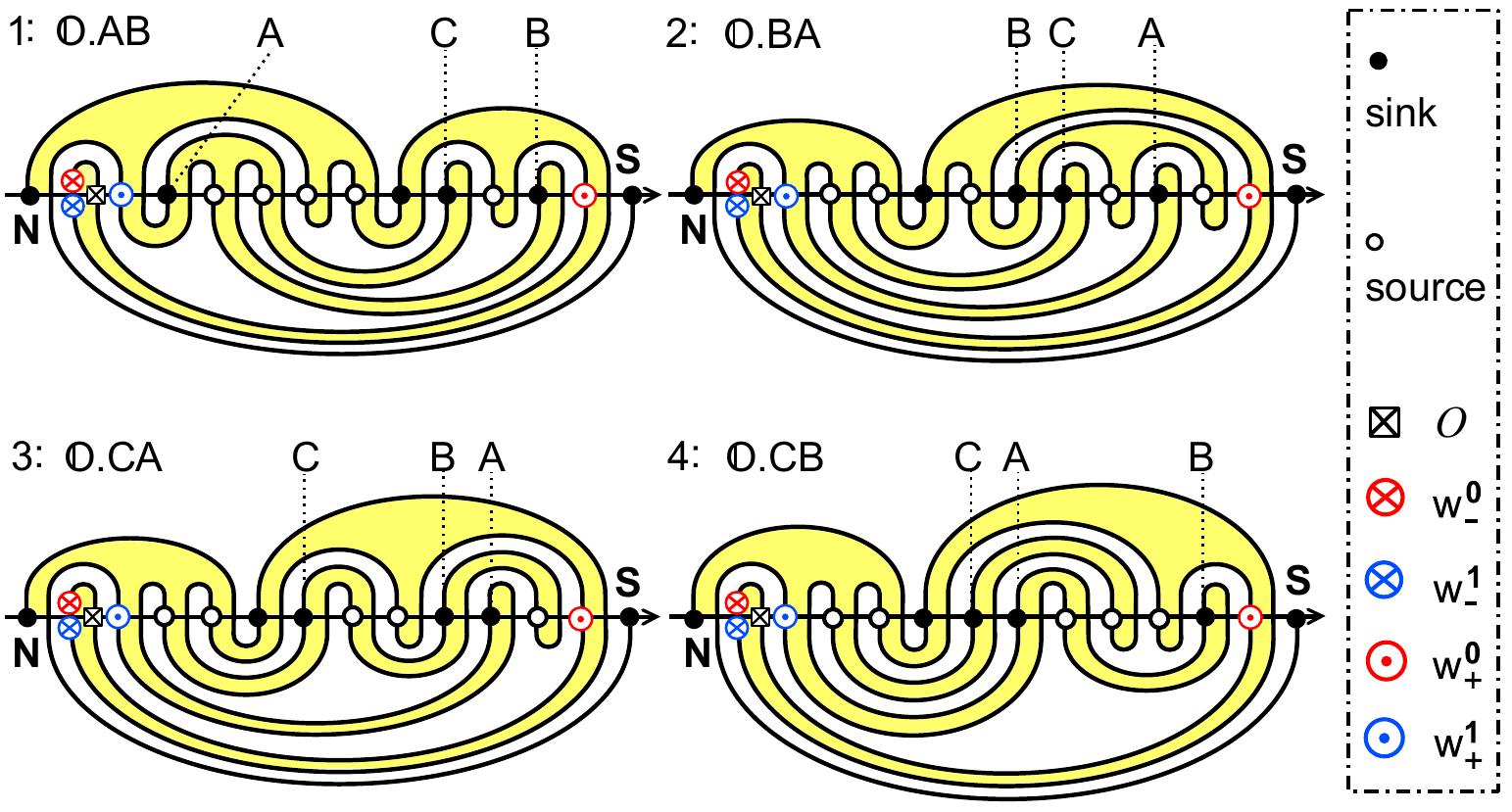}
\caption{\emph{
The four 3-meander templates of single-face lift octahedra, $\eta=1$.
Note the identical locations of the four core poles $w_\pm^\iota$, and the different location configurations of the central triangle vertices $A,\ B,\ C$.
See fig.~\ref{fig:7.3.5} for bipolar orientations, and table~\ref{tbl:7.3.1} for Sturm permutations and case labels.
}}
\label{fig:7.3.6}
\end{figure}

Consider the case of $\eta=2$ sink vertices on the single-edge  dual attractor $\mathbf{W}^*$, next.
The meridian (green) surrounding $\mathbf{W}^*$ (gray) offers three locations for $\mathbf{S}$.
See fig.~\ref{fig:7.3.4}.
The top and bottom choices $A,\ B$ cannot accommodate both poles $w_+^\iota$ of $\mathbf{E}^*$ on the polar circle $\partial\mathbf{S}^*$.
Indeed, at least one of $w_+^\iota \in \mathbf{E}^*$ would not be connected to its counterpart $w_-^{1-\iota}$ by a single-edge polar bridge (orange).
The only remaining option is worked out in fig.~\ref{fig:7.3.4}(a)--(c), from the dual $\mathbb{O}^*$ to the octahedron $\mathbb{O}$.
Since the bipolar orientations are determined uniquely, we obtain the unique permutation for the Sturm octahedron $\mathbb{O}.2$ with $\eta =2$ faces in the Western hemisphere.
See case 5 in table~\ref{tbl:7.3.1}.

\begin{table}[b!]
\centering \includegraphics[width=0.85\textwidth]{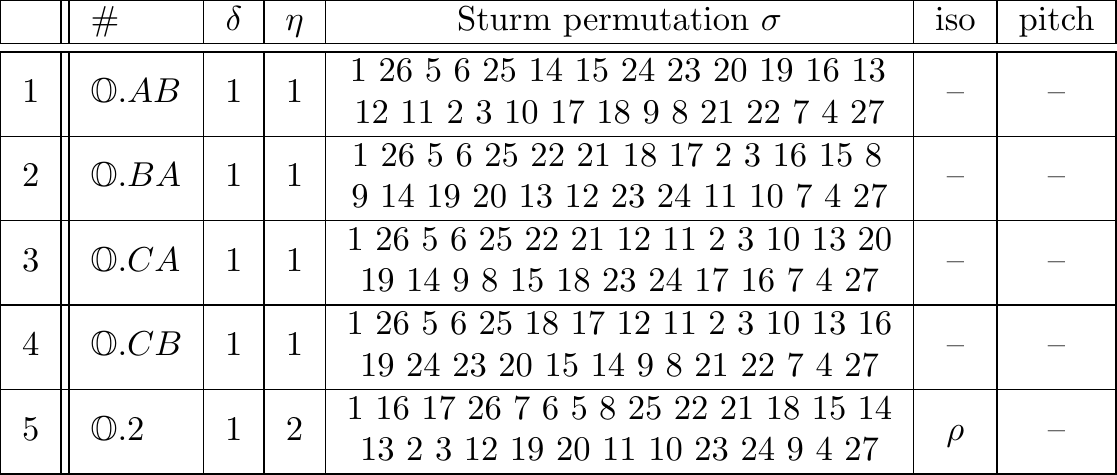}
\caption{\emph{
The five Sturm octahedra $\mathbb{O}$.
Pole distance $\delta=1$.
The number $\eta$ of Western faces is 1 or 2.
The four cases of single face lifts, $\eta=1$, arise from the local orientations of the triangle $ABC$ in fig.~\ref{fig:7.3.5}.
The involutive case $\eta =2$ of two Western faces is the only case with isotropy.
Although the requirements of corollary \ref{cor:3.2} are satisfied, flip-isotropy $\kappa$ does not occur in the octahedron $\mathbb{O}$.
Still, it is neither pitchforkable nor realizable by a pendulum type nonlinearity $f=f(u)$.
It defines a unique Sturm permutation, up to trivial equivalence.
}}
\label{tbl:7.3.1}
\end{table}

It only remains to discuss the case $\eta = 1$ where the Western hemisphere is a single face and $\mathbb{O}$ is the single face lift of an Eastern planar octahedron.
See fig.~\ref{fig:7.3.5} for the four possible orientations which arise.
Indeed non-polar edges in $\mathbf{E}$ have to emanate from meridian sink $D \in \mathbf{EW}$, by definition~\ref{def:1.1}(iii).
This leaves the central triangle $ABC$ up for bipolar orientation.
The vertex $C \neq \mathbf{S}$ cannot be chosen as a local minimum, among $\lbrace A,\ B,\ C \rbrace$, due to bipolarity of $\mathbb{O}$.
Consider $B$ as a local minimum on $ABC$.
This leaves us with the cases $AB$ and $CB$, depending on the choice of $A$ or $C$ for a local maximum on $ABC$.
With $A$ as local minimum, we obtain the remaining cases $BA,\ CA$ with local maxima $B,\ C$, respectively.

See table~\ref{tbl:7.3.1} for the resulting four cases 1--4 of bipolar face lifted octahedra, and fig.~\ref{fig:7.3.6} for their meanders.
The permutations $\sigma = h_0^{-1}\circ h_1$ are defined via the SZS-pair $(h_0,h_1)$ of each orientation.
Case~5 is the unique case with $\eta =2$ Western faces; see \eqref{eq:7.3.1}.
Note the pole distance $\delta =1$ in all cases, because the octahedron with antipodal poles cannot be realized in the Sturm class.
The only isotropy which arises is $\rho$, i.e. $\sigma^{-1}=\sigma$, in case~5.
See table~\ref{tbl:3.1}.
None of the cases~1--5 is pitchforkable or realizable by a pendulum $f=f(u)$.

For lack of scientific understanding it is also possible to arrive at table~\ref{tbl:7.3.1} by brute force.
There are 70,944~Hamiltonian path candidates for $h^\iota$, between antipode vertices, and 62,552 between neighbors.
Sifting through pairs for Sturm permutations, the above five cases can be obtained.
Alas, what would we have understood?

\subsection{The seven Sturm cubes}\label{subsec7.4}

The hexahedral cube $\mathbb{H}$, with dual octahedron $\mathbb{H}^*= \mathbb{O}$, consists of $c_2=6$ faces with $c_0=8$ vertices of vertex degree $d=4$, with $c_1=12$ edges. The edge diameter is $\vartheta =3$.
Therefore we have to discuss pole distances $\delta$ and Western duals $\mathbf{W}^*$ with $\eta$ faces such that
	\begin{equation}
	1 \leq \delta\leq \vartheta = 3 \quad \text{and} \quad
	1\leq \eta \leq c_2/2=3\,.
	\label{eq:7.4.1}
	\end{equation}
See table~\ref{tbl:7.4.1} for a list of results.

\begin{figure}[p!]
\centering \includegraphics[width=\textwidth]{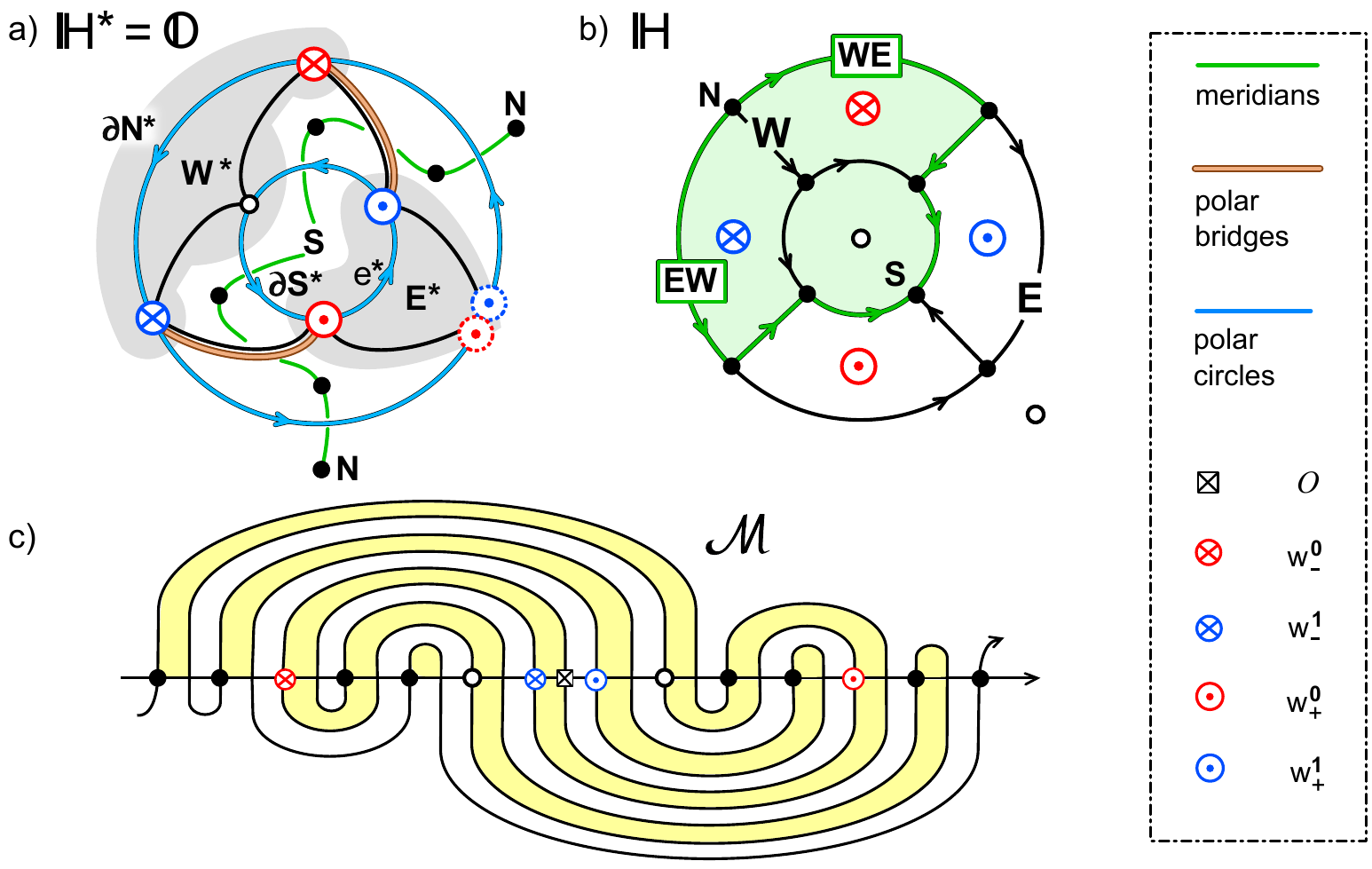}
\caption{\emph{
The unique cube $\mathbb{H}.3.3$ with pole distance $\delta =3$.
Note the resulting 3-gon, $\eta =3$, Western and Eastern cores $\mathbf{W}^*,\ \mathbf{E}^*$ (both gray).
(a) Disjoint polar circle triangles $\partial \mathbf{N}^*,\ \partial \mathbf{S}^*$, with poles $\otimes = w_-^\iota$ of $\mathbf{W}^*$ in $\partial \mathbf{N}^*$.
The bridge options for poles $\odot = w_+^\iota$ of $\mathbf{E}^*$ in $\partial \mathbf{S}^*$, across the meridians, are dotted or solid.
Only the solid option is compatible with the proper left orientation of $\partial \mathbf{S}^*$.
(b) The resulting cube 3-cell template $\mathbb{H}$ with uniquely determined bipolar orientation.
(c) The cube meander $\mathcal{M}$ generated by the SZS-pair $(h_0,h_1)$ of the cube 3-cell template (b).
Note the nested $3^2,\ 3,\ 1$ upper arcs, and $1,\ 3,\ 3^2$ lower arcs of $\mathcal{M}$, from left to right.
The meander exhibits full isotropy under all trivial equivalences $\langle \kappa, \rho \rangle$.
Flip-isotropy $\kappa$ is in compliance with the requirements of corollary \ref{cor:3.2}.
Still, it is neither pitchforkable nor realizable by a pendulum type nonlinearity $f=f(u)$.
See table~\ref{tbl:7.4.1}, case 7, for the Sturm permutation $\sigma$.
}}
\label{fig:7.4.1}
\end{figure}

\begin{figure}[p!]
\centering \includegraphics[width=\textwidth]{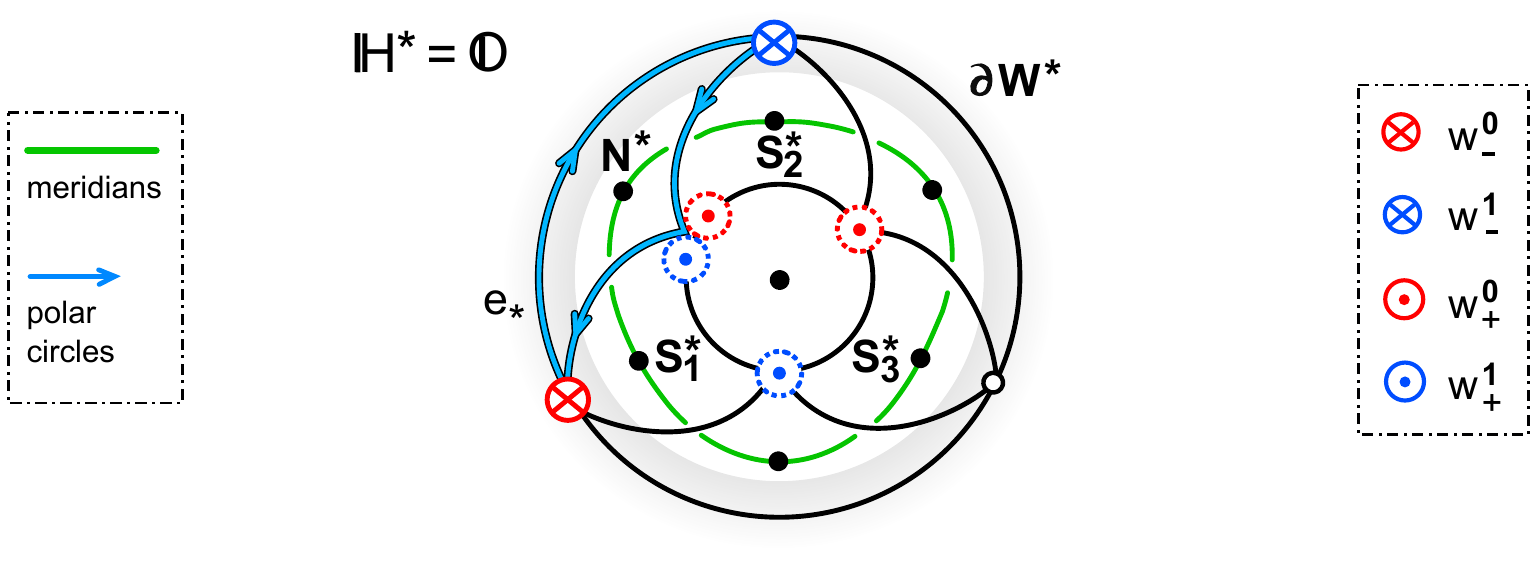}
\caption{\emph{
Dual octahedron with a 3-gon Western core $\mathbf{W}^*$ (exterior, gray) and surrounding meridian.
The direction of the dual edge $e_* = w_-^0w_-^1$ follows because $\mathbf{N}^*$, with barycenter $\mathbf{N}$ on the meridian, cannot coincide with the exterior face of $\mathbf{W}^*$.
The left orientation of the Southern polar circle $\partial \mathbf{S}^*$ contradicts the direction $w_+^0w_+^1$, for $\mathbf{S}=\mathbf{S}_1$ or $\mathbf{S}_2$ and all (dotted) candidates $w_+^{1-\iota}$ paired with $w_-^\iota$.
Therefore $\mathbf{S} = \mathbf{S}_3$ is the antipode of $\mathbf{N}$, as in fig.~\ref{fig:7.4.1}.
}}
\label{fig:7.4.2}
\end{figure}

Again we consider the case of maximal pole distance first:
$\delta =3$, with diagonally opposite poles $\mathbf{N},\ \mathbf{S}$.
In the dual octahedron $\mathbb{O}= \mathbb{H}^*$, this means that the polar circles $\partial \mathbf{N}^*,\ \partial \mathbf{S}^*$ are disjoint.
See fig.~\ref{fig:7.4.1}, where the polar circle $\partial \mathbf{N}^*$ is the outer bounding 3-gon, and $\partial \mathbf{S}^*$ is the disjoint central 3-gon.
Note how both polar circles are left oriented.
We place $w_-^\iota$ on $ \partial \mathbf{N}^*$ as indicated, without loss of generality.
By polar bridge adjacency of $w_-^\iota \in \partial \mathbf{N}^*$ to $w_+^{1-\iota} \in \partial \mathbf{S}^*$, we are restricted to the dotted and solid options for $w_+^{1-\iota}$, in fig.~\ref{fig:7.4.1}(a).
The requirement $ w_+^{1-\iota}\in \partial \mathbf{S}^*$ eliminates the dotted option, and selects the solid option, uniquely.
The meridian edge separations required by lemma~\ref{lem:5.1} define the complete meridian circle, which surrounds, separates, and hence defines, the dual cores $\mathbf{W}^*$ and $\mathbf{E}^*$.
Note how the Western core $\mathbf{W}^*$ is a Sturm 3-gon, $\eta =3$, as is the Eastern core $\mathbf{E}^*$.
This determines the orientations of $\mathbb{H}^*=\mathbb{O}$ and of the 3-cell template for $\mathbb{O}$ uniquely, as in fig.~\ref{fig:7.4.1}(b).
The unique meander $\mathcal{M}$ in (c) follows, as usual, from the SZS-pair $(h_0,h_1)$ of (b) via the Sturm permutation $\sigma = h_0^{-1}\circ h_1$.
See case 7 in table~\ref{tbl:7.4.1}.

\begin{figure}[]
\centering \includegraphics[width=\textwidth]{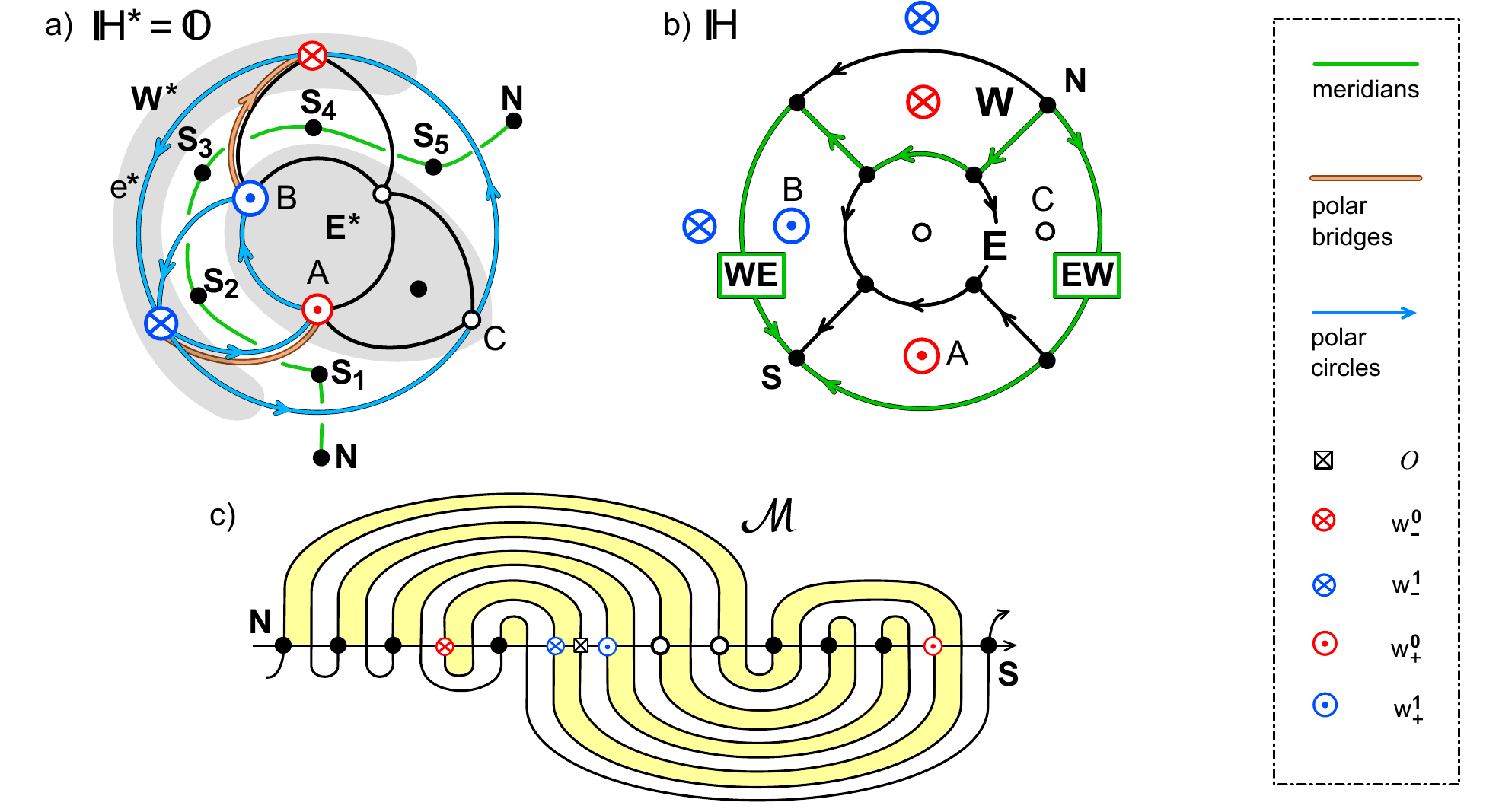}
\caption{\emph{
The unique cube $\mathbb{H}.2.2$ with $\eta=2$ Western faces.
Note the unique edge $e_* = w_-^0 w_-^1$ in the Western core $\mathbf{W}^*$ and the double 3-gon Eastern core $\mathbf{E}^*$ (both gray).
(a) Exterior polar circle $\partial \mathbf{N}^*$ and meridian circle in the octahedral dual $\mathbb{O}= \mathbb{H}^*$.
Only South poles $\mathbf{S} = \mathbf{S}_2$ and $\mathbf{S}_4$ are viable options, trivially equivalent under $\rho$.
Lemma~\ref{lem:5.1} forces the location $w_+^0 = A$, and hence $w_+^1=B$.
All remaining orientations of dual edges follow.
(b) The resulting cube 3-cell template $\mathbb{H}$ with uniquely determined bipolar orientation.
(c) The cube meander $\mathcal{M}$ generated by the SZS-pair $(h_0, h_1)$ of (b), without remaining isotropy.
For the Sturm permutation $\sigma$ see table~\ref{tbl:7.4.1}, case 6.
}}
\label{fig:7.4.3}
\end{figure}

Conversely, suppose the Western core $\mathbf{W}^* \subseteq \mathbb{O}=\mathbb{H}^*$ is 2-dimensional.
Since $\mathbf{W}^*$ contains at most $\eta =3$ vertices, and the octahedron $\mathbb{O}$ consists of triangles, this implies $\mathbf{W}^*$ itself is a Sturm 3-gon with poles $w_-^\iota$.
In fig.~\ref{fig:7.4.2} we choose $\mathbf{W}^*$ to be the exterior face.
We recall that the polar face $\mathbf{N}^*$ is located to the right of the directed edge $e_* = w_-^0w_-^1\in \mathbf{W}^*$.
Swapping our placements of $w_-^\iota = \otimes$ would force the barycenter $\mathbf{N}$ of $\mathbf{N}^*$ to be exterior, off the meridian circle.
The polar bridge options for $w_+^{1-\iota} \in \partial \mathbf{S}^*$ are dotted in fig.~\ref{fig:7.4.2}.
The resulting options for barycenters  $\mathbf{S}$ on the meridian are indicated in the faces $\mathbf{S}_1^*,\ \mathbf{S}_2^*,\ \mathbf{S}_3^*$.
The required left orientation of the polar circle $\partial \mathbf{S}^*$ is not compatible with the direction of the edge $w_+^0w_+^1$, unless $\mathbf{S}^* = \mathbf{S}_3^*$.
But then the polar circles $\partial \mathbf{N}^*,\ \partial\mathbf{S}^*$ are disjoint.
In other words, we are back with the case $\delta =3$ of diagonally opposite poles which we already discussed.
In conclusion, the cases $\delta=3$ and $\dim \mathbf{W}^*=2$ are equivalent.

It remains to study pole distances $\delta=1$ or $\delta =2$ with one-dimensional cores $\dim \mathbf{W}^*=1$.
In particular $\mathbf{W}^*$ is contained in the 3-gon polar circle $\partial \mathbf{N}^*$.
By \eqref{eq:5.6b}, $\mathbf{W}^*$ contains at most one edge $e_+= w_-^0 w_-^1$, i.e. $\eta =2$.
The only alternative for $\mathbf{W}^*$ is the singleton $w_-^0=w_-^1$, where $\eta=1$ defines a single face lift.

We consider the case $\eta=2$ first.
In fig.~\ref{fig:7.4.3}(a), we choose $\mathbf{N}^*$ to be the exterior face with $w_-^\iota$ on the polar boundary circle $\partial \mathbf{N}^*$.
The meridian circle around $\mathbf{W}^* = w_-^0w_-^1$ offers five barycenter locations $\mathbf{S}_1, \ldots , \mathbf{S}_5$ for the South pole $\mathbf{S}$.
The resulting polar bridges $w_-^\iota w_+^{1-\iota}$, which cross the meridian, eliminate choice $\mathbf{S}_3$.
Indeed $w_+^0=w_+^1$, in that case, would force $\eta =5$ because the Eastern core $\mathbf{E}^*= w_+^\iota$ becomes a singleton.
For the pole adjacent choice $\mathbf{S}= \mathbf{S}_1$ the polar bridges force $w_+^1= C,\ w_+^0=A$.
This contradicts the required left orientation of the polar circle $\partial \mathbf{S}_1^*$.
By trivial equivalence, this also eliminates $\mathbf{S} = \mathbf{S}_5$.
It is therefore sufficient to study $\mathbf{S} = \mathbf{S}_2$ with $w_+^0=A,\ w_+^1=B$.
Bipolarity of the Eastern core $\mathbf{E}^*$ fixes the remaining orientations of $\mathbb{O}=\mathbb{H}^*$, and hence of $\mathbb{H}$; see fig.~\ref{fig:7.4.3}(b).
The SZS-pair $(h_0,h_1)$ of (b) defines the meander $\mathcal{M}$ in (c), with the Sturm permutation $\sigma = h_0^{-1}\circ h_1$.
See table~\ref{tbl:7.4.1}, case 6.

\begin{figure}[p!]
\centering \includegraphics[width=\textwidth]{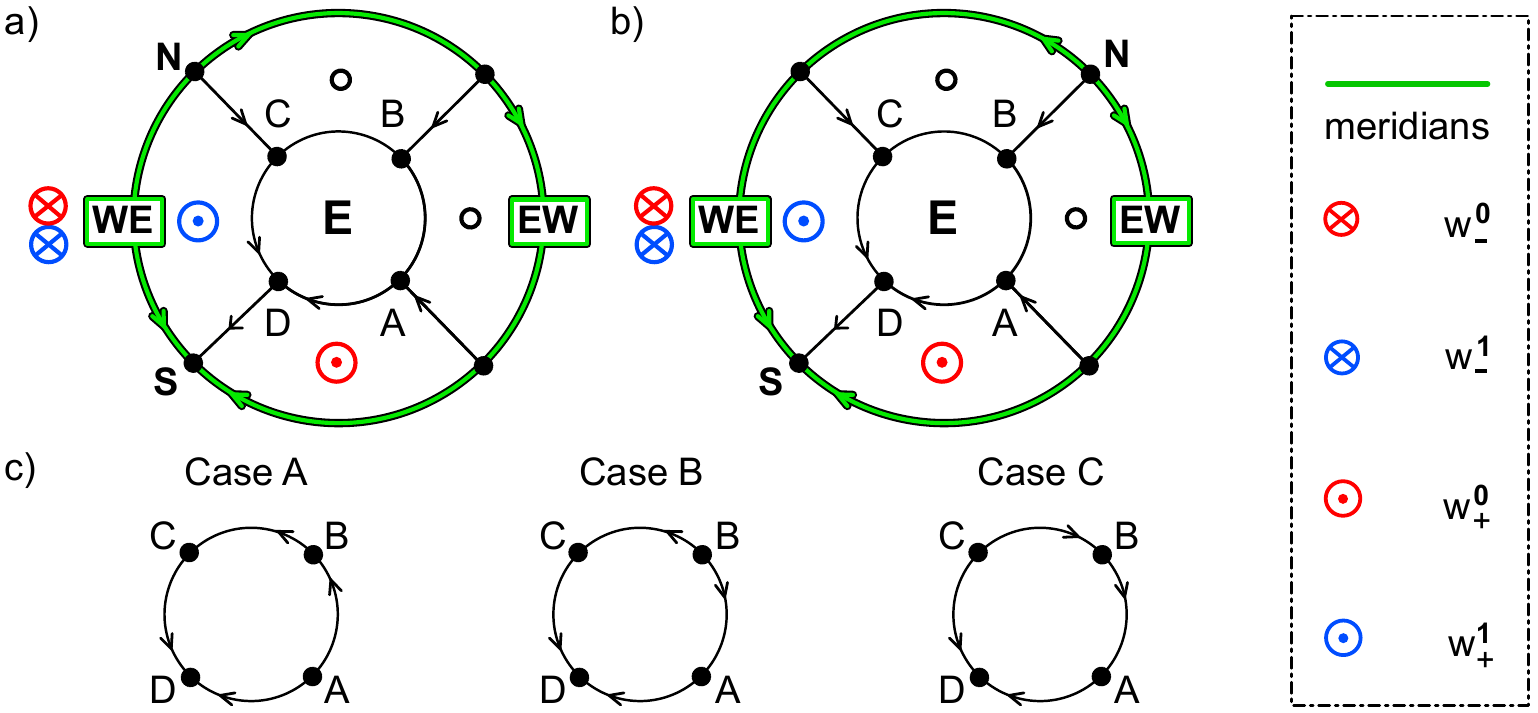}
\caption{\emph{
The three possible bipolar orientations, each, of the Sturm cube with single face Western hemisphere, exterior.
The orientations only differ on the acyclic central square $ABCD$, respectively, for the case (a) of adjacent poles, $\delta =1$, and for the case (b) of diagonally opposite poles, $\delta =2$, on the Western face.
Note how acyclicity of $ABCD$ makes $D$ a local minimum in (c).
The cases $A,\ B,\ C$ refer to the location of the local maximum.
}}
\label{fig:7.4.4}
\end{figure}

\begin{table}[p!]
\centering \includegraphics[width=\textwidth]{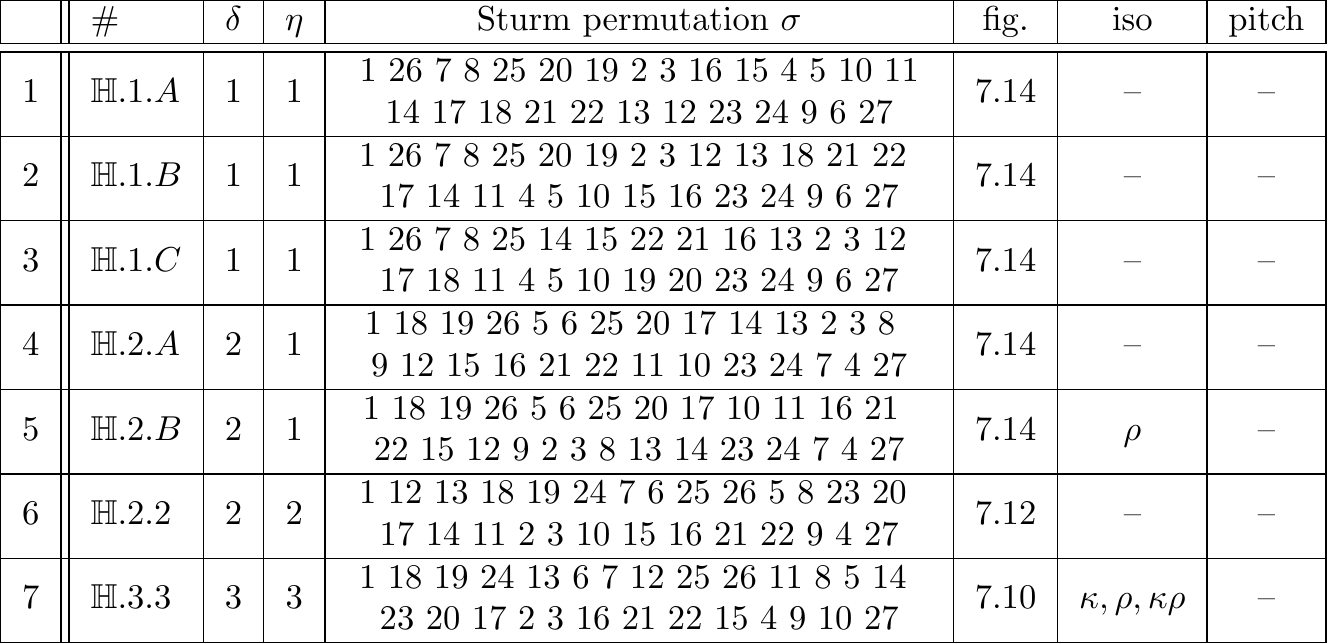}
\caption{\emph{
The seven Sturm hexahedral cubes $\mathbb{H}$.
All pole distances $\delta$ are realized.
The full diameter case $\delta = 3$ is maximally symmetric; see fig.~\ref{fig:7.4.1}. However, it is neither pitchforkable nor realizable by a pendulum type nonlinearity $f=f(u)$.
The nonuniqueness of the single-face lifts $\eta =1$ with pole distances $\delta = 1$ and $\delta =2$, respectively, arises from the choice of the local maximum vertex in the central 4-gon $ABCD$ of $\mathbb{H}$; see fig.~\ref{fig:7.4.4} (c).
The unique double-face lift $\eta =2$ is the third possibility of pole distance $\delta =2$.
}}
\label{tbl:7.4.1}
\end{table}

All remaining cases are single-face lifts, by $\eta =1$.
We argue for the cube $\mathbb{H}$, directly, with $\mathbf{W}$ as the exterior face and the exterior boundary as meridian circle.
We consider the two cases of pole distances $\delta=1$ and $\delta = 2$, separately; see fig.~\ref{fig:7.4.4}(a) and (b).
In either case the boundary meridian orientation follows from the location of the poles $\mathbf{N},\ \mathbf{S}$, with the top edge as the only difference.
The three edges emanating into $\mathbf{E}$ from meridian $i=0$ sinks other than $\mathbf{S}$ are all directed inward, towards $A,\ B,\ C$, respectively.
The fourth edge $D\mathbf{S}$ must be directed towards the South pole $\mathbf{S}$, of course.

Note how $AD$ has to be directed towards $D$.
Indeed, the opposite direction $DA$, and the absence of any other poles besides $\mathbf{N},\ \mathbf{S}$, would force the cyclic orientations $AB,\ BC$, and $CD$, successively.
The directed cycle $ABCD$ contradicts bi-polarity.

\begin{figure}[t!]
\centering \includegraphics[width=\textwidth]{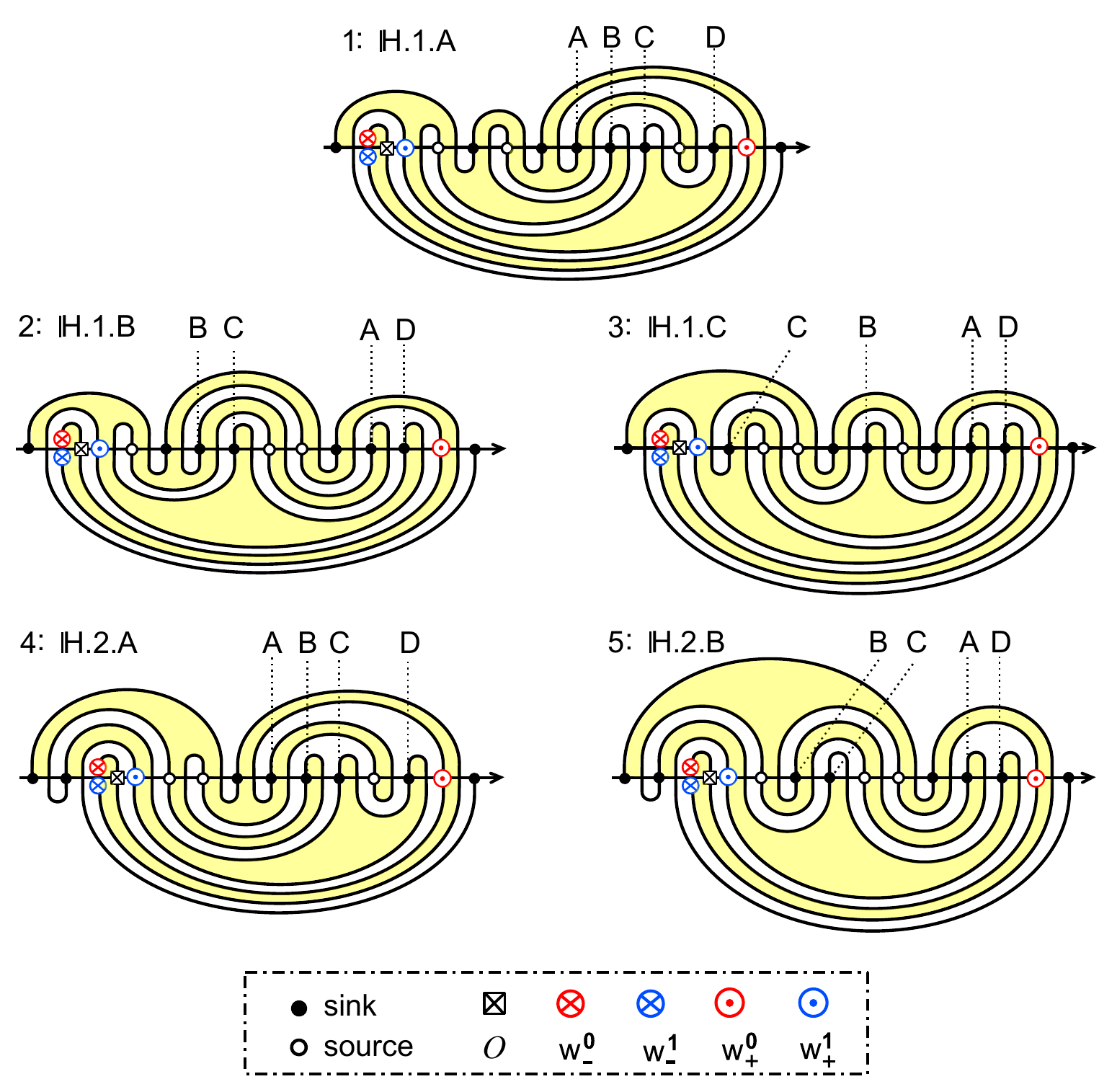}
\caption{\emph{
The five 3-meander templates of single face lift cubes, $\eta =1$.
Note the three identical locations of the four core poles $w_\pm^\iota$, for each pole distance $\delta =1,2$, and the different configurations of the central square $A,\ B,\ C,\ D$.
See fig.~\ref{fig:7.4.4} for bipolar orientations, and table~\ref{tbl:7.4.1} for Sturm permutations and case labels.
}}
\label{fig:7.4.5}
\end{figure}

Likewise, $CD$ has to be directed towards $D$.
This identifies $D$ as a local minimum on the boundary $ABCD$ of the central square.
We distinguish cases
	\begin{equation}
	A,\ B,\ C
	\label{eq:7.4.2}
	\end{equation}
according to the three remaining choices of a local maximum on the boundary $ABCD$ of the central square.
See fig.~\ref{fig:7.4.4}(c).
The only trivial equivalence arises for pole distance $\delta =2$: under $\rho$, cases $A$ and $C$ are then interchangeable.

This determines the bi-polar orientation in all five remaining cases of single-face lifted cube 3-ball complex templates.
See fig.~\ref{fig:7.4.5} for the five resulting Sturm meanders.

In table~\ref{tbl:7.4.1} we summarize the seven inequivalent 3-meander templates for cubes $\mathbb{H}$.
In cases 1--5, the Western hemisphere is a single face, $\eta =1$.
The minimal pole distance $\delta =1$ only occurs for $\eta = 1$; see cases 1--3.
The three cases differ by the choice of the locally maximal $i=0$ sink vertex $A,\ B,\ C$ on the central 4-gon of the Eastern hemisphere; see fig.~\ref{fig:7.4.4}.
Diagonally opposite poles across a face of the cube, $\delta = 2$, may arise for single face and for double face Western hemispheres, i.e. for $\eta =1$ and for $\eta =2$.
The two cases $4,\ 5$ are characterized by $\delta =2,\ \eta =1$, and differ by the locally maximal vertex $A,\ B$ of the central 4-gon, in the bipolar orientation.
Case 6 is characterized uniquely by its double face Western hemisphere, $\eta =2$.
It is the third case of face diagonally opposite poles, $\delta = 2$.
The final case 7, treated first in the present section, is equivalently characterized by the requirement of space diagonally opposite poles, $\delta =3$, or a face count $\eta =3$ in each hemisphere.
In each hemisphere, the three faces share one vertex.
In other words, each dual core is a Sturm 3-gon.

\subsection{Sturm icosahedra and dodecahedra}\label{subsec7.5}

We do not aim for complete case lists, in this section.
Instead, we explore the possible pole distances $\delta$ and Western, i.e. smaller hemisphere, face counts $\eta$ for solid Sturm icosahedra $\mathbb{I}$ and dodecahedra $\mathbb{D}$.
See theorems~\ref{thm:7.5.1} and \ref{thm:7.5.2}.
A priori, 
	\begin{equation}
	\begin{aligned}
	1\leq \delta \leq \vartheta =3 \quad &\text{and} \quad
	1\leq \eta \leq c_2/2 =10 \quad &\text{for}& \quad \mathbb{I}\,,\\
	1\leq \delta \leq \vartheta =5 \quad &\text{and} \quad
	1\leq \eta \leq c_2/2 =6 \quad &\text{for}& \quad \mathbb{D}\,.
	\end{aligned}
	\label{eq:7.5.1a}
	\end{equation}

\begin{figure}[t!]
\centering \includegraphics[width=\textwidth]{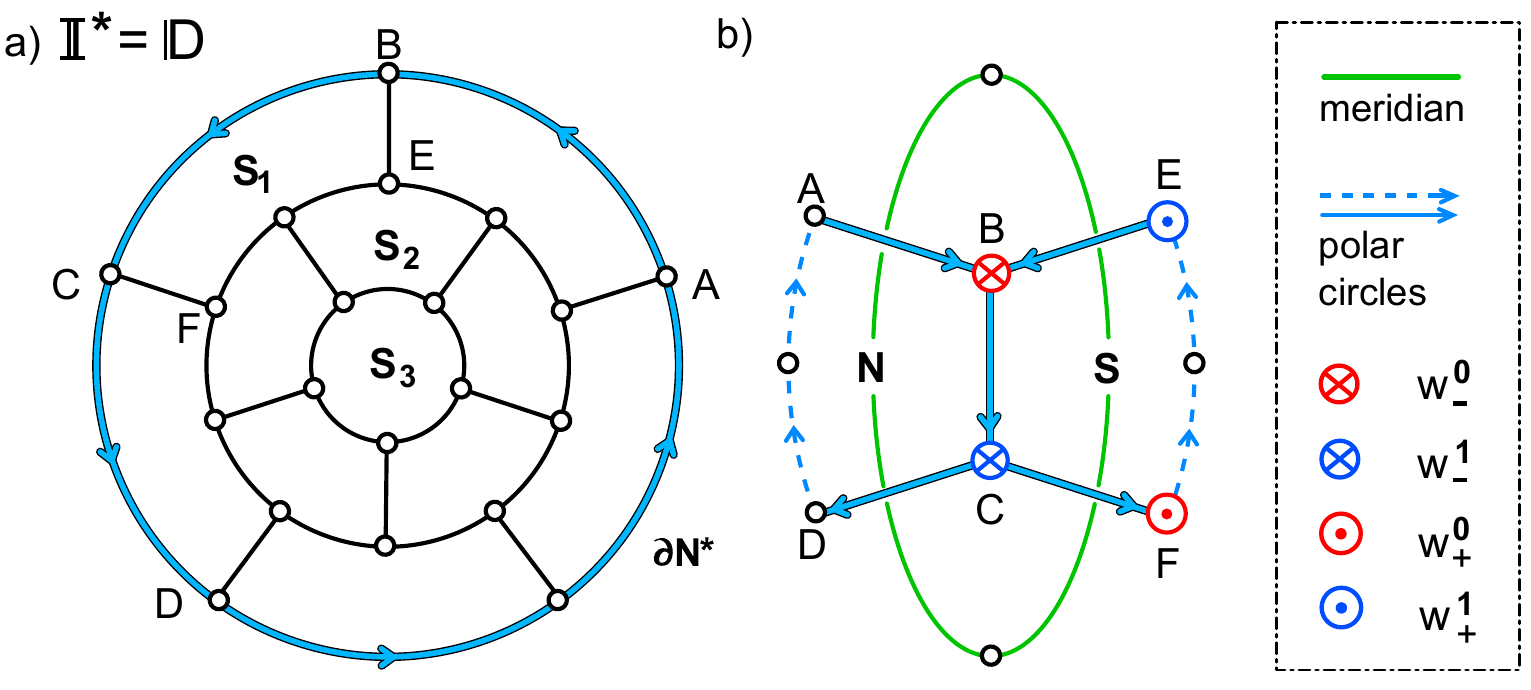}
\caption{\emph{
The dodecahedral dual $\mathbb{I}^*=\mathbb{D}$ of the icosahedron complex $\mathbb{I}$.
(a) Exterior polar face dual $\mathbf{N}^*$ with oriented boundary $\partial \mathbf{N}^*$.
Representative barycenters $\mathbf{S}_\delta$ of candidate face duals $\mathbf{S}_\delta^*$ denote South poles $\mathbf{S}$ at distances $\delta = 1,2,3$ from the exterior North pole $\mathbf{N}$.
Note the single bridge $BE$ between the polar circle $\partial \mathbf{N}^*$ and $\partial \mathbf{S}_2^*$, as well as the absence of bridges between $\partial \mathbf{N}^*$ and $\partial \mathbf{S}_3^*$.
(b) Viable placement of the four pole $w_\pm^\iota$ of the dual cores $\mathbf{W}^*,\ \mathbf{E}^*$ in case $\mathbf{S} = \mathbf{S}_1$.
Only the locations $A, \ldots , F$ allow for single-edge directed bridges $w_\pm ^1w_\mp^0$.
The bridges must lie in the solid boundary parts of the two adjacent pentagonal polar faces.
The orientations of the polar circles $\partial \mathbf{N}^*,\ \partial \mathbf{S}_1^*$ are indicated, and result in the green meridian circle.
}}
\label{fig:7.5.1}
\end{figure}

For the Sturm icosahedron Thom-Smale complex, it turns out that poles $\mathbf{N},\ \mathbf{S}$ must always be neighbors:
$\delta = 1$.
The maximal Western face count is $\eta = 2$.
See figs.~\ref{fig:7.5.3}, \ref{fig:7.5.4} and case 1 in table~\ref{tbl:7.5.2} for an example.
For the Sturm dodecahedron, the maximal pole distance is $\delta =2$.
It arises if, and only if, the Western face count is $\eta =2$.
See figs.~\ref{fig:7.5.5}, \ref{fig:7.5.6} and case 2 in table~\ref{tbl:7.5.2} for an example.

\begin{thm}\label{thm:7.5.1}
Consider any Sturm icosahedron $\mathbb{I}$.

Then the poles $\mathbf{N},\ \mathbf{S}$ are (edge) neighbors, and the (smaller) Western hemisphere face count $\eta$ is at most 2.

For $\eta =2$, the poles are located at the endpoints of the unique shared, non-meridian edge of the two Western triangle faces.
\end{thm}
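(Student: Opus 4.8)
The plan is to mimic, for the icosahedron $\mathbb{I}$ and its dual dodecahedron $\mathbb{D} = \mathbb{I}^*$, the dual analysis already carried out for the octahedron and the cube in sections~\ref{subsec7.3} and~\ref{subsec7.4}. Throughout I work in the dual $\mathcal{C}^{2,*}$, where the two poles $\mathbf{N},\mathbf{S}$ of $\mathbb{I}$ become the two pentagonal polar faces $\mathbf{N}^*,\mathbf{S}^*$ of $\mathbb{D}$, and where the face distance between $\mathbf{N}^*$ and $\mathbf{S}^*$ in $\mathbb{D}$ equals the pole distance $\delta$ in the icosahedral $1$-skeleton. Since the symmetry group of $\mathbb{D}$ acts transitively on ordered pairs of faces at each fixed distance, it suffices to test one representative configuration for each $\delta \in \{1,2,3\}$. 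The decisive structural requirement is the existence of two \emph{disjoint} directed single-edge polar bridges between the polar circles, one in each direction, as furnished by Lemma~\ref{lem:5.1}(iv) and Corollary~\ref{cor:5.2}(iii); this is what drives the entire argument.

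To prove $\delta = 1$ I would count the edges of $\mathbb{D}$ joining the boundary pentagon $\partial\mathbf{N}^*$ to the boundary pentagon $\partial\mathbf{S}^*$. For antipodal poles $\delta = 3$ the two pentagons are the top and bottom faces of $\mathbb{D}$; their vertices lie in disjoint layers at mutual graph distance $3$, so no edge joins them and no polar bridge exists, contradicting Corollary~\ref{cor:5.2}(iii). For $\delta = 2$ the two pentagons are vertex-disjoint (in $\mathbb{D}$ two faces share a vertex only if they share an edge), and a direct check of one representative pair shows that they are joined by exactly \emph{one} edge of $\mathbb{D}$. A single joining edge cannot supply the two disjoint bridges demanded by Corollary~\ref{cor:5.2}(iii), so $\delta = 2$ is impossible as well. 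Hence $\delta = 1$ and the poles are neighbours. I expect this part to be routine once the three representative pictures are drawn.

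For $\eta \le 2$ I would argue in two stages. First, the smaller core $\mathbf{W}^*$ must be one-dimensional: a $2$-cell of $\mathbf{W}^*$ is a pentagonal face of $\mathbb{D}$, i.e.\ a sink of $\mathbb{I}$ lying interior to $\mathbf{W}$ and surrounded by five Western faces; as in the elimination of $\dim\mathbf{W}^* = 2$ for the octahedron (fig.~\ref{fig:7.3.2}), tracking the induced position of the polar circle $\partial\mathbf{S}^*$ forces $\mathbf{S}$ off the meridian circle, a contradiction. Being bipolar and one-dimensional, $\mathbf{W}^*$ is then the directed polar segment $w_-^0 w_-^1 \subseteq \partial\mathbf{N}^*$ of Corollary~\ref{cor:5.2}(v), so $\eta$ equals its number of vertices. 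It remains to show this segment has length at most $1$. Here I would copy the orientation conflict used to kill $\eta = 3$ for the octahedron (fig.~\ref{fig:7.3.3}): a segment of length $\ge 2$ fixes $w_-^0,w_-^1$ two apart on the pentagon $\partial\mathbf{N}^*$, the polar bridges then pin the Eastern poles $w_+^0,w_+^1$ on $\partial\mathbf{S}^*$, and the forced direction of the segment $\partial\mathbf{S}^* \cap \partial\mathbf{E}^*$ from $w_+^0$ to $w_+^1$ orients $\partial\mathbf{S}^*$ clockwise, contradicting the counter-clockwise orientation required by Lemma~\ref{lem:5.1}(ii). This orientation bookkeeping on the pentagon, rather than on the triangle of the octahedron, is the step I expect to be most delicate, and it is the main obstacle.

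Finally, for $\eta = 2$ the two Western triangles of $\mathbb{I}$ meet in a single edge; since both faces lie in $\mathbf{W}$, this shared edge has Western faces on both sides and is therefore interior, not a meridian. By the previous stage $\mathbf{W}^*$ is the single dual edge $w_-^0 w_-^1$, which is precisely the dual of that shared edge and, being the polar segment on $\partial\mathbf{N}^*$ that simultaneously lies on $\partial\mathbf{S}^*$, is forced by Corollary~\ref{cor:5.2}(i) to be the dual of the pole edge $\mathbf{N}\mathbf{S}$. Hence the shared non-meridian edge is $\mathbf{N}\mathbf{S}$ itself, and its endpoints are the poles, as claimed.
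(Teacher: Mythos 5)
Your treatment of the pole distance is correct and is essentially the paper's own argument: dualize to $\mathbb{D}=\mathbb{I}^*$, note that antipodal polar pentagons are joined by no edge and distance-$2$ pentagons by exactly one, while lemma~\ref{lem:5.1}(iv) and corollary~\ref{cor:5.2}(iii) demand two disjoint directed polar bridges; hence $\delta=1$. The gaps are all in the $\eta$-part, and the first one is a missing idea that powers the paper's entire case analysis: for \emph{adjacent} polar pentagons in $\mathbb{D}$, every candidate bridge (every edge of $\mathbb{D}$ with one endpoint on each polar circle) lies inside $\partial\mathbf{N}^*\cup\partial\mathbf{S}^*$, and there are only five such edges — in the labels of fig.~\ref{fig:7.5.1}(b), the three consecutive edges $AB$, $BC$, $CD$ of $\partial\mathbf{N}^*$ and the two edges $BE$, $CF$ of $\partial\mathbf{S}^*$. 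This confines the four core poles to $w_-^\iota\in\{A,B,C,D\}$, $w_+^\iota\in\{B,C,E,F\}$, after which a finite case analysis on $w_-^1$ kills $w_-^1=D$ (no correctly directed bridge leaves $D$) and $w_-^1=B$ (the forced bridge $BC$ occupies $C$, leaving no disjoint second bridge from $\{E,F\}$ to $\{A,D\}$), so that $w_-^0=B$, $w_-^1=C$, $w_+^0=F$, $w_+^1=E$ up to trivial equivalence; by corollary~\ref{cor:5.2}(v) the four meridian-dual edges $AB$, $CD$, $EB$, $CF$ then have preduals forming a closed $4$-cycle through $\mathbf{N}$ and $\mathbf{S}$, which must therefore \emph{be} the meridian circle, so $\mathbf{W}$ consists of exactly the two faces $B$, $C$. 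This one argument delivers $\eta\le 2$, the one-dimensionality of $\mathbf{W}^*$, and the pole positions simultaneously. Your two-stage substitute rests on analogies that do not transfer: the cube argument of fig.~\ref{fig:7.3.2} exploited that a two-dimensional core with at most four vertices must be one whole square face (nothing similar holds with up to ten vertices available in $\mathbb{D}$), and the orientation flip of $\partial\mathbf{S}^*$ from fig.~\ref{fig:7.3.3} is not what kills long polar segments here — segments of length $\ge 2$ die because their middle vertices ($B$ and/or $C$) are absorbed into $\mathbf{W}^*$ and thereby destroy all remaining bridge candidates, an occupancy argument you cannot even set up without the five-edge list above.

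The second gap is a genuine non sequitur in your last step. You assert that the single-edge core $\mathbf{W}^*=w_-^0w_-^1$, ``being the polar segment on $\partial\mathbf{N}^*$ that simultaneously lies on $\partial\mathbf{S}^*$,'' is forced by corollary~\ref{cor:5.2}(i) to be the dual of the pole edge $\mathbf{N}\mathbf{S}$. Corollary~\ref{cor:5.2}(i) only states that the polar circles share an edge if and only if $\delta=1$; it does not identify $\mathbf{W}^*$ with that shared edge, and the premise that $\mathbf{W}^*\subseteq\partial\mathbf{S}^*$ is exactly what has to be proved. A priori $\mathbf{W}^*$ could equally well be the edge $AB$ or $CD$, which lies on $\partial\mathbf{N}^*$ only; in that event the two Western triangles would share an edge through $\mathbf{N}$ but not through $\mathbf{S}$, contradicting nothing you have established. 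Excluding precisely these options is the content of the paper's cases $w_-^1=B$ and $w_-^1=D$, which your proposal never performs. Once the candidate-bridge enumeration is in place, your outline can be repaired along the paper's lines, but as written both the bound $\eta\le2$ and the final pole-location claim rest on unproven analogies and an unjustified assertion rather than on that enumeration.
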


\begin{proof}[\textbf{Proof.}]
We proceed by decreasing pole distance $\delta \leq \vartheta =3$, via the dual dodecahedron $\mathbb{D} = \mathbb{I}^*$.
See table~\ref{tbl:7.1.1} and figs.~\ref{fig:7.1.1}, \ref{fig:7.5.1}.

By corollary~\ref{cor:5.2}(ii), the polar pentagon circles in $\mathbb{I}^*$ must be joined by at least two polar bridges.
In fig.~\ref{fig:7.5.1}(a), the polar circle $\partial \mathbf{N}^*$ is the boundary of the exterior face $\mathbf{N}^*$.
Up to trivial equivalences, the three barycenter options $\mathbf{S} \in \lbrace \mathbf{S}_1, \mathbf{S}_2, \mathbf{S}_3 \rbrace$ arise.
Note the pole distance is $\delta$, for $\mathbf{S} = \mathbf{S}_\delta$.

If $\mathbf{S} = \mathbf{S}_3$, then there is no polar bridge.
Therefore $\delta \leq 2$.
If $\mathbf{S} = \mathbf{S}_2$ there is a unique polar bridge, instead of the required two bridges.
This proves $\delta =1$, i.e. $\mathbf{S} = \mathbf{S}_1$ and the poles $\mathbf{N},\ \mathbf{S}$ are edge adjacent in the icosahedron $\mathbb{I}$.

We show $\eta \leq 2$ for the Western face count under adjacent poles.
See fig.~\ref{fig:7.5.1}(b) for the polar circles $\partial \mathbf{N}^*,\ \partial\mathbf{S}^*$.
The solid parts show all candidate edges $e^* \in \mathbb{I}^* = \mathbb{D}$ which are potential polar bridges between end points on different polar circles.
Note that all (solid) bridge candidates happen to be contained in the union of the polar circles themselves, here.

Suppose $\eta > 1$.
Then the four pole vertices $w_\pm^\iota$ of the dual cores $\mathbf{W}^*,\ \mathbf{E}^*$ are all disjoint.
They must be placed on the solid part of fig.~\ref{fig:7.5.1}(b), to afford the polar bridges of corollary~\ref{cor:5.2}(ii), directed from $w_\pm^1$ to $w_\mp^0$.
The North polar face $\mathbf{N}^*$ is located to the right of the directed polar circle segment from $w_-^0$ to $w_-^1$ in $\partial \mathbf{N}^* \cap \partial \mathbf{W}^*$.
Similarly, the South polar face $\mathbf{S}^*$ is located to the left of the directed polar circle segment from $w_+^0$ to $w_+^1$ in $\partial \mathbf{S}^* \cap \partial \mathbf{E}^*$. 
This implies
	\begin{equation}
	w_-^\iota \in \lbrace A, B, C, D \rbrace\,, \qquad
	w_+^\iota \in \lbrace E, B, C, F \rbrace\,,
	\label{eq:7.5.1b}
	\end{equation}
We proceed by location of $w_-^1$.

Suppose $w_-^1 =D$.
Then there is no directed single-edge bridge $w_-^1 w_+^0$.
Suppose $w_-^1=B$.
Then the only directed single-edge bridge from $w_-^1 \in \partial \mathbf{N}_*$ to $w_+^0 \in \partial \mathbf{S}_*$ is $BC$.
Hence $w_+^0=C$.
With $B,\ C$ already occupied, however, there does not remain any bridge from $w_+^1 \in \lbrace E, F \rbrace$ to $w_-^0 \in \lbrace A, D \rbrace$.
We illustrate the case $w_-^1 = C$ in fig.~\ref{fig:7.5.1}(b).
The remaining case $w_-^1 = A$ leads to the symmetric case $w_+^0 = B,\ w_+^1 =C,\ w_+^0 = D$, and just provides a trivially equivalent Sturm realization.

The polar bridges $e^*$ are duals to meridian edges.
Likewise, the pentagon edges preceding $w_\pm^0$ and following $w_\pm^1$, on their respective polar circles, are duals to meridian edges.
In $\mathbb{I}^* = \mathbb{D}$, these four meridian edges define the meridian circle which encloses the Western hemisphere with $\eta =2$ faces given by $w_-^\iota$.
This proves that $\eta > 1$ implies $\eta =2$.
The interior Western edge from $\mathbf{N}$ to $\mathbf{S}$ follows because the shared edge $BC$ of the polar circles is not dual to a meridian edge.
This proves the theorem.
\end{proof}

\begin{thm}\label{thm:7.5.2}
Consider any Sturm dodecahedron $\mathbb{D}$.

Then the maximal pole distance $\delta$ and the maximal (smaller) hemisphere face count $\eta$ satisfy
	\begin{equation}
	1\leq \eta \leq \delta \leq 2\,,
	\label{eq:7.5.1c}
	\end{equation}
and all these cases do occur. For $\eta = \delta =2$, the poles $\mathbf{N},\ \mathbf{S}$ are located asymmetrically at edge distance $\delta =2$ via the unique shared edge of the two Western pentagon faces.
Their edge distance along the meridian circle of edge circumference eight is three.
\end{thm}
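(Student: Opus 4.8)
The plan is to mirror the proof of theorem~\ref{thm:7.5.1}, but now working in the dual icosahedron $\mathbb{I} = \mathbb{D}^*$, whose faces are triangles; see table~\ref{tbl:7.1.1}. The pole vertices $\mathbf{N}, \mathbf{S}$ of $\mathbb{D}$ become the triangular polar faces $\mathbf{N}^*, \mathbf{S}^*$ of $\mathbb{I}$, with triangular polar circles $\partial \mathbf{N}^*, \partial \mathbf{S}^*$. By duality the pole distance $\delta$ equals the face-adjacency distance of these two triangles in $\mathbb{I}$, equivalently their distance in the dodecahedron graph $\mathbb{I}^* = \mathbb{D}$, so that $\delta \in \{1,\ldots,\vartheta\}$ with $\vartheta = 5$. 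As in sections~\ref{subsec7.3} and~\ref{subsec7.4}, I would proceed by decreasing $\delta$, using lemma~\ref{lem:5.1} and corollary~\ref{cor:5.2} as the governing constraints, and then read off the Western face count $\eta$ from the Western dual core $\mathbf{W}^* \subseteq \mathbb{I}$.

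First I would bound the pole distance. By corollary~\ref{cor:5.2}(iii) the two triangular polar circles must be joined by at least two oppositely directed single-edge polar bridges $w_-^1 w_+^0 \in \mathbf{EW}^*$ and $w_+^1 w_-^0 \in \mathbf{WE}^*$, while by lemma~\ref{lem:5.1}(ii) the circles $\partial \mathbf{N}^*$ and $\partial \mathbf{S}^*$ must carry clockwise and counter-clockwise orientations, respectively. Running through the relative positions of the two triangles in $\mathbb{I}$ by decreasing $\delta$, the antipodal case $\delta = 5$, and I expect also $\delta = 4$, offer no vertex-adjacency admitting a valid \emph{pair} of oppositely directed bridges; the $\delta = 3$ configurations do carry connecting edges, so the elimination there cannot be by mere absence of bridges, but must instead force either a non-bipolar (spiked) dual core or an orientation of $\partial \mathbf{S}^*$ opposite to the one required by lemma~\ref{lem:5.1} — precisely the mechanism that ruled out $\delta = 2$ in the octahedron (figs.~\ref{fig:7.3.1}, \ref{fig:7.3.3}). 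This would leave $\delta \le 2$.

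Next I would bound $\eta$ and establish $\eta \le \delta$ of \eqref{eq:7.5.1c}. Since $|\partial \mathbf{N}^*| = 3$, the polar-segment estimate \eqref{eq:5.6b} of corollary~\ref{cor:5.2}(vi) forces $|w_-^0 w_-^1| \le 1$, so the two Western poles either coincide (the singleton core $\eta = 1$, a single-face lift) or are joined by a single edge of the triangle. As in theorem~\ref{thm:7.5.1}, the orientation of $\partial \mathbf{N}^*$ together with the requirement that the edges flanking the Western polar segment be meridian duals confines $\mathbf{W}^*$ to the polar circle, giving $\eta \le 2$; and for $\delta = 1$ the shared edge of the two polar triangles (corollary~\ref{cor:5.2}(i)) occupies two of the three available vertices, pre-empting the only slot a second Western face could use and forcing $\eta = 1$, i.e. $\eta \le \delta$. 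To realize $\eta = \delta = 2$, I would place the shared vertex of the two triangular polar circles on a meridian (as a meridian dual face rather than a core vertex), so that the two Western pentagons of $\mathbb{D}$ share their unique non-meridian edge while $\mathbf{N}$ and $\mathbf{S}$ sit asymmetrically at the two ends of the length-two interior path through that edge; a direct count on the bounding octagon then yields meridian distance three. Exhibiting one consistent bipolar orientation for each of the three cases $(\eta,\delta) \in \{(1,1),(1,2),(2,2)\}$, as in figs.~\ref{fig:7.5.5} and~\ref{fig:7.5.6}, settles existence.

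The hard part will be the $\delta = 2$ and $\delta = 3$ bookkeeping. Unlike the octahedron and cube, whose antipodal dual triangles are richly interconnected, two icosahedral triangles at face-distance two meet in a single vertex and are joined by essentially one edge between their remaining vertices; producing the two oppositely directed bridges demanded by lemma~\ref{lem:5.1}(iv) is what forces the shared vertex onto a meridian and pins down the asymmetric pole placement. Checking that this placement is compatible with the mandated clockwise and counter-clockwise orientations of $\partial \mathbf{N}^*$ and $\partial \mathbf{S}^*$, that no symmetric alternative survives, and that every $\delta = 3$ configuration indeed collapses by an orientation clash of the kind seen in figs.~\ref{fig:7.3.3} and~\ref{fig:7.4.2}, is the delicate combinatorial heart of the proof.
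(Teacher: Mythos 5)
Your skeleton is the same as the paper's: pass to the dual icosahedron $\mathbb{I}=\mathbb{D}^*$, descend through pole distances $\delta\leq\vartheta=5$, and let lemma~\ref{lem:5.1} and corollary~\ref{cor:5.2} govern; your bridge-counting elimination of $\delta=5$ (no bridges) and $\delta=4$ (a unique bridge, not the required two) is exactly the paper's first step. But the parts you defer as the ``delicate combinatorial heart'' carry the actual content, and your anticipated toolkit is not sufficient for them. Concretely, your $\delta=3$ elimination misses a case: if the Western core is the singleton $\mathbf{W}^*=\{C\}$, i.e.\ $\eta=1$, there is neither a spiked core nor an orientation clash to invoke. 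The paper disposes of it by a third mechanism: for a singleton core the meridian circle is the boundary of the single Western pentagon $C$, so both meridians, which by lemma~\ref{lem:5.1}(iii) must run from $\mathbf{N}$ to $\mathbf{S}$, would have to reach the barycenter $\mathbf{S}$ of $\partial\mathbf{S}_3^*$ along that pentagon boundary --- impossible at distance $\delta=3$. Your two mechanisms only cover the complementary case $\eta>1$, where the forced placement $w_-^0w_-^1=BC$, $w_+^0=D$, $w_+^1=E$ indeed makes $\partial\mathbf{S}^*$ clockwise, against lemma~\ref{lem:5.1}(ii).

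Second, corollary~\ref{cor:5.2}(vi) does not give $\eta\leq 2$: estimate \eqref{eq:5.6b} bounds only the polar segment $w_-^0w_-^1$, not the core. Icosahedral vertices have degree five, so a bipolar core with poles $B,C$ could a priori contain further edges, such as $FC$, reaching dual vertices off the polar circle and raising $\eta$ beyond $2$; your remark that the edges flanking the polar segment are meridian duals does not exclude this. The paper flags exactly this step as ``slightly subtle'' and settles it with a Jordan curve argument on $S^2$: an edge $FC\in\mathbf{W}^*$ would close a cycle $\Gamma\subset\mathbf{W}^*$ disjoint from the meridian circle, yet with meridian edges (the duals of $CD$ and $AC$) on both sides of $\Gamma$. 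Third, your realization of $\eta=\delta=2$ rests on an impossible placement: by lemma~\ref{lem:5.1}(i) the meridian duals consist of edges and faces of $\mathbb{I}$ only, so every dual vertex --- every pentagon of $\mathbb{D}$ --- must lie in one of the two cores, and the shared vertex $C$ of the two polar triangles can never be ``a meridian dual face rather than a core vertex''. In the only admissible 4-cycles \eqref{eq:7.5.4}, $ABECA$ and $BCDEB$ (trivially equivalent under $\rho$), $C$ is one of the four core poles ($w_+^1$, respectively $w_-^1$), and it is this forced placement that produces the asymmetric poles at graph distance $2$ through the shared Western edge and at distance $3$ along the meridian octagon. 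Your $\delta=1\Rightarrow\eta=1$ slot-counting is likewise only a gesture at the paper's actual check that no directed 4-cycle exists among $\{A,B,C,F\}$.
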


\begin{figure}[t!]
\centering \includegraphics[width=\textwidth]{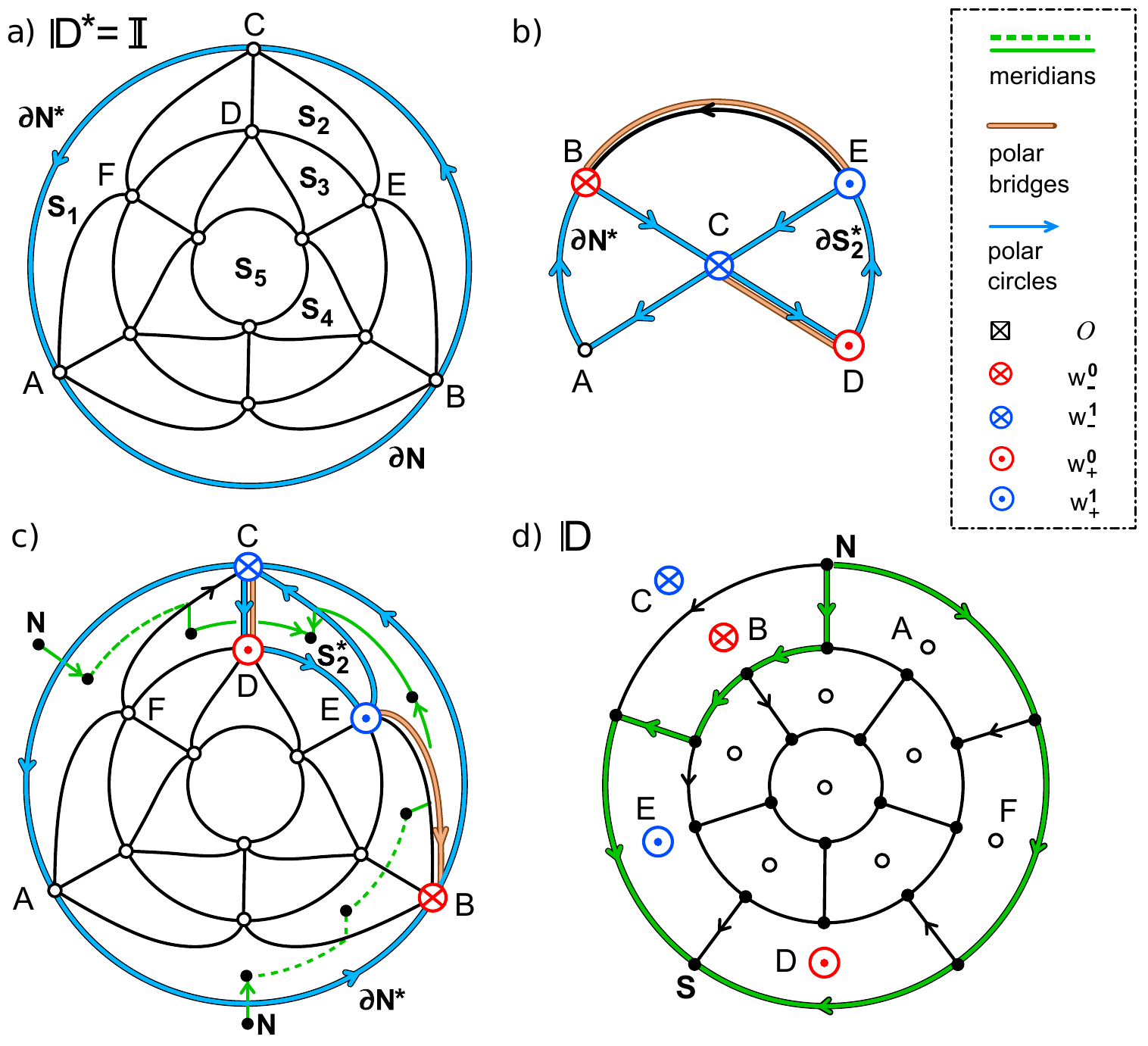}
\caption{\emph{
The icosahedral dual $\mathbb{D}^* = \mathbb{I}$ and the dodecahedral complex $\mathbb{D}$.
(a) Exterior polar face dual $\mathbf{N}^*$ with oriented boundary $\partial \mathbf{N}^*$.
Representative candidate face duals $\mathbf{S}_\delta^*$ indicate South poles $\mathbf{S}$ at distances $\delta =1, \ldots , 5$ from the North pole $\mathbf{N}$.
Bridges between $\partial \mathbf{S}_5^*$ and $\partial \mathbf{N}^*$ are absent. Bridges are unique between $\partial \mathbf{S}_4^*$ and $\partial \mathbf{N}^*$.
(b) Placement of the directed 4-cycle $BCDEB$ of $w_\pm^\iota$ in (a), for the case $\mathbf{S}= \mathbf{S}_2$ of pole distance $\delta = 2$.
See also table~\ref{tbl:7.5.1}.
(c) The resulting meridian segments (green, solid) in $\mathbb{I} = \mathbb{D}^*$ for the configuration (b) of $w_\pm^\iota$.
For the closure of the meridian circle (green, dashed) see text.
(d) The resulting hemisphere decomposition with pole distance $\delta = 2$ and $\eta = 2$ Western faces $w_-^0 = B,\ w_-^1= C$, in the original dodecahedron 3-cell template $\mathbb{D}$.
Only mandatory parts of the bipolar orientation are indicated.
}}
\label{fig:7.5.2}
\end{figure}

\begin{proof}[\textbf{Proof.}]
Similarly to the proof of theorem~\ref{thm:7.5.1}, we proceed by decreasing pole distance $\delta \leq \vartheta=5$, this time via the dual icosahedron $\mathbb{I} = \mathbb{D}^*$.
See table~\ref{tbl:7.1.1} and figs.~\ref{fig:7.1.1}, \ref{fig:7.5.2}.

The candidates for the South pole $\mathbf{S}$ at distance $1 \leq \delta \leq 5$ from the exterior North pole barycenter $\mathbf{N}$ are $\mathbf{S}_\delta$, in fig.~\ref{fig:7.5.2}(a), up to trivial equivalences.
Evidently the polar circles $\partial \mathbf{S}_5^*$ and $\partial \mathbf{S}_4^*$ do not possess two edge-disjoint polar bridges to the boundary polar circle $\partial \mathbf{N}^*$.
Therefore $\delta \leq 3$.

Suppose $\delta =3,\ \mathbf{S} = \mathbf{S}_3$.
Then the three edges $BE$, $CD$, and $CE$ are the only polar bridge candidates.
Two cases arise.

First suppose $\eta=1$. 
Then $\mathbf{W}^*$ is the singleton $w_-^0 = w_-^1 = C$.
In particular, the meridian circle surrounds $C =\mathbf{W}^*$, as the only dual vertex.
Therefore the meridians cannot reach the South pole barycenter $\mathbf{S}$ of the opposite polar circle $\partial \mathbf{S}^* = \partial \mathbf{S}_3^*$.
This contradicts lemma~\ref{lem:5.1}(iii).

Second suppose $\eta>1$.
Then $w_-^0 \neq w_-^1$ occupy both candidate North polar vertices $B$ and $C$ of bridges between the polar circles.
By corollary \ref{cor:5.2}(iv), the directed segment $w_-^0 w_-^1 = BC$ is the intersection of $\mathbf{W}^*$ with the triangular North polar circle $\partial \mathbf{N}^*$.
In particular $w_-^0=B$ and $w_-^1=C$.
The polar bridges then imply $w_+^1=E$ and, because the face count of $\mathbf{E}^*$ is at least $\eta>1$, also $w_+^0=D$.
See corollary \ref{cor:5.2}(v).
However, the resulting clockwise direction $w_+^0w_+^1=DE$ contradicts the  counter-clockwise orientation of  $\partial \mathbf{S}^*$ required by lemma \ref{lem:5.1}(ii).

Consider $\delta =2,\ \mathbf{S} = \mathbf{S}_2$ next, as indicated in fig.~\ref{fig:7.5.2}(a).
All core poles $w_\pm^\iota$ must be placed on the union of polar circles, i.e. at one of the five locations
	\begin{equation}
	w_\pm^\iota \in \lbrace A, B, C, D, E \rbrace
	\label{eq:7.5.2}
	\end{equation}
of fig.~\ref{fig:7.5.2}(b). The mandatory single-edge polar bridges must appear in the same reduced diagram.

Suppose $\eta =1$ first, i.e. the Western core $\mathbf{W}^* = \lbrace w_-^\iota \rbrace$ is a singleton.
Then many options arise, all of which require
	\begin{equation}
	w_-^0 = w_-^1 = C\,,\quad w_+^0=D\,,\quad w_+^1 =E \,.
	\label{eq:7.5.3a}
	\end{equation}
The two poles $\mathbf{N},\ \mathbf{S}$, are therefore located non-adjacently on the boundary of the single Western pentagon face.
We omit further details on this case, which certainly meets the claims of the theorem.

In case $\delta =2,\ \eta \geq 2$, the four core poles $w_\pm^\iota$ are all distinct.
Because all dual faces are 3-gons of boundary length $n=3$, corollary~\ref{cor:5.2}(vi) implies that the segment $w_\pm^0 w_\pm^1$, on the appropriate polar circle, consists of a single directed edge; see \eqref{eq:5.6b}.
Together with the directed single-edge polar bridges $w_\pm^1 w_\mp^0$, this defines a directed 4-cycle
	\begin{equation}
	w_-^0 w_-^1 w_+^0 w_+^1 w_-^0
	\label{eq:7.5.3b}
	\end{equation}
of four mutually disjoint edges	in the reduced diagram of fig.~\ref{fig:7.5.2}(b).
See corollary~\ref{cor:5.2}(iv).
Only the direction of the nonpolar edge $BE$ can still be chosen freely.
Note $w_-^\iota \in \partial \mathbf{N}^*$ implies $w_-^\iota \in \{A,B,C\}$.
Similarly $w_+^\iota \in \partial \mathbf{S}^*$ implies $w_+^\iota \in \{C,D,E\}$.
In table~\ref{tbl:7.5.1} we list all resulting options, left to right, starting from $w_-^0 \in \{ A, B, C\}$.
Evidently, the only possible directed 4-cycles \eqref{eq:7.5.3b} are
	\begin{equation}
	w_-^0 w_-^1 w_+^0 w_+^1 w_-^0 = ABECA\quad \text{or}\quad BCDEB\,.
	\label{eq:7.5.4}
	\end{equation}
The two cycles are trivially equivalent under $\rho$.
The cycle $BCDEB$ is indicated in fig.~\ref{fig:7.5.2}(b),(c).

\begin{table}[b!]
\centering \includegraphics[width=0.4\textwidth]{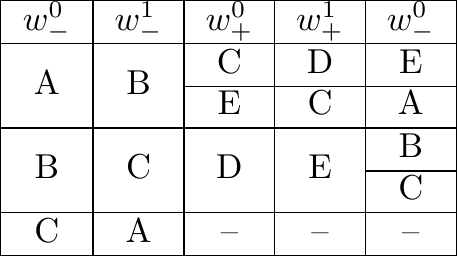}
\caption{\emph{
Realization of the directed 4-cycle \eqref{eq:7.5.3b} in fig.~\ref{fig:7.5.2}(b).
The directed edge $w_-^0w_-^1$ has to follow the oriented polar circle $\partial \mathbf{N}^*$, and $w_+^0w_+^1$ follows $\partial \mathbf{S}_2^*$.
The polar bridges $w_\pm^1w_\mp^0$ encounter two options for $w_\mp^0$, when $w_\pm^1\in \{ B,\ E\}$.
There is no bridge from $w_-^1= A$ to $\partial \mathbf{S}_2^*$.
The two possible directed cycles are therefore $ABECA$ and $BCDEB$, trivially equivalent under the hemisphere exchange $\rho$.
}}
\label{tbl:7.5.1}
\end{table}

For the Western face count $\eta =2$, i.e. for single-edge dual cores $\mathbf{W}^*$, the meridian circle in fig.~\ref{fig:7.5.2}(c) then follows from corollary~\ref{cor:5.2}(iii),(v):
it encloses the dual core $\mathbf{W}^* = w_-^0w_-^1$.
Converting the meridian circle around $\mathbf{W}^*$ back to the original dodecahedron $\mathbb{D}$, we easily identify the Western interior $\mathbf{W}$ as two pentagon faces with barycenters $B= w_-^0$ and $C=w_-^1$, and a single shared edge dual to $BC$.
The meridian therefore possesses circumference length eight.
The relative location of the pentagons with barycenters $A, B, C, D, E$ is easily derived from fig.~\ref{fig:7.5.2}(c); see fig.~\ref{fig:7.5.2}(d).
The locations of the poles $\mathbf{N}$ and $\mathbf{S}$ on the meridian follow just as easily.
The bipolar orientation of $\mathbb{D}$ remains partially undetermined.

We still have to show that $\eta \geq 2$ actually implies the above Western face count $\eta =2$.
This is slightly subtle.
We first show $FC \notin \mathbf{W}^*$, indirectly, as a first step towards closing the dashed meridian gap dual to $FC$ in fig.~\ref{fig:7.5.2}(c).
Indeed suppose $FC \in \mathbf{W}^*$.
By bipolarity of $\mathbf{W}^*$, we can then follow a directed path in $\mathbf{W}^*$, upwards against its orientation all the way, to the North pole $B=w_-^0$ of $\mathbf{W}^*$.
The downward edge $w_-^0w_-^1$ from $B$ to $C$ closes the path to a nonoriented cycle $\Gamma$ in $\mathbf{W}^*$ which does not intersect the dual meridian cycle.
But the meridian circle contains edges on either side of $\Gamma$:
the duals to $CD$ and $AC$, for example.
This contradicts the Jordan curve theorem on $S^2$, and proves $FC \notin \mathbf{W}^*$ is dual to a meridian edge.
An analogous argument closes the meridian circle through the two edges from $B = w_-^0$ which had not been accounted for, so far.
This proves
	\begin{equation}
	\mathbf{W}^* = w_-^0w_-^1
	\label{eq:7.5.5}
	\end{equation}
and hence $\mathbf{W}$ consists of only $\eta =2$ faces with barycenters $w_-^0=B$ and $w_-^1 =C$, as discussed above.

To complete the proof of theorem~\ref{thm:7.5.2}, it only remains to discuss the case $\delta =1$ of edge adjacent poles next, i.e. $\mathbf{S} = \mathbf{S}_1$ in fig.~\ref{fig:7.5.2}(a).
We claim $\eta =1$.
Suppose, indirectly, $\eta \geq 2$.
Then $w_\pm^\iota$ define a directed 4-cycle \eqref{eq:7.5.3b} in the union of polar circles $\partial \mathbf{N}^*,\ \partial \mathbf{S}^*$, again:
	\begin{equation}
	w_\pm^\iota \in \lbrace A, B, C, F\rbrace\,.
	\label{eq:7.5.6}
	\end{equation}
All edges of the 4-cycle must likewise be contained in $\partial \mathbf{N}^* \cup \partial \mathbf{S}^*$, following the given orientation.
Alas, there does not exist any directed 4-cycle in this configuration.
Therefore $\eta =1$, as claimed.
This proves the theorem.
\end{proof}

\begin{table}[t!]
\centering \includegraphics[width=\textwidth]{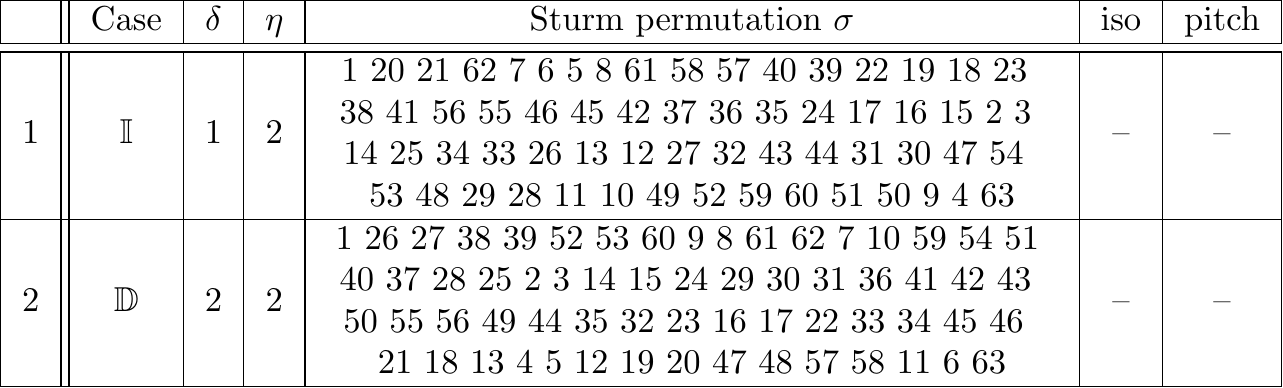}
\caption{\emph{
Two examples of Sturm permutations which lead to one of many icosahedral and dodecahedral 3-cell templates and Sturmian Thom-Smale complexes $\mathbb{I}$ and $\mathbb{D}$, respectively.
The number $\eta = 2$ of faces in the Western hemisphere, and the pole distances $\delta= 1,2$, are maximal in each case.
}}
\label{tbl:7.5.2}
\end{table}

\begin{figure}[p!]
\centering \includegraphics[width=\textwidth]{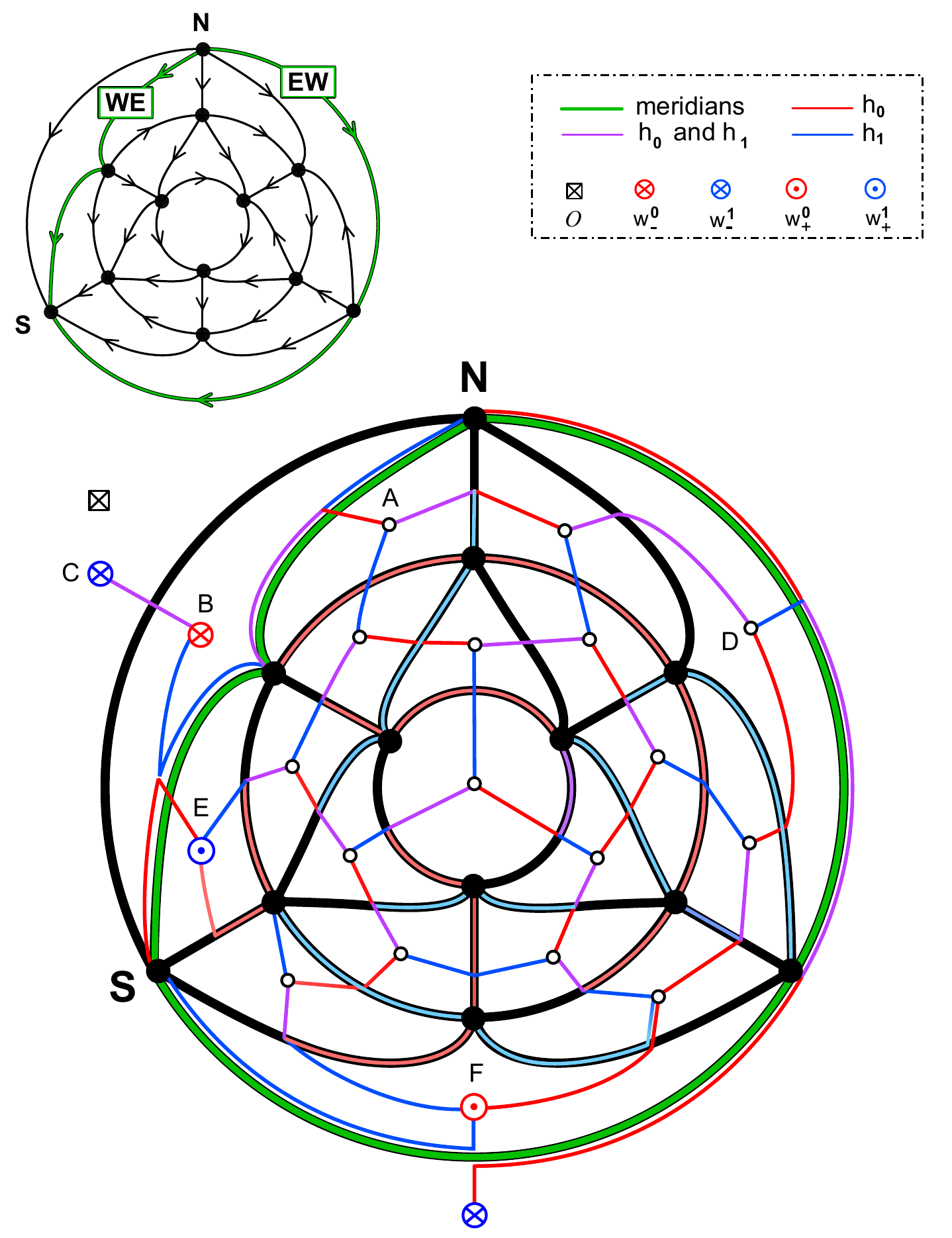}
\caption{\emph{
A sample Sturm icosahedron Thom-Smale complex $\mathbb{I}$ with pole distance $\delta =1$ and with $\eta = 2$ Western faces $\otimes$.
Note the required orientation arrows emanating from the North pole $\mathbf{N}$, directed away from the meridians into the Eastern hemisphere, and terminating at the South pole $\mathbf{S}$.
The SZS-pair $(h_0,h_1)$ results from the bipolar orientation: $h_0$ (red), $h_1$ (blue), $h_0 \& h_1$ (purple).
See fig.~\ref{fig:7.5.4} and table~\ref{tbl:7.5.2} for the Sturm meander $\mathcal{M}$ of the resulting Sturm permutation $\sigma = h_0^{-1}\circ h_1$.
}}
\label{fig:7.5.3}
\end{figure}

\begin{figure}[p!]
\centering \includegraphics[width=\textwidth]{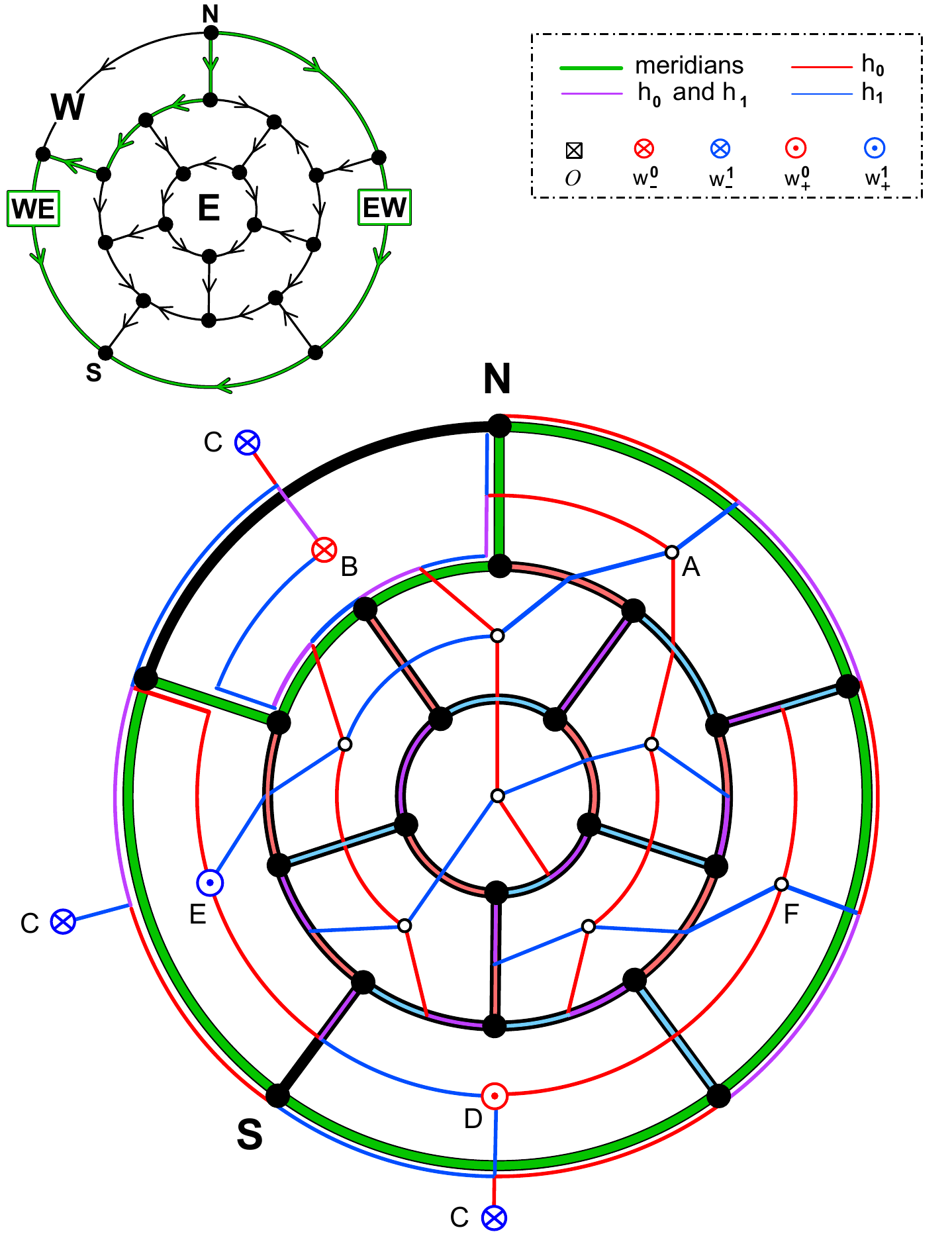}
\caption{\emph{
A sample Sturm dodecahedron Thom-Smale complex $\mathbb{D}$ with maximal pole distance $\delta =2$ and with $\eta = 2$ Western faces.
The SZS pair $(h_0,h_1)$ results from the bipolar orientation; see also fig.~\ref{fig:7.5.2}(d).
See fig.~\ref{fig:7.5.6} and table~\ref{tbl:7.5.2} for the Sturm meander $\mathcal{M}$ of the Sturm permutation $\sigma = h_0^{-1} \circ h_1$.
}}
\label{fig:7.5.5}
\end{figure}

\begin{figure}[p!]
\centering \includegraphics[width=\textwidth]{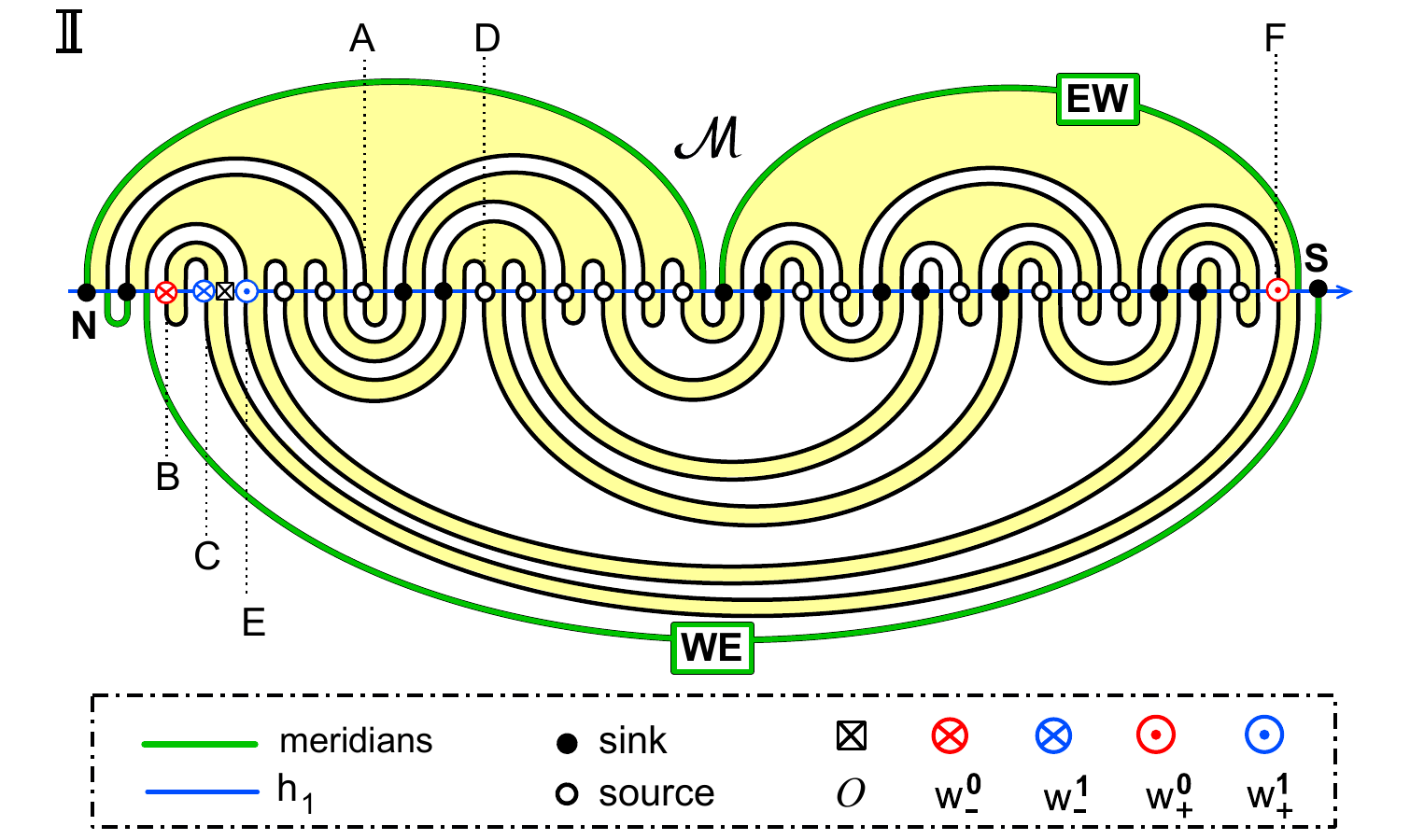}
\caption{\emph{
The Sturm meander $\mathcal{M}$ for the icosahedron $\mathbb{I}$ of fig.~\ref{fig:7.5.3}.
The marked sources $A, \ldots , F$ correspond to fig.~\ref{fig:7.5.1}(b) and to the icosahedral Thom-Smale complex $\mathbb{I}$.
Note the extreme positions of the poles $w_\pm^\iota$ of the dual cores $\mathbf{W}^*,\ \mathbf{E}^*$.
}}
\label{fig:7.5.4}
\end{figure}

\begin{figure}[p!]
\centering \includegraphics[width=\textwidth]{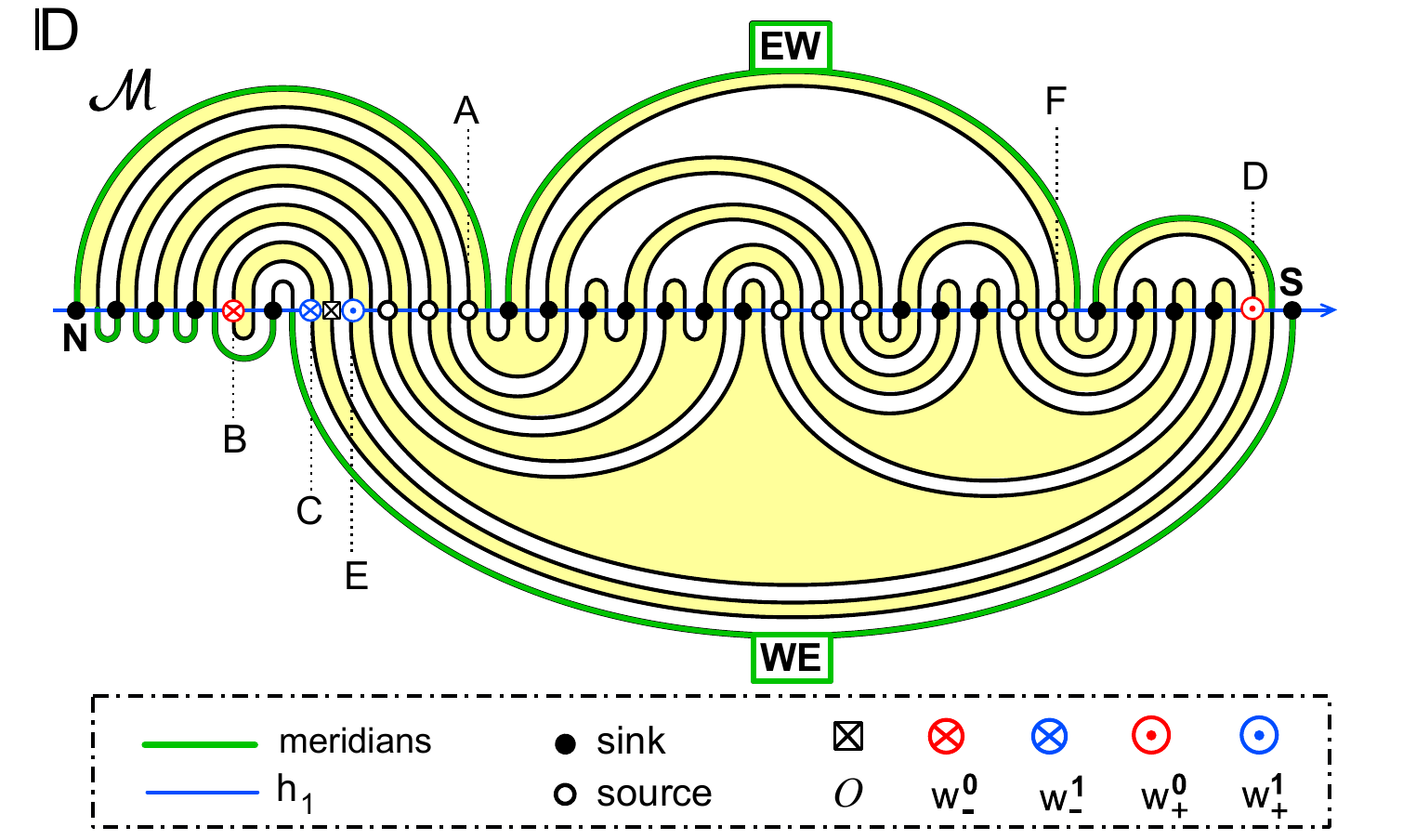}
\caption{\emph{
The Sturm meander $\mathcal{M}$ for the dodecahedron $\mathbb{D}$ of fig.~\ref{fig:7.5.5}.
The marked sources $A, \ldots , F$ correspond to figs.~\ref{fig:7.5.2} and \ref{fig:7.5.5}.
For further comments on $w_\pm^\iota$; see fig.~\ref{fig:7.5.4}.
}}
\label{fig:7.5.6}
\end{figure}

We conclude this section with one example, each, for largest 2-face Western hemispheres and maximal pole distances, $\delta =1$ in the icosahedron, and $\delta =2$ in the dodecahedron.
See figs.~\ref{fig:7.5.3} and \ref{fig:7.5.4} respectively.
The basic configuration of poles and meridians with overlap, satisfying the requirements of definition~\ref{def:1.1} of 3-cell templates, follows the proofs of theorems~\ref{thm:7.5.1} and \ref{thm:7.5.2}.
Orientations of nonpolar edges on the meridian, and away from the meridian in the Eastern hemisphere, follow the requirements of that definition.
We have picked the remaining orientations of the Eastern 1-skeleton, from the many bipolar possibilities, somewhat arbitrarily.
The SZS-pairs $(h_0,h_1)$ then define the Sturm permutations $\sigma$, as in table~\ref{tbl:7.5.2}, and the Sturm 3-meander templates of figs.~\ref{fig:7.5.4} and \ref{fig:7.5.6}.


\section{Conclusion and outlook}
\label{sec8}

We have concluded our trilogy on 3-ball PDE Sturm global attractors $\mathcal{A}_f$.
We have shown how their dynamic signed hemisphere complexes $\mathcal{C}_f$, the 3-cell templates $\mathcal{C}$, the 3-meander templates $\mathcal{M}$, their ODE shooting meanders $\mathcal{M}_f$, and their associated permutations $\sigma$ and $\sigma_f$, are all equivalent descriptions of one and the same geometric object:
not just the ODE critical points of the PDE Lyapunov function, alias the equilibria, but a signed version of the Thom-Smale complex defined by their PDE heteroclinic orbits.
In particular, the definition of unique SZS-pairs $(h_0,h_1)$ in abstract 3-cell templates $\mathcal{C}$ allowed us to design Sturm global attractors such that their signed Thom-Smale dynamic complex $\mathcal{C}_f$ coincides with any prescribed 3-cell template $\mathcal{C}$.
The construction resulted from a nonlinearity $f$ such that its Sturm permutation $\sigma_f$ satisfies
	\begin{equation}
	\sigma_f = \sigma:= h_0^{-1}\circ h_1\,.
	\label{eq:8.1}
	\end{equation}

One remarkable consequence of this result, perhaps, are the low pole distances $\delta$ and face counts $\eta$ of the (smaller) Western hemispheres which we encounter in our examples.
The absence of antipodal poles, $\delta =2$, in Sturm octahedral complexes was a first indication.
Similarly, $\text{max } \delta = 1$ with $\text{max } \eta = 2$ for the 20-faced icosahedron of edge diameter $\vartheta =3$, and $\text{max } \delta =2$ with $\text{max } \eta =2$ for the 12-faced dodecahedron of edge diameter $\vartheta =5$ are surprising.
Trivial isotropies $\kappa$ and $\rho \kappa$ are impossible, automatically, because they swap hemispheres and therefore require equal hemisphere face counts.

One reason for this asymmetric imbalance became apparent in corollary~\ref{cor:5.2}.
For face counts $\eta >1$, the four poles $w_\pm^\iota$ of the dual cores $\mathbf{W}^*$ and $\mathbf{E}^*$ are tightly bound into a short 4-cycle which consists of segments of the polar circles $\partial \mathbf{N}^*,\ \partial \mathbf{S}^*$, and two disjoint single-edge polar bridges between them.
To avoid this difficulty, we chose $\delta = \eta =1$ in \cite{firo14} to obtain some Sturmian signed hemisphere decomposition for any prescribed regular 2-sphere complex $\mathcal{C}^2 = S^2$.

Beyond the closure $\mathcal{C}_f = \overline{c}_\mathcal{O}$ of a single 3-cell, we may aim to describe all 3-dimensional Sturmian Thom-Smale dynamic complexes of maximal cell dimension three.
Even in the presence of a single 3-cell this allows for one-dimensional ``spikes'' or two-dimensional ``balconies''.
Specific examples already arise for $N=9$ equilibria and have been described in \cite{fi94}.

A more interesting example involves $N=15$ equilibria and arises from the ``\emph{Snoopy bun}'' cell complex of fig.~\ref{fig:6.3}, example~19.
See fig.~\ref{fig:8.1}(a), where we have swapped the hemispheres $\mathbf{E},\ \mathbf{W}$ and taken the single face hemisphere $\mathbf{E}$ as the exterior.
We call $\text{clos } \mathbf{W}$ the \emph{Snoopy disk} on top of the bun $c_\mathcal{O}$.
Examples~13, 14 and 24 of fig.~\ref{fig:6.3} are other hemisphere decompositions of the same Snoopy bun cell complex with 13~equilibria.
See also \cite[fig.~\ref{fig:5.2}]{firo3d-1}.

\begin{figure}[t!]
\centering \includegraphics[width=\textwidth]{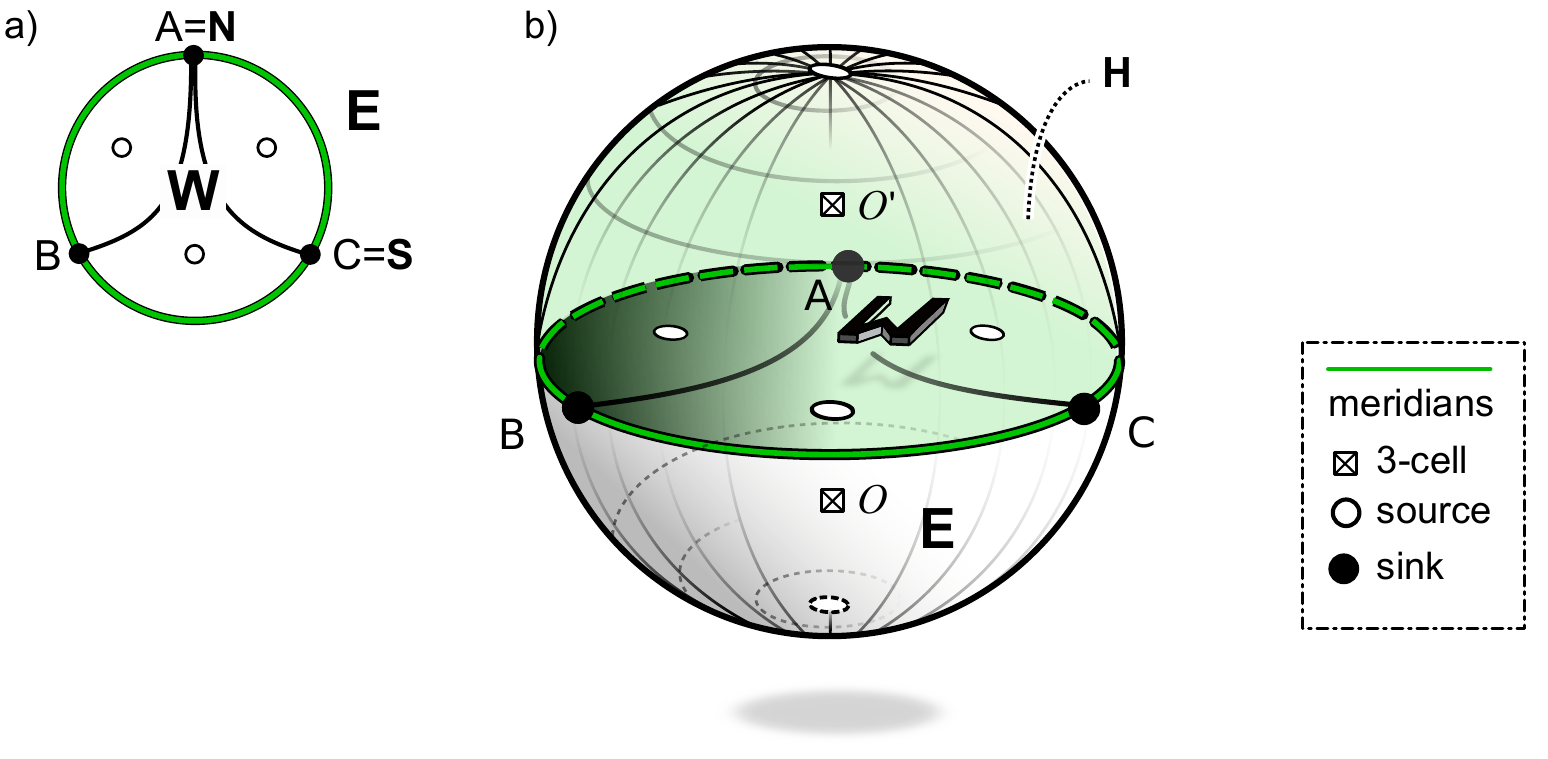}
\caption{\emph{
(a) The Snoopy bun 3-ball Sturm attractor with $N=13$ equilibria.
See fig.~\ref{fig:6.3} and table~\ref{tbl:6.6}, cases 13, 14, 19, 24 for inequivalent realizations.
(b) The Snoopy burger with an additional 3-cell bun $c_{\mathcal{O}}$, and hemisphere $\mathbf{H}$ packed on top.
This regular cell complex of dimension 3, with two adjacent 3-balls sharing 3 faces, is \emph{not} a Sturm dynamic complex.
}}
\label{fig:8.1}
\end{figure}

Let us add two more equilibria, to reach $N=15$.
We simply glue a second 3-cell $c_{\mathcal{O}'}$, to the other side of the equatorial Western disk of three faces, on top, and top off $c_{\mathcal{O}'}$ with a second 2-gon disk $\mathbf{H}$ as an upper hemisphere.
Here $\text{clos } \mathbf{H}$ shares the green meridian circle $\partial \mathbf{H} = \partial \mathbf{W} = \partial \mathbf{E}$ with, both, the lower hemisphere 2-disk $\text{clos } \mathbf{E}$ and the equatorial mid-plane Snoopy disk $\text{clos } \mathbf{W}$.
We call the resulting signed hemisphere complex of two 3-cells a \emph{Snoopy burger}.
See fig.~\ref{fig:8.1}(b).

We claim that the snoopy burger is \emph{not} a Sturm dynamic complex.
Indeed, the faces of the Snoopy disk $\mathbf{W}$ are reached from $\mathcal{O}$ and $\mathcal{O}'$, heteroclinically, from opposite incoming sides, tangent to the third eigenfunction $\pm \varphi_2$.
Therefore the same equatorial 3-face disk $\mathbf{W}$ must play the role of opposite hemispheres in the 3-balls $\text{clos } c_\mathcal{O}$ and $\text{clos } c_{\mathcal{O}'}$, respectively.
Only two of the three $i=0$ sink equilibria $A, \ B,\ C$ on the (green) shared boundary can be poles.
Any interior edge terminating at the third equilibrium thus has to be directed, both, towards the meridian boundary for $\text{clos } c_\mathcal{O}$, and away from that same meridian boundary for $\text{clos } c_{\mathcal{O}'}$.
This conflict prevents any Sturm realization of the Snoopy burger.

So, how about dimensions four and higher?
Already the Snoopy example, say, embedded into the 3-sphere boundary of a 4-cell cautions us to proceed with care.
In principle, at least, the general recipe of \cite{firo3d-2} for the construction of SZS-pairs $(h_0,h_1)$ extends to arbitrary signed hemisphere complexes \cite{firo18}.
A viable and complete geometric description, however, as we have presented for 3-balls here, is not available at this date.
We therefore conclude with a few examples. 

The $m$-dimensional \emph{Chafee-Infante global attractor} $\mathcal{A}_{\text{CI}}^m$ arises from PDE \eqref{eq:1.1} for cubic nonlinearities $f(u) = \lambda u(1-u^2)$.
Consider $\mathcal{O}$:= $0$ and observe $i(\mathcal{O}) = m \geq 1$ for $(m-1)^2 < \lambda / \pi^2 < m^2$.
The $2m$ remaining equilibria $v_\pm^j$ are characterized by $z(v_\pm^j - \mathcal{O}) = j_\pm$, all hyperbolic.
The Thom-Smale dynamic complex of $\mathcal{A}_{\text{CI}}^m = \text{clos } W^u(\mathcal{O})$ consists of the single $m$-cell $W^u (\mathcal{O})$ and the $m$-cell boundary $\partial W^u(\mathcal{O})$.
The hemisphere decomposition is simply the remaining Thom-Smale dynamic decomposition
	\begin{equation}
	\Sigma_\pm^j (\mathcal{O})= W^u (v_\pm^j)\,,
	\label{eq:4.5}
	\end{equation}
$0 \leq j < m = i(\mathcal{O})$, in the Chafee-Infante case.
See also \cite{chin74, he81, he85}.
The Chafee-Infante attractor $\mathcal{A}_{\text{CI}}^m$ is the $m$-dimensional Sturm attractor with the smallest possible number $N = 2m+1$ of equilibria.
Equivalently, among all Sturm attractors with $N = 2m+1$ equilibria, it possesses the largest possible dimension $m$.
Interestingly the dynamics on each closed hemisphere $\text{clos } \Sigma_\pm^j$ is itself $C^0$ orbit equivalent to the Chafee-Infante dynamics on $\mathcal{A}_{\text{CI}}^j$.
In section \ref{sec4}, for example, the Chafee-Infante 3-ball $\mathcal{A}_{\text{CI}}^3$ arose as a face lift, or an equivalent suspension, of $\mathcal{C}_0 =\mathcal{A}_{\text{CI}}^2$, alias the (1,1)-gon, by itself.

The double spiral meander $\mathcal{M}_{\text{CI}}^m$ of $\mathcal{A}_{\text{CI}}^m$ with $N=2m+1$ equilibria looks as follows.
It consists of $m$ nested upper arcs, above the horizontal $h_1$-axis, joining horizontally labeled equilibria $j$ and $2m+1-j$ in pairs, for $j=1, \ldots , m$.
Another $m$ nested lower arcs, joining equilibria $j+1$ and $2m+2-j$ in pairs, complete the meander.
This construction arises by successive meander suspension or, equivalently, by successive pitchfork bifurcation of the most unstable, central equilibrium.

Without proof we state how to obtain an $m$-simplex $\mathbb{S}^m$ with $N=2^{m+1}-1$ equilibria.
Note the 1-edge interval $\mathbb{S}^1= \mathcal{A}_{\text{CI}}^1$, the filled 3-gon $\mathbb{S}^2$, and the solid tetrahedron $\mathbb{S}^3=\mathbb{T}$ of fig.~\ref{fig:7.2.2}(c).
Above the horizontal axis, we keep a single nested sequence of $\tfrac{1}{2} (N-1)=2^m-1$ upper arcs, analogously to the Chafee-Infante case.
Below the axis, we position nests of $1,2,\ldots ,2^{m-1}$ arcs next to each other, starting with the single lower arc from 2 to 3.
This defines a meander $\mathcal{M}_{\mathbb{S}}^m$ for the $m$-simplex.
Of course, the pole distance is $\delta = \vartheta=1$.
The number of $(m-1)$-cells in the hemispheres $\Sigma_\pm^{m-1}$ are $\eta = [(m+1)/2]$ and $[(m+2)/2]$, respectively, each an $(m-1)$-simplex $\mathbb{S}^{m-1}$ itself.
Alas, there are many other Sturm realizations of the $m$-simplex $\mathbb{S}^m$.

A similar construction provides Sturm hypercubes $\mathbb{H}^m= \mathcal{A}_{\text{CI}}^1 \times \ldots \times \mathcal{A}_{\text{CI}}^1$ of any dimension $m$.
Analogously to fig.~\ref{fig:7.4.1} we place nests of $1,\ 3,\ 3^2,\ \ldots , 3^{m-1}$ lower arcs, left to right, below the axis, starting from the second equilibrium.
Above, we start at the first of the $3^m$ equilibria and reverse the nest sizes.
This places nests of $3^{m-1}, \ldots , 3^2,\ 3,\ 1$ upper arcs, left to right, above the horizontal axis.
The pole distance $\delta = \vartheta=m$ and the count $\eta =m$ of $(m-1)$-cells $\mathbb{H}^{m-1}$, identically in each hemisphere, are both maximally possible.
Again there are many other Sturm realizations with lower $\delta,\ \eta$.

\begin{table}[t!]
\centering \includegraphics[width=\textwidth]{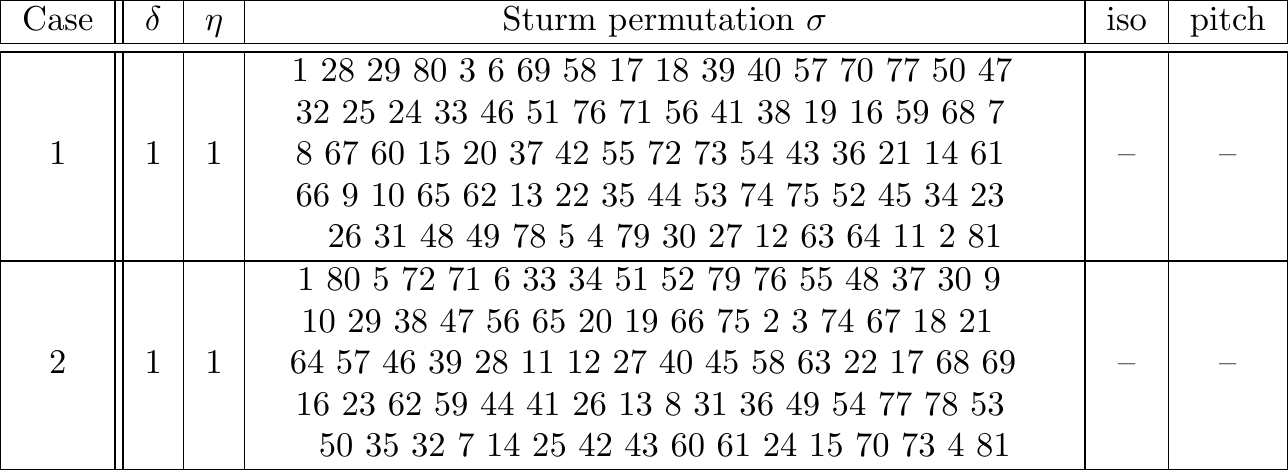}
\caption{\emph{
Two ad-hoc examples of Sturm permutations which lead to four-dimensional solid octahedra $\mathbb{O}^4$ with 81 equilibria and 16 three-dimensional solid tetrahedra $\mathbb{T}$ on the bounding 3-sphere $S^3$.
}}
\label{tbl:8.1}
\end{table}

For $m$-dimensional octahedra $\mathbb{O}^m$, i.e. the hypercube duals, also known as the (solid) $m$-orthoplex or the convex hull of the $2m$ $\pm$unit vectors in $\mathbb{R}^m$, we did not find such a series of Sturm realizations beyond $m=4$, so far.
One reason may be the strong asymmetry induced by small pole distances $\delta$ and the asymmetric counts $\eta$ of $(m-1)$-cells in the two hemispheres $\Sigma_\pm^{m-1}$.
We are only aware of two ad-hoc 4-dimensional Sturm examples of $\mathbb{O}^4$, with $3^m=81$ equilibria, $2^m=16$ tetrahedral 3-cells, and minimal $\delta = \eta =1$.
We conclude with their Sturm permutations, in table~\ref{tbl:8.1}, without further discussion.

\newpage


\begin{thebibliography}{999999999}

{\small{

\bibitem[An86]{an86}
S.~Angenent.
The {M}orse-{S}male property for a semi-linear parabolic equation.
{\em J.~Diff.~Eqns.} \textbf{62} (1986), 427--442.

\bibitem[An88]{an88}
S.~Angenent.
The zero set of a solution of a parabolic equation.
{\em J. Reine Angew. Math.} \textbf{390} (1988), 79--96.


\bibitem[Ar88]{ar88}
V.I.~Arnold. A branched covering $CP^2 \rightarrow S^4$, hyperbolicity and projective topology.
{\em Sib.~Math.~J.} \textbf{29} (1988) 717--726.

\bibitem[ArVi89]{arvi89}   
V.I.~Arnold, M.I.~Vishik et al. Some solved and unsolved problems in the theory of differential equations and mathematical physics.
{\em Russ. Math. Surv.} \textbf{44} (1989) 157--171.

\bibitem[BaVi92]{bavi92}
A.V.~Babin and M.I.~Vishik.
{\em Attractors of Evolution Equations}.
North Holland, Amsterdam, 1992.


\bibitem[BiZh92]{bizh92}
J.-M.~Bismut and W.~Zhang.
{\em An extension of a theorem by Cheeger and M\"uller. With an appendix by Fran\c{c}ois Laudenbach}.
Ast\'erisque \textbf{205}, Soc.~Math.~de France, 1992. 

\bibitem[Bo88]{bo88}
R.~Bott.
Morse theory indomitable.
{\em Public.~ Math.~I.H.\'E.S.} \textbf{68} (1988), 99--114.



\bibitem[BrFi88]{brfi88}
P.~Brunovsk\'y and B.~Fiedler.
 Connecting orbits in scalar reaction diffusion equations.
 {\em Dynamics Reported} \textbf{1} (1988), 57--89.

\bibitem[BrFi89]{brfi89}
P.~Brunovsk\'y and B.~Fiedler.
 Connecting orbits in scalar reaction diffusion equations {II}: The
  complete solution.
 {\em J.~Diff.~Eqns.} \textbf{81} (1989), 106--135.

\bibitem[ChIn74]{chin74}
N. Chafee and E. Infante.
A bifurcation problem for a nonlinear parabolic equation.
{\em J. Applicable Analysis} \textbf{4} (1974), 17--37.

\bibitem[ChVi02]{chvi02}
V.V.~Chepyzhov and M.I.~Vishik.
{\em Attractors for {E}quations of {M}athematical {P}hysics}.
Colloq. AMS, Providence, 2002.

\bibitem[Edetal94]{edetal94}
A. Eden, C. Foias, B. Nicolaenko and R. Temam.
\emph{Exponential Attractors for Dissipative Evolution Equations. }
Wiley, Chichester, 1994.

\bibitem[Fi94]{fi94}
B. Fiedler.
Global attractors of one-dimensional parabolic equations: sixteen examples.
{\em Tatra Mountains Math. Publ.} \textbf{4} (1994), 67--92. 

\bibitem[Fi02]{fi02}
B. Fiedler (ed.)  {\em {H}andbook of {D}ynamical
{S}ystems} \textbf{2}, {E}lsevier, {A}msterdam, 2002.

\bibitem[FiRo96]{firo96}
B. Fiedler and C. Rocha.
Heteroclinic orbits of semilinear parabolic equations.
{\em  J. Diff. Eqns.} \textbf{125} (1996), 239--281.

\bibitem[FiRo99]{firo99}
B.~Fiedler and C.~Rocha.
Realization of meander permutations by boundary value problems.  {\em
J. Diff. Eqns.} \textbf{156} (1999), 282--308.

\bibitem[FiRo00]{firo00}
B.~Fiedler and C.~Rocha.
Orbit equivalence of global attractors of semilinear parabolic
differential equations.
{\em Trans. Amer. Math. Soc.} \textbf{352} (2000), 257--284.

\bibitem[FiRo08]{firo08}
B. Fiedler and C. Rocha.
Connectivity and design of planar global attractors of Sturm type, II: Connection graphs.
{\em J.\ Diff. Eqns.} \textbf{244} (2008), 1255--1286.

\bibitem[FiRo09]{firo09}
B. Fiedler and C. Rocha.
Connectivity and design of planar global attractors of Sturm type, I: Bipolar orientations and Hamiltonian paths. 
{\em J.~Reine~Angew.~Math.} \textbf{635} (2009), 71--96.

\bibitem[FiRo10]{firo10}
B. Fiedler and C. Rocha.
Connectivity and design of planar global attractors of Sturm type, III: Small and Platonic examples.
{\em J. Dyn. Diff. Eqns.} \textbf{22} (2010), 121--162.

\bibitem[FiRo14]{firo14}
B.~Fiedler and C.~Rocha.
Nonlinear Sturm global attractors: unstable manifold decompositions as regular CW-complexes.
{\em Discr. Cont. Dyn. Sys.} \textbf{34} (2014), 5099-5122.

\bibitem[FiRo15]{firo13}
B.~Fiedler and C.~Rocha.
Schoenflies spheres as boundaries of bounded unstable manifolds in gradient Sturm systems.
{\em J.~Dyn.~Diff.~Eqns.} \textbf{27} (2015), 597--626.

\bibitem[FiRo16]{firo3d-1}
B.~Fiedler and C.~Rocha.
Sturm 3-balls and global attractors 1: Thom-Smale complexes and meanders. 
To appear in \emph{S\~ao Paulo J. Math. Sc.}, arXiv:1611.02003, 2016.

\bibitem[FiRo17]{firo3d-2}
B.~Fiedler and C.~Rocha.
Sturm 3-balls and global attractors 2: Design of Thom-Smale complexes. 
arXiv:1704.00344, 2017.

\bibitem[FiRo18]{firo18}
B.~Fiedler and C.~Rocha.
Boundary orders of equilibria in Sturm global attractors. 
In preparation, 2018.

\bibitem[FiSc03]{fisc03}
B.~Fiedler and A.~Scheel.
Spatio-temporal dynamics of reaction-diffusion patterns. In
\emph{Trends in Nonlinear Analysis}, M.~Kirkilionis et al.~(eds.),
Springer-Verlag, Berlin 2003, 23--152.

\bibitem[Fietal12]{firowo12}
B.~Fiedler, C.~Rocha and M.~Wolfrum.
A permutation characterization of Sturm global attractors of Hamiltonian type.
{\em J. Diff. Eqns.} \textbf{252} (2012), 588--623.

\bibitem[Fietal14]{fietal14}
B.~Fiedler, C.~Grotta-Ragazzo and C.~Rocha.
An explicit Lyapunov function for reflection symmetric parabolic differential equations on the circle.
{\em Russ. Math. Surveys.} \textbf{69} (2014), 419--433.

\bibitem[Fr79]{fr79}
J.M.~Franks.
Morse-Smale flows and homotopy theory.
{\em Topology} \textbf{18} (1979), 199--215.


\bibitem[FrPi90]{frpi90}
R.~Fritsch and R.A.~Piccinini.
\emph{Cellular Structures in Topology.}
Cambridge University Press, 1990.

\bibitem[FuOl88]{fuol88}
G.~Fusco and W.~Oliva.
Jacobi matrices and transversality.
{\em Proc. Royal Soc. Edinburgh A} \textbf{109} (1988) 231--243.


\bibitem[FuRo91]{furo91}
G.~Fusco and C.~Rocha.
 A permutation related to the dynamics of a scalar parabolic {PDE}.
 {\em J. Diff. Eqns.} \textbf{91} (1991), 75--94.

\bibitem[Ga04]{ga04}
V.A. Galaktionov.
\emph{Geometric Sturmian Theory of Nonlinear Parabolic Equations and Applications. }
Chapman \& Hall, Boca Raton, 2004.

\bibitem[Ha88]{ha88}
J.K.~Hale.
 {\em Asymptotic Behavior of Dissipative Systems}.
 Math. Surv. \textbf{25}. AMS, Providence, 1988.

\bibitem[Haetal02]{haetal02}
J.K. Hale,  L.T. Magalh\~aes, and W.M. Oliva.
\emph{Dynamics in Infinite Dimensions.} Springer-Verlag, New York, 2002.

\bibitem[He81]{he81}
D. Henry.
{\em Geometric Theory of Semilinear Parabolic Equations.}
Lect. Notes Math. \textbf{804}, Springer-Verlag, New York, 1981.

\bibitem[He85]{he85}
D.~Henry.
Some infinite dimensional {M}orse-{S}male systems defined by parabolic differential equations.
 {\em J. Diff. Eqns.} \textbf{59} (1985), 165--205.

\bibitem[Hu11]{hu11}
B.~Hu.
{\em Blow-up Theories for Semilinear Parabolic Equations.}
Lect. Notes Math. \textbf{2018}, Springer-Verlag, Berlin, 2011.


\bibitem[Ka17]{ka17}
A.~Karnauhova.
\emph{Meanders.}
de Gruyter, Berlin 2017.

\bibitem[La91]{la91}
O.A.~Ladyzhenskaya.
 {\em Attractors for Semigroups and Evolution Equations.}
 Cambridge University Press, 1991.

\bibitem[Ma78]{ma78}
H.~Matano.
Convergence of solutions of one-dimensional semilinear parabolic equations.
{\em J. Math. Kyoto Univ.} \textbf{18} (1978), 221--227.


\bibitem[Ma82]{ma82}
H.~Matano.
Nonincrease of the lap-number of a solution for a one-dimensional
semi-linear parabolic equation.
{\em J. Fac. Sci. Univ. Tokyo Sec. IA} \textbf{29} (1982), 401--441.

\bibitem[MaNa97]{mana97}
H.~Matano and K.-I.~Nakamura.
The global attractor of semilinear parabolic equations on ${S^1}$.
{\em Discr. Cont. Dyn. Sys.} \textbf{3} (1997), 1--24.

\bibitem[MP88]{mp88}
J.~Mallet-Paret.
Morse decompositions for delay-differential equations.
{\em J. Diff. Eqns.} \textbf{72} (1988), 270--315.


\bibitem[PaMe82]{pame82}
J.~Palis and W.~de~Melo.
{\em Geometric Theory of Dynamical Systems. An Introduction.}
Springer-Verlag, New York, 1982.

\bibitem[PaSm70]{pasm70}
J. Palis and S. Smale.
{\em Structural stability theorems.}
Global Analysis. Proc. Simp. in Pure Math. AMS, Providence, 1970.

\bibitem[Pa83]{pa83}
A.~Pazy.
{\em Semigroups of Linear Operators and Applications to Partial Differential Equations}.
Springer-Verlag, New York, 1983.


\bibitem[Ra02]{ra02}
G.~Raugel.
Global attractors.
In \cite{fi02}, 2002, 885--982.

\bibitem[Ro91]{ro91} 
C.~Rocha. 
Properties of the attractor of a scalar parabolic PDE. 
{\em J. Dyn. Diff. Eqns.} \textbf{3} (1991), 575-591. 

\bibitem[SeYo02]{seyo02}
G.R. Sell, Y. You.
{\em Dynamics of Evolutionary Equations.}
Springer-Verlag, New York, 2002.

\bibitem[St1836]{st1836}
C.~Sturm.
Sur une classe d'\'equations \`a diff\'erences partielles.
{\em J.~Math.~Pure~Appl.} \textbf{1} (1836), 373--444.

\bibitem[Ta79]{ta79}
H.~Tanabe.
{\em Equations of Evolution}.
Pitman, Boston, 1979.

\bibitem[Te88]{te88}
R.~Temam.
{\em Infinite-Dimensional Dynamical Systems in Mechanics and
Physics}.
Springer-Verlag, New York, 1988.
 

\bibitem[Ze68]{ze68}
T.I. Zelenyak.
Stabilization of solutions of boundary value problems for a second order parabolic equation with one space variable.
{\em Diff.~Eqns.} \textbf{4} (1968), 17--22.


}}


\end{thebibliography}
\end{document}